\documentclass[leqno]{amsart}

\usepackage{amssymb,latexsym}

\author[W.\thinspace{}Kim]{Wansu Kim}
\address{Wansu Kim\\%
Department of Mathematics\\%
South Kensington Campus\\%
Imperial College London\\%
London, SW7 2AZ\\%
United Kingdom}
\email{w.kim@imperial.ac.uk}

\usepackage[active]{srcltx}
\usepackage{amsmath}
\usepackage{amsfonts}

\usepackage{amsthm}
\usepackage{amscd}
\usepackage{mathrsfs}
\usepackage{graphicx}
\usepackage{psfrag}
\usepackage{calc}
\usepackage{url}
\usepackage[all]{xy}

\setcounter{secnumdepth}{4}
\setcounter{tocdepth}{1}

\usepackage{egastyle}

%

%






%

%
 
\numberwithin{equation}{subsection}

\theoremstyle{plain}
\newtheorem{thm}[subsection]{Theorem}
\newtheorem*{thm*}{Theorem}

\newtheorem{thmsub}[equation]{Theorem}

\newtheorem{lem}[subsection]{Lemma}
\newtheorem{lemsub}[equation]{Lemma}
\newtheorem*{lem*}{Lemma}

\newtheorem{prop}[subsection]{Proposition}

\newtheorem{propsub}[equation]{Proposition}
\newtheorem*{prop*}{Proposition}

\newtheorem{cor}[subsection]{Corollary}
\newtheorem{corsub}[equation]{Corollary}
\newtheorem*{cor*}{Corollary}

\newtheorem{claimsub}[equation]{Claim}
\newtheorem*{claim*}{Claim}

\theoremstyle{definition}

\newtheorem*{defn*}{Definition}
\newtheorem{defnsub}[equation]{Definition}

\newtheorem{exasub}[equation]{Example}
\newtheorem*{exa*}{Example}

\theoremstyle{remark}
\newtheorem{rmk}[subsection]{Remark}
\newtheorem{rmksub}[equation]{Remark}
\newtheorem*{rmk*}{Remark}

\numberwithin{figure}{subsection}
\numberwithin{table}{subsection}

%





\newcounter{listnum}


\newcommand{\abs}[1]{\left|#1\right|}

\newcommand{\scri}{\mathscr I}

\DeclareMathOperator{\im}{im}

\DeclareMathOperator{\coker}{coker}

\newcommand{\Proj}{\mathbb{P}}

\DeclareMathOperator{\Mat}{Mat}

\DeclareMathOperator{\id}{id}
\DeclareMathOperator{\Id}{Id}

\newcommand{\e}{\mathbf{e}}

\DeclareFontEncoding{OT2}{}{} 



\newcommand{\nf}[1]{\underline{#1}}
\newcommand{\ol}[1]{\overline{#1}}
\newcommand{\wt}[1]{\widetilde{#1}}

\newcommand{\tim}{\!\cdot\!}
\newcommand{\geqs}{\geqslant}
\newcommand{\leqs}{\leqslant}

\newcommand{\et}{\text{\rm\'et}}
\DeclareMathOperator{\cyc}{cyc}
\newcommand{\cycchar}{\chi_{\cyc}}

\newcommand{\wh}[1]{\widehat{#1}}

\newcommand{\cL}{\mathcal{L}}

\newcommand{\M}{\mathcal{M}}
\newcommand{\MM}{\boldsymbol{\mathcal{M}}}

\newcommand{\disk}{\mathbf{\Delta}}

\DeclareMathOperator{\Sym}{Sym}

\newcommand{\llpar}{(\!(}
\newcommand{\rrpar}{)\!)}

\newcommand{\rep}{\mathfrak{R}}

\DeclareMathOperator{\Spec}{Spec}

\DeclareMathOperator{\Spf}{Spf}

\DeclareMathOperator{\res}{res}
\DeclareMathOperator{\Ker}{Ker}

\DeclareMathOperator{\sep}{sep}

\newcommand{\starr}{^\times}

\DeclareMathOperator{\Frac}{Frac}

\newcommand{\ra}{\rightarrow}
\newcommand{\xra}[1]{\xrightarrow{#1}}

\newcommand{\xlra}[1]{\stackrel{#1}{\longrightarrow}}
\newcommand{\hra}{\hookrightarrow}

\newcommand{\thra}{\twoheadrightarrow}

\newcommand{\la}{\leftarrow}

\newcommand{\riso}{\xrightarrow{\sim}}
\newcommand{\liso}{\stackrel{\sim}{\la}}

\newcommand{\self}[1]{{#1}\ra {#1}}
\newcommand{\fstr}[1]{\sig^*{#1}\ra {#1}}
\newcommand{\etfstr}[1]{\sig^*{#1}\riso {#1}}

\DeclareMathOperator{\cont}{cont}

\newcommand{\invlim}{\mathop{\varprojlim}\limits}

\newcommand{\set}[1]{\{#1\}}
\newcommand{\iv}{^{-1}}
\newcommand{\ivtd}[1]{\frac{1}{#1}}

\newcommand{\abcd}[4]{\left(
        \begin{smallmatrix}#1&#2\\#3&#4\end{smallmatrix}\right)}
\newcommand{\mthree}[9]{\left(
        \begin{matrix}#1&#2&#3\\#4&#5&#6\\#7&#8&#9
        \end{matrix}\right)}

\newcommand{\Eps}{\mathcal{E}}
\newcommand{\vphi}{\varphi}

\newcommand{\sig}{\sigma}
\newcommand{\Sig}{\mathfrak{S}}

\newcommand{\Q}{\mathbb{Q}}

\newcommand{\cD}{\mathcal{D}}

\newcommand{\PP}{\mathcal{P}}

\newcommand{\LT}{\mathcal{LT}}

\newcommand{\C}{\mathbb{C}}

\newcommand{\Sets}{(\mbox{\rm\bf Sets})}

\newcommand{\kbar}{\bar{k}}

\newcommand{\Z}{\mathbb{Z}}

\newcommand{\F}{\mathbb{F}}

\newcommand{\Qp}{\Q_p}

\newcommand{\Kbar}{\overline{K}}

\newcommand{\Zp}{\Z_p}
\newcommand{\Fp}{\F_p}
\newcommand{\Fq}{\F_q}

\DeclareMathOperator{\Ad}{Ad}

\DeclareMathOperator{\nilp}{nilp}
\DeclareMathOperator{\unip}{unip}

\newcommand{\OO}{\mathcal{O}}
\newcommand{\fo}{\mathfrak{o}}
\newcommand{\ep}{\epsilon}
\newcommand{\A}{\mathcal{A}}


\DeclareMathOperator{\ord}{ord}
\DeclareMathOperator{\ssing}{ss}
\DeclareMathOperator{\GL}{GL}

\DeclareMathOperator{\Gal}{Gal}
\newcommand{\gal}{\boldsymbol{\mathcal{G}}}
\newcommand{\GK}{\gal_K}
\newcommand{\GKinfty}{\gal_{K_\infty}}
\newcommand{\IK}{I_K}
\newcommand{\IKinfty}{I_{K_\infty}}

\newcommand{\m}{\mathfrak{m}}

\newcommand{\gM}{\mathfrak{M}}
\newcommand{\gN}{\mathfrak{N}}

\newcommand{\comp}[1]{\ \wh{}\!\!_{#1}\,}

\DeclareMathOperator{\ur}{ur}

\DeclareMathOperator{\fl}{fl}

\DeclareMathOperator{\Hom}{Hom}

\DeclareMathOperator{\End}{End}

\DeclareMathOperator{\HH}{H}
\newcommand{\coh}[1]{\HH^{#1}}

\DeclareMathOperator{\Tor}{Tor}

\DeclareMathOperator{\Fil}{Fil}

\DeclareMathOperator{\Mod}{Mod}
\DeclareMathOperator{\ModFI}{ModFI}
\newcommand{\Modh}{(\Mod/\Sig)^{\leqslant h}}
\newcommand{\ModFIh}[1]{(\ModFI/\Sig)_{#1}^{\leqslant h}}
\newcommand{\ModFIBT}[1]{(\ModFI/\Sig)_{#1}^{\leqslant1}}
\newcommand{\ModFIet}[1]{(\ModFI/\fo_\Eps)_{#1}^{\et}}
\newcommand{\Th}{\underline{T}^{\leqs h}}

\DeclareMathOperator{\cris}{cris}
\newcommand{\Acris}{A_{\cris}}

\newcommand{\Bcris}{B_{\cris}}
\DeclareMathOperator{\st}{st}
\DeclareMathOperator{\qst}{qst}
\DeclareMathOperator{\dR}{dR}

\newcommand{\Asthat}{\widehat{A}_{\st}}
\newcommand{\Bst}{B_{\st}}
\newcommand{\BdR}{B_{\dR}}

\DeclareMathOperator{\tor}{tor}
\DeclareMathOperator{\free}{free}
\newcommand{\MF}{\mathcal{MF}}

\newcommand{\waMFphih}[1]{\MF(\vphi)_{#1}^{wa,[0,h]}}
\newcommand{\waMFphiNh}[1]{\MF(\vphi,N)_{#1}^{wa,[0,h]}}

\newcommand{\waMFphiBT}[1]{\MF(\vphi)_{#1}^{wa,[0,1]}}

\newcommand{\phimodh}[1]{\nf{\Mod}_{#1}(\varphi)^{\leqs h}}
\newcommand{\phimodBT}[1]{\nf{\Mod}_{#1}(\varphi)^{\leqs 1}}

\newcommand{\etphimod}[1]{\nf{\Mod}_{#1}^{\et}(\varphi)}
\newcommand{\fretphimod}[1]{\nf{\Mod}_{#1}^{\et,\free}(\varphi)}
\newcommand{\toretphimod}[1]{\nf{\Mod}_{#1}^{\et,\tor}(\varphi)}

\DeclareMathOperator{\Rep}{Rep}

\newcommand{\prep}{\Rep_{\Zp}}

\newcommand{\art}[1]{\mathfrak{AR}_{#1}}
\newcommand{\arhat}[1]{\widehat{\mathfrak{AR}}_{#1}}
\newcommand{\aug}[1]{\mathfrak{Aug}_{#1}}

\newcommand{\bfv}{\mathbf{v}}

\newcommand{\GR}{\mathscr{GR}}

\newcommand{\GRh}{\mathscr{GR}^{\leqs h}}

\newcommand{\GRv}{\mathscr{GR}^{\bfv}}

\title[Galois~Deformation~Theory]{Galois deformation theory for norm fields \\and flat deformation rings}

\begin{document} 

\begin{abstract}
Let $K$ be a finite extension of $\mathbb{Q}_p$, and choose a uniformizer $\pi\in K$, and put $K_\infty:=K(\sqrt[p^\infty]{\pi})$. We introduce a new technique using restriction to $\Gal(\ol K/K_\infty)$ to study flat deformation rings. We show the existence of deformation rings for $\Gal(\ol K/K_\infty)$-representations ``of height $\leqslant h$'' for any positive integer $h$, and we use them to give a variant of Kisin's proof of connected component analysis of a certain flat deformation rings, which was used to prove Kisin's modularity lifting theorem for potentially Barsotti-Tate representations. Our proof does not use the classification of finite flat group schemes, so it avoids Zink's theory of windows and displays when $p=2$.

This $\Gal(\ol K/K_\infty)$-deformation theory has a good analogue in positive characteristics analogue of crystalline representations in the sense of Genestier-Lafforgue. In particular, we obtain a positive characteristic analogue of crystalline deformation rings, and can analyze their local structure. 
\end{abstract}
\keywords{Kisin Theory, local Galois deformation theory, equi-characteristic analogue of Fontaine's theory}
\subjclass[2000]{11S20}

\maketitle
\tableofcontents
\section{Introduction}
Since the pioneering work of Wiles on the modularity of semi-stable elliptic curves over $\Q$, there has been huge progress on modularity lifting. Notably, Kisin  \cite{Kisin:ModuliGpSch, Kisin:2adicBT} (later improved by Gee \cite{Gee:MLTwt2HMF}) proved a very general modularity lifting theorem for potentially Barsotti-Tate representations, which had enormous impacts on this subject. (For the precise statement of the theorem, see the aforementioned references.)

One of the numerous noble innovations that appeared in Kisin's result is his improvement of Taylor-Wiles patching argument. The original patching argument required relevant local deformation rings to be formally smooth, which is a very strong requirement. Under Kisin's improved patching, we only need to show that he generic fiber of local deformation rings are formally smooth with correct dimension, and we need to have some control of their connected components. (See  \cite[Corollary~1.4]{Kisin:ModularityGeomGalRep} for the list of sufficient conditions on local deformation rings to prove modularity lifting.) It turns out that the most difficult part among them (and the hurdle to proving modularity lifting for more general classes of $p$-adic Galois representations)  is to ``control'' the connected components of certain $p$-adic deformation rings at places over $p$. (The relevant local deformation rings here are ``flat deformation rings''.)

The main purpose of this paper is to give another proof of the following theorem of Kisin. Let us fix some notations. Let $K$ be a finite extension of $\Qp$, $\F$ a finite field of characteristic $p$, and $\fo$ a complete discrete valuation ring with residue field $\F$. Let $\bar\rho:\GK\ra\GL_2(\F)$ be a continuous representation, and let $R^\Box$ be the framed deformation $\fo$-algebra of $\bar\rho$ (whose existence was shown by Mazur\cite{Mazur:DeformingGalReps}). Let $R^{\Box,\bfv}_{\cris}$ be the unique $p^\infty$-torsion free quotient  of $R^\Box$ whose $A$-points classifies crystalline lifts of $\bar\rho$ with ``Hodge type $(0,1)$'' for any finite $\Qp$-algebra $A$. (See \cite[\S2]{kisin:pstDefor} for the proof. The precise definition could be found in Theorem~\ref{thm:CrysStDeforRing} and \S\ref{subsec:HodgeType} for the case we need.) By \cite[Corollary~2.2.6]{kisin:fcrys} and \cite[Proposition~2.3.1]{raynaud:GpSch}, $R^{\Box,\bfv}_{\cris}$ differs only by $p^\infty$-torsion from the flat framed deformation ring of $\bar\rho$ with the inertia action on the determinant given by the $p$-adic cyclotomic character..
\begin{thm}[Kisin; Gee, Imai]\label{thm:KisinConndComp}
For finite local $\Qp$-algebras $A$ and $A'$, consider maps $\xi:R^{\Box,\bfv}_{\cris} \ra A$ and $\xi':R^{\Box,\bfv}_{\cris} \ra A'$. Let $\rho_\xi$ and $\rho_{\xi'}$ denote the lifts of $\bar\rho$ corresponding to $\xi$ and $\xi'$, respectively. 
\begin{enumerate}
\item\label{thm:KisinConndComp:pOdd}
Assume that $p>2$. Then $\xi$ and $\xi'$ are supported on the same connected component of $\Spec R^{\Box,\bfv}_{\cris}[\ivtd p]$ if and only if either both $\rho_\xi$ and $\rho_{\xi'}$ do not admit a non-zero unramified quotient (i.e. non-ordinary) or both $\rho_\xi$ and $\rho_{\xi'}$ admit a rank-$1$ unramified quotient which lift the same (mod $p$ unramified) character. In particular,  $\Spec R^{\Box,\bfv}_{\cris}[\ivtd p]$ has at most one connected component (which is geometrically connected) consisting of non-ordinary lifts, and at most one connected component (which is geometrically connected) consisting of ordinary lifts unless $\bar\rho \sim \psi_1\oplus\psi_2$ where $\psi_1$ and $\psi_2$ are distinct unramified characters, in which case there exist two connected components consisting of ordinary lifts (and each of them is geometrically connected). 
\item
\label{thm:KisinConndComp:pEven}
Assume that $p=2$. Then $\xi$ and $\xi'$ are supported on the same connected component of $\Spec R^{\Box,\bfv}_{\cris}[\ivtd p]$ if both $\rho_\xi$ and $\rho_{\xi'}$ do not admit a non-zero unramified quotient (i.e. non-ordinary).
\end{enumerate}
\end{thm}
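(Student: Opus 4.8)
The plan is to restrict everything to $\GKinfty$, where deformations are controlled by $\varphi$-modules over the norm field and, once a height bound is fixed, by the deformation rings $R^{\Box,\leqs 1}_{K_\infty}$ produced in the earlier sections. Restricting the universal crystalline lift of Hodge type $(0,1)$ over $R^{\Box,\bfv}_{\cris}$ to $\GKinfty$ gives a deformation of $\bar\rho|_{\GKinfty}$ of height $\leqs 1$, hence a morphism $\Spec R^{\Box,\bfv}_{\cris}\to\Spec R^{\Box,\leqs 1}_{K_\infty}$, and on the norm-field side one even has the finer datum of an object of $\waphiNdisk$, i.e.\ a $\varphi$-module over the open unit disk $\disk$ equipped with Kisin's connection $N_\nabla$. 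Over $\Spec R^{\Box,\bfv}_{\cris}$ I would build the projective resolution $\mathscr{GR}$ parametrising such height-$\leqs 1$ Kisin lattices-with-connection inside the universal $\varphi$-module; since $N_\nabla$ is available precisely on the Kisin lattices arising from crystalline $G_K$-representations, this yields a projective $\fo$-morphism $\Theta\colon \mathscr{GR}\to\Spec R^{\Box,\bfv}_{\cris}$. The first reduction is that $\Theta[\ivtd p]$ is an isomorphism: a crystalline lift of Hodge type $(0,1)$ over a finite $\Qp$-algebra determines its Kisin lattice-with-connection uniquely, so $\Theta[\ivtd p]$ is a proper monomorphism, hence a closed immersion; it is surjective because every such lift does arise this way (Kisin's theory of crystalline representations, with no finite flat group schemes needed); and a surjective closed immersion onto the reduced scheme $\Spec R^{\Box,\bfv}_{\cris}[\ivtd p]$ is an isomorphism. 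It therefore suffices to compute $\pi_0\big(\mathscr{GR}[\ivtd p]\big)$ and to locate its ordinary part.

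Next I would split off the ordinary locus $\mathscr{GR}^{\ord}$. For crystalline lifts of Hodge type $(0,1)$ the properties ``ordinary'' (nonzero unramified quotient), ``reducible'' and ``not absolutely irreducible'' all agree, because a reducible such lift is an extension of a Hodge--Tate weight $0$ character---which is unramified up to finite order---by a Hodge--Tate weight $1$ character; hence the ordinary locus is clopen in $\Spec R^{\Box,\bfv}_{\cris}[\ivtd p]$, being the complement of the absolutely irreducible open locus. On it the universal Kisin lattice is, after the evident twist, an extension of a rank-one object with invertible $\varphi$ by a rank-one object of height $1$; the mod $p$ unramified character through which the quotient line factors is locally constant, and for each such character the associated ordinary crystalline framed deformation ring is formally smooth of the expected dimension, so it has geometrically connected generic fibre. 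This produces at most one ordinary component for each unramified character lifting a quotient character of $\bar\rho$; the only way $\bar\rho$ has two such quotient characters is $\bar\rho\sim\psi_1\oplus\psi_2$ with $\psi_1\neq\psi_2$ unramified, which is precisely the exceptional case in part~\eqref{thm:KisinConndComp:pOdd}.

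The crux, and the part I expect to be genuinely hard, is the geometric connectedness of the non-ordinary (equivalently, absolutely irreducible) locus $U\subset\Spec R^{\Box,\bfv}_{\cris}[\ivtd p]$. The strategy is to exhibit $U$ as the generic fibre of an irreducible $\fo$-flat scheme. I would take $\mathscr{Z}$ to be the scheme-theoretic closure in $\mathscr{GR}$ of $\Theta^{-1}(U)$ (a clopen subscheme of the generic fibre, by the dichotomy above), and try to prove $\mathscr{Z}$ is $\fo$-flat and irreducible---equivalently, since the coweight $(1,0)$ is minuscule so that the relevant Schubert varieties in the $\GL_2$ affine Grassmannian are smooth copies of $\mathbb{P}^1$ and the resolution is correspondingly regular with connected fibres, it is enough to prove $\mathscr{Z}$ is connected---by pushing it through the resolution onto the $\GKinfty$-side: over the absolutely irreducible locus the height-$\leqs 1$ Kisin lattice and its connection are unique, so $\mathscr{GR}$ there agrees with the corresponding $\GKinfty$-deformation space, whose deformation theory is developed (and shown to be unobstructed on the irreducible locus) in the earlier sections, hence irreducible. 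Granting this, $U=\mathscr{Z}[\ivtd p]$ is irreducible, a fortiori geometrically connected. The obstacle is exactly establishing the $\fo$-flatness and irreducibility of $\mathscr{Z}$ purely on the $\GKinfty$-side---without the classification of finite flat group schemes---together with matching ``absolutely irreducible as a $G_K$-representation'' with the irreducible locus over $\GKinfty$; carrying this out uniformly in $p$, so that it also applies verbatim when $p=2$ and yields part~\eqref{thm:KisinConndComp:pEven} while bypassing Zink's theory of windows and displays, is the technical heart of the paper.
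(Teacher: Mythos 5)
Your outline diverges from the paper at the first step, and the divergence lands you back on precisely the obstacle the paper was designed to avoid. You propose to build a projective resolution $\mathscr{GR}\to\Spec R^{\Box,\bfv}_{\cris}$ parametrising ``Kisin lattices\emph{-with-connection}~$N_\nabla$'' over the crystalline deformation ring itself. But $N_\nabla$ is only defined after inverting $p$ (on $\OO_\disk\otimes_\Sig\gM$ with $\OO_\disk$ the ring of functions on the open unit disk), so there is no integral or torsion notion of a Kisin lattice with connection, and hence no way to make $\mathscr{GR}$ into a projective scheme over all of $\Spec R^{\Box,\bfv}_{\cris}$. Recovering a linear-algebraic datum over artinian bases that remembers the $\GK$-action is exactly the problem that drove Kisin to finite flat group schemes (and, for $p=2$, to Zink's theory). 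The paper sidesteps it entirely: the moduli $\GR^{\leqs1}_{\rho_R}$ of Proposition~\ref{prop:ResolDefor} parametrises $\Sig$-lattices \emph{without} any connection, and lives over the $\GKinfty$-deformation ring $R^{\Box,\leqs1}_\infty$ of Theorem~\ref{thm:Representability}, not over $R^{\Box,\bfv}_{\cris}$. The connectedness analysis (Proposition~\ref{prop:ConndOrdNonOrd}) is carried out there.

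The piece your proposal omits is then the bridge from the $\GKinfty$-ring back to $R^{\Box,\bfv}_{\cris}$. This is Theorem~\ref{thm:GabberDefor}: the restriction map $\Spec R^{\Box,[0,1]}_{\cris}[\ivtd p]\to\Spec R^{\Box,\leqs1}_\infty[\ivtd p]$ is an isomorphism (for $p>2$; on the formal components for $p=2$). Its proof is the technical core of the paper and uses two ingredients you do not mention: Gabber's normalization theorem (Theorem~\ref{thm:Gabber}) to reduce the isomorphism to a bijection on $E$-points, and Proposition~\ref{prop:Claim1} — every $\GKinfty$-stable lattice in a crystalline representation with Hodge--Tate weights in $[0,1]$ (unipotent when $p=2$) is $\GK$-stable — proved in \S\ref{sec:IntHT} via strongly divisible $S$-modules and $\Asthat$. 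Without something playing this role, the identification of $\pi_0$ of $\Spec R^{\Box,\bfv}_{\cris}[\ivtd p]$ with $\pi_0$ of the $\GKinfty$-side is simply asserted. Your phrase ``$\mathscr{GR}$ there agrees with the corresponding $\GKinfty$-deformation space'' is where this gap is hiding.

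Two smaller points. First, your argument that the ordinary locus is clopen only shows it is \emph{closed}: it is the complement of the absolutely irreducible locus, which is open. Openness requires a separate argument; in the paper it comes from Proposition~\ref{prop:DisjOrdSS}, which exhibits the ordinary and $\vphi$-nilpotent conditions as unions of connected components of $\GR^{\leqs1}_{\rho_R}$, ultimately via the flatness of $\coker\vphi$ (Lemma~\ref{lem:FlatCokerVphi}). Second, your appeal to the minuscule coweight to conclude that the relevant local model is smooth is wrong when $K/\Qp$ is ramified: the local model here is the Deligne--Pappas model $\nf M_\bfv$ of \S\ref{subsec:HodgeType}, which is $\fo$-flat and a relative complete intersection with reduced special fibre, but not smooth for $e>1$. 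Corollary~\ref{cor:FormalIdemp2} is stated and proved using exactly flatness and reducedness, not smoothness. Finally you are right, and you acknowledge, that the connectedness of the non-ordinary locus requires the explicit chain-of-rational-curves computation of Kisin, Gee and Imai; the paper imports it into the $\GR^{\ssing}_{\rho_R,0}$ setting at the end of the proof of Proposition~\ref{prop:ConndOrdNonOrd}.
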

In order to prove the $2$-adic modularity lifting theorem for potentially Barsotti-Tate representations, one needs the full statement of  Theorem~\ref{thm:KisinConndComp}\eqref{thm:KisinConndComp:pOdd} for $p=2$, not just Theorem~\ref{thm:KisinConndComp}\eqref{thm:KisinConndComp:pEven}, when $\bar\rho\sim\abcd{1}{0}{0}{1}$. But in fact, the idea of Khare-Wintenberger  \cite[3.2.5]{Khare-Wintenberger:SerreII} takes care of the ``ordinary components'' of $\Spec R^{\Box,\bfv}_{\cris}[\ivtd p]$ even when $p=2$. (See also  \cite[\S2.4]{Kisin:2adicBT}.)

Let us sketch the original idea of proof due to Kisin. When $p>2$, he constructed a kind of ``resolution'' of flat deformation rings using the classification of finite flat group schemes\footnote{See Theorem~\ref{thm:classifgpsch} for the statement. In fact, the construction of the resolved deformation space could be carried out using a slightly weaker statement.}. The statement of Theorem~\ref{thm:KisinConndComp}\eqref{thm:KisinConndComp:pOdd} concerning ordinary components follows relatively easily; see  \cite[Proposition~2.5.15]{Kisin:ModuliGpSch}, and the proof of the statements about non-ordinary components is reduced to constructing chains of rational curves in a certain subscheme of an affine grassmannian. (See \cite[\S2.5]{Kisin:ModuliGpSch}.) This ``affine grassmannian computation'' (which completes the proof of the geometric connectedness of the non-ordinary locus when $p>2$) was done in \emph{loc.~cit.} when the residue field of $K$ is $\Fp$, in \cite{Gee:MLTwt2HMF} assuming  $\bar\rho\sim\abcd{1}{0}{0}{1}$, and in \cite{Imai:ConndComp} for the general case.\footnote{Note that  \cite{Gee:MLTwt2HMF} is sufficient for application to the modularity lifting.}

As remarked earlier, the ordinary components of $2$-adic flat deformation rings can be controlled by some Kummer theory argument (which appeared in \cite[3.2.5]{Khare-Wintenberger:SerreII} or \cite[\S2.4]{Kisin:2adicBT}), so essentially it is left to show Theorem~\ref{thm:KisinConndComp}\eqref{thm:KisinConndComp:pEven}. 
The main new ingredient in generalizing this to the case $p=2$ is indeed proving the classification of \emph{connected} finite flat group schemes over a $2$-adic base  \cite[\S1]{Kisin:2adicBT},  which is based upon Zink's theory of windows and displays \cite{Zink:WindowsDisplayProgrMath195, Zink:DisplayFormalGpAsterisq278}. (Note that Theorem~\ref{thm:KisinConndComp}\eqref{thm:KisinConndComp:pEven} only concerns about non-ordinary components, so it suffices to work with ``connected finite flat group scheme models''.) The rest of the proof, including \cite{Gee:MLTwt2HMF, Imai:ConndComp} which are added later, goes through without major modification, so we obtain   Theorem~\ref{thm:KisinConndComp}\eqref{thm:KisinConndComp:pEven}. 

The purpose of this paper is to give another proof of Theorem~\ref{thm:KisinConndComp} that does not use (the classification of) finite flat group schemes, and instead relies more on ``linear-algebraic'' tools from $p$-adic Hodge Theory. 
Indeed, the only possible place where we use finite flat group schemes in our proof is Fontaine's ramification estimate of mod $p$ finite flat representations \cite{Fontaine:RamifEstimate}. As a consequence, we can remove the difficulty in proving the classification of connected finite flat group schemes (especially, over a $2$-adic base) from the proof of modularity lifting theorem for potentially Barsotti-Tate representations. We remind that the full proof of Serre's modularity conjecture \cite{Khare-Wintenberger:SerreI, Khare-Wintenberger:SerreII} uses the $2$-adic modularity lifting theorem for potentially Barsotti-Tate representations. Another motivation for removing finite flat group schemes from the proof is that it would be a sensible first step for the modularity lifting theorem with higher weight where no reasonable analogue of finite flat group schemes for torsion representations is available. (We do not claim, however, that our proof gives any indication towards this generalization.)

We point out that our technique is motivated by the author's study of positive characteristic analogue of crystalline deformation rings (using the  theory of Genestier-Lafforgue  \cite{Genestier-Lafforgue:FontaineEqChar} and Hartl \cite{hartl:period, hartl:dictionary}). We include a section (\S\ref{sec:GenLaffHartl}) to sketch this positive characteristic deformation theory.

\subsection{Structure and overview of the paper}
Let $K$ be a finite extension of $\Qp$, $K_\infty = K(\sqrt[p^\infty]{\pi})$ for a chosen uniformizer $\pi\in K$, $\GK:=\Gal(\ol K/K)$ and $\GKinfty:=\Gal(\ol K/K_\infty)$.
The main tool is the deformation theory of ``$\GKinfty$-representations of height $\leqs 1$''  -- in fact, we show  that there exists a complete noetherian ring which classifies all ``framed deformations of height $\leqs 1$''. See Theorem~\ref{thm:Representability} for the precise statement. The surprising aspect of this theorem is that $\GKinfty$ does not satisfy the cohomological finiteness condition that ensures the finiteness of the tangent space of the ``unrestricted'' deformation functor. (See \S\ref{subsec:RepbilitySetup} for more discussions.) The main content of the theorem is that the deformation condition of ``being of height $\leqs 1$'' ensures the finiteness of the tangent space. In \S\ref{sec:KisinThy} we give a summary of Kisin theory \cite{kisin:fcrys} to motivate the study of ``$\GKinfty$-deformations of height $\leqs 1$'', as well as to state theorems from \emph{loc.~cit.} that will be constantly used throughout the paper. The existence of framed deformation ring of height $\leqs1$ (Theorem~\ref{thm:Representability}) is proved in \S\ref{sec:Representability}.

Another main ingredient of the proof is to compare the $\GKinfty$-deformation ring of height $\leqs1$ and the crystalline deformation ring with Hodge-Tate weights in $[0,1]$ (or flat deformation ring, if you wish). We show that the natural map between them given by restriction of $\GK$-action to $\GKinfty$ is an isomorphism on the generic fibers (Theorem~\ref{thm:GabberDefor}). In \S\ref{sec:ApplBT} we use Gabber's result \cite[Appendix]{Kisin:ModularityGeomGalRep} to the verification of a certain statement about $\GKinfty$-stable $\Zp$-lattice in a crystalline $\GK$-representation (Proposition~\ref{prop:Claim1}), and in \S\ref{sec:IntHT} we prove this statement using strongly divisible $S$-modules. Our use of strongly divisible $S$-modules is limited to constructing a $\GK$-stable $\Zp$-lattice in a crystalline representation, and we will not use the subtle result of associating a strongly divisible $S$-module to a $\GK$-stable $\Zp$-lattice.

Theorem~\ref{thm:GabberDefor} finally reduces the proof of Theorem~\ref{thm:KisinConndComp} to the proof of the analogous (geometric) connectedness result for some $\GKinfty$-deformation ring of height $\leqs 1$ (Proposition~\ref{prop:ConndOrdNonOrd}). This is done in \S\ref{sec:GenFibers}. The main idea is to ``resolve'' a $\GKinfty$-deformation ring using ``moduli of $\Sig$-modules''. The linear-algebraic structure of the argument is very similar to that of Kisin's setting (i.e. resolution of a flat deformation ring via ``moduli of finite flat group schemes''), and most of the linear-algebraic arguments (including the affine grassmannian computation in \cite[\S2.5]{Kisin:ModuliGpSch}, \cite{Gee:MLTwt2HMF}, and \cite{Imai:ConndComp}) carry over to our setting.

The results proved  in \S\ref{sec:Representability} and \S\ref{sec:GenFibers} are inspired by the author's study of theor analogues in positive characteristic. As this positive characteristic result is of separate interest, we sketch this positive characteristic deformation theory and explain the analogy with the ($p$-adic) $\GKinfty$-deformation theory in \S\ref{sec:GenLaffHartl}.

\subsection*{Acknowledgement}
The author deeply thanks his thesis supervisor Brian Conrad for his guidance. The author especially appreciates his careful listening of my results and numerous helpful comments. The author thanks Tong Liu for his advice on the proof of Proposition~\ref{prop:Claim1} when $p=2$.

\section{Review of  Kisin theory}\label{sec:KisinThy}
Let $k$ be a finite extension of $\Fp$,\footnote{Most results in \S\ref{sec:KisinThy} holds when $k$ is perfect. But we need $k$ to be finite for the existence of (framed) deformation rings in the category of complete local noetherian rings.}  $W(k)$ its ring of Witt vectors, and $K_0:=W(k)[\ivtd p]$. Let $K$ be a finite totally ramified extension of $K_0$ and let us fix its algebraic closure $\Kbar$. We \emph{fix} a uniformizer $\pi\in K$. and choose $\pi^{(n)}\in\Kbar$ so that $(\pi^{(n+1)})^p = \pi^{(n)}$ and $\pi^{(0)}=\pi$. Put $K_\infty:= \bigcup_n K(\pi^{(n)})$, $\gal_K:=\Gal(\Kbar/K)$, and $\gal_{K_\infty}:=\Gal(\Kbar/K_\infty)$. 

Kisin \cite{kisin:fcrys} gave a new classification of $p$-adic crystalline $\GK$-representations using a certain kind of Frobenius modules which encode information of their restrictions to $\GKinfty$. (In particular, he showed that restricting the $\GK$-action to $\GKinfty$ defines a fully faithful functor from the category of $p$-adic crystalline $\GK$-representations to the category of $p$-adic $\GKinfty$-representations.) Furthermore, such Frobenius modules come equipped with a natural ``integral'' structure which corresponds to$\GKinfty$-stable $\Zp$-lattices. In this section we recall main results from \cite{kisin:fcrys} and record some consequences to $p^\infty$-torsion $\GKinfty$-representations which will be needed later. Note that this section does not contain any original result.

\subsection*{Basic definitions and conventions}
Let  consider a ring $R$ equipped with an endomorphism $\sig:\self R$. (We will often assume that $\sig$ is finite flat.) By \emph{$(\vphi,R)$-module} (often abbreviated as a \emph{$\vphi$-module}, if $R$ is understood), we mean a finitely presented $R$-module $M$ together with an $R$-linear morphism $\vphi_M:\sig^*M\ra M$, where $\sig^*$ denotes the scalar extension by $\sig$. A \emph{morphism} between to $(\vphi,R)$-modules is a $\vphi$-compatible $R$-linear map.

If $R'$ is another ring equipped with a ring endomorphism $\sig':\self{R'}$ and $f:R\ra R'$ is a ring morphism such that $f\circ\sig = \sig'\circ f$, then for any $(\vphi,R)$-module $M$ the scalar extension $M\otimes_{R,f}R'$ has a natural structure of $(\vphi,R')$-module.

\subsection{Fontaine's theory of \'etale $\vphi$-modules}
Let $\Sig:=W(k)[[u]]$ where $u$ is a formal variable. Let $\fo_\Eps$ be the $p$-adic completion of $\Sig[\ivtd u]$, and $\Eps:=\fo_\Eps[\ivtd p]$ the field of fractions. Note that $\fo_\Eps$ is a complete discrete valuation ring with uniformiser $p$ and $\fo_\Eps/(p)\cong k\llpar u \rrpar$. (Note that we should view the residue field $k\llpar u \rrpar$ as the norm field for the extension $K_\infty /K$. See \cite{wintenberger:NormFiels} for more details.)
We extend the Witt vectors Frobenius to $\Sig$, $\fo_\Eps$, and $\Eps$ by sending $u$ to $u^p$, and denote them by $\sig$. (We write $\sig_\Sig$ instead, if we need to specify that it's an endomoprhism on $\Sig$, for example.) Note that $\sig$ is finite and flat. We denote by $\sig^*(\cdot)$ the scalar extension by $\sig$.

\begin{defnsub}
An \emph{\'etale $(\vphi,\fo_\Eps)$-module} (or simply, an  \emph{\'etale $\vphi$-module}) is a finitely generated $\fo_\Eps$-module $M$ equipped with an $\fo_\Eps$-linear isomorphism $\vphi_M:\etfstr M$. We say an \'etale $\vphi$-module $M$ is \emph{free} (respectively, \emph{torsion}) if the underlying $\fo_\Eps$-module is free (respectively, $p^\infty$-torsion).

We let $\etphimod{\fo_\Eps}$ denote the category of \'etale $(\vphi,\fo_\Eps)$-modules with $\vphi$-compatible $\fo_\Eps$-linear morphisms, and let $\fretphimod{\fo_\Eps}$ and $\toretphimod{\fo_\Eps}$ respectively denote the full subcategories of free and torsion \'etale $\vphi$-modules. Let $\etphimod{\fo_\Eps}[\ivtd p]$ denote the ``isogeny category'' for  $\fretphimod{\fo_\Eps}$; i.e. the category defined by formally inverting the multiplication by $p$.
\end{defnsub}

There exist natural notions of subquotient, direct sum, $\otimes$-product, and internal hom for \'etale $\vphi$-modules. We respectively define the duals for free and torsion \'etale $\vphi$-modules using the $\fo_\Eps$-linear duals and the Pontryagin duals with the induced $\vphi$-structures.

 Let $\wh\fo_{\Eps^{\ur}}$ denote the $p$-adic completion of strict henselisation of $\fo_\Eps$. We define $\sig$ on $\wh\fo_{\Eps^{\ur}}$ to be the unique lift of the $p$th power map on the residue field which extends $\sig$ on $\fo_\Eps$, and the $\GKinfty$-action on   $\wh\fo_{\Eps^{\ur}}$ is the one that uniquely lifts the natural $\GKinfty$-action on $k\llpar u \rrpar^{\sep}$ via the norm field isomorphism\footnote{See \cite{wintenberger:NormFiels} for more details on the norm field isomorphism.} $\GKinfty\cong\Gal(k\llpar u \rrpar^{\sep}/k\llpar u \rrpar)$ and fixes $\fo_\Eps$. The existence and uniqueness of $\sig$ and the $\GKinfty$ action on $\wh\fo_{\Eps^{\ur}}$ are the consequence of the universal property of strict henselization.
 
 Let $\prep(\GKinfty)$ denote the category of finitely generated $\Zp$-modules equipped with a continuous $\GKinfty$-action. 
 Now we make the following definitions:
 \begin{subequations}
 \label{eqn:EtPhiModEqCat}
 \begin{equation}\label{eqn:EtPhiModTEps}
 \nf T_\Eps (M):=(M\otimes_{\fo_\Eps}\wh\fo_{\Eps^{\ur}})^{\vphi=1} \qquad \textrm{for }M\in\etphimod{\fo_\Eps},\\
 \end{equation}
viewed as a $\GKinfty$-module via its natural action on $\wh\fo_{\Eps^{\ur}}$, and
 \begin{equation}\label{eqn:EtPhiModDEps}
 \nf D_\Eps (T):=(T\otimes_{\fo_\Eps}\wh\fo_{\Eps^{\ur}})^{\GKinfty} \qquad \textrm{for }T\in\prep(\GKinfty),
\end{equation}
equipped with the $\vphi$-structure induced from the natural one on $\wh\fo_{\Eps^{\ur}}$.
 \end{subequations}
 
\begin{thmsub}[Fontaine {\cite[\S{A}\,1.2]{fontaine:grothfest}}]\label{thm:FontaineEtPhiMod}
The assignments $\nf T_\Eps$ and $ \nf D_\Eps$ define quasi-inverse exact equivalences of $\otimes$-categories between $\etphimod{\fo_\Eps}$ and $\prep(\GKinfty)$. An \'etale $\vphi$-module $M$ is free of $\fo_\Eps$-rank $r$ (respectively, torsion of $\fo_\Eps$-length $r$) if and only if $\nf T_\Eps(M)$ is free of $\Zp$-rank $r$ (respectively, torsion of $\Zp$-length $r$). Furthermore the functors $\nf T_\Eps$ and $ \nf D_\Eps$ commute with suitable duality when restricted to free (respectively, torsion) objects.
\end{thmsub}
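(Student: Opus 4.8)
The plan is to follow the classical descent argument for étale $\vphi$-modules, with $\wh\fo_{\Eps^{\ur}}$ playing the role of the ``period ring''. The essential input is that $\wh\fo_{\Eps^{\ur}}$ is a faithfully flat $\fo_\Eps$-algebra carrying compatible $\sig$- and $\GKinfty$-actions whose $\sig$-invariants are $\Zp$ and whose $\GKinfty$-invariants recover $\fo_\Eps$; moreover, by construction $\wh\fo_{\Eps^{\ur}}/p$ is the $p$-adic completion of the strict henselisation of $\fo_\Eps/p = k\llpar u\rrpar$, i.e.\ it contains $k\llpar u\rrpar^{\sep}$, so that $\GKinfty \cong \Gal(k\llpar u\rrpar^{\sep}/k\llpar u\rrpar)$ acts on it through the norm-field isomorphism. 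First I would record the ``mod $p$'' form of the statement: for a finite-dimensional $k\llpar u\rrpar$-vector space $\ol M$ with $\vphi_{\ol M}$ an isomorphism after the semilinear $\sig$-twist, the functor $\ol M \mapsto (\ol M\otimes_{k\llpar u\rrpar}k\llpar u\rrpar^{\sep})^{\vphi=1}$ is an exact equivalence onto finite $\Fp$-representations of $\GKinfty$, with quasi-inverse $V\mapsto (V\otimes_{\Fp}k\llpar u\rrpar^{\sep})^{\GKinfty}$. This is the standard Artin–Schreier-type statement: the key point is that over the separably closed field $k\llpar u\rrpar^{\sep}$ the equation $\vphi(x)=x$ has ``enough'' solutions, which is precisely Lang's theorem / the surjectivity of $\sig - 1$ on $\GL_n$ of a separably closed field, giving a $\vphi$-equivariant trivialisation $\ol M\otimes k\llpar u\rrpar^{\sep} \cong (k\llpar u\rrpar^{\sep})^{\oplus r}$ and hence $\dim_{\Fp}\nf T = r$. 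Faithful flatness of $k\llpar u\rrpar^{\sep}$ over $k\llpar u\rrpar$ combined with Galois descent then gives that the two functors are mutually quasi-inverse.

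Next I would bootstrap from $\Fp$-coefficients to $\Z/p^n$-coefficients and then to general finitely generated $\Zp$-coefficients. For the torsion case one proceeds by dévissage on the length: a short exact sequence of torsion étale $\vphi$-modules $0\to \ol M'\to M \to \ol M''\to 0$ with $\ol M',\ol M''$ killed by $p$ maps under $\nf T_\Eps$ to a sequence of finite $\GKinfty$-modules, which is exact because $\wh\fo_{\Eps^{\ur}}$ is flat over $\fo_\Eps$ and because $H^1$ of $\sig-1$ vanishes — here again the surjectivity of $\sig - 1$ on the relevant (completed, strictly henselian) ring is what kills the obstruction, so $\nf T_\Eps$ is exact and the length statement follows by induction. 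Full faithfulness and essential surjectivity then follow by the same induction, using the five lemma on the unit and counit of adjunction; the adjunction maps $M \to \nf D_\Eps(\nf T_\Eps(M))$ and $T\to \nf T_\Eps(\nf D_\Eps(T))$ are checked to be isomorphisms mod $p$ and then deduced integrally. For the free case one takes inverse limits: a free étale $\vphi$-module is $M = \invlim_n M/p^n$, each $M/p^n$ torsion, and one checks the limit is exact (Mittag–Leffler) so that $\nf T_\Eps(M) = \invlim_n \nf T_\Eps(M/p^n)$ is a free $\Zp$-module of the correct rank, with $\nf D_\Eps$ behaving symmetrically. General finitely generated $\Zp$-modules decompose (non-canonically) as free $\oplus$ torsion, and one checks compatibility on morphisms.

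Finally I would address the tensor structure and the duality compatibility. That $\nf T_\Eps$ and $\nf D_\Eps$ are $\otimes$-functors reduces, after the trivialisations above, to the statement that $(\wh\fo_{\Eps^{\ur}})^{\vphi=1} = \Zp$ and that tensor products of $\vphi$-equivariant trivialisations are $\vphi$-equivariant trivialisations; the natural map $\nf T_\Eps(M)\otimes_{\Zp}\nf T_\Eps(N) \to \nf T_\Eps(M\otimes_{\fo_\Eps}N)$ is an isomorphism because it is so after $\otimes_{\Zp}\wh\fo_{\Eps^{\ur}}$ and the source and target are free of the same rank. Compatibility with duality on free objects is the assertion that the perfect pairing $M \times M^\vee \to \fo_\Eps$ induces a perfect pairing $\nf T_\Eps(M)\times \nf T_\Eps(M^\vee)\to \Zp$, which again one sees on the trivialisation; the torsion case follows by passing to Pontryagin duals $\Hom_{\Zp}(-,\Qp/\Zp)$ and matching with the $\fo_\Eps$-linear duals $\Hom_{\fo_\Eps}(-,\Eps/\fo_\Eps)$. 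The one genuine subtlety — and the step I expect to need the most care — is verifying that $\wh\fo_{\Eps^{\ur}}$ has exactly the right invariants and cohomological properties, i.e.\ that taking the $p$-adic completion of the strict henselisation does not destroy the vanishing of $H^1(\GKinfty, -)$-type obstructions nor the equalities $(\wh\fo_{\Eps^{\ur}})^{\GKinfty} = \fo_\Eps$ and $(\wh\fo_{\Eps^{\ur}})^{\vphi=1} = \Zp$; once these ``period ring axioms'' are in hand the rest is the standard descent machinery. Since the excerpt attributes the result to Fontaine \cite[\S A\,1.2]{fontaine:grothfest}, I would in practice just cite that reference for these verifications and limit the write-up to the dévissage.
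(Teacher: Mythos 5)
The paper does not prove this theorem: it is stated as a citation to Fontaine \cite[\S{A}\,1.2]{fontaine:grothfest}, and the surrounding section explicitly says it ``does not contain any original result.'' Your sketch is a correct outline of the standard argument (mod-$p$ trivialisation via Lang's theorem and Artin--Schreier, exactness of $\nf T_\Eps$ from surjectivity of $\sig-1$, d\'evissage to $\Z/p^n$-coefficients, inverse limits for free modules, and checking $\otimes$- and duality-compatibility on trivialisations), which is indeed the route taken in the cited reference, so there is no discrepancy to flag.

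One small caution on the torsion duality step: the paper's convention is the $\fo_\Eps$-linear dual $\Hom_{\fo_\Eps}(M,\Eps/\fo_\Eps)$ on the module side and the Pontryagin dual $\Hom_{\Zp}(T,\Qp/\Zp)$ on the Galois side, and the $\vphi$-structure on the former uses $\vphi_M^{-1}$ (it exists since $M$ is \'etale), not $\vphi_M$; you should make that sign explicit when matching the pairings, as the naive transpose does not land in the right category.
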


\begin{rmksub}\label{rmk:FontaineEtPhiModIsog}
Since any $\Qp$-representation of $\GKinfty$ has a $\GKinfty$-stable $\Zp$-lattice, Theorem~\ref{thm:FontaineEtPhiMod} implies that the functors $\nf T_\Eps$ and $ \nf D_\Eps$ induce quasi-inverse equivalence of categories between the category of $\Qp$-representations of $\GKinfty$ and the ``isogeny category'' of  \'etale $\vphi$-modules (i.e., the category of $(\vphi,\Eps)$-modules which are the form $M[\ivtd p]$ for some \'etale $\vphi$-module $M$ free over $\fo_\Eps$). 
\end{rmksub}

\subsection{Kisin's ``integral $p$-adic Hodge theory''}\label{subsec:KisinIntPAdicHT} We first introduce a new class of semi-linear algebra objects which classify crystalline $\GK$-representations whose ``integral structure'' classifies $\GKinfty$-stable $\Zp$-lattices.

Let $\PP(u)\in W(k)[u]$ be the Eisenstein polynomial with $\PP(\pi)=0$ normalized\footnote{The usual notation for the Eisenstein polynomial is $E(u)$ but we chose to use a different letter to minimize confusion with a finite extension $E$ of $\Qp$. See Example~\ref{exa:repfinht}\eqref{exa:repfinht:cycchar} for our choice of the normalization $\PP(0)=p$.}  so that the constant term is $p$. We view $\PP(u)$ as an element of $\Sig:=W(k)[[u]]$.

\begin{defnsub}
For a non-negative integer $h$, a \emph{$(\vphi,\Sig)$-module of $\PP$-height $\leqs h$} (or simply, a  \emph{$\vphi$-module of height $\leqs h$} if $\Sig$ and $\PP(u)$ are understood) is a finite free $\Sig$-module $\gM$ equipped with an $\Sig$-linear morphism $\vphi_\gM:\fstr \gM$ such that $\coker (\vphi_\gM)$ is killed by $\PP(u)^h$. 

We let $\phimodh\Sig$ denote the category of $\vphi$-modules of height $\leqs h$ with $\vphi$-compatible $\Sig$-linear morphisms, and let $\phimodh\Sig[\ivtd p]$ denote the ``isogeny category'' for $\phimodh\Sig$; i.e. the category defined by formally inverting the multiplication by $p$.
\end{defnsub}

There exist natural notions of  subquotient, direct sum, and $\otimes$-product for $\vphi$-modules of height $\leqs h$. Note that $\PP(u)$ is a unit in $\fo_\Eps$, so for any $\gM\in\phimodh\Sig$ the scalar extension $\gM\otimes_\Sig \fo_\Eps$ (with $\vphi_\gM\otimes\id_{\fo_\Eps}$) is an \'etale $\vphi$-module. (In particular, it follows that $\vphi_\gM$ is \emph{injective}.) We define a functor $\nf T^{\leqs h}_\Sig:\phimodh\Sig \ra \prep(\GKinfty)$ as follows:
\begin{equation}\label{eqn:TSigLattice}
\nf T^{\leqs h}_\Sig(\gM):=\nf T_\Eps(\gM\otimes_\Sig \fo_\Eps)(h), \textrm{ for any }\gM\in\phimodh\Sig,
\end{equation}
where $\nf T_\Eps$ is as defined in \eqref{eqn:EtPhiModTEps} and $T(h)$ for some $T\in\prep(\GKinfty)$ denotes the ``Tate twist''; i.e., twisting the $\GKinfty$-action on $T$ by $\cycchar^h|_{\GKinfty}$. 

\begin{thmsub}[Kisin]\label{thm:main}\hfill
\begin{enumerate}
\item
\label{thm:main:fullfth}The functor $\nf T^{\leqs h}_\Sig:\phimodh\Sig \ra \prep(\GKinfty)$ is fully faithful.
\item
\label{thm:main:lattices} If $V = \nf T^{\leqs h}_\Sig(\gM)[\ivtd p]$ for some $\gM\in\phimodh\Sig$, then for any $\GKinfty$-stable $\Zp$-lattice $T'\subset V$ there exists a $\vphi$-module $\gM'$ of height $\leqs h$ such that $T'\cong\nf T^{\leqs h}_\Sig(\gM')$.
\end{enumerate}
\end{thmsub}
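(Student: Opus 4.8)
The plan is to derive both parts from Fontaine's equivalence (Theorem~\ref{thm:FontaineEtPhiMod}) together with the structure theory of $\vphi$-modules of finite $\PP$-height. For part~\eqref{thm:main:fullfth}, note that $\nf T^{\leqs h}_\Sig$ is the composite of the base-change functor $\gM\mapsto\gM\otimes_\Sig\fo_\Eps$, the equivalence $\nf T_\Eps$, and the Tate twist $T\mapsto T(h)$, the last being an autoequivalence of $\prep(\GKinfty)$. Hence full faithfulness of $\nf T^{\leqs h}_\Sig$ is equivalent to full faithfulness of $\gM\mapsto\gM\otimes_\Sig\fo_\Eps$ as a functor $\phimodh\Sig\ra\fretphimod{\fo_\Eps}$. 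Faithfulness is immediate, since $\Sig\hra\fo_\Eps$ and $\gM$ is finite free over $\Sig$, so $\gM\hra\gM\otimes_\Sig\fo_\Eps$; it remains to prove \emph{fullness}.

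By replacing $(\gM,\gM')$ with the internal hom $\lhom_\Sig(\gM,\gM')=\gM^\vee\otimes_\Sig\gM'$ — a $\vphi$-module over $\Sig$ of $\PP$-height $\leqs 2h$ — fullness reduces to the statement that, for a $\vphi$-module $\mathcal N$ over $\Sig$ of finite $\PP$-height, passing to the étale realisation creates no new $\vphi$-invariants: $(\mathcal N\otimes_\Sig\fo_\Eps)^{\vphi=1}=\mathcal N^{\vphi=1}$. I would prove this by factoring $\otimes_\Sig\fo_\Eps$ through Kisin's ring $\OO$ of power series convergent on the open unit disc, using the structural input of \cite[\S1]{kisin:fcrys}: (i) a finite free $\vphi$-module over $\OO$ whose $\vphi$-cokernel is killed by $\PP(u)^h$ carries a \emph{unique} $\vphi$-stable finite free $\Sig$-lattice of $\PP$-height $\leqs h$, so that $\gM\mapsto\gM\otimes_\Sig\OO$ is fully faithful with image exactly this category of finite-height $\OO$-modules (in particular $(\mathcal N\otimes_\Sig\OO)^{\vphi=1}=\mathcal N^{\vphi=1}$); and (ii) for a finite-height $\OO$-module the $\vphi$-invariants of the associated étale $(\vphi,\fo_\Eps)$-module coincide with those of the $\OO$-module, by overconvergence of that étale realisation. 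Combining (i) and (ii) yields the displayed identity, hence fullness. I expect (i) to be the main obstacle — one has to show that the $\vphi$-stable $\Sig$-submodule obtained by iterating $\vphi$ on a $\Sig$-form of an $\OO$-basis is again finite free over $\Sig$ — and this is exactly the step where finite $\PP$-height is indispensable; without it the displayed identity, and hence full faithfulness, fails.

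For part~\eqref{thm:main:lattices}, put $M_0:=\gM\otimes_\Sig\fo_\Eps$, so that $V=\nf T_\Eps(M_0)(h)[\ivtd p]$. By Remark~\ref{rmk:FontaineEtPhiModIsog} together with Theorem~\ref{thm:FontaineEtPhiMod}, a $\GKinfty$-stable $\Zp$-lattice $T'\subset V$ corresponds, after the harmless untwist by $(-h)$, to a $\vphi$-stable $\fo_\Eps$-lattice $M'\subset M_0[\ivtd p]$ with $\nf T_\Eps(M')=T'(-h)$; it then suffices to produce $\gM'\in\phimodh\Sig$ with $\gM'\otimes_\Sig\fo_\Eps\cong M'$, since then $\nf T^{\leqs h}_\Sig(\gM')=\nf T_\Eps(M')(h)\cong T'$. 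I would again work through $\OO$: inside $(\gM\otimes_\Sig\OO)[\ivtd p]$, whose étale realisation is $M_0[\ivtd p]$, one constructs a $\vphi$-stable finite free $\OO$-lattice $\Lambda'$ of $\PP$-height $\leqs h$ whose associated étale $(\vphi,\fo_\Eps)$-module is $M'$ — here one uses that $\OO$ is a B\'ezout domain, so that $\OO$-lattices behave as over a principal ideal domain, together with the $\PP(u)^h$-bound, which forces the intersection of the $\vphi$-iterates of an initial $\OO$-lattice realising $M'$ to stabilise at the required $\Lambda'$. By fact~(i) above, $\Lambda'$ descends to some $\gM'\in\phimodh\Sig$ with $\gM'\otimes_\Sig\OO=\Lambda'$, and then $\gM'\otimes_\Sig\fo_\Eps\cong M'$ by the compatibility in (ii). As in part~\eqref{thm:main:fullfth}, the delicate points — the existence of the finite-height $\OO$-lattice $\Lambda'$ and its finite-free descent to $\Sig$ — both hinge on the finite $\PP$-height hypothesis and form the technical heart of \cite{kisin:fcrys}.
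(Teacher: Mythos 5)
The paper does not actually prove this statement; it cites \cite[Prop.~2.1.12]{kisin:fcrys} for part \eqref{thm:main:fullfth} and \cite[Lemma~2.1.15]{kisin:fcrys} for part \eqref{thm:main:lattices}, so reconstructing Kisin's argument is the right thing to attempt, and the ring $\OO_\disk$ of functions on the open disc does play a role in \cite[\S1]{kisin:fcrys}. However, several steps of your sketch do not hold as written. The internal-hom reduction is off by a twist: the dual that stays inside the effective category $\phimodh\Sig$ carries $\vphi_{\gM^\vee}(\sig^*\ell)(m)=\sig^*\ell\big(\vphi_\gM\iv(\PP(u)^h m)\big)$ (the same Cartier-type normalisation as in \S\ref{par:CarDual}), so a $\vphi$-compatible morphism $\gM\ra\gM'$ corresponds to $\ell\in\gM^\vee\otimes_\Sig\gM'$ with $\vphi(\sig^*\ell)=\PP(u)^h\ell$, \emph{not} to a $\vphi$-fixed vector; you must either allow the non-effective twist $\Sig(-h)$ or restate the target identity for $\PP(u)^h$-eigenvectors. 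Moreover, the map $\Sig\ra\fo_\Eps$ does \emph{not} factor through $\OO_\disk$: neither of $\OO_\disk$, $\fo_\Eps$ contains the other, and $\OO_\disk\not\subset\Eps$ either (elements of $\OO_\disk$ may have $p$-adically unbounded coefficients). So your ``fact~(ii)'' cannot be read off from a chain of ring inclusions; it would have to be an honest overconvergence theorem, which is where the real content lives.

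Your ``fact~(i)'' is also false as stated. Consider $\OO_\disk\tim\e$ with $\vphi(\sig^*\e)=p\,\e$: since $p\in\OO_\disk\starr$, the $\vphi$-cokernel is zero, so this is a finite free $(\vphi,\OO_\disk)$-module of height $\leqs 0$; yet it admits \emph{no} $\vphi$-stable $\Sig$-lattice of height $\leqs 0$, because any candidate is $\Sig\tim(x\e)$ with $x\in\OO_\disk\starr$, and the resulting $\vphi$-scalar $p\,\sig(x)/x$ has value at $u=0$ of $p$-adic valuation $1$, hence lies outside $\Sig\starr$. So $\gM\mapsto\gM\otimes_\Sig\OO_\disk$ is not essentially surjective onto finite-height $(\vphi,\OO_\disk)$-modules, and the descent result you want to invoke applies only to a restricted class of $\OO_\disk$-modules. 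Fortunately none of this generality is needed. For part \eqref{thm:main:fullfth} the key input is the containment $(\N\otimes_\Sig\fo_\Eps)^{\vphi=\PP(u)^h}\subset\N$ for $\N$ finite free over $\Sig$ of height $\leqs 2h$, and this can be proved directly over $\Sig$ and $\fo_\Eps$ by iterating $\vphi$ and using that $\sig(u)=u^p$ is $u$-adically contracting --- essentially the same estimate that drives the proof of Proposition~\ref{prop:RepresentabilityH3} --- with no detour through $\OO_\disk$. For part \eqref{thm:main:lattices} the route the paper points to (\cite[Lemma~2.1.15]{kisin:fcrys}) saturates an arbitrary $\Sig$-lattice of $\nf D_\Eps(T'(-h))$ under $\vphi$ and shows stabilisation from the $\PP(u)^h$-bound on denominators; the B\'ezout property of $\OO_\disk$ is not the mechanism at work.
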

\begin{proof}
The full faithfulness of $\nf T^{\leqs h}_\Sig$ is a restatement of \cite[Prop~2.1.12]{kisin:fcrys} using Theorem~\ref{thm:FontaineEtPhiMod}. For the second claim of the statement, one can repeat the proof of \cite[Lemma~2.1.15]{kisin:fcrys}, keeping track of the $\PP$-height bound. (See \cite[Prop~5.2.9]{Kim:Thesis} for more details.)
\end{proof}

Let us discuss how $\vphi$-modules of height $\leqs h$ are related to $p$-adic Hodge theory. Let $\waMFphih{K}$ and $\waMFphiNh{K}$ respectively denote the categories of weakly admissible filtered $\vphi$-modules and $(\vphi,N)$-modules with the associated grading concentrated in degrees $[0,h]$, and let $\Rep_{\cris}^{[0,h]}(\GK)$ and $\Rep_{\st}^{[0,h]}(\GK)$ respectively denote the categories of crystalline and semistable $p$-adic $\GK$-representations with Hodge-Tate weights\footnote{For us, the $p$-adic cyclotomic character has Hodge-Tate weight $1$.} in $[0,h]$. (See \cite{fontaine:Asterisque223ExpIII} for the definitions.)  We view $\waMFphih{K}$ as a full subcategory of $\waMFphiNh{K}$ by ``setting $N=0$''. By the fundamental theorem of Colmez-Fontaine \cite{colmez-fontaine}, we have an equivalence of categories $\nf V_{\st}(h):\waMFphiNh K \riso \Rep_{\st}^{[0,h]}(\GK);\ D\mapsto \nf V_{\st}(D)(h)$ which restricts to an equivalence of categories $\nf V_{\cris}(h):\waMFphih K \riso \Rep_{\cris}^{[0,h]}(\GK);\ D\mapsto \nf V_{\cris}(D)(h)$, where $\nf V_{\st}$ and $\nf V_{\cris}$ are covariant functors defined by Fontaine \cite{fontaine:Asterisque223ExpIII}.
\begin{thmsub}[Kisin]\label{thm:KisinIntPAdicHT}\hfill
\begin{enumerate}
\item 
\label{thm:KisinIntPAdicHT:st}There exists a diagram of functors
\begin{equation}\label{eqn:KisinIntPAdicHT}
\xymatrix{
\waMFphih K \ar@{^{(}->}[r] \ar[d]^{\cong}_{\nf V_{\cris}(h)} & \waMFphiNh K \ar[r] \ar[d]^{\cong}_{\nf V_{\st}(h)} & \phimodh\Sig[1/p] \ar@{^{(}->}[d]_{\nf T_\Sig^{\leqs h}}\\
\Rep_{\cris}^{[0,h]}(\GK) \ar@{^{(}->}[r] & \Rep_{\st}^{[0,h]}(\GK) \ar[r]^{\res} & \Rep_{\Qp}(\GKinfty)
}\end{equation}
which commutes up to equivalences, where $\res$ is defined by restricting the $\GK$-action to $\GKinfty$. 
\item 
\label{thm:KisinIntPAdicHT:crys}The composite $\waMFphih K \ra  \phimodh\Sig[\ivtd p]$ of arrows in the top row of diagram \eqref{eqn:KisinIntPAdicHT} is fully faithful and its essential image contains all the rank-$1$ objects. 
\item 
\label{thm:KisinIntPAdicHT:BT} If $h=1$ then the functor $\waMFphiBT K \ra  \phimodBT\Sig[\ivtd p]$ as in \eqref{thm:KisinIntPAdicHT:crys} is an equivalence of categories.
\end{enumerate}
\end{thmsub}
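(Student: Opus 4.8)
The plan is to build the top row of \eqref{eqn:KisinIntPAdicHT} by hand following Kisin \cite{kisin:fcrys}, deduce part~\eqref{thm:KisinIntPAdicHT:crys} formally from the full faithfulness of $\res$ on crystalline representations together with Theorem~\ref{thm:main}\eqref{thm:main:fullfth}, treat the rank-one claim by an explicit classification, and obtain the equivalence of part~\eqref{thm:KisinIntPAdicHT:BT} from the theory of $p$-divisible groups. Concretely, to $D\in\waMFphiNh K$ I would attach the $\vphi$-module $\cM(D):=(\OO\otimes_{K_0}D)^{N_\nabla=0}$ over the ring $\OO$ of rigid-analytic functions on the open unit disc (the submodule on which the combined monodromy operator vanishes), show it is finite free with $\coker(\vphi_{\cM(D)})$ killed by $\PP(u)^h$, and produce inside it a $\vphi$-stable finite free $\Sig$-lattice $\gM(D)$ of height $\leqs h$, canonical up to isogeny; this gives $\waMFphiNh K\ra\phimodh\Sig[\ivtd p]$ and, by restriction, the top composite $\Theta\colon\waMFphih K\ra\phimodh\Sig[\ivtd p]$. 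The commutativity of \eqref{eqn:KisinIntPAdicHT} up to equivalence then comes down to a natural isomorphism $\nf T^{\leqs h}_\Sig(\gM(D))[\ivtd p]\cong\nf V_{\st}(D)(h)|_{\GKinfty}$, obtained by comparing $\OO$ and $\Sig$ with the period rings $\Bcris$ and $\Acris$.

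For part~\eqref{thm:KisinIntPAdicHT:crys}: by commutativity, $\nf T^{\leqs h}_\Sig\circ\Theta\cong\res\circ\nf V_{\cris}(h)$ as functors $\waMFphih K\ra\Rep_{\Qp}(\GKinfty)$, and the right-hand side is fully faithful since $\nf V_{\cris}(h)$ is an equivalence and $\res\colon\Rep_{\cris}^{[0,h]}(\GK)\ra\Rep_{\Qp}(\GKinfty)$ is fully faithful (a semistable, hence a fortiori a crystalline, $\GK$-representation is functorially recovered from its restriction to $\GKinfty$). Since $\nf T^{\leqs h}_\Sig$ is faithful on $\phimodh\Sig[\ivtd p]$ by Theorem~\ref{thm:main}\eqref{thm:main:fullfth}, $\Theta$ is faithful; and $\Theta$ is full because any morphism between objects in its image, pushed through $\nf T^{\leqs h}_\Sig$, descends uniquely to a $\GK$-morphism of the associated crystalline representations and hence comes from a morphism of filtered $\vphi$-modules. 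For the rank-one assertion I would classify the rank-one objects of $\phimodh\Sig[\ivtd p]$: up to isomorphism each has the form $(\Sig[\ivtd p]\cdot e,\ \vphi(e)=\lambda\,\PP(u)^a e)$ with $0\leqs a\leqs h$ and $\lambda\in\Sig[\ivtd p]^\times$, using that $\PP(u)$ is irreducible in $\Sig$ and --- being congruent to a power of $u$ modulo $p$ --- is the only prime of $\Sig$ dividing $\PP(u)^h$ after $p$ is inverted. Comparing with the $\vphi$-modules attached to crystalline characters of $\GK$ with labeled Hodge--Tate weight in $[0,h]$ (built from the cyclotomic and Lubin--Tate characters), one checks these exhaust the list, so the essential image of $\Theta$ contains all rank-one objects.

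For part~\eqref{thm:KisinIntPAdicHT:BT}, the functor $\Theta$ with $h=1$ is already fully faithful by the above, so only essential surjectivity remains. An object of $\phimodBT\Sig[\ivtd p]$ is $\gM[\ivtd p]$ for a finite free $\Sig$-module $\gM$ with $\coker(\vphi_\gM)$ killed by $\PP(u)$; by the classification of $p$-divisible groups over the ring of integers of $K$ in terms of Breuil--Kisin modules (\cite{kisin:fcrys} for $p>2$, and \cite{Kisin:2adicBT} with its later refinements for $p=2$), $\gM$ is the Breuil--Kisin module of a $p$-divisible group $\Gamma$. Then the rational Tate module of $\Gamma$ is crystalline with Hodge--Tate weights in $[0,1]$, its filtered $\vphi$-module lies in $\waMFphiBT K$, and compatibility of the two constructions identifies its image under $\Theta$ with $\gM[\ivtd p]$; hence $\Theta$ is an equivalence.

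I expect the main obstacle to be the period-ring comparison underlying the commutativity of \eqref{eqn:KisinIntPAdicHT} (together with, for the rank-one statement, the explicit bookkeeping of crystalline characters); and in part~\eqref{thm:KisinIntPAdicHT:BT} one must additionally invoke the classification of $p$-divisible groups --- precisely the input that the remainder of this paper is designed to avoid, but which may be freely quoted in the present review section.
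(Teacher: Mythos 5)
The paper does not prove this theorem: it is stated as background from Kisin's work \cite{kisin:fcrys} in a section which the text explicitly flags as a review containing ``no original result.'' So the relevant comparison is with Kisin's original argument, not with anything in the present paper. Your outline of part~\eqref{thm:KisinIntPAdicHT:st} is correct and matches Kisin's construction: pass to $\cM(D)=(\OO\otimes_{K_0}D)^{N_\nabla=0}$ over the ring $\OO$ of rigid-analytic functions on the open unit disc, descend to a $\Sig$-lattice up to isogeny, and identify $\nf T^{\leqs h}_\Sig\circ\Theta$ with $\res\circ\nf V_{\st}(h)$ via period-ring comparisons.

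Your arguments for parts~\eqref{thm:KisinIntPAdicHT:crys} and~\eqref{thm:KisinIntPAdicHT:BT}, however, are circular. For~\eqref{thm:KisinIntPAdicHT:crys} you appeal to full faithfulness of $\res\colon\Rep^{[0,h]}_{\cris}(\GK)\ra\Rep_{\Qp}(\GKinfty)$. But given the commutativity of \eqref{eqn:KisinIntPAdicHT} and Theorem~\ref{thm:main}\eqref{thm:main:fullfth}, that statement about $\res$ is logically \emph{equivalent} to part~\eqref{thm:KisinIntPAdicHT:crys} itself, and in Kisin's paper it is deduced \emph{from} the theorem --- the paper even highlights it immediately after the statement as a ``marvelous consequence.'' You cannot use the consequence to supply the premise. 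Kisin proves full faithfulness of $D\mapsto\cM(D)$ directly over $\OO$ by a slope/Newton-polygon argument with no Galois input, and then transports the result across $\nf T^{\leqs h}_\Sig$. Likewise for~\eqref{thm:KisinIntPAdicHT:BT} you cite the Breuil--Kisin classification of $p$-divisible groups over $\fo_K$ to get essential surjectivity, but in \cite[\S2.2]{kisin:fcrys} that classification is itself derived \emph{from} the $h=1$ equivalence. Kisin's essential surjectivity proof in the Barsotti--Tate case is again a direct linear-algebra statement over $\OO$, valid uniformly in $p$; in particular the $p=2$ difficulties you gesture at concern the integral classification of (connected) finite flat group schemes, not the rational statement~\eqref{thm:KisinIntPAdicHT:BT}. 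Your rank-one classification is sound in outline, though the objects of $\phimodh\Sig[\ivtd p]$ should be read as $\Sig$-lattices up to isogeny (with $\lambda\in\Sig^\times$ up to $\sigma$-conjugation and $p$-power scaling), not as $\Sig[\ivtd p]$-modules.
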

Note the following marvelous consequence of the theorem: any $p$-adic crystalline $\GK$-representation is uniquely determined by its restriction to $\GKinfty$ up to isomorphism.
\begin{defnsub}\label{def:hthLattice}
A finite free $\Zp$-module $T$ with continuous $\GKinfty$-action (which will be called a ``$\Zp$-lattice $\GKinfty$-representation'' from now on) is  \emph{of height $\leqs h$} if there exists $\gM\in\phimodh\Sig$ such that $T\cong \nf T^{\leqs h}_\Sig(\gM)$; or equivalently, the \'etale $\vphi$-module $\nf D_\Eps(T(-h))$ admits a (necessarily unique) $\vphi$-stable $\Sig$-lattice $\gM \in \phimodh\Sig$.

A $p$-adic $\GKinfty$-representation $V$ is  \emph{of height $\leqs h$} if there exists a $\GKinfty$-stable $\Zp$-lattice which is of height $\leqs h$ (or equivalently by Theorem~\ref{thm:main}\eqref{thm:main:lattices}, if any  $\GKinfty$-stable $\Zp$-lattice which is of height $\leqs h$).
\end{defnsub}

\begin{exasub}\label{exa:repfinht}\hfill
\begin{enumerate}
\item 
\label{exa:repfinht:sst}Theorem~\ref{thm:KisinIntPAdicHT}\eqref{thm:KisinIntPAdicHT:st} asserts that the $\GKinfty$-restriction of $V\in\Rep^{[0,h]}_{\st}(\GK)$ is of height $\leqs h$, and Theorem~\ref{thm:main}\eqref{thm:main:lattices} implies that  any $\GKinfty$-stable $\Zp$-lattice in $V\in\Rep^{[0,h]}_{\st}(\GK)$ is of height $\leqs h$. Furthermore, Theorem~\ref{thm:KisinIntPAdicHT}\eqref{thm:KisinIntPAdicHT:BT} asserts that any $p$-adic $\GKinfty$-representation of height $\leqs 1$ extends uniquely up to isormorphism to a crystalline $\GK$-representation with Hodge-Tate weights in $[0,1]$.
\item 
\label{exa:repfinht:unram}A $\Zp$-lattice $\GKinfty$-representation $T$ is unramified if and only if $T$ is of height $\leqs 0$ (i.e. $T$ comes from an ``\'etale $(\vphi,\Sig)$-module''). This can be seen either from Theorem~\ref{thm:KisinIntPAdicHT}\eqref{thm:KisinIntPAdicHT:crys} and the natural isomorphism $\GKinfty/ \IKinfty \riso \GK/\IK$ induced by the natural inclusion, or alternatively by \cite[Prop~5.2.10]{Kim:Thesis}.
\item 
\label{exa:repfinht:cycchar}For an integer $r\in[0,h]$, let $\Sig(r)$ denote the rank-$1$ $\vphi$-module equipped with $\vphi_{\Sig(r)}:\sig^*\e \mapsto \PP(u)^r\e$ where $\e\in\Sig(r)$ is a $\Sig$-basis.  Then the natural $\GKinfty$-action on  $\nf T^{\leqs h}_\Sig(\Sig(r))$ is given by $\cycchar^{h-r}|_{\GKinfty}$ where $\chi_{\cyc}:\GK \ra \Zp\starr$ is the $p$-adic cyclotomic character. (Here the normalization $\PP(0)=p$ is used.) This can be proved by applying to this setup the construction of the arrows in the upper row of the diagram \eqref{eqn:KisinIntPAdicHT}, which is done in \cite[\S1.2]{kisin:fcrys}. The proof is done in \cite[Lemma~2.3.4]{Kisin:2adicBT}. 
\end{enumerate}\end{exasub}

\begin{rmksub}
From Theorems~\ref{thm:main} and \ref{thm:KisinIntPAdicHT}\eqref{thm:KisinIntPAdicHT:st} we obtain the classification of $\GKinfty$-stable $\Zp$-lattices in a semi-stable $\GK$-representation via $\vphi$-modules of finite height. Tong Liu \cite{Liu:SST-Lattice} found an additional structure on $\vphi$-modules of finite height which can be defined if and only if the corresponding $\GKinfty$-stable $\Zp$-lattice is $\GK$-stable. But T.~Liu's theory involves a ring whose structure is not well-understood.
\end{rmksub}

\subsection{Torsion theory}
We say that a $p^\infty$-torsion $\GK$-representation $T$ is \emph{(torsion) crystalline with Hodge-Tate weights in $[0,h]$} if $T\cong T_0/T_1$ for some $\GK$-stable lattices $T_0,T_1$ in a crystalline $p$-adic $\GK$-representation $V$ with Hodge-Tate weights in $[0,h]$. We similarly define (torsion) semi-stable $\GK$-representations.

Except for some limited cases\footnote{i.e., when we can use finite flat group schemes or Fontaine-Laffaille theory} it seems to be very difficult to study torsion crystalline or semi-stable $\GK$-representations, but Kisin's theory outlined in \S\ref{subsec:KisinIntPAdicHT} provides a linear-algebraic tool to study their $\GKinfty$-restrictions.

\begin{defnsub}\label{def:TorKisMod}
For a non-negative integer $h$, a  \emph{torsion $(\vphi,\Sig)$-module of $\PP$-height $\leqs h$} (or simply, a  \emph{torsion $\vphi$-module of height $\leqs h$} if $\Sig$ and $\PP(u)$ are understood) is a finitely generated $\Sig$-module $\gM$ equipped with an $\Sig$-linear morphism $\vphi_\gM:\fstr \gM$ with the following properties:
\begin{enumerate}
\item
Some power of $p$ annihilates $\gM$.
\item\label{def:TorKisMod:ProjDim}
There is no non-zero $u$-torsion in $\gM$; or equivalently by a theorem of Auslander-Buchsbaum, $\gM$ is of projective dimension $\leqs 1$ as a $\Sig$-module.
\item
The cokernel of $\vphi_\gM$ is killed by $\PP(u)^h$. 
\end{enumerate}

We let $(\Mod/\Sig)^{\leqs h}$ denote the category of $\vphi$-modules of height $\leqs h$ with $\vphi$-compatible $\Sig$-linear morphisms, and let $\ModFIh{}$ denote the full subcategory of objects $\gM\in(\Mod/\Sig)^{\leqs h}$ which is isomorphic to $\bigoplus_i (\Sig/p^i)^{n_i}$ as a $\Sig$-module.
\end{defnsub}

There exist natural notions of  subquotient, direct sum, and $\otimes$-product for $\vphi$-modules of height $\leqs h$. Since the scalar extension $\gM\otimes_\Sig \fo_\Eps$ (with $\vphi_\gM\otimes\id_{\fo_\Eps}$) is a $p^\infty$-torsion \'etale $\vphi$-module\footnote{Since $\gM$ has no non-zero $u$-torsion, the natural map $\gM \ra\gM\otimes_\Sig\fo_\Eps$ is injective. By diagram chasing, it follows that $\vphi_\gM$ is \emph{injective}.}, we define a functor $\nf T^{\leqs h}_\Sig:\phimodh\Sig \ra \prep(\GKinfty)$ in a similar manner to \eqref{eqn:TSigLattice}, as follows:
\begin{equation}\label{eqn:TSigTors}
\nf T^{\leqs h}_\Sig(\gM):=\nf T_\Eps(\gM\otimes_\Sig \fo_\Eps)(h), \textrm{ for any }\gM\in(\Mod/\Sig)^{\leqs h}.
\end{equation}
Note that this functor $\nf T^{\leqs h}_\Sig$ is \emph{not} in general fully faithful.\footnote{The functor $\nf T^{\leqs h}_\Sig$ is fully faithful only when $eh<p-1$. See \cite[Prop 9.3.3]{Kim:Thesis} for the proof.}

The following lemma, which is proved in \cite[Lemma 2.3.4]{kisin:fcrys}, motivates Definition \ref{def:TorKisMod}, especially the condition \eqref{def:TorKisMod:ProjDim}.
\begin{lemsub}\label{lem:TorKisMod}
A $\Sig$-module $\gM$ equipped with a $\Sig$-linear map $\vphi_\gM:\sig^*\gM\ra\gM$ is a torsion $\vphi$-mdoule of height $\leqs h$ if and only if $\gM\cong \coker[f:\gM_1 \ra \gM_0]$ for some $\gM_0,\gM_1\in\phimodh\Sig$ and a $\vphi$-compatible map $f$ such that $f\otimes\Qp$ is an isomorphism. Furthermore, we have a natural $\GKinfty$-isomorphism $\nf T^{\leqs h}_\Sig(\gM)\cong \coker \nf T^{\leqs h}_\Sig(f)$.  
\end{lemsub}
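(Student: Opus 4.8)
The plan is to prove the two implications separately and then read off the $\nf T^{\leqs h}_\Sig$-statement from exactness. For the direction ``$\coker\Rightarrow$ torsion of height $\leqs h$'', suppose $\gM\cong\coker[f\colon\gM_1\ra\gM_0]$ with $\gM_0,\gM_1\in\phimodh\Sig$ and $f\otimes\Qp$ an isomorphism. First I would note that $f$ is injective: $\Ker f$ lies in the torsion-free $\Sig$-module $\gM_1$ while $(\Ker f)\otimes\Qp=0$, so $\Ker f=0$. Thus $0\ra\gM_1\xra{f}\gM_0\ra\gM\ra0$ is a free resolution of length $\leqs1$, so $\gM$ has projective dimension $\leqs1$ over $\Sig$, hence no nonzero $u$-torsion by Auslander--Buchsbaum (as in Definition~\ref{def:TorKisMod}\eqref{def:TorKisMod:ProjDim}); and $\gM\otimes\Qp=\coker(f\otimes\Qp)=0$ together with finite generation over $\Sig$ forces $p^N\gM=0$ for some $N$. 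Finally, applying the exact functor $\sig^*$ to the resolution and running the snake lemma on the vertical maps $\vphi_{\gM_1},\vphi_{\gM_0},\vphi_\gM$ gives a surjection $\coker\vphi_{\gM_0}\thra\coker\vphi_\gM$, so $\PP(u)^h$ kills $\coker\vphi_\gM$; thus $\gM$ is a torsion $\vphi$-module of height $\leqs h$.

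For the converse, given a torsion $\vphi$-module $\gM$ of height $\leqs h$ (so $\vphi_\gM$ is injective and $\mathrm{pd}_\Sig\gM\leqs1$), I would reduce everything to the assertion: \emph{there is a $\vphi$-compatible surjection $q\colon\gM_0\thra\gM$ with $\gM_0\in\phimodh\Sig$}. Given such $q$, set $\gM_1:=\Ker q$ and $f\colon\gM_1\hra\gM_0$. Then $\gM_1$ is free over $\Sig$ (since $\mathrm{pd}_\Sig\gM\leqs1$ and $\gM_0$ is free); it is $\vphi$-stable because $\sig^*\gM_1=\Ker(\sig^*q)$ and $q$ is $\vphi$-compatible; and it lies in $\phimodh\Sig$ because for $y\in\gM_1$, writing $\PP(u)^hy=\vphi_{\gM_0}(z)$ and pushing to $\gM$ gives $\vphi_\gM(\bar z)=\PP(u)^hq(y)=0$, so $\bar z=0$ by injectivity of $\vphi_\gM$, whence $z\in\sig^*\gM_1$ and $\PP(u)^hy\in\vphi_{\gM_1}(\sig^*\gM_1)$. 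Moreover $f$ is injective and $f\otimes\Qp$ is an isomorphism (as $\gM\otimes\Qp=0$, $\gM$ being $p$-power torsion), so $\gM\cong\coker f$.

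Producing $q$ is the technical heart, and I expect it to be the only step requiring real work; it is \cite[Lemma~2.3.4]{kisin:fcrys}. One starts from any free resolution $0\ra\gM_1'\ra\gM_0'\ra\gM\ra0$ over $\Sig$ and lifts $\vphi_\gM$ to a $\vphi$-structure on $\gM_0'$ compatible with the surjection (lifting the images of a basis), which automatically restricts to $\gM_1'$. But a general lift is not of height $\leqs h$ --- its cokernel is annihilated by $\PP(u)^h$ only modulo $p$ --- and the obstruction to repairing this without breaking compatibility with $\vphi_\gM$ is genuinely nonzero. I would remove it by enlarging $\gM_0'$ and inserting an off-diagonal correction term built from the complementary map $\psi_\gM\colon\gM\ra\sig^*\gM$ with $\vphi_\gM\circ\psi_\gM=\PP(u)^h\cdot\id$ (this exists because $\vphi_\gM$ is injective with $\PP(u)^h$-bounded cokernel), so that the enlarged Frobenius has cokernel exactly killed by $\PP(u)^h$; alternatively, one proceeds by induction on the exponent of $p$ killing $\gM$, using the $\vphi$-stable filtration $0\subset\gM[p]\subset\gM$ by the $p$-torsion, whose graded pieces $\gM[p]$ and $\gM/\gM[p]\cong p\gM$ are again torsion of height $\leqs h$ killed by smaller powers of $p$, and glues by a horseshoe argument with a $\vphi$-compatible choice of the connecting lift. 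In either approach the delicate point --- and the main obstacle --- is to meet the three demands at once (surjectivity, $\vphi$-compatibility, and the $\PP(u)^h$-bound on the cokernel); the enlargement is precisely what makes this possible.

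Finally, for the $\nf T^{\leqs h}_\Sig$-statement, recall $\nf T^{\leqs h}_\Sig(-)=\nf T_\Eps(-\otimes_\Sig\fo_\Eps)(h)$, and apply this to $0\ra\gM_1\xra{f}\gM_0\ra\gM\ra0$. The base change $-\otimes_\Sig\fo_\Eps$ is exact ($\fo_\Eps$ is the $p$-adic completion of the localization $\Sig[\ivtd u]$, hence $\Sig$-flat), $\nf T_\Eps$ is exact by Theorem~\ref{thm:FontaineEtPhiMod}, and Tate twisting is exact; hence $0\ra\nf T^{\leqs h}_\Sig(\gM_1)\ra\nf T^{\leqs h}_\Sig(\gM_0)\ra\nf T^{\leqs h}_\Sig(\gM)\ra0$ is exact as a sequence of $\GKinfty$-representations, giving the natural isomorphism $\nf T^{\leqs h}_\Sig(\gM)\cong\coker\nf T^{\leqs h}_\Sig(f)$.
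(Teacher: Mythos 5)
The paper itself gives no proof of this lemma; it simply cites Kisin's Lemma~2.3.4 in \cite{kisin:fcrys}. So there is nothing to compare against line by line, and your proposal has to be judged on its own.

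Most of your argument is sound and correctly organized. The ``cokernel $\Rightarrow$ torsion of height $\leqs h$'' direction is complete: injectivity of $f$ from torsion-freeness of $\gM_1$, projective dimension $\leqs 1$ from the length-$1$ free resolution, $p$-power torsion from $\gM\otimes\Qp=0$ plus finite generation, and the $\PP(u)^h$-bound on $\coker\vphi_\gM$ via the snake lemma applied to $\sig^*$ of the resolution (using flatness of $\sig$). The reduction of the converse to producing a $\vphi$-compatible surjection $q\colon\gM_0\thra\gM$ with $\gM_0\in\phimodh\Sig$ is also fully correct, and the verification that $\gM_1:=\Ker q$ is free, $\vphi$-stable, and of height $\leqs h$ (the last via injectivity of $\vphi_\gM$) is a nice clean argument. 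The exactness chain $-\otimes_\Sig\fo_\Eps$ (flat), $\nf T_\Eps$ (Theorem~\ref{thm:FontaineEtPhiMod}), Tate twist gives the $\nf T^{\leqs h}_\Sig$ statement.

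The one genuine gap is, as you flag yourself, the construction of $q$. You identify it as the heart and defer to \cite[Lemma~2.3.4]{kisin:fcrys}, which is reasonable, but the two sketches you offer are not yet arguments. In the horseshoe approach, even the base case $p\gM=0$ is non-trivial: $\gM$ is then free over $k[[u]]$ and an arbitrary lift of $\vphi_\gM$ to $\Sig^r$ has no reason to have cokernel killed by $\PP(u)^h$ --- the height condition involves matrix divisibility over $\Sig$, not just modulo $p$, and one cannot in general adjust a lift to satisfy it without changing the ambient free module. And $\gM_1:=\Ker q$ being forced into $\phimodh\Sig$ is something you \emph{derive} from $q$ (correctly), so the burden is entirely on $\gM_0$. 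The ``enlargement plus off-diagonal correction via $\psi_\gM$'' idea is in the right spirit, but as written it leaves unexplained how the correction simultaneously preserves $\vphi$-compatibility of $q$, surjectivity, and the $\PP(u)^h$-bound, which you correctly observe is the crux. So the decomposition of the proof is right and the surrounding arguments are correct, but the proof of the lemma is not complete without filling in that step.
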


\begin{defnsub}\label{def:hthTors}
A finite torsion $\Zp$-module $T$ with continuous $\GKinfty$-action (which will be called a ``torsion $\GKinfty$-representation'' from now on) is  \emph{of height $\leqs h$} if the following equivalent conditions satisfy:
\begin{enumerate}
\item \label{def:hthTors:model}
For some $\gM\in(\Mod/\Sig)^{\leqs h}$, there is a $\GKinfty$-isomorphism $T\cong \nf T^{\leqs h}_\Sig(\gM)$. (We say that such $\gM$ is a \emph{$\Sig$-module model of height $\leqs h$} for $T$.)
\item  \label{def:hthTors:Siglattice} 
For some $\gM\in(\Mod/\Sig)^{\leqs h}$, we have an isomorphism $\gM\otimes_\Sig\fo_\Eps \cong \nf D^{\leqs h}_\Eps(T(-h))$ as \'etale $\vphi$-modules. (We say that such $\gM$ is a \emph{$\Sig$-submodule of height $\leqs h$} in the \'etale $\vphi$-module $\nf D^{\leqs h}_\Eps(T(-h))$.)
\item
There is a $\GKinfty$-isomorphism $T\cong T_0/T_1$ where $T_0$ and $T_1$ are $\Zp$-lattices $\GKinfty$-representation of height $\leqs h$. 
\end{enumerate}
\end{defnsub}
The claimed equivalence easily follows Lemma~\ref{lem:TorKisMod} and Theorem~\ref{thm:FontaineEtPhiMod}. 
By Example~\ref{exa:repfinht}\eqref{exa:repfinht:sst}, the $\GKinfty$-restriction of a torsion semi-stable $\GK$-representation with Hodge-Tate weights in $[0,h]$ is of height $\leqs h$. Note that $\gM$ as  in \eqref{def:hthTors:model} and \eqref{def:hthTors:Siglattice}, is not in general unique\footnote{If $eh<p-1$ where $e$ is the absolute ramification index of $K$, then such $\gM$ is unique up to isomorphism.}, nor does  there exists a natural choice. 

The following theorem,which we call \emph{the Breuil-Kisin classification of finite flat group schemes}, will not be used later in this paper, but it provides a motivation to study $\Modh$ (at least when $h=1$). 
\begin{thmsub}[Classification of finite flat group schemes]\label{thm:classifgpsch}
If $p>2$ then there exists an exact equivalence of categories $\nf G$ from $(\Mod/\Sig)^{\leqs1}$ to the category of finite flat group schemes of $p$-power order over $\fo_K$ such that $\nf G(\gM)(\bar K)|_{\GKinfty}\cong \nf T^{\leqs 1}_\Sig(\gM)$ for any $\gM\in(\Mod/\Sig)^{\leqs1}$. 
\end{thmsub}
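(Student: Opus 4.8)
The plan is to reduce everything to the case of $p$-divisible groups over $\fo_K$, where the classification is already available in Kisin theory, and then to assemble finite flat group schemes from those by resolution. I would take two inputs as given. First, the integral refinement of Theorem~\ref{thm:KisinIntPAdicHT}\eqref{thm:KisinIntPAdicHT:BT}: for $p>2$ there is an exact equivalence $\Gamma\mapsto\gM(\Gamma)$ between $p$-divisible groups over $\fo_K$ and $\phimodBT{\Sig}$, normalised so as to be compatible with $\nf T^{\leqs1}_\Sig$ in the sense that $\Gamma[p^\infty](\Kbar)|_{\GKinfty}\cong\nf T^{\leqs1}_\Sig(\gM(\Gamma))$. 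This is proved in \cite{kisin:fcrys} via crystalline Dieudonn\'e theory, and it is exactly here that $p>2$ enters (and where, for $p=2$, one would otherwise need Zink's windows and displays). Second, the standard fact that every finite flat group scheme of $p$-power order over $\fo_K$ embeds into a $p$-divisible group over $\fo_K$, hence is the kernel of an isogeny of $p$-divisible groups over $\fo_K$; this is the geometric counterpart of Lemma~\ref{lem:TorKisMod} for $h=1$, which says every $\gM\in(\Mod/\Sig)^{\leqs1}$ is $\coker[f\colon\gM_1\to\gM_0]$ for a $\vphi$-compatible map of free height-$\leqs1$ modules with $f\otimes\Qp$ an isomorphism.

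Granting these, I would define $\nf G$ as follows. For $\gM\in(\Mod/\Sig)^{\leqs1}$, pick a presentation $\gM\cong\coker[f\colon\gM_1\to\gM_0]$ as in Lemma~\ref{lem:TorKisMod}; let $\Gamma_i$ be the $p$-divisible group with $\gM(\Gamma_i)\cong\gM_i$, let $\psi\colon\Gamma_1\to\Gamma_0$ be the isogeny corresponding to $f$, and set $\nf G(\gM):=\Ker\psi$. The routine checks are: (a) independence of the presentation and functoriality --- given two presentations, or a morphism $\gM\to\gM'$, pass to a common refinement of presentations and use full faithfulness in the $p$-divisible case to realise the comparison map (resp.\ the morphism) on the level of resolving isogenies, then apply $\Ker$; (b) essential surjectivity --- write $G=\Ker[\psi\colon\Gamma_1\to\Gamma_0]$ and take $\gM:=\coker[\gM(\Gamma_1)\to\gM(\Gamma_0)]$, which lies in $(\Mod/\Sig)^{\leqs1}$, in particular has no $u$-torsion, by Lemma~\ref{lem:TorKisMod} (this is where condition~\eqref{def:TorKisMod:ProjDim} of Definition~\ref{def:TorKisMod} is accounted for); (c) exactness --- choose compatible presentations of a short exact sequence of $\gM$'s and apply the snake lemma; (d) the Galois identification $\nf G(\gM)(\Kbar)|_{\GKinfty}\cong\nf T^{\leqs1}_\Sig(\gM)$ --- apply the exact functor $\nf T_\Eps$ of Theorem~\ref{thm:FontaineEtPhiMod} to $0\to\gM_1\otimes_\Sig\fo_\Eps\to\gM_0\otimes_\Sig\fo_\Eps\to\gM\otimes_\Sig\fo_\Eps\to0$, combine with $(\Ker\psi)(\Kbar)\cong\coker[\Gamma_1[p^\infty](\Kbar)\to\Gamma_0[p^\infty](\Kbar)]$ and the $p$-divisible compatibility, and note that the Tate twist $(1)$ built into $\nf T^{\leqs1}_\Sig$ matches the cyclotomic twist coming from $\PP(0)=p$ (cf.\ Example~\ref{exa:repfinht}\eqref{exa:repfinht:cycchar}).

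The hard part is the \emph{full faithfulness} of $\nf G$ on torsion objects, i.e.\ that $\Hom_{\fo_K}(\nf G(\gM),\nf G(\gM'))\to\Hom_{(\Mod/\Sig)^{\leqs1}}(\gM,\gM')$ is bijective. Surjectivity is formal from the construction, so it comes down to showing that a homomorphism of finite flat group schemes over $\fo_K$ inducing zero on the associated $\vphi$-modules vanishes. In the $p$-divisible case this is clean: $\Hom_{\fo_K}(\Gamma,\Gamma')=\Hom_{\GK}(\Gamma[p^\infty](\Kbar),\Gamma'[p^\infty](\Kbar))$ by Tate's isogeny theorem, this equals the corresponding $\GKinfty$-Hom because restriction to $\GKinfty$ is fully faithful on crystalline $\GK$-representations with Hodge--Tate weights in $[0,1]$ (trace this out of Theorem~\ref{thm:KisinIntPAdicHT}\eqref{thm:KisinIntPAdicHT:crys} and the commuting diagram~\eqref{eqn:KisinIntPAdicHT}), and this in turn equals $\Hom_{\phimodBT{\Sig}}$ by Theorem~\ref{thm:main}\eqref{thm:main:fullfth}. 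The trouble in the torsion case is that a homomorphism $\nf G(\gM)\to\nf G(\gM')$ need not lift to the chosen resolving $p$-divisible groups, and --- unlike for $p$-divisible groups --- one cannot simply pass to generic fibres, since a finite flat group scheme over $\fo_K$ is not determined by its generic fibre once the ramification index is $\geqs p-1$. One has to manipulate the resolutions (pull-backs, push-outs, passage to common refinements) to reduce to the $p$-divisible case, and this bookkeeping is where essentially all the work lies; I would follow \cite[\S2.3]{Kisin:ModuliGpSch} (see also \cite[\S9]{Kim:Thesis}) for it rather than reproduce it here.
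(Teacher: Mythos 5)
The paper does not prove this statement; immediately after stating it, it says ``See \cite[\S2.3]{kisin:fcrys} for the proof,'' and it also flags the theorem as one that ``will not be used later in this paper, but provides a motivation to study $(\Mod/\Sig)^{\leqs h}$.'' So there is no in-paper proof to compare your proposal against. Your sketch does reproduce the broad architecture of the cited proof: pass through the (anti)equivalence between $p$-divisible groups over $\fo_K$ and $\phimodBT{\Sig}$, use Raynaud's embedding theorem and Lemma~\ref{lem:TorKisMod} to present both sides as (co)kernels of isogenies, define the functor on kernels, and identify the Galois module via exactness of $\nf T_\Eps$ and the normalization $\PP(0)=p$. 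You also correctly isolate the genuinely hard step --- faithfulness on torsion objects when $e\geqs p-1$, where the generic fibre no longer determines the group scheme --- and defer it; note, though, that the paper's pointer is to \cite[\S2.3]{kisin:fcrys} rather than \cite[\S2.3]{Kisin:ModuliGpSch}, and the heavy lifting lives in the former. Two small points of hygiene: the notation $\Gamma[p^\infty](\Kbar)$ should be read as the Tate module $T_p(\Gamma)$ (not the divisible group of torsion points) for your compatibility statement with $\nf T^{\leqs1}_\Sig$ to parse, and the arrow $\Hom_{\fo_K}(\nf G(\gM),\nf G(\gM'))\to\Hom_{(\Mod/\Sig)^{\leqs1}}(\gM,\gM')$ you write is really the map induced by the quasi-inverse $\nf M$, whose \emph{well-definedness} already requires the passage-to-common-refinement step you sketch in (a); so ``surjectivity is formal'' is defensible only once that step is in place, which is fine but worth being explicit about.
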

See \cite[\S2.3]{kisin:fcrys} for the proof. When $p=2$ one has a similar classification for connected finite flat group schemes; see \cite[\S1]{Kisin:2adicBT} for the statement and the proof.

\subsection{$\Sig$-submodules of height $\leqs h$}
As remarked previously, there is no \emph{a priori} canonical choice of $\Sig$-module models for a $p^\infty$-torsion $\GKinfty$-representation. This becomes a problem when we try to use torsion $\Sig$-modules to study  deformations of $\GKinfty$-representation. 
The results in this section is needed to handle difficulties arising from this ``non-canonicity''.

\parag\label{par:ModFI}
Let $A$ be a $p$-adically separated and complete topological ring\footnote{For us topological rings are always linearly topologized. Later we need to consider coefficient rings that are not finite $\Zp$-algebras such as $A=\F[t]$, especially for analyzing the connected components of the generic fiber of a deformation ring.}, (for example, finite $\Zp$-algebras or any ring $A$ with $p^N\cdot A = 0$ for some $N$). Set $\Sig_A:=\Sig\wh\otimes_{\Zp}A:=\varprojlim_\alpha \Sig\wh\otimes_{\Zp}A/I_\alpha$ where $\set{I_\alpha}$ is a basis of open ideals in $A$. We define a ring endomorphism $\sig:\self{\Sig_A}$ (and call it \emph{Frobenius endomorphism})  by $A$-linearly extending the Frobenius endomorphism $\sig_\Sig$. We also  put $\fo_{\Eps,A}:=\fo_\Eps\wh\otimes_{\Zp}A :=\varprojlim_\alpha\fo_\Eps\wh\otimes_{\Zp}A/I_\alpha$ and similarly define $\sig:\self{\fo_{\Eps,A}}$.

Let $\ModFIh A$ be the category of finite free $\Sig_A$-modules $\gM_A$ equipped with a $\Sig_A$-linear map $\vphi_{\gM_A}:\sig^*(\gM_A)\ra\gM_A$ such that $\PP(u)^h$ annihilates $\coker(\vphi_{\gM_A})$. If $A$ is finite artinean $\Zp$-algebra, then $\gM_A\in\ModFIh A$ is precisely a torsion $(\vphi,\Sig)$-module of height $\leqs h$ equipped with a $\vphi$-compatible $A$-action such that $\gM_A$ is finite free over $\Sig_A$. 

Similarly, let $\ModFIet A$ be the category of finite free $\fo_{\Eps,A}$-modules $M_A$ equipped with a $\fo_{\Eps,A}$-linear isomorphism $\vphi_{M_A}:\sig^*(M_A)\riso M_A$. If $A$ is finite artinean $\Zp$-algebra, then one can check that $\nf T_\Eps$ and $\nf D_\Eps$ as in Theorem~\ref{thm:FontaineEtPhiMod} induce quasi-inverse equivalences of categories between $\ModFIet A$ and the category of $\GKinfty$-representations over $A$.\footnote{The relevant freeness follows from length consideration.}

\begin{lemsub}\label{lem:FlatCokerVphi}
Let $A$ be as in \S\ref {par:ModFI} and let $\gM_A \in \ModFIh A$.
\begin{enumerate}
\item 
\label{lem:FlatCokerVphi:Inj}
The map $\vphi_{\gM_A}$ is injective.
\item 
\label{lem:FlatCokerVphi:Flat}
The $\Sig_A/((\PP(u)^h)$-module $\coker(\vphi_{\gM_A})$, viewed as an $A$-module via scalar restriction, is flat over $A$.
 \end{enumerate}
\end{lemsub}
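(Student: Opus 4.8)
The plan is to extract both statements from one fact about the base ring: \emph{$\PP(u)$ is a non-zero-divisor in $\Sig_A$, it stays a non-zero-divisor after any further base change $\Sig_A\otimes_A A'$ along an $A$-module $A'$, and $\Sig_A$ is flat over $A$.} Indeed $\Sig/\PP(u)\Sig\cong\fo_K$ is finite free over $W(k)$, hence flat over $\Zp$, so the short exact sequence $0\to\Sig\xrightarrow{\PP(u)}\Sig\to\fo_K\to 0$ of $\Zp$-flat modules remains exact after $-\wh\otimes_{\Zp}A$, giving $0\to\Sig_A\xrightarrow{\PP(u)}\Sig_A\to\fo_{K,A}\to 0$ with $\fo_{K,A}$ finite free, hence flat, over $A$; exactness persists under any $-\otimes_A A'$ precisely because $\fo_{K,A}$ is $A$-flat, which is the non-zero-divisor claim. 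The $A$-flatness of $\Sig_A$ likewise reflects the $\Zp$-flatness of $\Sig$ (for the Noetherian coefficient rings $A$ relevant to this paper one may just invoke flatness of $A[[u]]$ over $A$; in general one reduces to $p^N A=0$, where $\Sig_A\cong(W(k)\otimes_{\Zp}A)[[u]]$).

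Granting this, part \eqref{lem:FlatCokerVphi:Inj} is Cramer's rule. Pick $\Sig_A$-bases of the finite free modules $\sig^*\gM_A$ and $\gM_A$ --- they have the same rank $d$ because $\sig$ is finite flat --- and let $X\in M_d(\Sig_A)$ be the matrix of $\vphi_{\gM_A}$. That $\PP(u)^h$ kills $\coker(\vphi_{\gM_A})$ means $\PP(u)^h\gM_A\subseteq X\cdot\sig^*\gM_A$, i.e.\ $XY=\PP(u)^h I_d$ for some $Y\in M_d(\Sig_A)$; taking determinants gives $\det(X)\det(Y)=\PP(u)^{hd}$, so $\det(X)$ is a non-zero-divisor in $\Sig_A$, and the adjugate identity $\operatorname{adj}(X)X=\det(X)I_d$ then forces $X$, hence $\vphi_{\gM_A}$, to be injective. (Equivalently one can argue that $\vphi_{\gM_A}\otimes_{\Sig_A}\fo_{\Eps,A}$ is an isomorphism, since $\PP(u)$ is a unit in $\fo_{\Eps,A}$, and that $\gM_A\hookrightarrow\gM_A\otimes_{\Sig_A}\fo_{\Eps,A}$ because $\gM_A$ is $\Sig_A$-free and $\Sig_A\hookrightarrow\fo_{\Eps,A}$, then diagram-chase; this is the $A$-linear version of the argument for torsion $\vphi$-modules over $\Sig$.)

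For part \eqref{lem:FlatCokerVphi:Flat}, the injectivity from \eqref{lem:FlatCokerVphi:Inj} gives a short exact sequence of $\Sig_A$-modules $0\to\sig^*\gM_A\xrightarrow{\vphi_{\gM_A}}\gM_A\to\coker(\vphi_{\gM_A})\to 0$. Apply $-\otimes_A A'$ for an arbitrary $A$-module $A'$ and pass to the long exact $\Tor^A$-sequence: since $\gM_A$ is $\Sig_A$-free and $\Sig_A$ is $A$-flat, $\Tor_1^A(\gM_A,A')=0$, so $\Tor_1^A(\coker(\vphi_{\gM_A}),A')\cong\ker(\vphi_{\gM_A}\otimes\id_{A'})$. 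Now $\vphi_{\gM_A}\otimes\id_{A'}$ is the map $(\Sig_A\otimes_A A')^d\to(\Sig_A\otimes_A A')^d$ attached to the image of $X$; since $\det(X)\det(Y)=\PP(u)^{hd}$ and $\PP(u)$ is a non-zero-divisor in $\Sig_A\otimes_A A'$ by the base-changed sequence above, $\det(X)$ is a non-zero-divisor there too, so $\vphi_{\gM_A}\otimes\id_{A'}$ is again injective by Cramer's rule. Hence $\Tor_1^A(\coker(\vphi_{\gM_A}),A')=0$ for every $A$-module $A'$, so $\coker(\vphi_{\gM_A})$ is flat over $A$; and it is a module over $\Sig_A/\PP(u)^h\Sig_A$ because $\PP(u)^h$ annihilates it.

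There is no deep difficulty here: the only point requiring care is the bookkeeping in the first paragraph --- checking that forming the completed scalar extension $-\wh\otimes_{\Zp}A$, and then a further $-\otimes_A A'$, preserves the $A$-flatness of $\Sig_A$ and the exactness of $0\to\Sig_A\xrightarrow{\PP(u)}\Sig_A\to\fo_{K,A}\to 0$. Everything else is formal: Cramer's rule for \eqref{lem:FlatCokerVphi:Inj}, and a $\Tor$ long exact sequence for \eqref{lem:FlatCokerVphi:Flat}.
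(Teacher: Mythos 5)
Your proof is correct and in substance follows the same route as the paper's: reduce to the case of discrete $A$, observe that $\vphi_{\gM_A}$ is injective, and then deduce $A$-flatness of the cokernel from the persistence of the short exact sequence $0 \to \sigma^*\gM_A \to \gM_A \to \coker(\vphi_{\gM_A}) \to 0$ under base change in $A$. The differences are in execution rather than strategy. For \eqref{lem:FlatCokerVphi:Inj}, the paper argues by a diagram chase through the injection $\gM_A \hookrightarrow \gM_A\otimes_{\Sig_A}\fo_{\Eps,A}$ together with the fact that the $\fo_{\Eps,A}$-linearization of $\vphi_{\gM_A}$ is an isomorphism; you lead with the determinant/Cramer's-rule version ($XY=\PP(u)^hI_d$, so $\det X$ divides $\PP(u)^{hd}$ and is a non-zero-divisor) and only mention the paper's argument parenthetically. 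For \eqref{lem:FlatCokerVphi:Flat}, the paper simply asserts that the sequence stays exact after $A/I\otimes_A(\cdot)$ and refers to Bourbaki; you spell out the $\Tor$ long exact sequence and, more to the point, \emph{explain} why the injectivity persists after an arbitrary $-\otimes_A A'$: because $\fo_{K,A}=\Sig_A/\PP(u)\Sig_A$ is finite free over $A$, $\PP(u)$ remains a non-zero-divisor in $\Sig_A\otimes_A A'$, so the same determinant argument applies verbatim after base change. What the determinant route buys you is that a single observation (that $\PP(u)$ is a universal non-zero-divisor, equivalently $\fo_{K,A}$ is $A$-flat) handles both the initial injectivity and its preservation under base change uniformly, without having to re-invoke part \eqref{lem:FlatCokerVphi:Inj} over each quotient $A/I$. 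The only place where care is still needed, as you note, is verifying the $A$-flatness of $\Sig_A$ itself; for the Noetherian (or $p^N$-torsion) coefficient rings the paper actually uses, this is routine, and the paper's phrase ``reduce to the case when $A$ is discrete'' is the same reduction you perform.
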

\begin{proof}
One can easily reduce to the case when $A$ is discrete. 
The injectivity of  $\vphi_{\gM_A}$ follows from a simple diagram chasing using the fact that the $\fo_{\Eps,A}$-linear extension of $\vphi_{\gM_A}$ is an isomorphism on $\gM_A\otimes_{\Sig_A}\fo_{\Eps,A}$, and that the natural map $\gM_A \ra \gM_A\otimes_{\Sig_A}\fo_{\Eps,A}$ is injective.

To show \eqref{lem:FlatCokerVphi:Flat}, observe that the exact sequence
\begin{displaymath}
 0\ra \sig^*\gM_A \xra{\vphi_{\gM_A}} \gM_A \ra \coker(\vphi_{\gM_A}) \ra 0
\end{displaymath}
stays short exact after applying $A/I\otimes_A (\cdot)$ for any ideal $I\subset A$. Hence, \eqref{lem:FlatCokerVphi:Flat} follows from standard facts about flatness (e.g. by \cite[Ch.I \S2.5 Prop 4]{Bourbaki:CommAlg}, or by an argument using $\Tor_1^A$.) 
\end{proof}

\parag\label{par:CarDual}
Let us  define duality on $\Modh$ which is an analogue of Cartier duality. For $\gM\in\Modh$ we define the \emph{dual of height $h$} to be $\gM^\vee:=\Hom_\Sig(\gM, \Sig[\ivtd p]/\Sig)$ equipped with a $\Sig$-linear map $\vphi_{\gM^\vee}:\sig^*\gM^\vee \ra \gM^\vee$ defined as follows:
\begin{equation}
\big(\vphi_{\gM^\vee}(l)\big)(m)= l\big(\vphi\iv(\PP(u)^hm)\big),
\end{equation} 
where  $m\in\gM$ and $l\in\sig^*(\gM^\vee) \cong \Hom_\Sig(\sig^*\gM, \Sig[\ivtd p]/\Sig)$. This makes $\gM^\vee$ as an object in  $\Modh$.

We similarly define $\gM_A^\vee$ for $\gM_A\in\ModFIh A$ using $\Sig_A$-linear duality instead of Pontryagin duality. When $A$ is finite, these two definitions of $\gM_A^\vee$ are compatible.

\begin{propsub}\label{prop:RamakrishnaCrit}
Any subquotients and direct sums of torsion $\GKinfty$-representations of height $\leqs h$ is of height $\leqs h$.
\end{propsub}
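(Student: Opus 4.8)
The statement splits into two independent assertions, and for subquotients it suffices to treat a subrepresentation and a quotient representation separately, since any subquotient is a quotient of a subrepresentation. Direct sums are immediate: if $T_i\cong\nf T^{\leqs h}_\Sig(\gM_i)$ with $\gM_i\in\Modh$ for $i=1,2$, then $\gM_1\oplus\gM_2\in\Modh$ because all three conditions of Definition~\ref{def:TorKisMod} are visibly stable under finite direct sums, and $\nf T^{\leqs h}_\Sig(\gM_1\oplus\gM_2)\cong T_1\oplus T_2$ since $\nf T^{\leqs h}_\Sig$ is additive. So the content is in the subquotient case.

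My plan for the subquotient case is to lift the question to $\Zp$-lattices and apply Theorem~\ref{thm:main}\eqref{thm:main:lattices}. Let $T$ be of height $\leqs h$ and $T'\subseteq T$ a $\GKinfty$-subrepresentation. Using the equivalent description of ``height $\leqs h$'' in Definition~\ref{def:hthTors} (which rests on Lemma~\ref{lem:TorKisMod}), I would write $T\cong T_0/T_1$ with $T_1\subseteq T_0$ finite-index $\Zp$-lattice $\GKinfty$-representations of height $\leqs h$, and set $V:=T_0[\ivtd p]$, so that $V\cong\nf T^{\leqs h}_\Sig(\gM_0)[\ivtd p]$ for some $\gM_0\in\phimodh\Sig$. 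Let $\pi\colon T_0\twoheadrightarrow T$ be the projection, with kernel $T_1$, and put $\wt{T'}:=\pi^{-1}(T')\subseteq T_0$. Then $\wt{T'}$ is a $\GKinfty$-stable $\Zp$-submodule of $T_0$ sitting between the finite-index sublattice $T_1$ and $T_0$, hence a $\GKinfty$-stable $\Zp$-lattice in $V$; by Theorem~\ref{thm:main}\eqref{thm:main:lattices} it is of height $\leqs h$. Now $T'\cong\wt{T'}/T_1$ and $T/T'\cong T_0/\wt{T'}$ each exhibit the representation in question as a quotient of one height-$\leqs h$ $\Zp$-lattice representation by another, so Definition~\ref{def:hthTors} gives that $T'$ and $T/T'$ are of height $\leqs h$. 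The only step needing a word of care is the verification that $\wt{T'}$ is genuinely a lattice, i.e. of full $\Zp$-rank in $V$, which is precisely why one arranges $T_1\subseteq\wt{T'}$ with $T_1$ of finite index in $T_0$; with Theorem~\ref{thm:main}\eqref{thm:main:lattices} available this route carries no serious obstacle, the real work having been done in proving that theorem.

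For completeness I would also indicate (at most as motivation, not with full details) a more hands-on argument purely in terms of semilinear algebra. Fix a model $\gM\in\Modh$ of $T$, set $M:=\gM\otimes_\Sig\fo_\Eps\cong\nf D_\Eps(T(-h))$, and let $M'\subseteq M$ be the sub-\'etale-$\vphi$-module corresponding to $T'$ under Fontaine's equivalence (Theorem~\ref{thm:FontaineEtPhiMod}). Then $\gM':=\gM\cap M'$ (intersection inside $M$) and $\gM'':=\image(\gM\to M/M')$ lie in $\Modh$: finite generation over the noetherian ring $\Sig$ is automatic, both are killed by a power of $p$, neither has $u$-torsion because $u$ acts invertibly on $M$ and on $M/M'$, and their $\vphi$-cokernels are killed by $\PP(u)^h$ (for $\gM''$ via the snake lemma applied to $0\to\gM'\to\gM\to\gM''\to 0$ and exactness of $\sig^*$; for $\gM'$ via a diagram chase using that $\vphi_M$ is an isomorphism). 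The genuinely technical point on this route — and the place I expect to spend real effort — is showing $\gM'\otimes_\Sig\fo_\Eps\cong M'$ and $\gM''\otimes_\Sig\fo_\Eps\cong M''$; here I would compare $\fo_\Eps$-lengths, using flatness of $\fo_\Eps$ over $\Sig$ to see that the natural injection $\gM'\otimes_\Sig\fo_\Eps\hookrightarrow M'$ and surjection $\gM''\otimes_\Sig\fo_\Eps\twoheadrightarrow M''$ have source and target of equal length (the two lengths on each side summing to that of $M$), hence are isomorphisms. Granting this, $\nf T^{\leqs h}_\Sig(\gM')\cong T'$ and $\nf T^{\leqs h}_\Sig(\gM'')\cong T/T'$. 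I would present the first route in the paper and keep this second one only as a cross-check.
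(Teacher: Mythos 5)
Your proof is correct, and your \emph{primary} route is genuinely different from the paper's. The paper's proof is in fact your ``second route'': given a short exact sequence $0\to M'\to M\to M''\to 0$ of torsion \'etale $\vphi$-modules and a $\Sig$-submodule $\gM\subset M$ of height $\leqs h$ with $\gM\otimes_\Sig\fo_\Eps=M$, the paper sets $\gM'':=\image(\gM\to M'')$ and $\gM':=\ker(\gM\to\gM'')=\gM\cap M'$, and then checks directly (exactly as you indicate: vanishing of $u$-torsion, exactness of $\sig^*$, the snake lemma for the $\vphi$-cokernels, and the length/localization comparison for $\gM'\otimes_\Sig\fo_\Eps=M'$, $\gM''\otimes_\Sig\fo_\Eps=M''$) that these land in $\Modh$, then concludes via exactness of $\nf D_\Eps$, $\nf T_\Eps$. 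Your primary route instead lifts everything to $\Zp$-lattices: it uses the equivalence of conditions in Definition~\ref{def:hthTors} to write $T\cong T_0/T_1$, observes that $\wt{T'}:=\pi^{-1}(T')$ is a $\GKinfty$-stable lattice in $V=T_0[\ivtd p]$, and invokes Theorem~\ref{thm:main}\eqref{thm:main:lattices} to see it is of height $\leqs h$; then $T'\cong\wt{T'}/T_1$ and $T/T'\cong T_0/\wt{T'}$ give the result. The trade-off is clear: the paper's argument is self-contained at the level of torsion $\Sig$-modules and does a small amount of diagram chasing; yours offloads that work to Theorem~\ref{thm:main}\eqref{thm:main:lattices} and the equivalence in Definition~\ref{def:hthTors}, which makes the present proof shorter and more conceptual but leans on heavier inputs whose own proofs are, underneath, the same kind of $\vphi$-module manipulations. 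Both are valid, and neither introduces a circularity, since Theorem~\ref{thm:main} and Definition~\ref{def:hthTors} (with Lemma~\ref{lem:TorKisMod}) precede this proposition logically in the paper.
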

\begin{proof}
The assertion about direct sums is obvious. Now consider a short exact sequence $0\ra M' \ra M \ra M'' \ra 0$ of $p^\infty$-torsion \'etale $\vphi$-modules and  assume that  there is a $\vphi$-sbable $\Sig$-submodule $\gM\in\Modh$ in $M$ such that $\gM\otimes_\Sig\fo_\Eps = M$. Let $\gM''$ be the image of $\gM$ by  $M \thra M''$ and $\gM'$ the kernel of the natural map $\gM \ra \gM''$. One can check that $\gM'$ and $ \gM''$ are objects  in $\Modh$ such that $\gM'\otimes_\Sig\fo_\Eps = M'$ and  $\gM''\otimes_\Sig\fo_\Eps = M''$. Now the proposition follows from the exactness of $\nf D_\Eps$ and $\nf T_\Eps$ (Theorem~\ref{thm:FontaineEtPhiMod}). 
\end{proof}

\parag
Let $M$ be a $p^\infty$-torsion \'etale $\vphi$-module, and $\gM,\gM'\subset M$ be $\Sig$-submodules of height $\leqs h$ (as in Definition~\ref{def:hthTors}\eqref{def:hthTors:Siglattice}). One can check that the $(\vphi,\Sig)$-submodules $\gM+\gM'$ and $\gM\cap\gM'$ of $M$ are also objects in $\Modh$, hence are $\Sig$-submodules of height $\leqs h$ in $M$. Therefore, inclusion defines a partial ordering for the set of $\Sig$-submodules of height $\leqs h$ in $M$. The following lemma further shows that this set is bounded.

\begin{lemsub}\label{lem:MaxMinProlongation}
Let $M$ be a $p^\infty$-torsion \'etale $\vphi$-module which admits a $\Sig$-submodule of height $\leqs h$ (defined in Definition~\ref{def:hthTors}(\ref{def:hthTors:Siglattice})). Then there exist the maximal and the minimal $\Sig$-submodules of height $\leqs h$ (denoted by $\gM^+$ and $\gM^-$, respectively) with respect to the partial ordering given by inclusion. In particular, there are only finitely many $\Sig$-submodules of height $\leqs h$.
\end{lemsub}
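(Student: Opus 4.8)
The plan is to reduce the statement to one uniform sandwich bound on models. First fix $N$ with $p^N M=0$ and a $\Sig$-submodule $\gM_0\subseteq M$ of height $\leqs h$, which exists by hypothesis; since $p^N\gM_0=0$ one has $\fo_\Eps\otimes_\Sig\gM_0=\gM_0[\ivtd u]$, so $M=\gM_0[\ivtd u]$, and a $\Sig$-submodule of height $\leqs h$ in $M$ is precisely a finitely generated $\vphi$-stable $\Sig$-submodule $\gM\subseteq M$ with $\gM[\ivtd u]=M$ and $\Coker(\vphi_M\colon\sig^*\gM\ra\gM)$ killed by $\PP(u)^h$ (there is no $u$-torsion to check, as $u$ is a unit in $\fo_\Eps$). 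Write $\mathcal S$ for this set. I would first prove that there is an integer $c\geqs 0$ with $u^c\gM_0\subseteq\gM\subseteq u^{-c}\gM_0$ for all $\gM\in\mathcal S$. Granting this, each $\gM\in\mathcal S$ is a $\Sig$-submodule of the finite-length module $u^{-c}\gM_0/u^c\gM_0$, so $\mathcal S$ is finite; and as $\mathcal S$ is a nonempty finite set closed under $\gM\mapsto\gM+\gM'$ and $\gM\mapsto\gM\cap\gM'$ (recalled just before the lemma), the sum of all its members and the intersection of all its members again lie in $\mathcal S$ and are the desired maximal and minimal elements.

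To bound models from above, I would use closure under $+$ to replace $\gM$ by $\gM+\gM_0$, reducing to $\gN\supseteq\gM_0$, and estimate $\mathrm{length}_\Sig(\gN/\gM_0)$. This is finite ($\gN/\gM_0$ is finitely generated, $u$-power torsion and $p^N$-torsion), and so are $\mathrm{length}_\Sig(\Coker\vphi_{\gM_0})$ and $\mathrm{length}_\Sig(\Coker\vphi_\gN)$ (finitely generated and killed by $\PP(u)^h$ and $p^N$). Computing $\mathrm{length}_\Sig(\gN/\vphi_M(\sig^*\gM_0))$ along the two chains $\vphi_M(\sig^*\gM_0)\subseteq\gM_0\subseteq\gN$ and $\vphi_M(\sig^*\gM_0)\subseteq\vphi_M(\sig^*\gN)\subseteq\gN$ — and using that $\sig$ is finite flat of degree $p$ (so $\sig^*$ is exact and multiplies $\Sig$-length by $p$) and that $\vphi_M$ is an isomorphism (so $\vphi_M(\sig^*\gN)/\vphi_M(\sig^*\gM_0)\cong\sig^*(\gN/\gM_0)$) — yields
\[ \mathrm{length}_\Sig(\gN/\gM_0)+\mathrm{length}_\Sig(\Coker\vphi_{\gM_0})=\mathrm{length}_\Sig(\Coker\vphi_\gN)+p\cdot\mathrm{length}_\Sig(\gN/\gM_0), \]
hence $\mathrm{length}_\Sig(\gN/\gM_0)=\tfrac{1}{p-1}\big(\mathrm{length}_\Sig(\Coker\vphi_{\gM_0})-\mathrm{length}_\Sig(\Coker\vphi_\gN)\big)\leqs\tfrac{1}{p-1}\mathrm{length}_\Sig(\Coker\vphi_{\gM_0})=:b$, a bound depending only on $\gM_0$. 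Then $\gN/\gM_0$ is killed by $(p,u)^b$, in particular by $u^b$, so $\gN\subseteq u^{-b}\gM_0$, and therefore $\gM\subseteq u^{-b}\gM_0$ for every $\gM\in\mathcal S$.

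For the lower bound I would pass to the cokernel duality $\gM\mapsto\gM^\vee$ of \S\ref{par:CarDual}. One checks that it upgrades to an order-reversing involution between the $\Sig$-submodules of height $\leqs h$ in $M$ and those in $M^\vee=\Hom_{\fo_\Eps}(M,\Eps/\fo_\Eps)$ — this uses the identification $\gM^\vee[\ivtd u]=M^\vee$, double duality $(\gM^\vee)^\vee=\gM$, and the elementary relation $(u^j\gM_0)^\vee=u^{-j}\gM_0^\vee$ inside $M^\vee$. Applying the upper bound already obtained to $M^\vee$ with the fixed model $\gM_0^\vee$ gives a constant $b'$ with $\gN^\vee\subseteq u^{-b'}\gM_0^\vee=(u^{b'}\gM_0)^\vee$ for every $\Sig$-submodule of height $\leqs h$ in $M^\vee$; dualizing, $u^{b'}\gM_0=\big((u^{b'}\gM_0)^\vee\big)^\vee\subseteq(\gM^\vee)^\vee=\gM$ for every $\gM\in\mathcal S$. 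Taking $c=\max(b,b')$ gives the sandwich bound and finishes the proof.

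The crux is the length identity of the second paragraph: the degree $p$ of $\sig$ produces the coefficient $p-1>0$ with the favourable sign, which is exactly what lets the height-$\leqs h$ condition control how far a model can grow above $\gM_0$ (with $\sig$ replaced by the identity, or without the height bound, there is no such control). I expect the only delicate points to be the bookkeeping in that length computation and the verification that the cokernel duality is compatible with inclusions and with multiplication by powers of $u$; everything else is formal. A direct attempt at the lower bound via the same length identity applied to $\gM\cap\gM_0\subseteq\gM_0$ does not obviously bound $\mathrm{length}_\Sig(\Coker\vphi_{\gM\cap\gM_0})$, which is why routing the lower bound through $M^\vee$ seems cleanest.
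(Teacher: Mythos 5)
Your proof is correct, and it takes a genuinely different route from the paper's. The paper first reduces to $p\cdot M=0$ by d\'evissage and then runs an integral-closure argument: to each model $\gM$ it attaches the finite flat $\Sig/(p)$-algebra $\A_{\gM}=\Sym_{\Sig/(p)}(\gM)/\langle m^p-\vphi(\sig^*m)\rangle$, observes that $\A_{\gM}[\ivtd u]$ is the \'etale $\fo_\Eps/(p)$-algebra $A_M$ independently of the model, and gets the upper bound from finiteness of the integral closure of $\A_{\gM}$ in $A_M$ (via the generic trace pairing). You instead run a direct length count on torsion $\Sig$-modules; the degree of $\sig$ being $p$ produces the coefficient $p-1>0$ and the explicit bound $\Sig\textrm{-length}(\gN/\gM_0)\leqs\frac{1}{p-1}\,\Sig\textrm{-length}(\coker\vphi_{\gM_0})$, with no d\'evissage and no auxiliary algebras. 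Both proofs route the lower bound through the height-$h$ duality of \S\ref{par:CarDual}, and the remaining steps (finiteness, then $\gM^{\pm}$ from closure under sum and intersection) coincide. What each buys: yours is more elementary, self-contained, and quantitative; the paper's transports directly to Raynaud's schematic boundedness argument for prolongations of finite flat group schemes and uses only that $A_M$ is \'etale, not that $\sig$ has any particular degree. One small point to spell out in a write-up: the identity $\bigl((u^{b'}\gM_0)^\vee\bigr)^\vee=u^{b'}\gM_0$ should be derived from the scaling relation $(u^jA)^\vee=u^{-j}A^\vee$ together with double duality of $\gM_0\in\Modh$, since $u^{b'}\gM_0$ itself need not be of height $\leqs h$ when $b'$ is large, so one cannot apply the height-$h$ double duality to it directly.
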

\begin{proof}
Let $M$ be as in the statement. In order to prove the lemma, it suffices to show the existence of the maximal $\Sig$-submodule  $\gM^+\subset M$ of height $\leqs h$; the existence of the minimal $\Sig$-submodule $\gM^-\subset M$ of height $\leqs h$ follows via the duality defined in \S\ref{par:CarDual}, and the finite assertion follows since $\gM^+/\gM^-$ is an artinean module (being killed by some powers of $p$ and $u$).  

In order to show the existence of $\gM^+\subset M$, it is enough to handle the case when $p\tim M=0$  by a  d\'evissage argument using Proposition~\ref{prop:RamakrishnaCrit}. Now, assume that there exists $\gM\in\Modh$ with $\gM[\ivtd u]=M$ and $p\cdot\gM=0$. 
Consider the following algebras
\begin{equation}\label{eqn:FaltingStrMod1}
A_M:=\frac{\Sym_{\fo_\Eps/(p)} (M)}{\langle m^p-\vphi(\sig^*m):m\in M\rangle},\textrm{ and}\quad \A_\gM:= \frac{\Sym_{\Sig/(p)}(\gM) }{ \langle m^p-\vphi(\sig^*m):m\in\gM\rangle}.
\end{equation}
Clearly, $\A_\gM$ is finite flat over $\Sig/(p)$ with $\A_\gM[\ivtd u] \cong A_M$ since $\gM$ is finite flat over $\Sig/(p)$ with $\gM[\ivtd u]=M$. Note also that $A_M$ is an \'etale algebra over $\fo_\Eps/(p)$ since $M$ is an \'etale $\vphi$-module. If $\gM'\subset M$ is another $\Sig$-submodule of height $\leqs h$ containing $\gM$, then $\A_{\gM'}$ is finite over $\A_\gM$ and we have $\A_{\gM'}[\ivtd u]= A_M$. But the integral closure of $\A_\gM$ in $A_M$ is finite over $\A_\gM$ since $A_M$ is \'etale\footnote{Since $A_M$ is \'etale, the ``generic trace pairing'' $A_M\otimes_{\fo_\Eps/(p)} A_M \ra \fo_\Eps/(p)$ is perfect. The integral closure of $\A_\gM$ is therefore contained in the $\fo_K$-linear dual of $\A_\gM$ embedded in $A_M$ via the ``generic trace pairing'', and this is a finitely generated $\A_\gM$-module.}, so the set of $\Sig$-submodules of height $\leqs h$ in $M$ is bounded above. \end{proof}

Note that any $\vphi$-compatible map $f:M\ra M'$ of $p^\infty$-torsion \'etale $\vphi$-modules restricts to a map $\gM^+\ra{\gM'}^+$ between the maximal $\Sig$-submodules of height $\leqs h$. Using this, we obtain the following useful corollary:

\begin{corsub}\label{cor:RaynaudScheme}
Let $\F$ be a finite extension of $\Fp$, and $\bar\rho$ a $\GKinfty$-representation over $\F$ which is of height $\leqs h$ as a torsion $\GKinfty$-representation (in the sense of Definition~\ref{def:hthTors}). Then there exists $\gM_\F \in\ModFIh\F$ such that $\Th_\Sig(\gM_\F)\cong\bar\rho$.
\end{corsub}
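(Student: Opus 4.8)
The plan is to take for $\gM_\F$ the \emph{maximal} $\Sig$-submodule of height $\leqs h$ sitting inside the \'etale $\vphi$-module $\nf D_\Eps(\bar\rho(-h))$, and then to check that this maximal prolongation is automatically stable under the $\F$-action and finite free over $\Sig_\F$.

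Concretely, I would set $M:=\nf D_\Eps(\bar\rho(-h))$. Since $\bar\rho$ --- hence also its Tate twist $\bar\rho(-h)$ --- is a $\GKinfty$-representation over $\F$, the equivalence recorded in \S\ref{par:ModFI} shows that $M$ is an object of $\ModFIet\F$; in particular $M$ is a $p$-torsion \'etale $\vphi$-module which is finite free over $\fo_{\Eps,\F}$, say of rank $d:=\dim_\F\bar\rho$. The hypothesis that $\bar\rho$ is of height $\leqs h$ is, by Definition~\ref{def:hthTors}\eqref{def:hthTors:Siglattice}, exactly the statement that $M$ admits a $\Sig$-submodule of height $\leqs h$, so Lemma~\ref{lem:MaxMinProlongation} applies and produces the maximal one $\gM^+\subset M$, with $\gM^+\in\Modh$ and $\gM^+\otimes_\Sig\fo_\Eps=M$.

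Next I would check that $\gM^+$ is $\F$-stable. For $a\in\F^\times$, multiplication by $a$ on $M$ is a $\vphi$-compatible $\fo_\Eps$-linear \emph{automorphism} --- here one uses that $\sig$ acts $\F$-linearly on $\fo_{\Eps,\F}$, so that $\vphi_M$ commutes with scaling by $a$ --- hence $a\gM^+$ is again a $\Sig$-submodule of height $\leqs h$ in $M$; maximality gives $a\gM^+\subseteq\gM^+$, and the same applied to $a^{-1}$ gives $a\gM^+=\gM^+$. Thus $\gM^+$ is a module over $\Sig_\F=\Sig/p\otimes_{\Fp}\F$ carrying a $\vphi$-compatible $\F$-action. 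To see that it is finite \emph{free} over $\Sig_\F$, I would note that $k\otimes_{\Fp}\F$ is a finite product of finite fields ($k$ and $\F$ being perfect), so $\Sig_\F\cong\prod_i\kappa_i[[u]]$ and $\fo_{\Eps,\F}\cong\prod_i\kappa_i((u))$; decomposing $\gM^+\subset M$ along the idempotents, each component of $\gM^+$ is a finitely generated, $u$-torsion-free (because $\gM^+\in\Modh$) module over the complete discrete valuation ring $\kappa_i[[u]]$ that spans the $d$-dimensional $\kappa_i((u))$-vector space $M_i$, hence is free of rank $d$; so $\gM^+$ is finite free of rank $d$ over $\Sig_\F$, i.e. $\gM^+\in\ModFIh\F$. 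Finally, applying $\nf T_\Eps$ to $\gM^+\otimes_\Sig\fo_\Eps=M=\nf D_\Eps(\bar\rho(-h))$ and Tate-twisting by $h$ (using \eqref{eqn:TSigTors}), Theorem~\ref{thm:FontaineEtPhiMod} gives a $\GKinfty$-equivariant isomorphism $\Th_\Sig(\gM^+)=\nf T_\Eps(M)(h)\cong\bar\rho$, so $\gM_\F:=\gM^+$ does the job.

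I do not expect a serious obstacle: everything is formal once one has the maximal-prolongation machinery of \S\ref{par:ModFI}--\ref{lem:MaxMinProlongation}. The only point that genuinely uses the hypotheses is the freeness of $\gM^+$ over $\Sig_\F$ (rather than merely over $k[[u]]$), and this is exactly why one should record at the outset that $M$ itself is finite free over $\fo_{\Eps,\F}$ --- equivalently, that it has the same rank over each factor $\kappa_i((u))$; without this the component ranks of $\gM^+$ could \emph{a priori} differ and $\gM^+$ would fail to be free over $\Sig_\F$.
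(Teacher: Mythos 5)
Your proof is correct and matches the paper's: take the maximal prolongation $\gM^+\subset M$, check $\F$-stability by applying maximality to the automorphisms $m\mapsto am$ of $M$, observe $\gM^+$ is then finitely generated $u$-torsion-free hence projective over $\Sig_\F$, and finally show it is free. The only (inessential) divergence is in the freeness step: you deduce equal component ranks of $\gM^+$ from the freeness of $M$ over $\fo_{\Eps,\F}$, whereas the paper instead observes that $\sig$ transitively permutes the orthogonal idempotents of $\Sig_\F$, which together with the $\vphi$-structure on $\gM^+$ forces the ranks to agree; both routes are valid and equally short.
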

\begin{proof}
Put $M:=\nf D^{\leqs h}_\Eps(\bar\rho(-h))$ and let $\gM_\F:=\gM^+\subset M$ be the maximal $\Sig$-submodule of height $\leqs h$. Then the $\vphi$-compatible $\F$-action on $M$ (induced by the scalar multiplication on $\bar\rho$) induces a $\vphi$-compatible $\F$-action on $\gM_\F$, which makes $\gM_\F$ a projective $\Sig_\F$-module. (Note that $\Sig_\F$ is a product of copies of a discrete valuation ring.) To show $\gM_\F\in\ModFIh\F$ it is left to show that $\gM_\F$ is free over $\Sig_\F$, but this follows because the endomorphism $\sig:\self{\Sig_\F}$ transitively permutes the orthogonal idempotents of $\Sig_\F$.
\end{proof}

\begin{rmksub}\label{rmk:FailureIsom4T}
Corollary~\ref{cor:RaynaudScheme} does not fully generalize to a $\GKinfty$-representation $\rho_A$ of height $\leqs h$ over a finite artinean $\Zp$-algebra $A$. 
Assume that $he\geqs p$ where $e$ is the absolute ramification index of $K$, and consider $\F[\ep]$ where $\ep^2=0$. Let $M$ be a rank-$1$ free $\fo_{\Eps,\F[\ep]}$-module equipped with $\vphi_M(\sig^*\e)=(\PP(u)^h+\frac{1}{u}\ep)\e$ for an $\fo_{\Eps,\F[\ep]}$-basis $\e\in M$. Let $\gM$ be a $\Sig_\F$-span of $\{\e,\ivtd u\ep\e\}$ in $M$. Then $\gM\subset M$ is a $\Sig$-submodule of height $\leqs h$ (using that $he\geqs p$), but one can check that there cannot exist a $\Sig$-submodule of height $\leqs h$ which is rank-$1$ free over $\Sig_{\F[\ep]}$.\footnote{One way to see this is by directly computing the ``$\vphi$-matrix'' for any $\fo_{\Eps,\F[\ep]}$-basis $\e'\in M$, and show that it cannot divide $\PP(u)^h$.} Note also that $\gM$ above is the maximal $\Sig$-submodule of height $\leqs h$ and has a $\vphi$-compatible $\F[\ep]$-action induced from $M$, but $\gM$ is not projective over $\Sig_{\F[\ep]}$. This is where the proof of Corollary~\ref{cor:RaynaudScheme} fails.
\end{rmksub}

\begin{propsub}\label{prop:LimitThm}
A $\Zp$-lattice $\GKinfty$-representation $T$ is of height $\leqs h$ in the sense of Definition~\ref{def:hthLattice} if and only if $T/p^nT$ is of height $\leqs h$ for each $n$ in the sense of Definition \ref{def:hthTors}.
\end{propsub}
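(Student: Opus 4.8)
The ``only if'' direction is immediate: if $T\cong\nf T^{\leqs h}_\Sig(\gM)$ with $\gM\in\phimodh\Sig$, tensoring $0\to\gM\xra{p^n}\gM\to\gM/p^n\gM\to0$ with the $\Sig$-flat algebra $\fo_\Eps$ keeps it exact, so applying the exact functor $\nf T_\Eps$ (Theorem~\ref{thm:FontaineEtPhiMod}) and the Tate twist gives $\nf T^{\leqs h}_\Sig(\gM/p^n\gM)\cong T/p^nT$, with $\gM/p^n\gM\in\Modh$ since $\gM$ is $\Sig$-free. For the converse the plan is to assemble the desired finite free $\Sig$-module model of $T$ out of the maximal $\Sig$-submodules of height $\leqs h$ modulo $p^n$ via an inverse limit, and then to repair possible non-freeness by passing to a reflexive hull.

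Concretely, put $M:=\nf D_\Eps(T(-h))$, a free \'etale $\vphi$-module over $\fo_\Eps$ of rank $d=\rk_{\Zp}T$, so $M/p^nM\cong\nf D_\Eps\big((T/p^nT)(-h)\big)$. By hypothesis and Definition~\ref{def:hthTors}\eqref{def:hthTors:Siglattice}, each $M/p^nM$ admits a $\Sig$-submodule of height $\leqs h$, hence by Lemma~\ref{lem:MaxMinProlongation} a maximal one $\gM_n\subseteq M/p^nM$. One first checks $(\gM_n)_n$ forms an inverse system: the reduction $M/p^{n+1}M\thra M/p^nM$ carries $\gM_{n+1}$ into $\gM_n$, since its image is a finitely generated, $u$-torsion-free, $\vphi$-stable $\Sig$-submodule which generates $M/p^nM$ after inverting $u$ and whose $\vphi$-cokernel, being a quotient of $\coker(\vphi_{\gM_{n+1}})$, is killed by $\PP(u)^h$ — so it is contained in the maximal such submodule $\gM_n$. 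Set $\gM:=\varprojlim_n\gM_n\subseteq\varprojlim_nM/p^nM=M$ (the latter equality since $M$ is $p$-adically complete), a $\vphi$-stable $\Sig$-submodule with $\vphi_\gM=\varprojlim_n\vphi_{\gM_n}$; injectivity of each $\vphi_{\gM_n}$ lets one lift $\PP(u)^h$-multiples compatibly, giving $\coker(\vphi_\gM)$ killed by $\PP(u)^h$. Once one knows (see below) that $\gM$ is finitely generated over $\Sig$ with $\gM\otimes_\Sig\fo_\Eps=M$, its reflexive hull $\gM^{\vee\vee}$ is a finite \emph{free} $\Sig$-module (a reflexive finitely generated module over a two-dimensional regular local ring has depth $2$, hence is free by Auslander--Buchsbaum), still $\vphi$-stable with $\coker$ killed by $\PP(u)^h$ (these conditions can be checked at height-one primes, where $\gM^{\vee\vee}$ and $\gM$ agree), and still satisfying $\gM^{\vee\vee}\otimes_\Sig\fo_\Eps=M$ (the quotient $\gM^{\vee\vee}/\gM$ is killed by a power of $u$, hence vanishes after $\otimes_\Sig\fo_\Eps$). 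Thus $\gM^{\vee\vee}\in\phimodh\Sig$ and $\nf T^{\leqs h}_\Sig(\gM^{\vee\vee})=\nf T_\Eps(M)(h)=T$, so $T$ is of height $\leqs h$. (Alternatively, once $\gM\otimes_\Sig\Eps\cong M[\ivtd p]$ one may conclude $T[\ivtd p]$ is of height $\leqs h$ and invoke Theorem~\ref{thm:main}\eqref{thm:main:lattices}.)

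The crux, and the step I expect to be the main obstacle, is the finiteness of $\gM$ over $\Sig$; I would deduce it from a \emph{uniform} boundedness of the $\gM_n$: there should exist one $\vphi$-stable $\Sig$-lattice $\gN\subseteq M$ and an integer $c$, depending only on $M$ and $h$, with $u^c(\gN/p^n\gN)\subseteq\gM_n\subseteq u^{-c}(\gN/p^n\gN)$ for every $n$. Then $u^c\gN\subseteq\gM\subseteq u^{-c}\gN$ gives finiteness and $\gM\otimes_\Sig\fo_\Eps=M$ simultaneously, while a Mittag--Leffler argument — legitimate because $u^{-c}\gN/u^c\gN$ is a module over the finite ring $\Sig/(p^n,u^{2c})$ — disposes of the remaining $\varprojlim$-bookkeeping, in particular $\gM/p^n\gM\cong\gM_n$. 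This uniform bound is where the height hypothesis enters essentially: comparing $\gM_n$ with $\gN$ through a Cartan decomposition, the relations $\vphi(\sig^*\gM_n)\subseteq\gM_n$ and $\PP(u)^h\gM_n\subseteq\vphi(\sig^*\gM_n)$, together with $\vphi_M$ being an isomorphism whose $u$-denominators are bounded in terms of $M$ alone, confine the elementary divisors of $\gM_n$ relative to $\gN$ to an interval independent of $n$ — the lower inclusion being obtained by feeding the upper one into the Cartier-type duality of \S\ref{par:CarDual}. Granting this, the remaining steps are routine.
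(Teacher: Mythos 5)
Your proposal follows essentially the same Raynaud-style limit argument that the paper's proof sketch defers to: maximal prolongations (via Lemma~\ref{lem:MaxMinProlongation}) assembled into an inverse system using the ``schematic closure'' analogue (images of $\gM_{n+1}$ under reduction), the duality of \S\ref{par:CarDual}, and a uniform boundedness that is precisely the content of Raynaud's key estimate, which the paper likewise only references rather than proves. Your ``only if'' direction is correct, and the reflexive-hull step at the end is a clean (if slightly nonstandard) way to recover $\Sig$-freeness from the finitely generated limit.
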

\begin{proof}[Idea of Proof]
This proposition is proved in \cite[Prop~9.2.6]{Kim:Thesis} (\emph{cf.} \cite[Thm~2.4.1, \S4.4]{Liu:Fontaine}) and here we only indicate the main idea. The proposition follows from the same argument as the proof of Raynaud's ``limit theorem'' for Barsotti-Tate representations \cite[Prop~2.3.1]{raynaud:GpSch}, but working with torsion $\Sig$-modules of height $\leqs h$ instead of finite flat group schemes. This is possible, provided that we have suitable analogues of generic fiber, prolongations (with a notion of partial ordering), schematic closure and Cartier duality. The analogue of Cartier dual $\gM^\vee$ for $\gM\in(\Mod/\Sig)^{\leqs h}$ is introduced in \S\ref{par:CarDual}. 
The analogue of the generic fiber for torsion $\vphi$-modules of height $\leqs h$ is the $p^\infty$-torsion \'etale $\vphi$-module obtained by extending scalars to $\fo_\Eps$, and the analogue of prolongation is a $\Sig$-submodule of height $\leqs h$ in a torsion \'etale $\vphi$-module in the sense of Definition~\ref{def:hthTors}\eqref{def:hthTors:Siglattice}  with partial ordering given by inclusion, which is bounded by Lemma ~\ref{lem:MaxMinProlongation}. 
For $\gM\in(\Mod/\Sig)^{\leqs h}$ and a surjective map $f: \gM\otimes_\Sig\fo_\Eps \thra M'$ of \'etale $\vphi$-modules, $f(\gM)$ serves as the analogue of ``schematic closure of $M'$ in $\gM$''. Note that $f(\gM)$ is a torsion $\vphi$-module of height $\leqs h$.   
\end{proof}


We end this section by stating Kisin's theorem on the existence of (potentially) crystalline and semistable deformation rings. Let $\F$ be a finite field of characteristic $p$, $\fo$ a $p$-adic discrete valuation ring with residue field $\F$, and $\bar\rho:\GK\ra\GL_d(\F)$ a  representation. Let $R^\Box$ be a universal framed deformation ring of $\rho$ over $\fo$, which exists by Mazur's theorem \cite{Mazur:DeformingGalReps}. Let $\rho^\Box$ be the universal framed deformation of $\bar\rho$. 
\begin{thmsub}[Kisin]\label{thm:CrysStDeforRing}
There exists unique $\fo$-flat quotients $R_{\cris}^{\Box,[0,h]}$ and  $R_{\st}^{\Box,[0,h]}$ of $R^\Box$ such that for any finite $\Qp$-algebra $A$ a morphism $\xi:R^\Box \ra A$ factors through $R^{\Box,[0,h]}_{\cris}$ (respectively, $R^{\Box,[0,h]}_{\st}$) if and only if $\rho^\Box\otimes_{R,\xi}A$ is crystalline (respectively, semi-stable) with Hodge-Tate weights in $[0,h]$ as a $\GK$-representation over $\Qp$ in the sense of Fontaine \cite{fontaine:Asterisque223ExpIII}. A similar statement holds for (unframed) universal deformation rings if they exist.
\end{thmsub}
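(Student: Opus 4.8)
This is a theorem of Kisin; I will outline the strategy of \cite[\S2]{kisin:pstDefor} (\emph{cf.}\ \cite[Cor.~2.2.6]{kisin:fcrys} for $h=1$). The first step is to reduce everything to the rigid generic fibre. Since $p$ is invertible in any finite $\Qp$-algebra $A$, a morphism $\xi\colon R^\Box\ra A$ automatically factors through $R^\Box[\ivtd p]$, and an $\fo$-flat quotient $R^\Box\thra R'$ is $p$-torsion free, hence embeds into $R'[\ivtd p]$ --- a quotient of $R^\Box[\ivtd p]$ --- and is recovered as $\im\!\big(R^\Box\ra R'[\ivtd p]\big)$. So it is enough to produce a \emph{closed} subscheme $Z_{\cris}\subseteq\Spec R^\Box[\ivtd p]$ (resp.\ $Z_{\st}$) such that, for every finite local $\Qp$-algebra $A$ (the general case reducing to this by decomposing $A$ into a product), an $A$-point of $\Spec R^\Box[\ivtd p]$ lies on $Z_{\cris}$ if and only if the associated deformation is crystalline (resp.\ semistable) with Hodge-Tate weights in $[0,h]$; one then defines $R^{\Box,[0,h]}_{\cris}:=\im\!\big(R^\Box\ra\mathcal{O}(Z_{\cris})\big)$, which is $\fo$-flat because it sits inside a $\Qp$-algebra and whose generic fibre is $\mathcal{O}(Z_{\cris})$. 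Uniqueness is then automatic: $\Spec R^\Box[\ivtd p]$ is Jacobson, so its closed points (with residue fields finite over $\Qp$) are dense, and the prescribed $A$-points --- for $A$ running over residue fields and small Artinian thickenings --- pin down a closed subscheme uniquely.

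The heart of the proof is the construction of $Z_{\cris}$ and $Z_{\st}$, and this is the step I expect to be hard. Let $\rho^{\univ}$ be the universal framed deformation over $R^\Box[\ivtd p]$. The plan is to attach to it, using the theory of $(\vphi,\Gamma)$-modules over the Robba ring in families (or, closer to the present section, Kisin's $\vphi$-modules over the ring of functions on the open unit disk equipped with the logarithmic connection $N_\nabla$), a linear-algebra object whose sub-$\vphi$-modules ``of height $\leqs h$'' detect (semi)stability: being crystalline (resp.\ semistable) with Hodge-Tate weights in $[0,h]$ should be, point by point, equivalent to the existence of such a submodule with $N_\nabla=0$ (resp.\ with $N$ nilpotent), in the spirit of Theorems~\ref{thm:main} and \ref{thm:KisinIntPAdicHT}. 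The crucial geometric input is that the candidate submodules of bounded denominator inside a fixed $\vphi$-module are parametrised by a \emph{projective} scheme over the base --- an affine-Grassmannian-type moduli, directly analogous both to Kisin's ``moduli of finite flat group schemes'' and to the ``moduli of $\Sig$-modules'' used later in this paper. Cutting out the closed conditions ``$\vphi$-stable'', ``cokernel killed by $\PP(u)^h$'', ``$\Gamma$- (resp.\ $N_\nabla$-)stable'' and the $N$-condition then produces a scheme \emph{proper} over $\Spec R^\Box[\ivtd p]$, whose scheme-theoretic image is the sought-for $Z_{\cris}$ (resp.\ $Z_{\st}$). The remaining point --- that this closed subscheme has exactly the asserted $A$-points, i.e.\ carries the correct scheme structure --- follows by combining the pointwise characterisation with the uniqueness of the height-$\leqs h$ submodule over the generic fibre (full faithfulness), which makes the moduli map a monomorphism over the relevant locus.

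Finally, the unframed statement descends: when $\End_{\GK}(\bar\rho)=\F$ the universal deformation ring $R_{\bar\rho}$ exists and $R^\Box$ is a power series ring over it, hence faithfully flat; since the crystalline/semistable condition does not depend on the framing, $R^{\Box,[0,h]}_{\cris}$ is the base change of a (unique, $\fo$-flat) quotient $R^{[0,h]}_{\cris}$ of $R_{\bar\rho}$ with the analogous universal property. The genuine difficulty, to repeat, lies entirely in the middle step --- showing that ``(semi)stable with Hodge-Tate weights in $[0,h]$'' is a closed condition with the right scheme structure --- which needs the theory of $(\vphi,\Gamma)$-modules / Wach modules over general $\Qp$-algebra coefficients and the properness of the relevant moduli, the technical core of \cite{kisin:pstDefor}.
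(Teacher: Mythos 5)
Your strategy is a faithful outline of Kisin's original construction in \cite[\S2]{kisin:pstDefor}, which is exactly what the paper cites for the existence, and your uniqueness argument (Jacobson on the generic fibre plus recovering the $\fo$-flat quotient as the image in its own generic fibre) is the same two-step remark the paper records after the theorem statement. The paper itself does not reprove the existence; it also mentions the alternative and more ``hands-off'' route of T.~Liu \cite{Liu:Fontaine}, who first proves a ``limit theorem'' for torsion (semi-)stable representations with bounded Hodge--Tate weights (the direct analogue of Proposition~\ref{prop:LimitThm} here) and then obtains the deformation ring by Ramakrishna's criterion, thereby avoiding the moduli-of-$\vphi$-modules machinery for this particular step; your proposal does not discuss that alternative, but that is not a gap since either construction yields the stated object.
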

Note that the mapping properties in the corollary above only characterize the generic fibers\footnote{Note that $R^\Box[\ivtd p]$ is a Jacobson ring so the properties uniquely determines $R^{\Box,[0,h]}_{\cris}[\ivtd p]$ and $R^{\Box,[0,h]}_{\st}[\ivtd p]$ if they exist.} $R^{\Box,[0,h]}_{\cris}[\ivtd p]$ and $R^{\Box,[0,h]}_{\st}[\ivtd p]$ as quotients of $R^\Box[\ivtd p]$, so the uniqueness assertion follows from requiring that the quotients are $\fo$-flat. These quotients  $R_{\cris}^{\Box,[0,h]}$ and  $R_{\st}^{\Box,[0,h]}$  were first constructed by Kisin \cite[\S2]{kisin:pstDefor}. Later, T.~Liu  \cite{Liu:Fontaine} gave more ``conceptual'' construction of these quotients by proving the ``limit theorem'' (i.e., the natural analogue of Proposition~\ref{prop:LimitThm}) for crystalline and semi-stable $\GK$-representations with \emph{bounded}  Hodge-Tate weights. 

Using \cite[Corollary~2.2.6]{kisin:fcrys} and Raynaud's ``limit theorem'' \cite[Proposition~2.3.1]{raynaud:GpSch}, it follows that $R^{\Box,[0,1]}_{\cris}\cong R^\Box_{\fl}/(p^\infty\textrm{-torsion})$ where $R^\Box_{\fl}$ is a framed flat deformation ring. We do not use this result, as we solely work with $R^{\Box,[0,1]}_{\cris}$.

\section{$\GKinfty$-deformation rings of height $\leqs h$}\label{sec:Representability}
\subsection{}\label{subsec:RepbilitySetup}
Let $\F$ be a finite field of characteristic $p$, and $\bar\rho_\infty:\GKinfty\ra\GL_d(\F)$ a  representation. Let $\fo$ be a $p$-adic discrete valuation ring with residue field $\F$. Let $\art{\fo}$ be the category of artin local $\fo$-algebras $A$ whose residue field is $\F$, and let $\arhat{\fo}$ be the category of complete local noetherian $\fo$-algebras with residue field  $\F$.

Let $D_\infty,D_\infty^\Box:\arhat\fo \ra \Sets$ be the deformation functor and framed deformation functor for $\bar\rho_\infty$. For the definition, see the standard references such as \cite{Mazur:Deformation-FLT, Mazur:DeformingGalReps, Gouvea:GalDefor}. Contrary to local and global deformation functors we usually consider, these functors \emph{cannot} be represented by complete local noetherian rings since the tangent spaces $D_\infty (\F[\ep])$ and $D_\infty^\Box(\F[\ep])$ are infinite dimensional $\F$-vector spaces. See \S\ref{par:MazurRamakrishna} for more details.

We say that a deformation $\rho_{\infty,A}$ over $A\in\art\fo$ is \emph{of height $\leqs h$} if it is a torsion $\GKinfty$-representation of height $\leqs h$ as a torsion $\Zp[\GKinfty]$-module; or equivalently, if there exists $\gM\in\Modh$ and an isomorphism $\Th_\Sig(\gM)\cong\rho_{\infty,A}$ as $\Zp[\GKinfty]$-modules. For $A\in\arhat\fo$, we say that $\rho_{\infty,A}$ is \emph{of height $\leqs h$} if $\rho_{\infty,A}\otimes A/\m_A^n$ is a deformation of height $\leqs h$ for each $n$. When $A\in\art\fo$, both definitions are compatible by Proposition~\ref{prop:RamakrishnaCrit}. When $A$ is finite flat over $\Zp$, a deformation $\rho_{\infty,A}$ over $A$ is of height $\leqs h$ if and only if $\rho_{\infty,A}$ is of height $\leqs h$ as a $\Zp$-lattice $\GKinfty$-representation (in the sense of Definition~\ref{def:hthLattice}), by Proposition~\ref{prop:LimitThm}.

Let $D_\infty^{\leqs h}\subset D_\infty$ and $D_\infty^{\Box,\leqs h}\subset D_\infty^\Box$ respectively denote subfunctors of deformations and framed deformations of height $\leqs h$. The main goal of this section is to prove the following theorem:
\begin{thm}\label{thm:Representability}
The functor $D^{\leqs h}_\infty$ always has a hull. If $\End_{\GKinfty}(\bar\rho_\infty) \cong \F$ then $D^{\leqs h}_\infty$ is representable (by $R_\infty^{\leqs h}\in\arhat\fo$). The functor $D^{\Box,\leqs h}_\infty$ is representable (by $R_\infty^{\Box,\leqs h}\in\arhat\fo$) with no assumption on $\bar\rho_\infty$.
Furthermore, the natural inclusions $D^{\leqs h}_\infty \hra D_\infty$ and $D^{\Box,\leqs h}_\infty\hra D^\Box_\infty$ of functors are relatively representable by surjective maps in $\arhat\fo$.
\end{thm}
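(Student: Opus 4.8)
The plan is to verify Schlessinger's criteria (H1)--(H4) for $D^{\leqs h}_\infty$ and $D^{\Box,\leqs h}_\infty$, using throughout that the condition ``of height $\leqs h$'' is stable under the basic module operations (Proposition~\ref{prop:RamakrishnaCrit}). The unrestricted functors $D_\infty$ and $D^\Box_\infty$ satisfy (H1) and (H2) for formal reasons -- a pair of deformations agreeing over $A$ glues to a deformation over $A'\times_A A''$ whose underlying module is the fibre product of the two free modules -- and they fail only (H3), the finite-dimensionality of the tangent space, because $\GKinfty$ has infinite mod $p$ cohomology; reinstating (H3) after imposing the height condition is the substantive content. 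For the formal inputs: if $\rho'$ and $\rho''$ are deformations of $\bar\rho_\infty$ over $A'$ and $A''$ restricting to a common $\rho$ over $A$, the glued deformation $\rho'\times_\rho\rho''$ over $A'\times_A A''$ is a $\GKinfty$-submodule of $\rho'\oplus\rho''$; so if $\rho'$ and $\rho''$ have height $\leqs h$, then $\rho'\oplus\rho''$ does (direct sums) and hence its submodule $\rho'\times_\rho\rho''$ does (subquotients), both by Proposition~\ref{prop:RamakrishnaCrit}. This passes (H1), (H2) from $D_\infty$, $D^\Box_\infty$ to $D^{\leqs h}_\infty$, $D^{\Box,\leqs h}_\infty$, and the same gluing argument passes (H4) -- which holds unconditionally for the framed functor $D^\Box_\infty$ (framed deformations have no nontrivial automorphisms) and for $D_\infty$ exactly when $\End_{\GKinfty}(\bar\rho_\infty)\cong\F$ -- to the two height-$\leqs h$ subfunctors.

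The same circle of ideas gives relative representability of $D^{\leqs h}_\infty\hra D_\infty$ and $D^{\Box,\leqs h}_\infty\hra D^\Box_\infty$ by surjections. Given $A\in\art\fo$ and a (framed) deformation $\rho_A$, consider the set $S$ of ideals $I\subseteq A$ with $\rho_A\otimes_A A/I$ of height $\leqs h$: it contains $\m_A$ (the residual representation is of height $\leqs h$ by hypothesis, cf.\ Corollary~\ref{cor:RaynaudScheme}), and it is closed under finite intersection, since $\rho_A$ being $A$-free gives $(I_1\cap I_2)\rho_A = I_1\rho_A\cap I_2\rho_A$, so that $\rho_A\otimes_A A/(I_1\cap I_2)$ embeds $\GKinfty$-equivariantly into $(\rho_A\otimes_A A/I_1)\oplus(\rho_A\otimes_A A/I_2)$, whence Proposition~\ref{prop:RamakrishnaCrit} applies. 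As $A$ is artinian, $I_{\rho_A}:=\bigcap_{I\in S}I$ lies in $S$ and is its least element, and $A\thra A/I_{\rho_A}$ represents the fibre of the inclusion over $\rho_A$; the case $A\in\arhat\fo$ follows by taking the inverse limit over the $A/\m_A^n$.

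The heart of the matter is (H3): $D^{\leqs h}_\infty(\F[\ep])$ is finite-dimensional over $\F$ -- equivalently, as $\F$ is finite, there are only finitely many deformations of $\bar\rho_\infty$ to $\F[\ep]$ of height $\leqs h$; the corresponding statement for $D^{\Box,\leqs h}_\infty(\F[\ep])$ then follows since it differs only by the finite-dimensional space of framings. I would argue as follows. By Fontaine's equivalence (Theorem~\ref{thm:FontaineEtPhiMod}), such a deformation $\rho_{\F[\ep]}$ corresponds to a $p$-torsion \'etale $(\vphi,\fo_\Eps)$-module $D_{\F[\ep]}:=\nf D_\Eps(\rho_{\F[\ep]}(-h))$, free of rank $d$ over $\fo_{\Eps,\F[\ep]}$ and reducing modulo $\ep$ to $\bar D:=\nf D_\Eps(\bar\rho_\infty(-h))$, and the height condition says precisely that $D_{\F[\ep]}$ contains a $\Sig$-submodule of height $\leqs h$ in the sense of Definition~\ref{def:hthTors}. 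Let $\gM^+_{\F[\ep]}\subseteq D_{\F[\ep]}$ be the maximal such submodule (Lemma~\ref{lem:MaxMinProlongation}); by the remark following that lemma it is preserved by the $\F[\ep]$-action, hence is a $(\vphi,\Sig_{\F[\ep]})$-module of height $\leqs h$ with $\gM^+_{\F[\ep]}\otimes_{\Sig_{\F[\ep]}}\fo_{\Eps,\F[\ep]}\cong D_{\F[\ep]}$ and $\rho_{\F[\ep]}\cong\Th_\Sig(\gM^+_{\F[\ep]})$. It then remains to note that such $\gM^+_{\F[\ep]}$ -- a finitely generated $\Sig$-module with no $u$-torsion, killed by $p^2$, with $\gM^+_{\F[\ep]}[\ivtd u]$ of bounded $\fo_\Eps$-length (at most twice that of $\bar D$) and with bounded ``$u$-denominators'' (a uniform version of the boundedness underlying Lemma~\ref{lem:MaxMinProlongation}, controlled by $h$, $e$, and $\bar\rho_\infty$) -- fall into finitely many isomorphism classes as $(\vphi,\Sig_{\F[\ep]})$-modules with $\F[\ep]$-action, so there are only finitely many possibilities for $\rho_{\F[\ep]}\cong\Th_\Sig(\gM^+_{\F[\ep]})$.

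The main obstacle is precisely this last finiteness assertion, and the subtlety to watch -- flagged in Remark~\ref{rmk:FailureIsom4T} -- is that when $he\geqs p$ a deformation of height $\leqs h$ over $\F[\ep]$ need \emph{not} admit any $\Sig$-module model that is free over $\Sig_{\F[\ep]}$; this is why one cannot invoke representability of a naive ``moduli of free $\vphi$-modules of height $\leqs h$'' and must instead work with the canonical maximal model $\gM^+_{\F[\ep]}$, which is only guaranteed to be finitely generated over $\Sig_{\F[\ep]}$, controlling it via the boundedness of prolongations. Granting (H1), (H2), (H3), (H4), Schlessinger's theorem then yields a hull for $D^{\leqs h}_\infty$ in all cases, representability of $D^{\leqs h}_\infty$ by some $R^{\leqs h}_\infty\in\arhat\fo$ when $\End_{\GKinfty}(\bar\rho_\infty)\cong\F$, and representability of $D^{\Box,\leqs h}_\infty$ by some $R^{\Box,\leqs h}_\infty\in\arhat\fo$ unconditionally -- the representing objects being complete local noetherian since their tangent spaces are finite over $\fo$ by (H3). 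Together with the relative representability established above, this proves the theorem.
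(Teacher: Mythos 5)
Your structural outline matches the paper's proof: Schlessinger's criteria, Ramakrishna's observation that a subfunctor closed under subquotients and direct sums inherits (H1), (H2), (H4) from the ambient functor, the relative representability argument (which is exactly the paper's Proposition~\ref{prop:RelRepbility}), and the reduction of (H3) to a finiteness statement via Fontaine's equivalence and the maximal $\Sig$-submodule $\gM^+$ of Lemma~\ref{lem:MaxMinProlongation}. You also correctly flag, via Remark~\ref{rmk:FailureIsom4T}, that $\gM^+_{\F[\ep]}$ need not be free over $\Sig_{\F[\ep]}$, which rules out a naive moduli argument.

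However, the heart of (H3) is simply asserted, and this is a genuine gap, not an omitted routine check. You write that objects with ``bounded $u$-denominators'' (and bounded length) ``fall into finitely many isomorphism classes''; that is precisely what must be proved, and it is not immediate. Concretely, fix a $\Sig_\F$-basis $\{e_i\}$ of a maximal $\ol\gM$ in $\ol M$ with $\vphi$-matrix $\alpha$; after lifting to an $\fo_{\Eps,\F[\ep]}$-basis, a height-$\leqs h$ deformation contributes a $\vphi$-matrix $\alpha+\ep\beta$, and the bound in the paper's step~\ref{Repbility5} (derived by hand from the $\vphi$-matrix of $\ol\gM$, not from the algebra argument of Lemma~\ref{lem:MaxMinProlongation}) gives $\beta\in u^{-r}\Mat_n(\Sig_\F)$ with $r$ uniform. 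But $u^{-r}\Mat_n(\Sig_\F)$ is infinite, and the equivalence relation $\beta\sim\beta+(\alpha\sig(X)-X\alpha)$, $X\in\Mat_n(\fo_{\Eps,\F})$, has no reason a priori to cut this down to a finite set. The paper closes the gap with the successive-approximation Claim~\ref{Repbility7}: for $X\in u^c\Mat_n(\Sig_\F)$ with $c>2he$ one shows $\beta\sim\beta+X$ by converging a telescoping sum, using $u^{he}\alpha^{-1}\in\Mat_n(\Sig_\F)$ (which is exactly where the height $\leqs h$ hypothesis enters) and the inequality $p(c-he)>c$. That produces a surjection from the finite set $u^{-r}\Mat_n(\Sig_\F)/u^c\Mat_n(\Sig_\F)$ onto the set of equivalence classes, and only then is (H3) established. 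One further refinement you elide: one must vary over the finitely many choices of $\ol\gM\subset\ol M$ (the ``index $a$'' in the paper's step~\ref{Repbility6}), because a given lift's $\gM^+$ need not reduce to any single pre-chosen $\ol\gM$. Without the conjugacy computation and this enumeration, the finiteness claim has no proof.
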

We call $R_\infty^{\Box,\leqs h}$ the \emph{universal framed deformation ring of height $\leqs h$} and $R_\infty^{\leqs h}$ the  \emph{universal deformation ring of height $\leqs h$} if it exists. 
We give a proof later in this section (\S~\ref{par:MazurRamakrishna} and onward), and simply remark that the hard part of the proof is to show that the tangent space of $D^{\leqs h}_\infty$ is a finite-dimensional $\F$-vector space. 

We record a few consequences of Theorem~\ref{thm:Representability}. Firstly, we obtain the following result which was proved by Kisin \cite[Prop~1.6.4(2)]{kisin:pstDefor} via different method:
\begin{cor}\label{cor:hthQt}
Let $\rho_{\infty,R}$ be a $\GKinfty$-representation over $R\in\arhat\fo$. Then there exists a quotient $R^{\leqs h}$ of $R$ such that for any finite $W(k)[\ivtd p]$-algebra $A$, a map $\xi:R\ra A$ factors through $R^{\leqs h}$ if and only if  $\rho_{\infty,R}\otimes_{R,\xi}A$ is of height $\leqs h$ as $\GKinfty$-representation over $\Qp$ (in the sense of Definition ~\ref{def:hthLattice}). 
\end{cor}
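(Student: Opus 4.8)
The plan is to first realize $R^{\leqs h}$ as a quotient of $R$ carrying the analogous interpolation property \emph{over the category $\arhat\fo$}, and then to bootstrap from artinian (and complete noetherian local) $\fo$-algebras to finite $W(k)[\ivtd p]$-algebras using the ``limit theorem'' (Proposition~\ref{prop:LimitThm}) together with the fact that being of height $\leqs h$ is insensitive to the choice of $\GKinfty$-stable lattice (Theorem~\ref{thm:main}\eqref{thm:main:lattices}). For the first part: choosing an $R$-basis of $\rho_{\infty,R}$ reducing to an $\F$-basis of $\bar\rho_\infty:=\rho_{\infty,R}\otimes_R\F$ presents $\rho_{\infty,R}$ as a point of $D_\infty^{\Box}(R)$, and feeding this point into the relative representability of $D_\infty^{\Box,\leqs h}\hra D_\infty^{\Box}$ (part of Theorem~\ref{thm:Representability}) produces a quotient $R\thra R^{\leqs h}$ such that a morphism $R\ra B$ in $\arhat\fo$ factors through $R^{\leqs h}$ if and only if $\rho_{\infty,R}\otimes_R B$ is of height $\leqs h$, equivalently (by definition) if and only if $\rho_{\infty,R}\otimes_R B/\m_B^n$ is a torsion $\GKinfty$-representation of height $\leqs h$ for every $n$, in the sense of Definition~\ref{def:hthTors}. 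Since height $\leqs h$ depends only on the isomorphism class of the $\GKinfty$-representation and not on the framing, this quotient is independent of the chosen basis (when $\End_{\GKinfty}(\bar\rho_\infty)\cong\F$ one may instead apply the relative representability of $D_\infty^{\leqs h}\hra D_\infty$). It then remains to check that this $R^{\leqs h}$ has the mapping property asserted for finite $W(k)[\ivtd p]$-algebras.

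Next, given such an $A$ and $\xi:R\ra A$, I would replace $A$ by the image $\OO':=\xi(R)=R/\ker\xi$. One checks that $\OO'$ lies in $\arhat\fo$, is $p$-torsion free (being a subring of the $\Qp$-algebra $A$), and has $\OO'[\ivtd p]\subseteq A$ a finite $\Qp$-algebra; hence $\dim\OO'=1$, so by the Cohen structure theorem $\OO'$ is finite over $W(\F)$, in particular finite free over $\Zp$. Thus $\OO'/p^n\OO'\in\art\fo$, the filtrations $\{p^n\OO'\}$ and $\{\m_{\OO'}^n\}$ are cofinal, and $\OO'^{\oplus d}$ equipped with the action $\rho_{\OO'}:=\rho_{\infty,R}\otimes_R\OO'$ is a $\GKinfty$-stable $\Zp$-lattice in the $\Qp$-representation $V:=\rho_{\OO'}[\ivtd p]$. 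Decomposing $\OO'[\ivtd p]$ into its local factors shows $A$ is finite free over $\OO'[\ivtd p]$, so $\rho_{\infty,R}\otimes_{R,\xi}A$ is, as a $\Qp[\GKinfty]$-module, a finite direct sum of direct summands of $V$ with $V$ itself among those summands; since direct sums and direct summands of $\GKinfty$-representations of height $\leqs h$ are again of height $\leqs h$ (Proposition~\ref{prop:RamakrishnaCrit}, Proposition~\ref{prop:LimitThm}, Theorem~\ref{thm:main}\eqref{thm:main:lattices}), $\rho_{\infty,R}\otimes_{R,\xi}A$ is of height $\leqs h$ if and only if $V$ is. Moreover $\xi$ factors through $R^{\leqs h}$ if and only if the surjection $R\thra\OO'$ does, since both amount to $\ker(R\thra R^{\leqs h})\subseteq\ker\xi$.

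Finally I would chain these equivalences. By Proposition~\ref{prop:LimitThm} the $\Zp$-lattice representation $\OO'^{\oplus d}$ is of height $\leqs h$ in the sense of Definition~\ref{def:hthLattice} if and only if $\OO'^{\oplus d}/p^n=\rho_{\infty,R}\otimes_R(\OO'/p^n\OO')$ is of height $\leqs h$ as a torsion representation for every $n$; by cofinality of the two filtrations and the fact that quotients of torsion representations of height $\leqs h$ remain of height $\leqs h$ (Proposition~\ref{prop:RamakrishnaCrit}), this is equivalent to $\rho_{\infty,R}\otimes_R(\OO'/\m_{\OO'}^n\OO')$ being of height $\leqs h$ for all $n$, i.e.\ to $\rho_{\infty,R}\otimes_R\OO'$ being of height $\leqs h$ over $\OO'\in\arhat\fo$, which by the construction above is equivalent to $R\thra\OO'$ factoring through $R^{\leqs h}$. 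Combining this with the previous paragraph, $\rho_{\infty,R}\otimes_{R,\xi}A$ is of height $\leqs h$ as a $\GKinfty$-representation over $\Qp$ if and only if $\xi$ factors through $R^{\leqs h}$, which is Corollary~\ref{cor:hthQt}.

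The step I expect to be the main obstacle is the passage from the artinian formalism to the rigid-analytic side: one must identify the image ring $\OO'=\xi(R)$, prove it is finite flat over $\Zp$ so that the height $\leqs h$ condition can be tested $p^n$-adically, and carefully transport ``height $\leqs h$'' between the honest $\Qp$-representation $\rho_{\infty,R}\otimes_{R,\xi}A$ and the explicit lattice $\OO'^{\oplus d}$ for arbitrary, possibly non-reduced and non-local, $A$. The inputs from Theorem~\ref{thm:Representability} (relative representability) and Proposition~\ref{prop:LimitThm} (the limit theorem) are invoked as black boxes, so the remaining work is precisely this gluing.
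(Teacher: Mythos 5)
Your proof follows the same route as the paper's, which compresses the whole argument into a single sentence: take $R^{\leqs h}$ to be the universal quotient of height $\leqs h$ produced by relative representability (Proposition~\ref{prop:RelRepbility}, stated as part of Theorem~\ref{thm:Representability}), and then deduce the mapping property on finite $\Qp$-algebras from the limit theorem (Proposition~\ref{prop:LimitThm}). You have unpacked exactly the implicit steps: identifying the image $\OO'=\xi(R)$ as a finite flat $\Zp$-algebra, testing height $\leqs h$ through the cofinal filtrations $\{\m_{\OO'}^n\}$ and $\{p^n\OO'\}$, and then transporting the condition between $\rho_{\infty,R}\otimes_R\OO'[\ivtd p]$ and $\rho_{\infty,R}\otimes_{R,\xi}A$.

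One intermediate assertion is false as stated: it is not true in general that $A$ is finite \emph{free} over $\OO'[\ivtd p]$, and hence $\rho_{\infty,R}\otimes_{R,\xi}A$ need not be a direct sum of direct summands of $V$ with $V$ among them. For instance, with $R=\fo[[y]]$ and $\xi:y\mapsto x^2$ landing in $A=\Qp[x]/(x^3)$, the image is $\OO'=\Zp[x^2]/(x^4)\cong\Zp[z]/(z^2)$, and $A$ has $\Qp$-dimension $3$ over the $2$-dimensional $\OO'[\ivtd p]\cong\Qp[z]/(z^2)$, so it is not free. The fix is easy and uses only what you already cite: since $V$ is free over $\OO'[\ivtd p]$ and $\OO'[\ivtd p]\hookrightarrow A$, flatness gives an injection $V\hookrightarrow V\otimes_{\OO'[\ivtd p]}A$; conversely, $A$ is a quotient of a finite free $\OO'[\ivtd p]$-module, so $V\otimes_{\OO'[\ivtd p]}A$ is a quotient of $V^{\oplus n}$. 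Closure of the height $\leqs h$ condition under subrepresentations, quotients, and finite direct sums of $\Qp$-representations (which follows from Proposition~\ref{prop:RamakrishnaCrit}, Proposition~\ref{prop:LimitThm}, and Theorem~\ref{thm:main}\eqref{thm:main:lattices} by choosing a saturated lattice) then gives the biconditional between $V$ and $\rho_{\infty,R}\otimes_{R,\xi}A$. With ``direct summand'' replaced by ``subobject and quotient'' in this way, your argument is complete and matches the paper's.
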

Note that the above mapping property uniquely determines $R^{\leqs h}[\ivtd p]$ because $R[\ivtd p]$ is a Jacobson ring. For the proof of the corollary, we set $R^{\leqs h}$ to be the universal quotient of $R$ of height $\leqs h$ (as in Theorem~\ref{thm:Representability}), and this satisfies the desired mapping property by Proposition~\ref{prop:LimitThm}.

Let $\bar\rho:\GK\ra\GL_d(\F)$ be a representation and set $\bar\rho_\infty:=\bar\rho|_{\GKinfty}$. Let $R_{\st}^{\Box,[0,h]}$ and $R_{\cris}^{\Box,[0,h]}$ respectively denote semi-stable and crystalline framed deformation ring of $\bar\rho$, and $R_\infty^{\Box,\leqs h}$ is the universal framed deformation ring  of $\bar\rho_\infty$ with height $\leqs h$. 
\begin{cor}\label{cor:Restr}
Restricting to $\GKinfty$ induces the following morphisms:
\begin{equation*}
\res_{\st}^{\leqs h}:R_\infty^{\Box,\leqs h} \ra R_{\st}^{\Box,[0,h]}\textrm{, and }\res_{\cris}^{\leqs h}:R_\infty^{\Box,\leqs h} \ra R_{\cris}^{\Box,[0,h]}.
\end{equation*}
The same holds for deformation rings (without framing) if $\End_{\GKinfty}(\bar\rho_\infty)\cong\F$. 
\end{cor}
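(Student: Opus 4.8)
The plan is to obtain each of the four morphisms by recognizing the $\GKinfty$-restriction of the appropriate universal (framed) semistable, resp.\ crystalline, deformation as a (framed) deformation of $\bar\rho_\infty$ of height $\leqs h$, and then applying the universal property of $R_\infty^{\Box,\leqs h}$ (resp.\ $R_\infty^{\leqs h}$) from Theorem~\ref{thm:Representability}. Concretely, write $R:=R_{\st}^{\Box,[0,h]}\in\arhat\fo$, let $\rho^\Box$ denote the universal framed deformation of $\bar\rho$ (so $R$ is a quotient of $R^\Box$), and set $\rho_{\infty,R}:=\rho^\Box|_{\GKinfty}$, a framed deformation of $\bar\rho_\infty$ over $R$. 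It will suffice to show that $\rho_{\infty,R}$ is of height $\leqs h$ in the sense of \S\ref{subsec:RepbilitySetup} (i.e.\ that $\rho_{\infty,R}\otimes R/\m_R^n$ is of height $\leqs h$ for every $n$), for then $\rho_{\infty,R}$ is classified by a unique map $\res_{\st}^{\leqs h}\colon R_\infty^{\Box,\leqs h}\to R_{\st}^{\Box,[0,h]}$, and likewise for $\cris$ in place of $\st$.

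To verify this I would apply Corollary~\ref{cor:hthQt} to $\rho_{\infty,R}$ over $R$, producing the universal quotient $R^{\leqs h}$ of $R$ over which $\rho_{\infty,R}$ becomes of height $\leqs h$; by the mapping property recorded there, for a finite $W(k)[\ivtd p]$-algebra $A$ a map $\xi\colon R\to A$ factors through $R^{\leqs h}$ if and only if $\rho_{\infty,R}\otimes_{R,\xi}A$ is of height $\leqs h$ as a $\GKinfty$-representation over $\Qp$. I then claim $R^{\leqs h}=R$. Indeed, for every closed point $\xi\colon R\to A$ of $\Spec R[\ivtd p]$ (so $A$ is finite over $\Qp$), the composite $R^\Box\thra R=R_{\st}^{\Box,[0,h]}\xra{\xi}A$ factors through $R_{\st}^{\Box,[0,h]}$, hence $\rho^\Box\otimes_{R^\Box}A$ is semistable with Hodge-Tate weights in $[0,h]$ by Theorem~\ref{thm:CrysStDeforRing}; therefore, by Example~\ref{exa:repfinht}\eqref{exa:repfinht:sst} (which rests on Theorem~\ref{thm:KisinIntPAdicHT}\eqref{thm:KisinIntPAdicHT:st}), its $\GKinfty$-restriction $\rho_{\infty,R}\otimes_{R,\xi}A$ is of height $\leqs h$ over $\Qp$, so $\xi$ factors through $R^{\leqs h}$. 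Thus $\Spec R^{\leqs h}[\ivtd p]$ is a closed subscheme of $\Spec R[\ivtd p]$ containing every closed point; since $R[\ivtd p]$ is Jacobson (being a quotient of the Jacobson ring $R^\Box[\ivtd p]$), it equals $\Spec R[\ivtd p]$, and since $R=R_{\st}^{\Box,[0,h]}$ is $\fo$-flat the kernel of $R\thra R^{\leqs h}$ is killed by inverting $p$, hence is $p^\infty$-torsion, hence zero. So $R^{\leqs h}=R$, which by the defining property of $R^{\leqs h}$ says precisely that $\rho_{\infty,R}$ is a framed deformation of $\bar\rho_\infty$ of height $\leqs h$. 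The crystalline case is identical, using Example~\ref{exa:repfinht}\eqref{exa:repfinht:sst} again (a crystalline representation with Hodge-Tate weights in $[0,h]$ is in particular semistable with Hodge-Tate weights in $[0,h]$).

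For the unframed statement, observe that $\End_{\GK}(\bar\rho)\subseteq\End_{\GKinfty}(\bar\rho_\infty)\cong\F$ forces $\End_{\GK}(\bar\rho)\cong\F$, so the unframed universal deformation ring of $\bar\rho$ exists together with its quotients $R_{\st}^{[0,h]}$ and $R_{\cris}^{[0,h]}$ (Theorem~\ref{thm:CrysStDeforRing}), as does $R_\infty^{\leqs h}$ (Theorem~\ref{thm:Representability}); running the same argument with the universal (unframed) semistable, resp.\ crystalline, deformation in place of $\rho^\Box$ produces $R_\infty^{\leqs h}\to R_{\st}^{[0,h]}$ and $R_\infty^{\leqs h}\to R_{\cris}^{[0,h]}$. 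I do not expect a genuine obstacle here: the one substantive input is that semistable (or crystalline) $\GK$-representations with bounded Hodge-Tate weights restrict to $\GKinfty$-representations of bounded height, and this is Kisin's Theorem~\ref{thm:KisinIntPAdicHT}, already available. The only points needing a little care are the passage from $R^{\leqs h}[\ivtd p]=R[\ivtd p]$ to $R^{\leqs h}=R$ (which is exactly where $\fo$-flatness of Kisin's crystalline/semistable deformation rings enters) and the bookkeeping that ``of height $\leqs h$'' over a general $\arhat\fo$-coefficient ring is the notion matched both by Corollary~\ref{cor:hthQt} and by the universal property of $R_\infty^{\Box,\leqs h}$.
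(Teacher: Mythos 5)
Your argument is correct in substance but takes a somewhat different route from the paper. The paper's proof is essentially a citation: Kisin constructed $R_{\st}^{\Box,[0,h]}$ and $R_{\cris}^{\Box,[0,h]}$ explicitly as quotients of the universal height-$\leqs h$ quotient $R^{\Box,\leqs h}$ of $R^\Box$ (obtained by applying Corollary~\ref{cor:hthQt} to $R^\Box$), so the composite $R_\infty^{\Box,\leqs h}\to R^{\Box,\leqs h}\to R_{\st}^{\Box,[0,h]}$ gives the map; the paper also indicates an alternative via T.~Liu's torsion construction. You instead apply Corollary~\ref{cor:hthQt} directly to $R:=R_{\st}^{\Box,[0,h]}$ and show that the resulting universal quotient $R^{\leqs h}$ is all of $R$, using only the generic-fiber mapping property of Theorem~\ref{thm:CrysStDeforRing}, the restriction statement of Theorem~\ref{thm:KisinIntPAdicHT}\eqref{thm:KisinIntPAdicHT:st} via Example~\ref{exa:repfinht}\eqref{exa:repfinht:sst}, and the $\fo$-flatness of $R$. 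In effect you re-derive, from the stated universal properties alone, that $R_{\st}^{\Box,[0,h]}$ is a quotient of $R^{\Box,\leqs h}$ --- the piece of Kisin's construction the paper invokes as a black box --- which makes your argument more self-contained at the cost of a little length. Your treatment of the unframed case, including the observation $\F\subseteq\End_{\GK}(\bar\rho)\subseteq\End_{\GKinfty}(\bar\rho_\infty)\cong\F$, is fine.

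One step as written is imprecise. You say $\Spec R^{\leqs h}[\ivtd p]$ is a closed subscheme of $\Spec R[\ivtd p]$ containing every closed point, and conclude from the Jacobson property that it equals $\Spec R[\ivtd p]$; but containing every closed point only forces the defining ideal into the nilradical, not to vanish (a nilpotent thickening has the same closed points), and you have not invoked reducedness of $R[\ivtd p]$. The fix is immediate: run your factorization argument against every finite \emph{local} $\Qp$-algebra $A$, not just residue fields --- Theorem~\ref{thm:CrysStDeforRing} and Corollary~\ref{cor:hthQt} are both stated at this level of generality. Then for each maximal ideal $\m$ of $R[\ivtd p]$ and each $n\geqs1$ the map $R[\ivtd p]\thra R[\ivtd p]/\m^n$ factors through $R^{\leqs h}[\ivtd p]$, so the kernel ideal lies in $\bigcap_n\m^n$ at every $\m$ and therefore vanishes after localizing (Krull intersection). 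This is precisely the uniqueness the paper records immediately after Corollary~\ref{cor:hthQt}. Your $\fo$-flatness step then upgrades $R^{\leqs h}[\ivtd p]=R[\ivtd p]$ to $R^{\leqs h}=R$, completing the argument.
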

\begin{proof}
Let $R^\Box$ be the universal framed $\GK$-deformation ring of $\bar\rho$, and $R^{\Box,\leqs h}$  the universal quotient of $R^\Box$ with height $\leqs h$  (obtained by applying Corollary~\ref{cor:hthQt} to $R=R^\Box$). Kisin \cite[\S2]{kisin:pstDefor} constructed $R_{\st}^{\Box,[0,h]}$ and $R_{\cris}^{\Box,[0,h]}$ as quotients of $R^{\Box,\leqs h}$, so the corollary follows.

Alternatively, if one uses T.~Liu's construction of $R_{\st}^{\Box,[0,h]}$ and $R_{\cris}^{\Box,[0,h]}$ as in \cite[Theorem~1.0.2]{Liu:Fontaine}, then the corollary follows from the observation that  the $\GKinfty$-restriction of torsion semi-stable $\GK$-representation with Hodge-Tate weights in $[0,h]$ is of height $\leqs h$ (by Example~\ref{exa:repfinht}\eqref{exa:repfinht:sst} and Definition~\ref{def:hthTors}).
\end{proof}
\begin{rmk}\label{rmk:ResNonIsom}
Kisin's full faithfulness result (Theorem~\ref{thm:main}\eqref{thm:main:fullfth}) implies that $\res_{\cris}^{\leqs h}\otimes_{\Zp}\Qp$ induces a surjection on completions at any maximal ideal of $R_\infty^{\Box,\leqs h}\otimes_{\Zp}\Qp$. We will show that $\res_{\cris}^{\leqs 1}\otimes_{\Zp}\Qp$ is an isomorphism when $p>2$ (and a weaker statement when $p=2$). On the other hand, $\res_{\cris}^{\leqs h}$ is \emph{not} in general an isomorphism; the dimensions of the source and the target are not same at a maximal ideal of $R_{\cris}^{\Box,[0,h]}[\ivtd p]$ which corresponds to a lift which has two Hodge-Tate weights whose difference is at least $2$. See \cite[Theorem~3.3.8]{kisin:pstDefor} and \cite[Corollary~11.3.11]{Kim:Thesis} for the dimension formulas.
\end{rmk}
%

\subsection{Resum\'e of Mazur's and Ramakrishna's theory}\label{par:MazurRamakrishna}
Given a functor $D:\art\fo \ra \Sets$, Schlessinger found three conditions (H1)-(H3) which are equivalent for $D$ to have a hull. He also showed that $D$ is pro-representable if and only if $D$ satisfies an additional condition (H4). For the statement and a proof, see \cite[Thm 2.11]{Schlessinger:FunctArtRing}.

Mazur \cite[\S1.2]{Mazur:DeformingGalReps} showed that for a profinite group $\Gamma$ and a continuous $\F$-linear $\Gamma$-representation $\bar\rho$, the deformation functor for $\bar\rho$ 
always satisfies (H1)-(H2), and satisfies (H4) if $\End_{\Gamma}(\bar\rho) \cong \F$ (e.g. if $\bar\rho$ is absolutely irreducible). Furthermore, Mazur showed that the \emph{framed} deformation functor for $\bar\rho$ 
always satisfies (H1), (H2) and (H4) with no assumption on $\bar\rho$.

On the other hand,  framed or unframed deformation functors for arbitrary $\Gamma$ do not automatically satisfy (H3) (i.e., the tangent space is a finite-dimensional $\F$-vector space). The condition (H3) is satisfied when $\Gamma$ is $p$-finite in the sense of \cite[\S1.1]{Mazur:DeformingGalReps}), and this is the case when $\Gamma$ is either an absolute Galois group for a finite extension of $\Qp$, or a certain quotient of the absolute Galois group of any finite extension of $\Q$.
Unfortunately, $\GKinfty$ does not satisfy the $p$-finiteness, and in fact (H3) fails even when $\bar\rho_\infty$ is $1$-dimensional.
To see this, consider the  cohomological interpretation of  the tangent space; i.e., $D_{\infty}(\F[\ep])\cong\coh 1 (K_\infty,\Ad(\bar\rho_\infty))$, where $\Ad(\bar\rho_\infty):=(\bar\rho_\infty)^*\otimes_\F\bar\rho_\infty$ and $D_\infty$ is the deformation functor for $\bar\rho_\infty$. If $\bar\rho_\infty$ is $1$-dimensional, then  $\Ad(\bar\rho_\infty)$ is the trivial $1$-dimensional $\GKinfty$-representation, so $\coh 1 (K_\infty,\Ad(\bar\rho_\infty))\cong \Hom_{\cont}(\GKinfty,\F)$. We can see this is infinite from the norm field isomorphism $\GKinfty \cong \Gal(k\llpar u \rrpar^{\sep}/k\llpar u \rrpar)$ and the existence of infinitely many Artin-Schreier cyclic $p$-extensions of $k\llpar u \rrpar$.
For a finite dimensional $\bar\rho_\infty$,  one sees that the deformation and framed deformation functors $D_{\infty}$ and  $D^\Box_{\infty}$ never satisfies (H3) from deforming the determinant
\footnote{For any $\F[\ep]$-deformation $\det(\bar\rho_\infty)+\ep\tim c$ of $\det(\bar\rho_\infty)$ (where $c:\gal_K\ra F$ is a cocycle), the deformation $\bar\rho_\infty + \ep\tim\tilde c$ with $\tilde c := \mthree{c}{0}{\cdots}{0}{0}{}{\vdots}{}{\ddots}$ has determinant $\det(\bar\rho_\infty)+\ep\tim c $.}, and in particular these `unrestricted' deformation functors are never represented by a complete local noetherian ring.

Now, let us look at the subfunctors $D^{\leqs h}_{\infty}\subset D_{\infty} $ and $D^{\Box,\leqs h}_{\infty}\subset D^\Box_{\infty}$ which consist of deformations of height $\leqs h$ (as defined in \S\ref{subsec:RepbilitySetup}).
We have seen, in Proposition \ref{prop:RamakrishnaCrit}, that these subfunctors are closed under subobjects, quotients, and direct sums. Under this setup, Ramakrishna proved that if the ambient functor satisfies (H$i$) for some $i=$1, 2, 3 or 4, then so does the subfunctor. (For more details, see the proof of \cite[Theorem~1.1]{ramakrishna:FlatDeforRing}, or  \S25 and \S23 of \cite{Mazur:Deformation-FLT}.)

Applying this to our setup, we obtain the following results.
\begin{enumerate}
\item The functor $D^{\leqs h}_{\infty}$ always satisfies (H1)-(H2), and satisfies (H4) if we have $\End_{\GKinfty}(\bar\rho_\infty) \cong \F$.
\item The functor $D^{\Box,\leqs h}_{\infty}$ always satisfies (H1), (H2), and (H4) with no assumptions on $\bar\rho_\infty$.
\end{enumerate}
Therefore, for the representability assertion of  Theorem \ref{thm:Representability} it remains to check (H3)\footnote{Even though (H3) is false for $D_{\infty}$ and $D^{\Box}_{\infty}$, one can hope that suitable subfunctors of them satisfy (H3).} for $D^{\leqs h}_{\infty}$ and $D^{\Box,\leqs h}_{\infty}$. Before doing this, let us digress to show the relative representability of the subfunctor  $D^{\leqs h}_{\infty}\subset D_{\infty} $, which ``essentially'' follows from the discussion above on Ramakrishna's theory.

\begin{prop}\label{prop:RelRepbility}
The subfunctor $D^{\leqs h}_\infty \subset D_\infty$ 
is relatively representable by surjective maps in $\arhat\fo$.
In other words, for any given deformation 
$\rho_A$ over $A\in\arhat\fo$, there exists a universal quotient $A^{\leqs h}$ of $A$ over which the deformation 
is of height $\leqs h$.
\end{prop}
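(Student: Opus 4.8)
The plan is to obtain this as a formal consequence of Proposition~\ref{prop:RamakrishnaCrit}---the closure of the ``height $\leqs h$'' condition under subobjects, quotients and direct sums---via the standard formalism for relative representability of deformation conditions; this is the mechanism behind the proof of \cite[Theorem~1.1]{ramakrishna:FlatDeforRing} and \cite[\S23,\,\S25]{Mazur:Deformation-FLT}. We may and do assume that $\bar\rho_\infty$ is of height $\leqs h$, since otherwise $D^{\leqs h}_\infty$ is the empty functor.

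First I would settle the case $A\in\art\fo$; then $A$, and hence $\rho_A$, is a finite torsion $\GKinfty$-module, so Proposition~\ref{prop:RamakrishnaCrit} applies directly. Let $\mathcal S$ be the set of ideals $J\subseteq A$ for which $\rho_A\otimes_A A/J$ is of height $\leqs h$; it contains $\m_A$, hence is nonempty. The key point is that $\mathcal S$ is closed under finite intersections: for $J_1,J_2\in\mathcal S$ one has a ring isomorphism $A/(J_1\cap J_2)\cong A/J_1\times_{A/(J_1+J_2)}A/J_2$, and since $\rho_A$ is free over $A$ this realizes $\rho_A\otimes_A A/(J_1\cap J_2)$ as a $\GKinfty$-stable submodule of $(\rho_A\otimes_A A/J_1)\oplus(\rho_A\otimes_A A/J_2)$, which is of height $\leqs h$; hence so is $\rho_A\otimes_A A/(J_1\cap J_2)$ by Proposition~\ref{prop:RamakrishnaCrit}. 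As $A$ is artinian, $\mathcal S$ then has a least element $I$, and I claim $A^{\leqs h}:=A/I$ is the desired universal quotient. For the verification: given a morphism $A\to B$ in $\arhat\fo$ with kernel $J$, the finite module $\rho_A\otimes_A A/J$ embeds $\GKinfty$-equivariantly into $\rho_A\otimes_A B/\m_B^n$ for $n\gg 0$ (using $A$-freeness of $\rho_A$, and that $(A/J)\cap\m_B^n=0$ for $n\gg 0$ since $\bigcap_n\m_B^n=0$ and $A/J$ is finite), so if $\rho_A\otimes_A B$ is of height $\leqs h$ then $\rho_A\otimes_A A/J$ is too by Proposition~\ref{prop:RamakrishnaCrit}, i.e.\ $J\in\mathcal S$, i.e.\ $I\subseteq J$, i.e.\ $A\to B$ factors through $A^{\leqs h}$. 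Conversely, if $A\to B$ factors through $A^{\leqs h}$, then $\rho_A\otimes_A B$ is a base change of the height $\leqs h$ deformation $\rho_A\otimes_A A^{\leqs h}$, and reducing modulo each $\m_B^n$ exhibits it as a $\GKinfty$-quotient of a finite direct sum of copies of $\rho_A\otimes_A A^{\leqs h}$, so Proposition~\ref{prop:RamakrishnaCrit} applies once more.

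For general $A\in\arhat\fo$ I would pass to the limit. Applying the artinian case to each $\rho_A\otimes_A A/\m_A^n$ yields quotients $(A/\m_A^n)^{\leqs h}$ with surjective transition maps, and one sets $A^{\leqs h}:=\varprojlim_n (A/\m_A^n)^{\leqs h}$, which is a quotient of $A$---hence lies in $\arhat\fo$---because $A$ is $\m_A$-adically complete. That $\rho_A\otimes_A A^{\leqs h}$ is of height $\leqs h$ in the sense of \S\ref{subsec:RepbilitySetup} (all of its reductions modulo powers of the maximal ideal are of height $\leqs h$) then follows from the finite-level statements, and the universal property over $\arhat\fo$ is read off from those at finite level together with the continuity of all the functors in sight. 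This limiting step---reconciling the $\m$-adic filtration on $\varprojlim_n(A/\m_A^n)^{\leqs h}$ with the tower defining it---is the only place demanding genuine care, and it is exactly the standard argument of \cite{ramakrishna:FlatDeforRing} and \cite[\S23]{Mazur:Deformation-FLT}; all of the substantive input is already contained in Proposition~\ref{prop:RamakrishnaCrit}.
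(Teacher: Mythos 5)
Your proof is correct and takes essentially the same route as the paper's: both rest entirely on Proposition~\ref{prop:RamakrishnaCrit} and the Ramakrishna formalism for deformation conditions closed under subquotients and direct sums, with the paper deferring the verification to the prorepresentability argument of \cite[Theorem~1.1]{ramakrishna:FlatDeforRing} and the Schlessinger subfunctor $\mathfrak{h}_A^{\leqs h}\subset\mathfrak{h}_A$, while you unwind that citation into the explicit construction (minimal ideal via closure of $\mathcal S$ under finite intersections, then the $\m_A$-adic limit). Your fiber-product argument $A/(J_1\cap J_2)\hookrightarrow A/J_1\oplus A/J_2$ is precisely the step the paper leaves implicit inside Ramakrishna's proof.
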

\begin{proof}
Consider a functor $\mathfrak{h}_A:\art\fo \ra\Sets$ defined by $\mathfrak{h}_A(B):=\Hom_\fo(B,A)$ for $B\in\art\fo$, and a subfunctor $\mathfrak{h}_A^{\leqs h}\subset \mathfrak{h}_A$ defined as below:
\begin{equation*}
\mathfrak{h}_A^{\leqs h}(B):=\left\{f:B\ra A\textrm{ such that } \rho_A\otimes_{A,f}B\textrm{ is of height }\leqs h\right\},
\end{equation*}
where $B\in\art\fo$. Since $\mathfrak{h}_A$ is prorepresentable and the subfunctor $\mathfrak{h}_A^{\leqs h}$ is closed under subquotients and direct sums, it follows that $\mathfrak{h}_A^{\leqs h}$ is prorepresentable, say by a quotient $A^{\leqs h}$ of $A$. It is clear that $A^{\leqs h}$ satisfies the desired properties. (\emph{cf.} the proof of \cite[Theorem 1.1]{ramakrishna:FlatDeforRing}.)
\end{proof}

Now let us verify (H3) for $D^{\leqs h}_{\infty}$ and $D^{\Box,\leqs h}_{\infty}$, thus prove the  representability assertion of  Theorem \ref{thm:Representability}. 
\begin{prop}\label{prop:RepresentabilityH3}
The tangent spaces $D^{\leqs h}_{\infty}(\F[\ep])$ and $D^{\Box,\leqs h}_{\infty}(\F[\ep])$ are finite-dimensional $\F$-vector spaces.
\end{prop}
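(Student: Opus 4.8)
The plan is to reduce the statement to a finiteness assertion about extensions of $\vphi$-modules of height $\leqs h$. Since being ``of height $\leqs h$'' for a deformation over $\F[\ep]$ depends only on the underlying $\GKinfty$-representation, the forgetful map $D^{\Box,\leqs h}_\infty(\F[\ep])\to D^{\leqs h}_\infty(\F[\ep])$ has fibres inside the finite group $\ker(\GL_d(\F[\ep])\to\GL_d(\F))$, so it suffices to treat $D^{\leqs h}_\infty$. Both tangent spaces are $\F$-vector spaces (the functors satisfy (H1)--(H2) by Ramakrishna's theory), and $\F$ is finite, so it is enough to show the \emph{set} $D^{\leqs h}_\infty(\F[\ep])$ is finite. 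We may assume $\bar\rho_\infty$ is of height $\leqs h$ (otherwise $D^{\leqs h}_\infty$ is empty and there is nothing to prove); put $M_0:=\nf D_\Eps(\bar\rho_\infty(-h))$, an \'etale $\vphi$-module over $\fo_\Eps$ killed by $p$, and let $\gM_0^+\subset M_0$ be its maximal $\Sig$-submodule of height $\leqs h$ (Lemma~\ref{lem:MaxMinProlongation}).

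The first step is to attach discrete invariants. By Theorem~\ref{thm:FontaineEtPhiMod} with $\F[\ep]$-coefficients (\S\ref{par:ModFI}) and the Tate twist, a deformation $\rho_{\F[\ep]}\in D^{\leqs h}_\infty(\F[\ep])$ corresponds to an \'etale $\vphi$-module $M$ over $\fo_\Eps$ with $\F[\ep]$-action, sitting in an exact sequence $0\to M_0\xra{\iota}M\xra{\pi}M_0\to 0$ of \'etale $\vphi$-modules with $\iota(M_0)=\ker(\ep)=\im(\ep)$ and $\ep=\iota\circ\pi$, so the deformation is equivalent to this extension datum. As $\rho_{\F[\ep]}$ is of height $\leqs h$, $M$ has a maximal $\Sig$-submodule $\gM^+$ of height $\leqs h$ (Lemma~\ref{lem:MaxMinProlongation}), stable under $\ep$ by functoriality of maximal prolongations. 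A short computation with $\PP(u)^h\gM^+\subseteq\vphi_M(\sig^*\gM^+)$, using that $\vphi_M$ is injective and $\sig$ is flat, shows that $\gM^+\cap\iota(M_0)$ and $\pi(\gM^+)$ are $\Sig$-submodules of height $\leqs h$ in $M_0$; by maximality of $\gM_0^+$ the first equals $\iota(\gM_0^+)$, and $\bar\gM:=\pi(\gM^+)$ lies in the \emph{finite} set $\mathcal S$ of $\Sig$-submodules of height $\leqs h$ in $M_0$ (Lemma~\ref{lem:MaxMinProlongation}). Thus $\gM^+$ is an extension $0\to\gM_0^+\to\gM^+\to\bar\gM\to 0$ in the category of $(\vphi,\Sig/(p))$-modules with prescribed sub and quotient, and $M$ together with its $\ep$-action is recovered from it after inverting $u$. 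Hence $\rho_{\F[\ep]}\mapsto(\bar\gM,[\gM^+])$ is injective, landing in the finite disjoint union $\coprod_{\bar\gM\in\mathcal S}\Ext^1_{(\vphi,\Sig/(p))}(\bar\gM,\gM_0^+)$.

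The remaining step — and the main obstacle, which is the only point where the height hypothesis really enters — is to show each $\Ext^1_{(\vphi,\Sig/(p))}(\bar\gM,\gM_0^+)$ is finite, equivalently finite-dimensional over $\F$. Writing $\Sig/(p)\cong k[[u]]$ and noting $\PP(u)\equiv u^e\bmod p$ (with $e$ the ramification index of $K$), this $\Ext^1$ is the cokernel of the map
\begin{equation*}
d\colon N:=\Hom_{k[[u]]}(\bar\gM,\gM_0^+)\lra N':=\Hom_{k[[u]]}(\sig^*\bar\gM,\gM_0^+),\qquad f\longmapsto f\circ\vphi_{\bar\gM}-\vphi_{\gM_0^+}\circ\sig^*f,
\end{equation*}
between finite free $k[[u]]$-modules of equal rank. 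The height-$\leqs h$ condition forces $\coker\vphi_{\bar\gM}$ and $\coker\vphi_{\gM_0^+}$ to be killed by $\PP(u)^h\equiv u^{eh}$, so $\beta(f):=f\circ\vphi_{\bar\gM}$ is $k[[u]]$-linear and injective with $u^{eh}N'\subseteq\beta(N)$, while $\gamma(f):=\vphi_{\gM_0^+}\circ\sig^*f$ is $\sig$-semilinear with $\gamma(u^nN)\subseteq u^{pn}N'$. A $u$-adic successive-approximation argument then gives $u^{C}N'\subseteq\im(d)$ for $C:=eh+\lfloor eh/(p-1)\rfloor+1$: for $v\in u^{C}N'$ the self-map $g\mapsto\beta^{-1}(\gamma(g))-\beta^{-1}(v)$ of the complete module $u^{\,C-eh}N$ is a $u$-adic contraction, and its unique fixed point $g$ satisfies $d(g)=-v$. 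Therefore $\coker(d)$ is a quotient of $N'/u^{C}N'$, which is finite-dimensional over the finite field $k$, hence over $\F$. (By contrast, for \'etale $\vphi$-modules the analogous cokernel is infinite-dimensional — this is the source of the failure of (H3) for the unrestricted functor, reflecting that $\GKinfty$ has infinitely many wildly ramified characters.) Combining the three steps proves the proposition.
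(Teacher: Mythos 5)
Your argument is correct, and it reorganizes the paper's proof in a genuinely different (and in some ways cleaner) way, although the two proofs share the same reduction via Fontaine's equivalence, the same reliance on Lemma~\ref{lem:MaxMinProlongation}, and essentially the same successive-approximation computation at the core. The paper (\S\ref{Repbility3}--\S\ref{Repbility7}) picks an $\ep$-stable $\Sig_\F$-lattice $\gM$, chooses a basis $\{e_i\}$ of $\ol\gM:=\pi(\gM)$, lifts it, shows $\{e_i,u^{-r_i}\ep e_i\}$ is a $\Sig_\F$-basis of $\gM$ with the $r_i$ bounded in terms of $\ol\gM$ only (\S\ref{Repbility5}), and then runs a matrix-level $\sig$-conjugacy approximation (Claim~\ref{Repbility7}). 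You instead always take $\gM=\gM^+$ the maximal prolongation, observe that this forces $\gN=\ker(\gM^+\thra\bar\gM)$ to equal $\iota(\gM_0^+)$, so the ``denominator bounding'' is absorbed into the canonicity of $\gM_0^+$; the data is then packaged invariantly as a pair $(\bar\gM,[\gM^+])$ with $\bar\gM$ ranging over a finite set and $[\gM^+]$ an extension class, and finiteness of the $\Ext^1$ is established by a Banach fixed-point argument that is a coordinate-free version of Claim~\ref{Repbility7}. Your bound $C=eh+\lfloor eh/(p-1)\rfloor+1$ is also sharper than the paper's $c>2he$ when $p>2$. Two small points to tighten: (i) the $\Ext^1$ should be taken over $\Sig_\F/(p)\cong(k\otimes_{\Fp}\F)[[u]]$ rather than $k[[u]]$ so that an isomorphism of extensions witnessing $[\gM^+]=[{\gM'}^+]$ is automatically $\F$-linear, which you need for the deformations to be equivalent (alternatively, since $|\F^\times|$ is prime to $p$, one can average a $k[[u]]$-linear splitting over $\F^\times$ to make it $\F$-linear, so the forgetful map on Ext groups is injective and your computation over $k[[u]]$ still suffices -- but this should be said); (ii) the asserted equality $\gM^+\cap\iota(M_0)=\iota(\gM_0^+)$ gets $\subseteq$ from maximality of $\gM_0^+$, but the reverse inclusion $\iota(\gM_0^+)\subseteq\gM^+$ also needs justification -- it is precisely the functoriality-of-maximal-prolongations argument you already invoked for $\ep$, now applied to the inclusion $\iota\colon M_0\hra M$, and is worth making explicit.
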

\begin{proof}
Since $D^{\Box,\leqs h}_{\infty}(\F[\ep])$ is a torsor of $\wh{\nf\GL}(n)(\F[\ep])/(1+\ep\Ad(\bar\rho_\infty)^{\GKinfty})$ over $D^{\leqs h}_{\infty}(\F[\ep])$, it is enough to show that the set  $D^{\leqs h}_{\infty}(\F[\ep])$ is finite. 
We proceed in the following steps.

\parag[Setup]
\label{Repbility1}
We use the notation introduced in \S\ref{par:ModFI}.
Let $\ol M:=\nf D_\Eps (\bar\rho_\infty(-h))$ and consider $(M,\iota)$, where $M\in\ModFIet{\F[\ep]}$ and  $\iota:\ol M \riso M \otimes_{\F[\ep]}\F$ is a $\vphi$-compatible $\fo_{\Eps,\F}$-linear isomorphism. Two such lifts $(M,\iota)$ and $(M',\iota')$ are \emph{equivalent} if there exists an isomorphism $f:M\riso M'$ such that $(f\bmod{\ep})\circ\iota=\iota'$. Theorem~\ref{thm:FontaineEtPhiMod} implies that $\nf T_\Eps(\cdot)(h)$ and $\nf D_\Eps(\cdot(-h))$ induce inverse bijections between $D_\infty(\F[\ep])$ and the set of equivalent classes of $(M,\iota)$.

Now assume that there is a $\vphi$-stable $\Sig_{\F}$-lattice $\gM\subset M$ of height $\leqs h$, viewing $M$ as a $\fo_{\Eps,\F}$-module. By Corollary~\ref{cor:RaynaudScheme}, the set of equivalence classes of such $(M,\iota)$ exactly corresponds to $D^{\leqs h}_\infty(\F[\ep])$ via the bijections in the previous paragraph. So Proposition~\ref{prop:RepresentabilityH3} is equivalent to the following claim:
\begin{claimsub}\label{clm:DeforEtPhiMod}
There exists only finitely many equivalence classes of $(M,\iota)$ where $M$ admits a $\vphi$-stable $\Sig_\F$-lattice which is of height $\leqs h$.
\end{claimsub}
\parag[Strategy and Outline]
\label{Repbility2}
One possible approach to prove Claim~\ref{clm:DeforEtPhiMod} is to fix a $\fo_{\Eps,\F}$-basis for $\ol M$ and a lift to an $\fo_{\Eps,\F[\ep]}$-basis for each deformation $M$ once and for all, and identify $M$ with the ``$\vphi$-matrix'' with respect to the fixed basis and interpret the equivalence relations in terms of the ``$\vphi$-matrix.'' Then the problem turns into showing the finiteness of equivalence classes of matrices with some constraints -- namely, having some ``integral structure''; more precisely, having a $\vphi$-stable  $\Sig_\F$-lattice with height $\leqs h$ (but not necessarily a $\Sig_{\F[\ep]}$-lattice). So the fixed basis has to ``reflect'' the integral structure.

This approach faces the following obstacles. First,  the deformations $M$ we consider do not necessarily allow any $\vphi$-stable $\Sig_{\F[\ep]}$-lattice with height $\leqs h$ as we have seen at Remark \ref{rmk:FailureIsom4T}. In other words, we cannot expect, in general, to find a $\fo_{\Eps,\F[\ep]}$-basis $\{e_i\}$ for $M$ in such a way that $\{e_i,\ep e_i\}$ generates a $\Sig_{\F}$-lattice of height $\leqs h$. In \S\ref{Repbility3}--\S\ref{Repbility5} we show that a slightly weaker statement is true. Roughly speaking, we show that there is an $\fo_{\Eps,\F[\ep]}$-basis $\set{\e_i}$ for $M$ so that there exists a $\vphi$-stable $\Sig_{\F}$-lattice with height $\leqs h$ with a $\Sig_\F$-basis only involving ``uniformly'' $u$-adically bounded denominators as coefficients relative to the $\fo_{\Eps,\F}$-basis $\set{\e_i,\ep\tim\e_i}$ of $M$.

Second, we may have more than one $\vphi$-stable $\Sig_\F$-lattice with height $\leqs h$ for $\ol M$ or for $M$, especially when  $he$ is large. In particular, a fixed $\Sig_\F$-lattice for $\ol M$ may not be nicely related to any $\vphi$-stable $\Sig_\F$-lattice with height $\leqs h$ for some lift $M\in\ModFIet{\F[\ep]}$.
We get around this issue by varying the basis for $\ol M$ among finitely many choices. This step is carried out in \S\ref{Repbility6}. In fact, we only need finitely many choices of bases because there are only finitely many $\Sig_\F$-lattices of height $\leqs h$ for a fixed $\ol M$, thanks to Lemma \ref{lem:MaxMinProlongation}.

Once we get around these technical problems, we show the finiteness by a $\sig$-conjugacy computation of matrices. This is the key technical step and crucially uses the assumption that the $\F[\ep]$-deformations we consider (or rather, the corresponding \'etale $\vphi$-module $M$) admits a $\vphi$-stable $\Sig_\F$-lattice with height $\leqs h$ (in $M$). See Claim \ref{Repbility7} for more details.

\parag
 \label{Repbility3}
Let $M$ correspond to some $\F[\ep]$-deformation of height $\leqs h$.
Even though there may not exist any $\vphi$-stable $\Sig_{\F[\ep]}$-lattice with height $\leqs h$ for $M$, we can find a $\vphi$-stable $\Sig_\F$-lattice $\gM$ with height $\leqs h$ such that $\gM$ is stable under multiplication by $\ep$.\footnote{This means that $\gM$ is a $\vphi$-module over $\Sig_{\F[\ep]}$ and is projective over $\Sig_\F$, but $\gM$ does not have to be a projective $\Sig_{\F[\ep]}$-module. Hence, such $\gM$ may not be an object in  $\ModFIh{\F[\ep]}$. This actually occurs: $\gM\cong \Sig_\F\tim\e\oplus\Sig_\F\tim(\frac{1}{u}\ep\e)$ discussed in Remark \ref{rmk:FailureIsom4T} is such an example.}
In fact, the maximal $\Sig$-submodule $\gM^+\subset M$ among the ones with height $\leqs h$ does the job, as remarked above Corollary~\ref{cor:RaynaudScheme}. 
\parag
 \label{Repbility4}
For a $\Sig_{\F}$-lattice $\gM\subset M$ of height $\leqs h$ which is stable under the $\ep$-multiplication, we can find a $\Sig_\F$-basis which can be ``nicely'' written in terms of some $\fo_{\Eps,\F[\ep]}$-basis of $M$, as follows. Let $\ol \gM$ be the image of $\gM\ra \ol M$ induced by the natural projection $M\ra \ol M$, which  is a $\vphi$-stable $\Sig_\F$-lattice with height $\leqs h$ in $\ol M$. Now, consider the following diagram:
\begin{displaymath}
\xymatrix{
0 \ar[r] &
\gN \ar[r] \ar@{^{(}->}[d] &
\gM \ar[r] \ar@{^{(}->}[d]&
\ol \gM \ar[r] \ar@{^{(}->}[d]&
0\\
0 \ar[r] &
\ep\tim M \ar[r] &
M \ar[r] &
M_{\F} \ar[r] &
0,
}\end{displaymath}
where $\gN:=\Ker[\gM\thra \ol \gM]$ is a $\vphi$-stable $\Sig_\F$-lattice with height $\leqs h$ in $M$. We choose a $\Sig_\F$-basis $\{e_1,\cdots,e_n\}$ of $\ol \gM$. Viewing them as a $\fo_{\Eps,\F}$-basis of $\ol M$, we lift $\{e_i\}$ to an $\fo_{\Eps,\F[\ep]}$-basis of $M$ (again denoted $\{e_i\}$). By assumption from the previous step, we have $\bigoplus_{i=1}^n\Sig_\F\tim (\ep e_i) \subset \gN$, where both are $\Sig_\F$-lattices of height $\leqs h$ for $\ep\tim M$. It follows that $(\ivtd{u^{r_i}}\ep) e_i$ form a $\Sig_\F$-basis of $\gN$ for some non-negative integers $r_i$. Therefore, $\{e_i,(\ivtd{u^{r_i}}\ep)e_i\}$ is a $\Sig_\F$-basis of $\gM$. 
\parag 
\label{Repbility5}
In this step, we find an upper bound for the non-negative integers $r_i$ only depending on $\ol \gM$ and the choice of $\Sig_\F$-basis of $\ol \gM$.
Since $\gN$ is a $\vphi$-stable submodule, it contains
\begin{equation}\label{eqn:Repbility:BddDenom}
\vphi_{M}\left(\sig^*\left(\frac{1}{u^{r_i}}\ep e_i\right)\right)=\left(\frac{1}{u^{pr_i}}\ep\right)\tim \vphi_{\ol \gM}(\sig^*e_i) = \frac{1}{u^{pr_i}}\ep \tim \sum_{j=1}^n \alpha_{ij}e_j,
\end{equation}
where $\alpha_{ij}\in\Sig_\F$ satisfy $\vphi_{\ol \gM}(\sig^*e_i) = \sum_{j=1}^n \alpha_{ij}e_j$. Note that we obtain the first identity because $\vphi_{M}(\sig^*e_i)$ lifts $\vphi_{\ol \gM}(\sig^*e_i)$ and the $\ep$-multiple ambiguity in the lift disappears when we multiply against $\ep$. Since any element of $\gN$ is a $\Sig_\F$-linear combination of $(\ivtd{u^{r_i}}\ep) e_i$, we obtain inequalities $\ord_u(\alpha_{ij}) - pr_i \geq -r_j$ for all $i,j$ from the above equation \eqref{eqn:Repbility:BddDenom}. Let $r:=\max_j\set{r_j}$ and we obtain $pr_i \leq r + \min_j\set{\ord_u(\alpha_{ij})}$ for all $i$. (Note that the right side of the inequality is always finite.) Now, by taking the maximum among all $i$, we obtain 
\begin{displaymath}
r \leq \frac{1}{p-1} \max_i\big\{\min_j\set{\ord_u(\alpha_{ij})} \big\}<\infty
\end{displaymath}
This shows that the non-negative integers $r_i$ has an upper bound which only depends on the matrices entries for $\vphi_{\ol \gM}$ with respect to the $\Sig_\F$ basis of $\ol \gM$.
\parag[Recapitulation]
\label{Repbility6}
Let $\set{\ol \gM^{(a)}}$ denote the set of all the $\Sig_\F$-lattices  of height $\leqs h$ in $\ol M$. This is a finite set by Lemma~\ref{lem:MaxMinProlongation}. For each $\ol\gM^{(a)}$, we fix a $\Sig_\F$-basis $\{\e_i^{(a)}\}$ and let $\alpha^{(a)}=(\alpha_{ij}^{(a)}) \in \Mat_n(\Sig_\F)$ be the ``$\vphi$-matrix'' with respect to $\{\e_i^{(a)}\}$; i.e.,
$\vphi_{\ol \gM^{(a)}}(\sig^*\e_i^{(a)}) = \sum_{i=1}^n \alpha_{ij}^{(a)}\e_j^{(a)}$. We also view $\{\e_i^{(a)}\}$ as a $\fo_{\Eps,\F}$-basis for $\ol M$ and $(\alpha_{ij}^{(a)})$ is the matrix for $\vphi_{\ol M}$ with respect to $\{\e_i^{(a)}\}$. Note that $(\alpha_{ij}^{(a)})$ is invertible over $\fo_{\Eps,\F}$ since $\ol M=\ol \gM^{(a)}[\ivtd u]$ is an \'etale $\vphi$-module. We pick an integer $r^{(a)} \geq  \frac{1}{p-1} \max_i\big\{\min_j\set{\ord_u(\alpha_{ij})} \big\}$, for each index $a$.

For any $M$ which corresponds to a deformation of height $\leqs h$, we may find a $\Sig_\F$-lattice $\gM\subset M$ of height $\leqs h$  which is stable under $\ep$-multiplication. (See \S\ref{Repbility3}.) The image of $\gM$ inside $\ol M$ is equal to some $\ol \gM^{(a)}$.  Lift the chosen basis $\{\e_i^{(a)}\}$ to an $\fo_{\Eps,\F[\ep]}$-basis for $M$. Then $\gM$ admits a $\Sig_\F$-basis of form $\{\e_i^{(a)},(\ivtd{u^{r_i}}\ep) \e_i^{(a)}\}$ for some integers $r_i \leq r^{(a)}$ (\S\ref{Repbility4}--\S\ref{Repbility5}).

Let us consider the matrix representation of $\vphi_{M}$ with respect to the basis $\{\e_i^{(a)}\}$.
We have $\vphi_{M}(\e_i^{(a)}) = \sum_{i}(\alpha_{ij}^{(a)}+\ep\beta^{(a)}_{ij})\e_j^{(a)}$ for some $\beta^{(a)}=(\beta^{(a)}_{ij})\in \Mat_n(\fo_{\Eps,\F})$ because $\vphi_{M}$ lifts $\vphi_{\ol M}$. Furthermore we have that $\beta\in \ivtd{u^{r^{(a)}}}\tim\Mat_n(\Sig_\F)$  since $\gM\subset M$ is $\vphi$-stable.
We say two such matrices $\beta$ and $\beta'$ are \emph{equivalent} if there exists a matrix $X\in\Mat_n(\fo_{\Eps,\F})$ such that $\beta' = \beta + (\alpha^{(a)}\tim\sig(X)-X\tim\alpha^{(a)})$. This equation is obtained from the following:
\begin{displaymath}
(\alpha^{(a)}+\ep\beta') = (\Id_n+\ep X)\iv \tim(\alpha^{(a)}+\ep\beta)\tim\sig(\Id_n+\ep X),
\end{displaymath}
which defines the equivalence of two \'etale $\vphi$-modules whose $\vphi$-structures are given by $(\alpha^{(a)}+\ep\beta)$ and $(\alpha^{(a)}+\ep\beta')$, respectively.

Now, the theorem is reduced to the verification of the following claim: \emph{for each $a$, there exist only finitely many equivalence classes of matrices $\beta\in \ivtd{u^{r^{(a)}}}\tim\Mat_n(\Sig_\F)$.}
Indeed, by varying both $a$ and the equivalence classes of $\beta$, we cover all the possible lifts $M$ of ``height $\leqs h$'' up to equivalence, hence the theorem is proved.

From now on, we fix $a$ and suppress the superscript $(\cdot)^{(a)}$ everywhere. For example, $\ol\gM:=\ol\gM^{(a)}$, $r:=r^{(a)}$, and $\alpha:=\alpha^{(a)}$. Proving the following claim is the last step of the proof.
\begin{claimsub}\label{Repbility7}
For any $X\in u^{c}\Mat_n(\Sig_\F)$ with $c>2he$, the matrices $\beta$ and $\beta + X$ are equivalent.\footnote{The inequality $c>2he$ is used to ensure $p(c-he) > c$. Therefore, if $p\ne 2$ then $c=2he$ also works.}
\end{claimsub}
This claim provides a surjective map from $\left(\ivtd{u^{r}}\tim\Mat_n(\Sig_\F)\right) / \left(u^{c}\tim\Mat_n(\Sig_\F)\right)$ onto the set of equivalence classes of $\beta$'s, and the former is a finite set\footnote{We crucially used the fact that we can bound the denominator.}, thus we conclude the proof of Proposition~\ref{prop:RepresentabilityH3}.

We prove the claim by ``successive approximation.'' Let $\gamma = u^{he}\tim\alpha\iv$. Since $\ol \gM$ is of height $\leqs h$ and $\PP(u)$ has image in $\Sig_\F\cong (k\otimes_{\Fp}\F)[[u]]$ with $u$-order $e$, we know that $\gamma \in \Mat_n(\Sig_\F)$. We set $Y^{(1)} := \ivtd{u^{he}}\tim (X\gamma)$, which is in $u^{c-he}\Mat_n(\Sig_F)$ by the assumption on $X$. Then $\beta+X$ is equivalent to
\begin{equation*}
(\beta+X) + (\alpha\tim\sig(Y^{(1)}) - Y^{(1)}\alpha) = \beta + \alpha\tim\sig(Y^{(1)}) =: \beta + X^{(1)}
\end{equation*}
with $X^{(1)} \in u^{c^{(1)}}\tim\Mat_n(\Sig_\F)$, where $c^{(1)}:= p(c-he) > c$.
Now for any positive integer $i$, we recursively define the following
\begin{displaymath}
Y^{(i)} := \ivtd{u^{he}}\tim (X^{(i-1)}\gamma),\qquad X^{(i)} := \alpha\tim\sig(Y^{(i)}), \qquad c^{(i)}:= p(c^{(i-1)}-he).
\end{displaymath}
One can check that $c^{(i)}>c{(i-1)}(>2he)$, $X^{(i)} \in u^{c^{(i)}}\tim\Mat_n(\Sig_\F)$, and $Y^{(i)} \in u^{c^{(i-1)}-he}\Mat_n(\Sig_\F)$.
Also, $\beta+X$  is equivalent to
\begin{equation*}
(\beta+X) + \left(\alpha\tim\sig(Y^{(1)}+ \cdots +Y^{(i)}) - (Y^{(1)}+\cdots+Y^{(i)})\alpha\right) = \beta + X^{(i)}.
\end{equation*}

Since $c^{(i)}\to\infty$ as $i\to\infty$, it follows that  the infinite sum $Y:=\sum_{i=1}^{\infty}Y^{(i)}$ converges and $X^{(i)} \to 0$ as $i\to\infty$. Therefore we see that $\beta+X$  is equivalent to
\begin{eqnarray*}
(\beta+X) + \left(\alpha\tim \sig(Y) -y\tim\alpha\right) &=& (\beta+X) + \left(\alpha\tim\sig\big(\sum_{i=1}^{\infty}Y^{(i)}\big) - \big(\sum_{i=1}^{\infty}Y^{(i)}\big)\tim\alpha\right) \\
&=& \lim_{i\to\infty}(\beta + X^{(i)}) = \beta,
\end{eqnarray*}
so we are done.
\end{proof}

\section{Generic fibers of $\GKinfty$-deformation rings of height $\leqs 1$}\label{sec:GenFibers}
Let $\bar\rho_\infty$ be a $\GKinfty$-representation over $\F$ and let $R^{\Box,\leqs 1}_\infty$ denote the framed deformation ring of height $\leqs 1$. In this section we study the generic fiber $R^{\Box,\leqs 1}_\infty[\ivtd p]$, such as formal smoothness, dimension, and connected components. The main idea is to ``resolve'' $R^{\Box,\leqs 1}_\infty$ via some suitable closed subscheme of affine grassmannian which parametrize certain ``nice'' $\Sig$-module models of height $\leqs 1$ for framed deformations of $\bar\rho_\infty$. This technique is inspired by Kisin's construction of moduli of finite flat group schemes \cite[\S2]{Kisin:ModuliGpSch} (and also \cite[\S1]{kisin:pstDefor}).  In fact, most of linear algebra steps in aforementioned Kisin's works carry over word by word to our situation due to the similarities of linear algebraic structures involved. Note, however, that we ``resolve'' a framed $\GKinfty$-deformation ring $R^{\Box,\leqs 1}_\infty$ instead of a framed flat deformation ring.

With some extra work, many results in this section generalize to (framed) $\GKinfty$-deformation rings of height $\leqs h$ for any $h$, possibly except the result on non-ordinary components. See \cite[\S11]{Kim:Thesis} for the statements and proofs. All the results apply to (unframed) $\GKinfty$-deformation rings if they exist.

\subsection{Definition: moduli of $\Sig$-modules of height $\leqs h$}\label{subsec:DefnOfResol}
Let $h$ be a positive integer, and we will later specialize to the case when $h=1$. Consider a deformation $\rho_R
$ of $\bar\rho_\infty$ over $R\in\arhat\fo$ which is of height $\leqs h$ (i.e. $\rho_R\otimes_R R/\m_R^n$ is of height $\leqs h$ for each $n$). The main examples to keep in mind are universal framed or unframed deformation of height $\leqs h$. 

We use the notations from \S\ref{par:ModFI}.
Put $M_R:=\varprojlim M_n$ where $M_n\in\ModFIet{R/\m_R^n}$ is such that $\nf T_\Eps(M_n)(h)\cong \rho_R\otimes_R R/\m_R^n$ for each $n$. For any $R$-algebra $A$, we view $M_R\otimes_R A$ as an \'etale $\vphi$-module by $A$-linearly extending $\vphi_{M_R}$. 

For a complete local noetherian ring $R$, let $\aug R$ be the category of pairs $(A,I)$ where $A$ is an $R$-algebra and $I\subset A$  is an ideal with $I^N=0$ for some $N$ which contains $\m_R\tim A$. Note that an artin local $R$-algebra $A$ can be viewed as an element in $\aug\fo$ by setting $I:=\m_A$. A morphism $(A,I)\ra(B,J)$ in $\aug R$ is an $R$-morphism $A\ra B$ which takes $I$ into $J$. 
We define a functor $D^{\leqs h}_{\Sig,\rho_R}:\aug R \ra \Sets$ by putting $D^{\leqs h}_{\Sig,\rho_R}(A,I)$ the set of $\vphi$-stable $\Sig_A$-lattices in $M_R\otimes_RA$ which are of height $\leqs h$. In \cite{kisin:pstDefor} this functor is denoted by $L^{\leqs h}_{\rho_R}$. 

\begin{prop}\label{prop:ResolDefor}
There exists a projective $R$-scheme $\GRh_{\rho_R}$ and a $\Sig\otimes_{\Zp}\OO_{\GRh_{\rho_R}}$-lattice $\nf\gM^{\leqs h}_{\rho_R} \subset M_R \otimes_{R} \OO_{\GRh_{\rho_R}}$ of height $\leqs h$ which represents $D^{\leqs h}_{\Sig,\rho_R}$ in the following sense: there exists a natural isomorphism $\GRh_{\rho_R} \riso D^{\leqs h}_{\Sig,\rho_R}$ such that for any $(A,I)\in\aug R$ it sends an $A$-point $\eta\in\GRh_{\rho_R}(A)$ to $\eta^*(\nf\gM^{\leqs h}_{\rho_R}) \in D^{\leqs h}_{\Sig,\rho_R}(A,I)$. (We call  $\nf\gM^{\leqs h}_{\rho_R}$ a \emph{universal $\Sig$-lattice of height $\leqs h$} for $\rho_R$.)

Moreover, for any map $R\ra R'$ of complete local noetherian rings, there exists a unique isomorphism $\GRh_{\rho_R} \otimes_R R' \riso \GRh_{\rho_R\otimes_RR'}$, which pulls back $\nf \gM^{\leqs h}_{\rho_R\otimes_RR'}$ to $\nf\gM^{\leqs h}_{\rho_R}\otimes_R R'$ inside of $(M_R \otimes_R \OO_{\GRh_{\rho_R}})\otimes_RR'$.
\end{prop}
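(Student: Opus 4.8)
The plan is to realise $D^{\leqs h}_{\Sig,\rho_R}$ as a closed and \emph{bounded} subfunctor of an affine Grassmannian, in close parallel with Kisin's construction of the moduli of finite flat group schemes \cite[\S2.1]{Kisin:ModuliGpSch} (and its extension to general $h$ in \cite[\S11]{Kim:Thesis}). First I would fix an $\fo_{\Eps,R}$-basis of the finite free module $M_R$, let $\Lambda_R\subset M_R$ be the $\Sig_R$-span of that basis, and write $\Lambda_A:=\Lambda_R\otimes_RA$ for an $R$-algebra $A$. For each $N\geqs 0$ the functor sending $A$ to the set of $\Sig_A$-submodules $\gM_A$ with $u^N\Lambda_A\subseteq\gM_A\subseteq u^{-N}\Lambda_A$ and $(u^{-N}\Lambda_A)/\gM_A$ finite projective over $A$ is represented by a projective $R$-scheme $\GR_N$, namely a closed subscheme of the relative Grassmannian of direct summands of the finite free $\Sig_A/u^{2N}$-module $(u^{-N}\Lambda_A)/(u^N\Lambda_A)$; the transition maps $\GR_N\hra\GR_{N+1}$ are closed immersions, and $\GR:=\varinjlim_N\GR_N$ is the ind-projective ind-scheme of $\Sig_A$-lattices $\gM_A\subset M_R\otimes_RA$ (those with $\gM_A\otimes_{\Sig_A}\fo_{\Eps,A}=M_R\otimes_RA$).

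Next I would cut out the $\vphi$-conditions inside $\GR$. The condition $\vphi_{M_R\otimes_RA}(\sig^*\gM_A)\subseteq\gM_A$ is closed, being the vanishing of the composite $\sig^*\gM_A\ra M_R\otimes_RA\ra(M_R\otimes_RA)/\gM_A$; and, since $\vphi$ is injective so that $\vphi(\sig^*\gM_A)$ is again a lattice (Lemma~\ref{lem:FlatCokerVphi}\eqref{lem:FlatCokerVphi:Inj}), the $\PP$-height bound $\PP(u)^h\gM_A\subseteq\vphi(\sig^*\gM_A)$ is likewise closed. Thus $D^{\leqs h}_{\Sig,\rho_R}$ is a closed subfunctor of $\GR$, and the tautological sublattice on $\GR$ restricts to a $\Sig\otimes_{\Zp}\OO_{\GRh_{\rho_R}}$-lattice of height $\leqs h$ on it, which will be the universal lattice $\nf\gM^{\leqs h}_{\rho_R}$ once representability is established. (Over local $A$ one checks that $\gM_A$ is moreover free over $\Sig_A$, by the idempotent-permuting argument of Corollary~\ref{cor:RaynaudScheme}.)

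The step I expect to be the real obstacle is \emph{boundedness}: producing an integer $N$, independent of $(A,I)\in\aug R$, with $u^N\Lambda_A\subseteq\gM_A\subseteq u^{-N}\Lambda_A$ for every $\vphi$-stable $\Sig_A$-lattice $\gM_A\subset M_R\otimes_RA$ of height $\leqs h$. Granting this, $D^{\leqs h}_{\Sig,\rho_R}$ is a closed subfunctor of the projective $R$-scheme $\GR_N$, hence is representable by a projective $R$-scheme $\GRh_{\rho_R}$ carrying the universal lattice of the previous paragraph, which proves the first assertion. To establish boundedness I would reduce to $A=\F$: since $I$ is nilpotent and $\m_RA\subseteq I$, a dévissage along the powers of $I$ (each graded piece being a module over the $\F$-algebra $A/I$), followed by spreading out and passage to residue fields, reduces to $\vphi$-stable $\Sig_C$-lattices of height $\leqs h$ inside $\ol M\otimes_\F C$, where $\ol M:=M_R\otimes_R\F=\nf D_\Eps(\bar\rho_\infty(-h))$ and $C$ is an $\F$-algebra, and ultimately to $C=\F$; there the set of $\Sig_\F$-submodules of height $\leqs h$ in $\ol M$ is finite and lies between the extremal prolongations $\gM^{\pm}$ of Lemma~\ref{lem:MaxMinProlongation}, with $\gM^+/\gM^-$ killed by a fixed power of $u$, and tracking $u$-denominators back through the dévissage yields the uniform $N$. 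This is the one genuinely technical point; it is the exact analogue of the boundedness underlying Kisin's projectivity statement in \cite[\S2.1]{Kisin:ModuliGpSch}, and the computation is carried out in \cite[\S11]{Kim:Thesis}.

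Finally, the base-change assertion I expect to be formal. For a map $R\ra R'$ of complete local noetherian rings, $\rho_R\otimes_RR'$ is again of height $\leqs h$ and $M_{R'}\cong M_R\wh\otimes_RR'$, so for every $R'$-algebra $A$ one has $M_R\otimes_RA=M_{R'}\otimes_{R'}A$ and the notion of a $\vphi$-stable $\Sig_A$-lattice of height $\leqs h$ is the same whether computed over $R$ or over $R'$. Hence $D^{\leqs h}_{\Sig,\rho_R}$ and $D^{\leqs h}_{\Sig,\rho_R\otimes_RR'}$ coincide as functors on $\aug{R'}$; since the former is represented by $\GRh_{\rho_R}\otimes_RR'$ (base change of a representing scheme) and the latter by $\GRh_{\rho_R\otimes_RR'}$, Yoneda yields the unique isomorphism between them, and it carries $\nf\gM^{\leqs h}_{\rho_R\otimes_RR'}$ to $\nf\gM^{\leqs h}_{\rho_R}\otimes_RR'$ because both are the tautological lattice on the common functor.
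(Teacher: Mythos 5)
Your outline follows the same strategy the paper is implicitly relying on via its citation of \cite[Prop.~1.3, Cor.~1.5.1, Cor.~1.7]{kisin:pstDefor}: realise $D^{\leqs h}_{\Sig,\rho_R}$ as a closed subfunctor of the affine Grassmannian cut out by $\vphi$-stability and the $\PP$-height condition, establish a uniform bound on $u$-denominators so that it sits inside a finite (hence projective) piece, and then observe base-change. The closed-subfunctor and base-change parts are fine (for the latter it is cleaner to say that the whole Grassmannian construction is manifestly functorial in the \'etale $\vphi$-module $M_R$ and $M_R\wh\otimes_RR'\cong M_{R'}$, rather than invoke Yoneda on $\aug{R'}$, which is too small a category to detect an isomorphism of $R'$-schemes on its own).

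The one place where your sketch has a real gap is the boundedness step, which is also what you flag as the crux. The d\'evissage along powers of $I$ does give, for each $k$, a $\vphi$-stable $\Sig_\F$-lattice $\gr^k\gM_A\subset\ol M\otimes_\F(I^k/I^{k+1})$ of height $\leqs h$, and these are indeed pinched between $\gM^-\otimes_\F(I^k/I^{k+1})$ and $\gM^+\otimes_\F(I^k/I^{k+1})$ by the argument with linear functionals applied to Lemma~\ref{lem:MaxMinProlongation}. But it is not true that bounding each graded piece automatically bounds $\gM_A$ inside $u^{-N}\Lambda_A$: the obstruction to lifting the bound from $\gr^k$ to $\gM_A$ lives in $\Hom_{\Sig_\F}(\gr^0\gM_A,(\ol M/u^{-N}\Lambda_\F)\otimes_\F(I/I^2))$, which is typically nonzero, and there is no $\vphi$-structure on the target (since $\Lambda_R$ is not chosen $\vphi$-stable) to force it to vanish. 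This is precisely the phenomenon behind Remark~\ref{rmk:FailureIsom4T}: the graded data need not reassemble to a projective $\Sig_A$-lattice with controlled denominators. Kisin's own boundedness argument is not a d\'evissage but a direct $u$-adic estimate (iterating $\PP(u)^h\gM_A\subset\vphi(\sig^*\gM_A)\subset\gM_A$ against the matrix of $\vphi$ for a fixed reference lattice, using a determinant bound), and this is what you should substitute for the ``tracking $u$-denominators'' step; the rest of your construction then goes through as written.
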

This proposition is proved in \cite[Proposition~1.3; Corollary~1.5.1; Corollary~1.7]{kisin:pstDefor} possibly except the base change assertion which is straightforward, so we omit the proof.

\subsection{Generic fibers and characteristic $0$ deformations}
We now give a ``moduli interpretation'' of the generic fiber\footnote{or rather, its completion at a closed point} of a $\GKinfty$-deformation ring of height $\leqs h$. 
We begin with the following lemma:
\begin{lemsub}\label{lem:GalLattice}
Let $A$ be a finite $\Qp$-algebra, and $V_A$ a finite free $A$-module with continuous $\GKinfty$-action. Then, for some (big enough) finite $\Zp$-subalgebra $A^\circ \subset A$ with $A^\circ[\ivtd p]=A$, there exists a $\GKinfty$-stable $A^\circ$-lattice $T_{A^\circ}\subset V_A$.  Furthermore, for any choice of $\GKinfty$-stable $\fo_E$-lattice $T_{\fo_E}$ in $V_A\otimes_AE$, one can arrange $T_{A^\circ}$ so that its image under the natural projection $V_A \thra V_A\otimes_AE$ is $T_{\fo_E}$.
\end{lemsub}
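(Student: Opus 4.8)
The plan is to run the familiar compactness-and-averaging construction of a Galois-stable lattice, taking extra care only in order to match the prescribed reduction $T_{\fo_E}$.

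First I would record the input I need. Forgetting the $A$-module structure, $V_A$ is a finite-dimensional $\Qp$-vector space on which $\GKinfty$ acts $\Qp$-linearly and continuously, hence through a compact subgroup of $\GL_A(V_A)$. Finite $\Zp$-subalgebras $A^\circ\subset A$ with $A^\circ[\ivtd p]=A$ (``orders'') certainly exist, and each is noetherian, being finite over $\Zp$. The key observation is that for any order $A^\circ$ and any finitely generated $A^\circ$-submodule $\Lambda_0\subset V_A$ with $\Lambda_0[\ivtd p]=V_A$, the stabiliser of $\Lambda_0$ in $\GL_A(V_A)$ is open (it contains $1+p^N\End_{A^\circ}(\Lambda_0)$ for $N\geqs1$); since $\GKinfty$ is profinite and acts continuously, its stabiliser of $\Lambda_0$ is open of finite index, so the orbit $\set{g\tim\Lambda_0:g\in\GKinfty}$ is a finite set, and (as $\GKinfty$ acts $A$-linearly) each $g\tim\Lambda_0$ is again an $A^\circ$-submodule of $V_A$. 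For the first assertion I would then take any order $A^\circ$ and let $\Lambda_0$ be the $A^\circ$-span of an $A$-basis of $V_A$, and set $T_{A^\circ}:=\sum_{g\in\GKinfty}g\tim\Lambda_0$; this is a finite sum of $A^\circ$-lattices, hence a finitely generated $A^\circ$-submodule with $T_{A^\circ}[\ivtd p]=V_A$, and it is visibly $\GKinfty$-stable.

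For the ``furthermore'' I would first reduce to the case that $A$ is local, since a finite $\Qp$-algebra is a product of local ones and $V_A\otimes_AE$ only involves the local factor with residue field $E$. Write $\pi\colon V_A\thra V_E:=V_A\otimes_AE$ for the reduction map, which is $A$-linear and $\GKinfty$-equivariant, and let $T_{\fo_E}\subset V_E$ be the given $\GKinfty$-stable $\fo_E$-lattice. Since $\fo_E$ is a discrete valuation ring, $T_{\fo_E}$ is free over it; I would lift an $\fo_E$-basis of $T_{\fo_E}$ to $e_1,\dots,e_r\in V_A$, which then form an $A$-basis of $V_A$ by Nakayama's lemma. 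The extra step is to enlarge $A^\circ$ so that the composite $A^\circ\hookrightarrow A\thra E$ is surjective onto $\fo_E$: this is possible because $\fo_E$ is generated as a $\Zp$-algebra by finitely many elements, each integral over $\Zp$, and any lift $b\in A$ of such an element $\bar b$ is again integral over $\Zp$ (if $g\in\Zp[t]$ is the minimal polynomial of $\bar b$, then $g(b)$ lies in the nilpotent ideal $\ker(A\thra E)$, so $b$ is a root of a power of $g$); adjoining these lifts to an order and clearing denominators yields an order $A^\circ$ with $A^\circ[\ivtd p]=A$ that surjects onto $\fo_E$. With $\Lambda_0:=\bigoplus_i A^\circ e_i$ one then has $\pi(\Lambda_0)=\sum_i\fo_E\tim\pi(e_i)=T_{\fo_E}$, and the averaging of the previous step produces a $\GKinfty$-stable $A^\circ$-lattice $T_{A^\circ}:=\sum_{g\in\GKinfty}g\tim\Lambda_0$ with $\pi(T_{A^\circ})=\sum_g g\tim\pi(\Lambda_0)=\sum_g g\tim T_{\fo_E}=T_{\fo_E}$, the final equality holding because $T_{\fo_E}$ is $\GKinfty$-stable.

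I do not anticipate a serious obstacle: this is the standard compactness trick. The two points that need attention are the coordination between the order $A^\circ$ and the prescribed lattice $T_{\fo_E}$ — namely that $A^\circ$ can be enlarged to surject onto $\fo_E$ while remaining an order with $A^\circ[\ivtd p]=A$ — and the routine bookkeeping that every module appearing in the construction stays finitely generated over the (possibly non-maximal, but noetherian) ring $A^\circ$.
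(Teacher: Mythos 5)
Your proof is correct, and it takes a genuinely different (though still standard) route from the paper's. Both reduce to $A$ local, both coordinate $A^\circ$ with $\fo_E$, and both invoke compactness of $\GKinfty$; the difference lies in how the stable lattice is manufactured. The paper works with the auxiliary non-noetherian $\Zp$-algebra $A^+\subset A$ defined as the preimage of $\fo_E$ and the $\GKinfty$-stable free $A^+$-module $V_{A^+}\subset V_A$ defined as the preimage of $T_{\fo_E}$; after fixing an $A^+$-basis (obtained by lifting an $\fo_E$-basis of $T_{\fo_E}$), compactness guarantees a finite $\Zp$-subalgebra $A^\circ\subset A^+$ containing all matrix entries of $\rho_A(g)$, and the $A^\circ$-span of that basis is then $\GKinfty$-stable by construction. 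You instead arrange $A^\circ$ to surject onto $\fo_E$ from the start, take the $A^\circ$-span $\Lambda_0$ of a lifted basis, and average: $T_{A^\circ}=\sum_{g}g\Lambda_0$, a finite sum because the orbit of $\Lambda_0$ is finite. In short, the paper bounds matrix entries while you bound the orbit of a lattice. Your version has two modest advantages: it makes explicit the enlargement step ensuring $A^\circ\thra\fo_E$ (which the paper leaves tacit, though it is needed for the ``furthermore''), and it avoids having to verify that $V_{A^+}$ is free over the non-noetherian ring $A^+$. The mild cost is that your $T_{A^\circ}$ need not be free over $A^\circ$, whereas the paper's is; but the lemma asks only for a finitely generated $\GKinfty$-stable lattice, so this is harmless.
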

\begin{proof}
We may assume $A$ is a finite local $\Qp$-algebra with residue field $E$. Let $A^+$ denote the preimage of $\fo_E$ under the natural projection $A\thra E$. Choose a $\GKinfty$-stable $\fo_E$-lattice $T_{\fo_E}$ in $V_A\otimes_AE$, and let $V_{A^+}$ denote the preimage of $T_{\fo_E}$ under the natural projection $V_A\thra V_A\otimes_AE$. Now, fix an $A^+$-basis $\set{\e_i}$ of $V_{A^+}$. Since $\GKinfty$ is compact and $A^+$ is a rising union of finite $\Zp$-subalgebras of $A$, one can find such an $A^\circ \subset A^+$ such that all the matrix entries for $\rho_A(g)$ for $g\in\GKinfty$ are valued $A^\circ$. Let  $T_{A^\circ}\subset V_{A^+}$ be the $A^\circ$-span of   $\set{\e_i}$ .
\end{proof}
\parag
For a finite $\Qp$-algebra $A$ we say an $A$-representation $\rho_A$ of $\GKinfty$ is \emph{of height $\leqs h$} if it is so as a $\Qp$-representation of $\GKinfty$ (in the sense of Definition~\ref{def:hthLattice}). For a finite extension $E$ of $\Qp$, we study the following deformation problem of a $E$-representation $\rho_E$ of $\GKinfty$: let $\art E$ be the category of artin local $E$-algebra with residue field $E$, and let $D^{\leqs h}_{\rho_E},D^{\Box,\leqs h}_{\rho_E}:\art E \ra \Sets$ respectively denote the subfunctors of the deformations and framed deformations of $\rho_E$ with height $\leqs h$.  

Choose a $\GKinfty$-stable $\fo_E$-lattice in $\rho_E$, and view $\rho_E$ as a ``deformation'' of the mod $p$ representation $\bar\rho_\infty$ of $\GKinfty$ obtained by reducing the $\fo_E$-lattice modulo $\m_E$.
By the ``limit theorem'' (Proposition~\ref{prop:LimitThm}) and Lemma~\ref{lem:GalLattice}, we obtain the following corollary:

\begin{corsub}\label{cor:GenFiberDefR}
Consider an $\fo$-map $\eta: R^{\Box,\leqs h}_\infty\ra E$ and let $\rho_\eta:= \rho^{\Box,\leqs h}\otimes_{R^{\Box,\leqs h}_\infty,\eta}E$ be the corresponding $\GKinfty$-representation over $E$.
Let $(R^{\Box,\leqs h}_\infty)\comp\eta$ be the completion of $R^{\Box,\leqs h}_\infty\otimes_\fo E$ with respect to the maximal ideal generated by $\ker(\eta)$, and $\wh\rho^{\Box,\leqs h}_\eta$ the  pullback to $(R^{\Box,\leqs h}_\infty)\comp\eta$ of the universal framed deformation of height $\leqs h$. Then $(R^{\Box,\leqs h}_\infty)\comp\eta$ together with  $\wh\rho^{\Box,\leqs h}_\eta$ represents the functor $D^{\Box,\leqs h}_{\rho_\eta}$. A similar statement holds for unframed deformation rings if they exist.
\end{corsub}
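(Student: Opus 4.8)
The plan is to identify, naturally in $A\in\art E$, the set of $E$-algebra maps $(R^{\Box,\leqs h}_\infty)\comp\eta\ra A$ with $D^{\Box,\leqs h}_{\rho_\eta}(A)$, in two stages. \emph{Stage~1 (reduction to deformation data over $\fo$).} First I would observe that $\eta$ is necessarily a local homomorphism with image in $\fo_E$: its image is a complete local noetherian domain whose fraction field is finite over $\Qp$, hence a subring of $\fo_E$. Consequently, for $A\in\art E$, an $E$-algebra map $(R^{\Box,\leqs h}_\infty)\comp\eta\ra A$ amounts to the same thing as an $\fo$-algebra map $\phi:R^{\Box,\leqs h}_\infty\ra A$ with $\phi\equiv\eta\pmod{\m_A}$: such a $\phi$ sends $\m$ into the preimage of $\m_A$ under $A\thra E$, and since $\m_A$ is nilpotent the induced map $R^{\Box,\leqs h}_\infty\otimes_\fo E\ra A$ kills a power of $\ker(\eta\otimes_\fo E)$ and therefore factors uniquely through the completion. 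So it suffices to match such $\phi$'s with $D^{\Box,\leqs h}_{\rho_\eta}(A)$, compatibly in $A$.

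\emph{Stage~2 (matching with height-$\leqs h$ deformations).} Fix a $\GKinfty$-stable $\fo_E$-lattice $T_{\fo_E}\subset\rho_\eta$ compatible with the framing and reducing, modulo $\m_E$, to $\bar\rho_\infty$ (after enlarging $\fo$, harmlessly, since forming $R^{\Box,\leqs h}_\infty$ commutes with extension of the coefficient ring). Given $\phi$ as in Stage~1, it factors through a finite flat $\Zp$-subalgebra $A^\circ\subset A$ (generated over $\fo_E$ by $\phi$ of a finite set of topological generators of $\m$; these are integral over $\fo_E$, differing from their $\eta$-images by nilpotents). Pulling back the universal framed deformation $\rho^{\Box,\leqs h}$ along $R^{\Box,\leqs h}_\infty\ra A^\circ$ and then extending scalars to $A$ yields a framed deformation of $\rho_\eta$ over $A$; it is of height $\leqs h$ because applying the universal property of $R^{\Box,\leqs h}_\infty$ at each Artin quotient $A^\circ/p^n$ shows the reduction mod $p^n$ of the $A^\circ$-lattice is of height $\leqs h$, whence by Proposition~\ref{prop:LimitThm} the $A^\circ$-lattice is of height $\leqs h$ as a $\Zp$-lattice $\GKinfty$-representation, and so its underlying $\Qp$-representation is of height $\leqs h$ in the sense of Definition~\ref{def:hthLattice}. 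Conversely, given $\rho_A\in D^{\Box,\leqs h}_{\rho_\eta}(A)$, Lemma~\ref{lem:GalLattice} furnishes a finite flat $\Zp$-subalgebra $A^\circ\subset A$ with $A^\circ[\ivtd p]=A$ and a $\GKinfty$-stable $A^\circ$-lattice $T_{A^\circ}\subset\rho_A$ compatible with the framing and with image $T_{\fo_E}$ in $\rho_\eta$, so that $T_{A^\circ}$ is a framed deformation of $\bar\rho_\infty$ over $A^\circ$; since $\rho_A$ is of height $\leqs h$ as a $\Qp$-representation, Theorem~\ref{thm:main}\eqref{thm:main:lattices} forces $T_{A^\circ}$ to be of height $\leqs h$, and then Proposition~\ref{prop:LimitThm} (via the Artin quotients $A^\circ/p^n$) shows $T_{A^\circ}$ is of height $\leqs h$ as a deformation over $A^\circ$ in the sense of \S\ref{subsec:RepbilitySetup}, so the universal property of $R^{\Box,\leqs h}_\infty$ produces an $\fo$-map $R^{\Box,\leqs h}_\infty\ra A^\circ\hra A$, which reduces to $\eta$ by construction of $T_{A^\circ}$. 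These assignments are mutually inverse and natural in $A$; this proves the framed statement, and the unframed one follows by the same argument with framings dropped (using that $\End_{\GKinfty}(\bar\rho_\infty)\cong\F$ guarantees the existence of $R^{\leqs h}_\infty$).

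\emph{Expected main difficulty.} The only substantive point is that ``of height $\leqs h$'' has two a priori different meanings in play: over $A\in\art E$ it is a property of the underlying $\Qp$-representation (Definition~\ref{def:hthLattice}), whereas $R^{\Box,\leqs h}_\infty$ only ``sees'' height-$\leqs h$ deformations over complete noetherian local $\fo$-algebras with residue field $\F$. Reconciling the two --- descending a characteristic-$0$ deformation to a $\GKinfty$-stable lattice over a finite flat $\Zp$-subalgebra (Lemma~\ref{lem:GalLattice}), observing that \emph{any} such lattice is then automatically of height $\leqs h$ (Theorem~\ref{thm:main}\eqref{thm:main:lattices}), and using the ``limit theorem'' (Proposition~\ref{prop:LimitThm}) to pass freely between ``height $\leqs h$ over $A^\circ$'', ``height $\leqs h$ as a $\Zp$-lattice $\GKinfty$-representation'', and ``reductions mod $p^n$ of height $\leqs h$ for all $n$'' --- is where the actual content sits; the remaining verifications (the bookkeeping with $A^\circ$ and with residue-field extensions, and the naturality) are routine.
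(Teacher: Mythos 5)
Your proof is correct, and it follows essentially the same route as the paper's own argument: identify $E$-algebra maps out of the completion with $\fo$-algebra maps $R^{\Box,\leqs h}_\infty\to A$ reducing to $\eta$, descend to a finite flat $\Zp$-subalgebra $A^\circ\subset A$, invoke Lemma~\ref{lem:GalLattice} to produce a $\GKinfty$-stable $A^\circ$-lattice lifting $\bar\rho_\infty$, use Theorem~\ref{thm:main}\eqref{thm:main:lattices} together with the ``limit theorem'' (Proposition~\ref{prop:LimitThm}) to see that this lattice is of height~$\leqs h$ in the deformation-theoretic sense, and then apply the universal property of $R^{\Box,\leqs h}_\infty$. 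Your write-up is somewhat more explicit than the paper's — in particular Stage~1 (the bookkeeping with the completion) and the need to enlarge $\fo$ so that the residue field of $A^\circ$ matches $\F$ are both things the paper passes over silently — so you have filled in genuine but routine details rather than found a different argument.
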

\begin{proof}
For a complete local noetherian ring $R$ with finite residue field, any ring morphism $\xi:R\ra A$ into a finite $\Qp$-algebra $A$ should factor through a finite $\Zp$-subalgebra of $A$. 
This, combined with Proposition~\ref{prop:LimitThm}, shows that the pullback $\rho_\xi:=\rho^{\Box,\leqs h}\otimes_{R^{\Box,\leqs h}_\infty,\xi}A$ under any map $\xi: R^{\Box,\leqs h}_\infty\ra A$ is of height $\leqs h$. If $\xi$ lifts $\eta$, then $\rho_\xi$ is a lift of $\rho_\eta$.

For any framed $A$-deformation $\rho_A$ of $\rho_\eta$, Lemma~\ref{lem:GalLattice} gives us a map $\xi^\circ:R^{\Box,\leqs h}_\infty\ra A^\circ$ for some finite $\fo$-subalgebra $A^\circ\subset A$. By composing it with the natural inclusion $A^\circ \hra A$, we obtain a map $\xi:R^{\Box,\leqs h}_\infty\ra A$ such that $\rho_A\cong \rho^{\Box,\leqs h}\otimes_{R^{\Box,\leqs h}_\infty,\xi}A$. This map $\xi$ lifts $\eta$ since $\rho_A$ lifts $\rho_\eta$, so $\xi$ factors through  $(R^{\Box,\leqs h}_\infty)\comp\eta$. Note that the map $\xi$ is independent of the choice of $A^\circ$. 
\end{proof}

\subsection{Generic Fiber of $\GRh_{\rho_R}$}\hfill
%
\parag
\label{par:GenFiberGRh}
For a finite $\Qp$-algebra $A$, we define $\ModFIh A$ to be the category of $\vphi$-modules $\gM_A$ such that for some finite $\Zp$-subalgebra $A^\circ\subset A$ there exists $\gM_{A^\circ}\in\ModFIh{A^\circ}$ with $\gM_{A^\circ}[\ivtd p] \cong \gM_A$. For such $\gM_A$, we define $\nf V^{\leqs h}_\Sig(\gM_A):=\nf T^{\leqs h}_\Sig(\gM_{A^\circ})\otimes_{A^\circ}A$, which is naturally an $A$-representation of height $\leqs h$. Note that $\nf V^{\leqs h}_\Sig(\gM_A)$ is independent of the choice of $\gM_{A^\circ}$.

Let $A$ be a local $R$-algebra with residue field $E$ which is finite over $\Qp$. (This makes $E$ an $R$-algebra.) Now, let us interpret $A$-points of $\GRh_{\rho_R}$. Let $A^+$ denote the preimage of $\fo_E$ under the natural projection $A\thra E$. By the valuative criterion 
any $R$-map $\Spec A \ra \GRh_{\rho_R}$ factors through $\Spec A^+$, so in turn, it factors through $\Spec A^\circ$ where $A^\circ$ is an $R$-subalgebra  in $A^+$ which is finite over $\fo$ and such that $A^\circ[\ivtd p]= A$. This  implies that $\GRh_{\rho_R}(A)$ is naturally isomorphic to the set of $\vphi$-stable $\Sig_A$-lattices $\gM_A$ of $M_R\otimes_R A$ such that $\gM_A\in \ModFIh A$.
%



\begin{propsub}\label{prop:GenIsom}
The structure morphism $\GRh_{\rho_R}\otimes_{\Zp}\Qp \ra \Spec(R\otimes_{\Zp}\Qp)$ is an isomorphism.
\end{propsub}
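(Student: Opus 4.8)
The plan is to show that the proper morphism
$\Theta^{\an}\colon\GRh_{\rho_R}\otimes_{\Zp}\Qp\to\Spec(R\otimes_{\Zp}\Qp)=\Spec R[\ivtd p]$
— proper because $\GRh_{\rho_R}\to\Spec R$ is projective by Proposition~\ref{prop:ResolDefor} — is an isomorphism, by analysing it at the closed points of the Jacobson Noetherian scheme $\Spec R[\ivtd p]$. Concretely I would establish three things: (i) $\Theta^{\an}$ is a monomorphism, hence — being proper — a closed immersion; (ii) $\Theta^{\an}$ is surjective; and (iii) for each closed point $y$ of $\Spec R[\ivtd p]$, the induced map on completed local rings is an isomorphism. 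Granting these, the defining coherent ideal sheaf $\mathscr I$ of the closed immersion $\Theta^{\an}$ vanishes on the completed local ring at every closed point by (iii), so $\mathscr I_y=0$ for $y$ closed; since $\operatorname{Supp}\mathscr I$ is closed and, by (ii), contains no closed point while $\Spec R[\ivtd p]$ is Jacobson, $\mathscr I=0$, so $\Theta^{\an}$ is an isomorphism.

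For (i) I would invoke the moduli description recalled in \S\ref{par:GenFiberGRh}: for a local $R[\ivtd p]$-algebra $A$ finite over $\Qp$ and a given map $R[\ivtd p]\to A$, the corresponding fibre of $\GRh_{\rho_R}(A)$ is the set of $\vphi$-stable $\Sig_A$-lattices $\gM_A\subseteq M_R\otimes_R A$ of height $\leqs h$ lying in $\ModFIh A$; it then suffices to check that this set always has at most one element (a finite-type morphism of Noetherian schemes injective on artin local $\Qp$-valued points is a monomorphism). This is the uniqueness of the $\Sig$-module model in characteristic $0$: any two such $\gM_A,\gM_A'$ are $\Sig_A$-lattices in the same $M_R\otimes_R A$, so $\nf T^{\leqs h}_\Sig(\gM_A)[\ivtd p]$ and $\nf T^{\leqs h}_\Sig(\gM_A')[\ivtd p]$ are canonically the \emph{same} $\GKinfty$-representation $\nf T_\Eps(M_R\otimes_R A)(h)[\ivtd p]$; by the full faithfulness of $\nf T^{\leqs h}_\Sig$ on the isogeny category (Theorem~\ref{thm:main}\eqref{thm:main:fullfth}) the identity of this representation lifts to a morphism $f\colon\gM_A\to\gM_A'$, and by the faithfulness of Fontaine's functor (Remark~\ref{rmk:FontaineEtPhiModIsog}) the induced map on $M_R\otimes_R A$ is the identity; hence $\gM_A\subseteq\gM_A'$ inside $M_R\otimes_R A$, and symmetrically $\gM_A=\gM_A'$.

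For (ii) and (iii) the input is that the relevant representations are of height $\leqs h$ and that a $\Sig$-model of height $\leqs h$ both exists and can be taken locally free. First, as in the proof of Corollary~\ref{cor:GenFiberDefR}, for a closed point $y$ with residue field $E$ and any artin local $E$-algebra $A$ the representation $\rho_R\otimes_R A$ is of height $\leqs h$: the structure map reduces modulo $p^n$ (on a Galois-stable lattice) and modulo $\m_R^n$ to maps through artinian quotients of $R$, base change along a map of finite artinian $\Zp$-algebras realises $\rho_R\otimes_R A$ as a subquotient of a power of a height-$\leqs h$ representation, and Propositions~\ref{prop:RamakrishnaCrit} and~\ref{prop:LimitThm} conclude. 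For (ii): choosing a $\GKinfty$-stable $\fo_E$-lattice $T\subset\rho_E$, Theorem~\ref{thm:main}\eqref{thm:main:lattices} gives $\gM\in\phimodh\Sig$ with $\nf T^{\leqs h}_\Sig(\gM)\cong T$; the $\fo_E$-action transports to $\gM$ by Theorem~\ref{thm:main}\eqref{thm:main:fullfth}, and since $\Sig_{\fo_E}$ is a finite product of complete discrete valuation rings with $\sig$ permuting its primitive idempotents transitively (because $\sig|_{W(k)}$ generates $\Gal(k/\Fp)$, exactly as in Corollary~\ref{cor:RaynaudScheme}), one sees $\gM[\ivtd p]$ is free over $\Sig_E$; clearing denominators produces an object of $\ModFIh{A^\circ}$ over a suitable finite $\Zp$-subalgebra $A^\circ\subset E$, hence an $E$-point of $\GRh_{\rho_R}$ over $y$, so $y$ lies in the (closed) image of $\Theta^{\an}$. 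For (iii): $\widehat{\OO}_{\Spec R[\ivtd p],y}$ pro-represents the functor of lifts of $y$ over artin local $E$-algebras, while $\widehat{\OO}_{\GRh_{\rho_R}\otimes\Qp,\,x}$ (at the point $x$ over $y$) pro-represents lifts of $y$ together with a lift of the model at $x$; the forgetful transformation is an isomorphism because each lift $\rho_R\otimes_R A$ is of height $\leqs h$, hence has a (unique, by (i)) $\vphi$-stable $\Sig_A$-lattice of height $\leqs h$, which moreover lies in $\ModFIh A$ — again by the idempotent argument above, together with the fact that over an artin local coefficient ring a finitely presented module flat over the base with locally free closed fibre is itself locally free, flatness over $A$ following as in Lemma~\ref{lem:FlatCokerVphi}. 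This yields (iii), hence the proposition.

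I expect the obstacle to be twofold. Conceptually, the heart is the uniqueness in (i): a height-$\leqs h$ representation over a $\Qp$-algebra admits at most one $\Sig$-module model of height $\leqs h$. This has no integral analogue (compare the footnote after \eqref{eqn:TSigTors}) and rests squarely on Kisin's full faithfulness theorem, Theorem~\ref{thm:main}\eqref{thm:main:fullfth}. Technically, the delicate point is the assertion used in (ii)--(iii) that the $\Sig_A$-model can be chosen \emph{locally free} over $\Sig_A$ rather than as a mere finitely generated $\Sig_A$-submodule; over non-reduced artinian $\Qp$-coefficients this is precisely the counterpart of the characteristic-$p$ pathology of Remark~\ref{rmk:FailureIsom4T}, and verifying that it does not obstruct representability after inverting $p$ is where one must follow the representability analysis of \cite[\S1]{kisin:pstDefor} (via $A$-flatness of $\coker\vphi$ and transitivity of Frobenius on idempotents). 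Everything else — properness, the Jacobson property of $\Spec R[\ivtd p]$, the implication ``proper monomorphism $\Rightarrow$ closed immersion'', and the ideal-sheaf bookkeeping — is formal.
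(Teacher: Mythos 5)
Your proposal is correct and takes essentially the same approach as the paper's, which is simply to cite \cite[Proposition~1.6.4]{kisin:pstDefor} together with the observation that both sides are Jacobson; you have in effect reconstructed the content of that reference (proper monomorphism via full faithfulness of $\nf T^{\leqs h}_\Sig$ on the isogeny category, surjectivity on closed points via Theorem~\ref{thm:main}\eqref{thm:main:lattices} and transitivity of $\sig$ on idempotents, then the Jacobson/ideal-sheaf bookkeeping). The one imprecision worth noting is that $A$-flatness of the $\Sig_A$-model over a non-reduced Artinian $\Qp$-algebra does not quite ``follow as in Lemma~\ref{lem:FlatCokerVphi}'' --- that lemma already presupposes $\gM_A\in\ModFIh A$ --- but, as you observe yourself, this is exactly the technical point one delegates to \cite[\S1]{kisin:pstDefor}, which is what the paper does as well.
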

The  proposition is a direct consequence of  \cite[Proposition~1.6.4]{kisin:pstDefor}; note that both the source and the target of the map are Jacobson.
\begin{propsub}\label{prop:FormalSm}
Assume that the map $\Spf R \ra D^{\leqs h}_{\infty}$ of functors on $\art\fo$ defined by a deformation $\rho_R$ is formally smooth. Then $R[\ivtd p]$ is formally smooth over $\Frac(\fo)$.
\end{propsub}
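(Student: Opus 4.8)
I would prove this by realising $\Spec R[\ivtd p]$, through the resolution $\GRh_{\rho_R}$ of \S\ref{subsec:DefnOfResol} together with Proposition~\ref{prop:GenIsom}, as a moduli space of $\Sig$-lattices of height $\leqs h$, and then showing that over characteristic-zero coefficients such lattices deform without obstruction.

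\emph{Reduction to the framed universal ring.} First I would reduce to the case $R=R^{\Box,\leqs h}_\infty$. Both $\Spf R\to D^{\leqs h}_\infty$ and $\Spf R^{\Box,\leqs h}_\infty\to D^{\leqs h}_\infty$ are formally smooth --- the second because the height condition is insensitive to framing, so $D^{\Box,\leqs h}_\infty\to D^{\leqs h}_\infty$ is formally smooth as is standard for framed versus unframed deformation functors --- so the fibre product $\Spf R':=\Spf R\times_{D^{\leqs h}_\infty}\Spf R^{\Box,\leqs h}_\infty$ is pro-represented by a complete local noetherian $\fo$-algebra $R'$ which is a power series ring over each of $R$ and $R^{\Box,\leqs h}_\infty$ (a formally smooth local homomorphism of complete local noetherian rings with trivial residue field extension is a power series extension). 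Since adjoining power series variables does not affect formal smoothness of the generic fibre over $\Frac(\fo)$, it then suffices to treat $R=R^{\Box,\leqs h}_\infty$.

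\emph{Reduction to a deformation functor.} Next, since $R^{\Box,\leqs h}_\infty[\ivtd p]$ is Jacobson, I would only need to show that $(R^{\Box,\leqs h}_\infty)\comp\eta$ is a power series ring over its residue field $E$ --- which is finite, hence separable, over $\Frac(\fo)$ --- for each closed point $\eta$. By Corollary~\ref{cor:GenFiberDefR} this completion pro-represents $D^{\Box,\leqs h}_{\rho_\eta}$, and Proposition~\ref{prop:GenIsom} applied to $R^{\Box,\leqs h}_\infty$, together with the uniqueness of the height-$\leqs h$ lattice (this uniqueness is part of that isomorphism; it also follows from Theorem~\ref{thm:main}\eqref{thm:main:fullfth} and Lemma~\ref{lem:MaxMinProlongation}), identifies $D^{\Box,\leqs h}_{\rho_\eta}$, up to a formally smooth framing factor, with the functor on $\art E$ sending $A$ to the set of liftings of the height-$\leqs h$ $\Sig$-lattice $\gM_\eta$ attached to $\rho_\eta$ to an object of $\ModFIh A$. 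I would thus be reduced to showing that this last functor is formally smooth over $E$.

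\emph{The unobstructedness, and the main obstacle.} Given a small surjection $A'\thra A$ in $\art E$ with square-zero kernel $J$ and $\gM_A\in\ModFIh A$, the underlying finite free $\Sig_{A'}$-module lifts uniquely up to isomorphism and $\vphi_{\gM_A}$ admits a $\Sig_{A'}$-linear lift $\tilde\vphi$; if $\psi_A$ is a map with $\vphi_{\gM_A}\psi_A=\PP(u)^h$ (which exists because $\gM_A$ is of height $\leqs h$) and $\tilde\psi$ lifts it, then the only obstruction to correcting $\tilde\vphi$ and $\tilde\psi$ so that $\tilde\vphi\tilde\psi=\PP(u)^h$ --- equivalently, to making $\coker(\tilde\vphi)$ killed by $\PP(u)^h$ --- is the class of $\delta:=\tilde\vphi\tilde\psi-\PP(u)^h$ in $J\otimes_A\End_{\Sig_A}(\gM_A)$ modulo the image of $(\zeta,\xi)\mapsto\vphi_{\gM_A}\zeta+\xi\psi_A$. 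The hard part will be to show that this class always vanishes: one has to check that $\delta$ lies in that image, which I expect to follow from the extra $\PP(u)$-divisibility that ``height $\leqs h$'' forces on $\vphi_{\gM_A}\bmod\m_A$ --- already visible on the rank-one modules $\Sig\cdot e$ with $\vphi(\sig^* e)=\PP(u)^r e$, where the image of the above map is $\PP(u)^{\min(r,h-r)}\Sig$ while $\delta$ is pushed into a deeper $\PP(u)$-adic layer --- combined with the identity $\vphi_{\gM_A}\psi_A=\PP(u)^h$; this is essentially the same denominator bookkeeping that underlies Proposition~\ref{prop:GenIsom}. Once it is in hand, $(R^{\Box,\leqs h}_\infty)\comp\eta$ is a power series ring over $E$, so $R^{\Box,\leqs h}_\infty[\ivtd p]$, and therefore $R[\ivtd p]$, is formally smooth over $\Frac(\fo)$.
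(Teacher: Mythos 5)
Your proposal sets up the right statement and correctly locates the difficulty, but the one step that carries the real content of the proposition — showing that a $\Sig$-lattice of height $\leqs h$ over $\ol A$ lifts across a square-zero extension $A'\thra\ol A$ — is left unproved, and the mechanism you offer for it is not the one that actually works. Concretely, you exhibit the obstruction class of $\delta=\tilde\vphi\tilde\psi-\PP(u)^h$ modulo the image of $(\zeta,\xi)\mapsto\vphi_{\gM_A}\zeta+\xi\psi_A$ and ``expect'' it to vanish by $\PP(u)$-divisibility. This is the whole ballgame, not a detail. The cokernel of that map is genuinely nonzero (your own rank-one example gives $\Sig/\PP(u)^{\min(r,h-r)}$), so vanishing of the class is not automatic; and your divisibility heuristic relies on $\Sig_{A'}\thra\Sig_A$ admitting an $\Sig_A$-algebra splitting (so that one can write lifts as $\hat f+a$, $\hat g+b$ with $\hat f\hat g=\PP(u)^h$), which is false for a general square-zero extension of artinian local $E$-algebras (e.g.\ $E[\epsilon]/\epsilon^3\thra E[\epsilon]/\epsilon^2$). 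So the calculation that is ``visible on rank-one modules'' does not persist.

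The paper's proof of this lifting statement (Lemma~\ref{lem:FormalSm}) avoids obstruction classes altogether. The key input is Lemma~\ref{lem:FlatCokerVphi}\eqref{lem:FlatCokerVphi:Flat}: $\omega=\coker(\vphi_{\gM_{\ol{A^\circ}}})$ is \emph{flat} over the coefficient ring. One lifts $\omega$ as a free module, lifts the underlying free $\Sig$-module, lifts the quotient map $\gM/\PP(u)^h\thra\omega$, and then — precisely because $\omega$ is flat so that the short exact sequence $0\ra\sig^*\gM\ra\gM\ra\omega\ra0$ stays exact after $\otimes\,\ol{A^\circ}$ — the kernel of the lifted quotient is itself a lift of $\sig^*\gM_{\ol{A^\circ}}$, and one defines the Frobenius as the composite with a lift of the identity. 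This is a direct construction of a lift; it does not pass through, and is not a rephrasing of, a $\PP(u)$-adic estimate. Your reductions (to $R^{\Box,\leqs h}_\infty$ via the fibre product, then to completed local rings via Corollary~\ref{cor:GenFiberDefR} and Proposition~\ref{prop:GenIsom}) are sound and close in spirit to the paper's reduction to lifting points of $\GRh_{\rho_R}$, but without the flatness argument the proof does not close.
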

Note that the assumption on $R$ is satisfied by universal framed or unframed $\GKinfty$-deformation rings of height $\leqs h$. 
\begin{proof}
Note that a noetherian Jacobson $\Frac(\fo)$-scheme $X$ (e.g., $X=\Spec R[\ivtd p]$ for some complete local noetherian $\fo$-algebra $R$) is formally smooth over $\Frac(\fo)$ if and only if its completion at each maximal ideal is geometrically regular, by \cite[0$_{\textrm{IV}}$, Th\'eor\`eme (20.5.8), Corollaires (22.6.5), (22.6.6)]{EGA}. 

Using Proposition~\ref{prop:GenIsom} it is enough to show the following claim: for any finite local $\Frac(\fo)$-algebra $A$ and $\ol A:= A/I$ where $I$ is a square-zero ideal, any $\Frac(\fo)$-map $\bar\xi:\Spec \ol A \ra \GRh_{\rho_R}$ lifts to a $\Frac(\fo)$-map $\xi:\Spec A\ra\GRh_{\rho_R}$. Note that by post-composing the structure morphism $\GRh_{\rho_R}$ over $R$, the map $\bar\xi$ induces a map $R\ra\ol A$, which we also denote by $\bar\xi$. By the discussion in \S\ref{par:GenFiberGRh}, an equivalent claim is that for any $\gM_{\ol A}\in\ModFIh{\ol A}$ with an isomorphism $\bar\iota:\gM_{\ol A}\otimes_{\Sig_{\ol A}}\Eps_{\ol A} \cong M_R\otimes_{R,\bar\xi} \ol A$, one can find $\gM_A\in\ModFIh A$,  $\xi:R\ra A$, and $\iota:\gM_A\otimes_{\Sig_A}\Eps_A \cong M_R\otimes_{R,\xi} A$, which lift $\gM_{\ol A}$, $\bar\xi$, and $\bar\iota$, respectively. 


By Lemma~\ref{lem:FormalSm} there exists $\gM_A\in\ModFIh A$ such that $\gM_A \otimes_A \ol A\cong \gM_{\ol A}$.  Note that $\nf V^{\leqs h}_\Sig(\gM_A)$, defined in \S\ref{par:GenFiberGRh}, is an $A$-deformation of of height $\leqs h$. The lifts $\xi$ and $\iota$ are obtained by formal smoothness of $\Spf R \ra D^{\leqs h}_{\infty}$. 
\end{proof}
\begin{lemsub}\label{lem:FormalSm}
Let $A\thra\ol A$ be a surjective map of finite $\Qp$-algebras with square-zero kernel $I$. For any $\gM_{\ol A}\in\ModFIh{\bar A}$, there exists a $\gM_A\in\ModFIh A$ such that $\gM_A\otimes_A \ol A \cong \gM_{\ol A}$.
\end{lemsub}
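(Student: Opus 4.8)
The plan is to descend to finite flat $\Zp$-coefficients, lift the underlying free module at no cost, and then lift the semilinear Frobenius in a way that keeps the height bound; this last step is the substantive one.

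\textbf{Reduction to integral models.} First I would choose a finite $\Zp$-subalgebra $A^\circ\subseteq A$ with $A^\circ[\ivtd p]=A$ that surjects onto a finite $\Zp$-subalgebra $\bar A^\circ\subseteq\bar A$ over which $\gM_{\bar A}$ descends to some $\gM_{\bar A^\circ}\in\ModFIh{\bar A^\circ}$; this is possible since $A\thra\bar A$ is surjective, and $A^\circ$ may be enlarged at any later point. As $A^\circ$ and $\bar A^\circ$ are subrings of $\Q_p$-algebras they are $\Zp$-torsion free, hence $\Zp$-flat, so the kernel $I^\circ:=A^\circ\cap I$ of $A^\circ\thra\bar A^\circ$ satisfies $(I^\circ)^2\subseteq A^\circ\cap I^2=0$; and, as $\Sig$ is $\Zp$-flat, $\Sig_{A^\circ}\thra\Sig_{\bar A^\circ}$ is surjective with square-zero kernel $\mathcal I:=\Sig\otimes_{\Zp}I^\circ$, while $\Sig_{A^\circ}[\ivtd p]=\Sig_A$. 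It therefore suffices to produce $\gM_{A^\circ}\in\ModFIh{A^\circ}$ with $\gM_{A^\circ}\otimes_{\Sig_{A^\circ}}\Sig_{\bar A^\circ}\cong\gM_{\bar A^\circ}$, since then $\gM_A:=\gM_{A^\circ}[\ivtd p]\in\ModFIh A$ is the required lift; enlarging $A^\circ$ mid-construction only enlarges $\mathcal I$ accordingly, which does no harm.

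\textbf{The matrix formulation.} Since $\gM_{\bar A^\circ}$ is finite free over $\Sig_{\bar A^\circ}$, I would fix an $\Sig_{\bar A^\circ}$-basis and set $\gM_{A^\circ}:=\Sig_{A^\circ}^{\,n}$ with the induced reduction isomorphism. In this basis $\vphi_{\gM_{\bar A^\circ}}$ is a matrix $\bar\Phi\in\Mat_n(\Sig_{\bar A^\circ})$, injective by Lemma~\ref{lem:FlatCokerVphi}\eqref{lem:FlatCokerVphi:Inj}, and ``height $\leqs h$'' says exactly that there is $\bar\Psi\in\Mat_n(\Sig_{\bar A^\circ})$ with $\bar\Phi\bar\Psi=\bar\Psi\bar\Phi=\PP(u)^h\cdot\Id_n$ (the two identities are equivalent because $\PP(u)$ is a non-zero-divisor in $\Sig_{\bar A^\circ}$). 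Thus the whole problem reduces to: \emph{find a lift $\Phi\in\Mat_n(\Sig_{A^\circ})$ of $\bar\Phi$ admitting some $\Psi\in\Mat_n(\Sig_{A^\circ})$ with $\Phi\Psi=\PP(u)^h\cdot\Id_n$}; any such $\Phi$ turns $\gM_{A^\circ}$ into an object of $\ModFIh{A^\circ}$ lifting $\gM_{\bar A^\circ}$.

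\textbf{The Frobenius lift (the crux).} The key leverage is that $\PP(u)$ is a unit in $\fo_\Eps$: over $\fo_{\Eps,A^\circ}$ any lift $\Phi$ of $\bar\Phi$ is invertible, so its only candidate pseudo-inverse is the forced matrix $\PP(u)^h\Phi^{-1}$, and one must choose $\Phi$ so that this is genuinely $\Sig_{A^\circ}$-integral and not merely $\fo_{\Eps,A^\circ}$-integral. Taking an arbitrary $\Sig_{A^\circ}$-lift $\Phi_0$ of $\bar\Phi$ and $\Psi_0$ of $\bar\Psi$, one writes $\Phi_0\Psi_0=\PP(u)^h\Id_n+E$ with $E\in\Mat_n(\mathcal I)$; since $\mathcal I^2=0$, for $F\in\Mat_n(\mathcal I)$ one computes $\PP(u)^h(\Phi_0+F)^{-1}=\Psi_0-\PP(u)^{-h}\,\Psi_0(E+F\Psi_0)$ in $\Mat_n(\fo_{\Eps,A^\circ})$, so what has to be arranged is the affine-linear condition $\Psi_0(E+F\Psi_0)\in\PP(u)^h\cdot\Mat_n(\mathcal I)$ for $F$ in the square-zero module $\mathcal I$. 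This can be solved over $\fo_{\Eps,A^\circ}$, where $\PP(u)$ and $\Psi_0$ are invertible; to descend the solution to $\Sig_{A^\circ}$ one would run a successive-approximation argument controlling the $u$-adic denominators, in the spirit of the proof of Proposition~\ref{prop:RepresentabilityH3} (cf.\ Claim~\ref{Repbility7}), repeatedly multiplying the error by the $\Sig$-integral matrix $\PP(u)^h\bar\Phi^{-1}=\bar\Psi$ — integral precisely because $\gM_{\bar A^\circ}$ is of height $\leqs h$ — so that it gains order geometrically; any residual $p$-power denominators are then absorbed by enlarging $A^\circ$. Inverting $p$ yields $\gM_A\in\ModFIh A$. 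The descent and module-lift are routine bookkeeping with integral structures; the one genuinely hard step is this last one, namely forcing the \emph{$\Sig$-integrality} (as opposed to mere $\fo_\Eps$-integrality) of the pseudo-inverse of the lifted Frobenius — this is exactly what makes the height bound survive to the lift, and it is the point where both the hypothesis that $\gM_{\bar A}$ is of height $\leqs h$ and the fact that $\PP(u)$ becomes a unit over $\fo_\Eps$ are indispensable.
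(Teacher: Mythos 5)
Your reduction to integral models and the matrix reformulation (find $\Phi\in\Mat_n(\Sig_{A^\circ})$ lifting $\bar\Phi$ with $\PP(u)^h\Phi^{-1}\in\Mat_n(\Sig_{A^\circ})$, then linearize using $\mathcal I^2=0$ to arrive at the affine condition $\Psi_0\bigl(E+F\Psi_0\bigr)\in\PP(u)^h\Mat_n(\mathcal I)$) is a correct and sensible set-up. But the decisive step — showing this affine condition is actually solvable with $F\in\Mat_n(\mathcal I)$ — is not filled in, and the specific device you reach for does not apply. The successive approximation of Claim~\ref{Repbility7} works because the operator being iterated involves the semilinear Frobenius $\sig$, which multiplies $u$-adic orders by $p$; each pass strictly increases the $u$-order of the error, and the iterates converge. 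Your condition is a \emph{$\Sig_{\ol{A^\circ}}$-linear} constraint in which $\sig$ does not appear at all (the entries $E$, $F$ live in the square-zero ideal $\mathcal I$, which kills any self-composition), so there is no mechanism for an iteration to gain $u$-order. Solving the equation over $\fo_{\Eps,A^\circ}$ gives $F$ with a $\PP(u)^{h}$-denominator, and ``absorbing denominators'' has no meaning here: one needs $E$ to lie in the $\Sig_{\ol{A^\circ}}$-submodule $\bar\Phi\cdot\Mat_n(\mathcal I)+\Mat_n(\mathcal I)\cdot\bar\Psi$, which is a genuine constraint, not a convergence issue. What makes it hold is precisely the $\ol{A^\circ}$-flatness of $\coker(\vphi_{\gM_{\ol{A^\circ}}})$ (Lemma~\ref{lem:FlatCokerVphi}\eqref{lem:FlatCokerVphi:Flat}), which your proof never invokes.

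The paper's proof is structured so that this obstruction never appears as a matrix equation to be solved. Since $\omega_{\ol{A^\circ}}:=\coker(\vphi_{\gM_{\ol{A^\circ}}})$ is finite \emph{free} over $\ol{A^\circ}$, one simply lifts it to a free $A^\circ$-module $\omega_{A^\circ}$, lifts the surjection $\gM_{A^\circ}/\PP(u)^h\gM_{A^\circ}\thra\omega_{\ol{A^\circ}}$ to a surjection onto $\omega_{A^\circ}$, and takes $\gN_{A^\circ}$ to be the kernel of the composite $\gM_{A^\circ}\thra\omega_{A^\circ}$. Flatness of $\omega_{A^\circ}$ over $A^\circ$ gives $\gN_{A^\circ}\otimes_{A^\circ}\ol{A^\circ}\riso\sig^*\gM_{\ol{A^\circ}}$, and projectivity of $\sig^*\gM_{A^\circ}$ lets one lift the identity on $\sig^*\gM_{\ol{A^\circ}}$ to a surjection $\sig^*\gM_{A^\circ}\thra\gN_{A^\circ}$; the resulting composite with $\gN_{A^\circ}\hra\gM_{A^\circ}$ is the required lift of $\vphi$, and its cokernel is $\omega_{A^\circ}$, hence killed by $\PP(u)^h$ by construction. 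If you want to make your matrix version work, you should isolate the fact that $\coker(\bar\Phi)$ is flat over $\ol{A^\circ}$ and use it to see that $E=\Phi_0\Psi_0-\PP(u)^h\Id$ lands in the correct submodule; without it, the last step of your argument is a genuine gap.
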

\begin{proof}
Choose $\gM_{\ol{A^\circ}}\in \ModFIh{\ol{A^\circ}}$ with $\gM_{\ol{A^\circ}}\otimes_{\ol{A^\circ}} \ol A\cong \gM_{\ol A}$ for some finite $\Zp$-subalgebra $\ol{A^\circ}\subset \ol A$ (which exists by definition), and a finite $\Zp$-subalgebra $A^\circ \subset A$ so that $A^\circ[\ivtd p]=A$ and $A^\circ$ surjects onto  $\ol{A^\circ}$. To prove the lemma, it is enough to lift $\gM_{\ol{A^\circ}}$ to some  $\gM_{A^\circ}\in \ModFIh{A^\circ}$.

Put $\omega_{\ol{A^o}}:=\coker(\vphi_{\gM_{\ol{A^o}}})$, which  is finite free over $\ol{A^o}$ by Lemma~\ref{lem:FlatCokerVphi}\eqref{lem:FlatCokerVphi:Flat}. Let $\omega_{A^o}$ be a finite free $A^o$-module that lifts $\omega_{\ol{A^o}}$, and $\gM_{A^o}$  a finite free $\Sig_{A^o}$-module that lifts $\gM_{\ol{A^o}}$. We can choose a $\Sig_{A^o}/\PP(u)^h$-linear surjection $\gM_{A^o}/\PP(u)^h\gM_{A^o} \thra \omega_{A^o}$ which lifts the natural projection $\gM_{\ol{A^o}}/\PP(u)^h\gM_{\ol{A^o}} \thra \omega_{\ol{A^o}}$. Since $\vphi_{\gM_{\ol{A^o}}}$ is injective by Lemma~\ref{lem:FlatCokerVphi}\eqref{lem:FlatCokerVphi:Inj}, we obtain the following diagram with exact rows:
\begin{equation}\label{eqn:LiftSigMod}
\xymatrix{
0 \ar[r] &
\gN_{A^o} \ar[r] \ar[d] &
\gM_{A^o} \ar[r] \ar@{->>}[d] &
\omega_{A^o} \ar[r] \ar@{->>}[d] &
0\\
0 \ar[r] &
\sig^*\gM_{\ol{A^o}} \ar[r]_-{\vphi_{\gM_{\ol{A^o}}}} &
\gM_{\ol{A^o}} \ar[r] &
\omega_{\ol{A^o}} \ar[r] &
0,
}\end{equation}
where $\gN_{A^o}$ is the kernel of the natural surjection $\gM_{A^o} \thra \gM_{A^o}/\PP(u)^h\gM_{A^o} \thra \omega_{A^o}$.  Since $\omega_{A^o}$ is flat over $A^o$, the top row stays short exact after applying $(\cdot)\otimes_{A^o} \ol{A^o}$. This shows that $\gN_{A^o}\otimes_{A^o} \ol{A^o} \riso \sig^*\gM_{\ol{A^o}}$ 
so there exists a surjective map $r:\sig^*\gM_{A^o} \thra \gN_{A^o}$ which factors the natural projection $\sig^*\gM_{A^o} \thra \sig^*\gM_{\ol{A^o}}$. Now, put $\vphi_{\gM_{A^o}}:\sig^*\gM_{A^o} \xra r \gN_{A^o} \hra \gM_{A^o}$. Clearly $\vphi_{\gM_{A^o}}$ lifts  $\vphi_{\gM_{\ol{A^o}}}$, and $\coker(\vphi_{\gM_{A^o}})$ is isomorphic to $\omega_{A^o}$ so it is annihilated by $\PP(u)^h$. 
%
\end{proof}

\subsection{Hodge type and local structure}\label{subsec:HodgeType}
From now on, we assume that $h=1$ and $\bar\rho_\infty$ is $2$-dimensional. Consider a lift $\rho_R$ of $\bar\rho_\infty$ over $R\in\arhat\fo$ as a $\GKinfty$-representation. 
We first need the following definition.
\begin{defnsub}\label{def:Lagrangian}
For a $\Zp$-algebra $A$, let $\cD_{A,K}$ be a rank-$2$ free $\fo_K\otimes_{\Zp}A$-module. We say an $\fo_K\otimes_{\Zp}A$-submodule $\cL \subset \cD_{A,K}$ is a \emph{Lagrangian} if $\cL$ is a direct factor as an $A$-submodule and it is its own annihilator with respect to the standard $\fo_K\otimes_{\Zp}A$-linear symplectic pairing $\langle\cdot,\cdot\rangle$ on $\cD_{A,K}$ (which is unique up to $(\fo_K\otimes_{\Zp}A)\starr$-multiple).
\end{defnsub}
Note that this notion does not depend on the choice of $\langle\cdot,\cdot\rangle$. If $A$ is flat over $\Zp$, then a Lagrangian is necessarily $\fo_K\otimes_{\Zp}A$-free of rank $1$, but this is not necessarily the case when $A$ has non-zero $p^\infty$-torsion and $K$ is ramified over $\Qp$. 

Define a subfunctor $D^{\bfv}_{\Sig,\rho_R} \subset D^{\leqs1}_{\Sig,\rho_R}$ as follows: for any $(A,I)\in\aug R$ the subset $D^{\bfv}_{\Sig,\rho_R}(A,I)$ consists of all $\gM_A$'s with the property that $\im(\vphi_{\gM_A})/\PP(u)\gM_A \subset \gM_A/\PP(u)\gM_A$ is a Lagrangian in the sense of Definition~\ref{def:Lagrangian}. Note that $\gM_A/\PP(u)\gM_A$ is free of rank $2$ over $\Sig_A/(\PP(u))\cong \fo_K\otimes_{\Zp}A$.
 
\begin{lemsub}\label{lem:IntHodgeType}
The subfunctor $D^{\bfv}_{\Sig,\rho_R} \subset D^{\leqs1}_{\Sig,\rho_R}$ can be represented by a closed subscheme  $\GR^{\bfv}_{\rho_R} \subset \GR^{\leqs 1}_{\rho_R}$.\end{lemsub}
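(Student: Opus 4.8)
The plan is to cut $\GR^{\bfv}_{\rho_R}$ out of $X:=\GR^{\leqs1}_{\rho_R}$ as the intersection of an open‑and‑closed subscheme with a closed subscheme. Write $\gM:=\nf\gM^{\leqs1}_{\rho_R}$ for the universal $\Sig\otimes_{\Zp}\OO_X$-lattice of height $\leqs1$ (Proposition~\ref{prop:ResolDefor}), put $\overline{\gM}:=\gM/\PP(u)\gM$ (locally free of rank $2$ over $\fo_K\otimes_{\Zp}\OO_X\cong(\Sig\otimes_{\Zp}\OO_X)/(\PP(u))$), and put $\overline{L}:=\im(\vphi_{\gM})/\PP(u)\gM\subset\overline{\gM}$; this is legitimate because $\coker(\vphi_\gM)$ is killed by $\PP(u)$, i.e.\ $\PP(u)\gM\subseteq\im(\vphi_\gM)$, so that $\overline{\gM}/\overline{L}=\coker(\vphi_\gM)=:\omega$. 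Note that $\overline{L}$ is automatically $\fo_K\otimes_{\Zp}\OO_X$-stable, being the reduction mod $\PP(u)$ of the image of a $\Sig\otimes_{\Zp}\OO_X$-linear map.

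First I would show that $\omega$ is flat over $\OO_X$. The non‑flat locus of the finitely presented sheaf $\omega$ is closed in $X$, and every point of $X$ specialises to a closed point (as $X$ is proper over the complete local ring $R$, all closed points of $X$ lie over $\m_R$), so it suffices to prove flatness at closed points $x_0$; there, completing and applying the local criterion of flatness reduces the problem to flatness of $\omega$ over each artin local quotient $A$ of $\OO_{X,x_0}$. Since $p\in\m_R$, each such $(A,\m_A)$ lies in $\aug R$ and the associated point sends $\gM$ into $\ModFIh A$, so this is precisely Lemma~\ref{lem:FlatCokerVphi}\eqref{lem:FlatCokerVphi:Flat}. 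Being finitely presented and flat, $\omega$ is locally free of locally constant rank; hence $\overline{L}=\ker(\overline{\gM}\thra\omega)$ is locally free as well, is an $\OO_X$-module direct summand of $\overline{\gM}$, and the formation of the pair $(\overline{\gM},\overline{L})$ commutes with arbitrary base change $X'\ra X$ (for $\overline{L}$, because $\omega$ is $\OO_X$-flat).

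I then take for $\GR^\bfv_{\rho_R}$ the intersection $X_1\cap Z$, where $X_1\subseteq X$ is the open‑and‑closed subscheme on which $\omega$ has $\OO_X$-rank $[K:\Qp]$ --- equivalently, on which $\overline{L}$ has $\OO_X$-rank $[K:\Qp]=\tfrac12\rk_{\OO_X}\overline{\gM}$; this is a union of connected components of $X$ since $\omega$ is locally free --- and $Z\subseteq X$ is the closed subscheme cut out by the ideal sheaf locally generated by the entries $\langle v_i,v_j\rangle$ of the standard $\fo_K\otimes_{\Zp}\OO_X$-symplectic pairing on a local $\OO_X$-basis $\{v_i\}$ of $\overline{L}$, equivalently by the vanishing of the composite $\overline{L}\otimes_{\fo_K\otimes_{\Zp}\OO_X}\overline{L}\ra\bigwedge^2_{\fo_K\otimes_{\Zp}\OO_X}\overline{\gM}$ (independent of the unit by which the pairing is scaled). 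Thus $\GR^\bfv_{\rho_R}$ is a closed subscheme of $X$ whose formation commutes with base change. To see it represents $D^\bfv_{\Sig,\rho_R}$ one unwinds definitions: for $(A,I)\in\aug R$ with associated point $\eta\in X(A)$ corresponding to $\gM_A\in D^{\leqs1}_{\Sig,\rho_R}(A,I)$, base change gives $\eta^*\overline{L}=\im(\vphi_{\gM_A})/\PP(u)\gM_A$ and $\eta^*\omega=\coker(\vphi_{\gM_A})$, the latter $A$-flat; so $\eta$ factors through $X_1$ iff $\eta^*\overline{L}$ is an $A$-module direct summand of $\gM_A/\PP(u)\gM_A$ of $A$-rank $[K:\Qp]=\tfrac12\rk_A(\gM_A/\PP(u)\gM_A)$, and $\eta$ factors through $Z$ iff $\eta^*\overline{L}$ is isotropic.

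The lemma thereby reduces to the linear‑algebra statement: an $\fo_K\otimes_{\Zp}A$-submodule $\cL$ of the rank‑$2$ symplectic $\fo_K\otimes_{\Zp}A$-module $\gM_A/\PP(u)\gM_A$ which is an $A$-module direct summand of half the $A$-rank and is isotropic is its own annihilator --- hence a Lagrangian in the sense of Definition~\ref{def:Lagrangian} --- and conversely. The converse and the rank bookkeeping in the forward direction are routine. I expect the genuine difficulty to be the forward direction precisely when $A$ has non‑zero $p^\infty$-torsion and $K/\Qp$ is ramified, for then $\cL$ need \emph{not} be an $\fo_K\otimes_{\Zp}A$-direct summand (cf.\ the remark after Definition~\ref{def:Lagrangian}), so one cannot conclude immediately from perfectness of the pairing that $\cL^\perp$ is a complementary $A$-direct summand of the expected rank. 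I would dispose of this by faithfully flat descent to the case $A$ local, followed by an explicit computation with the $\fo_K\otimes_{\Zp}A$-module structure --- or by checking the statement after inverting $p$ (where it is classical) and after reduction mod $p$, and then patching the two using the $\OO_X$-flatness of $\omega$ obtained above. Representability of $D^{\leqs1}_{\Sig,\rho_R}$ by $X$ and the compatibility of everything above with base change are supplied by Proposition~\ref{prop:ResolDefor}; the statement for unframed or framed deformation rings is as in the surrounding discussion.
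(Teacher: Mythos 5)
Your construction is the same in spirit as the paper's --- both cut out the locus using the coherent ideal generated by the Gram matrix entries of the symplectic pairing on $\overline{L}:=\im(\vphi_{\gM})/\PP(u)\gM$, after observing (via Lemma~\ref{lem:FlatCokerVphi}\eqref{lem:FlatCokerVphi:Flat}) that $\overline{L}$ is a sub-vector bundle of $\overline{\gM}:=\gM/\PP(u)\gM$ --- but you are actually more careful than the paper on one point, and your extra care is warranted. The paper's written proof cuts out only the \emph{isotropic} locus $Z$ (Gram entries vanish) and asserts this represents $D^{\bfv}_{\Sig,\rho_R}$; but Definition~\ref{def:Lagrangian} demands $\cL = \cL^\perp$, not merely $\cL\subseteq\cL^\perp$, and isotropy alone does not exclude, e.g., the rank-$0$ case $\overline{L}=0$ (Lubin--Tate type), which is trivially isotropic but not Lagrangian since $0^\perp=\overline{\gM}\ne 0$. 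Your intersection with the open-and-closed $X_1$ (the locus where $\rk_{\OO_X}\overline{L}=[K:\Qp]$) supplies exactly the missing ingredient: on $X_1\cap Z$ one has a half-rank isotropic $A$-direct summand, and then $\cL=\cL^\perp$ does follow, so the scheme you construct genuinely represents the Lagrangian functor. Incidentally, the worry in your closing paragraph is unfounded and no descent or patching is needed: since $\cL$ is an $A$-direct summand, $\overline{\gM}\thra\cL^*$ (dual over $\OO_X$, not over $\fo_K\otimes\OO_X$) is a surjection onto a vector bundle, so $\cL^\perp$ is automatically an $A$-direct summand of rank $2[K:\Qp]-\rk\cL$; the inclusion $\cL\subseteq\cL^\perp$ of $A$-direct summands of equal rank is an equality by reducing modulo each maximal ideal and applying Nakayama to the finitely generated quotient $\cL^\perp/\cL$. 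The failure of $\cL$ to be an $\fo_K\otimes_{\Zp}A$-direct summand is irrelevant to this argument. Your reduction of the flatness of $\omega$ to closed points and the local flatness criterion, in order to invoke Lemma~\ref{lem:FlatCokerVphi} (which is stated for $p$-adically complete $A$, not arbitrary $\OO_X(U)$), is also a welcome bit of care that the paper elides.
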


\begin{proof}
We construct $\GR^{\bfv}_{\rho_R} $ as follows.
Put $\Sig_{\GR^{\leqs 1}_{\rho_R}}:=\Sig\otimes_{\Zp}\OO_{\GR^{\leqs 1}_{\rho_R}}$, and let $\nf\gM^{\leqs 1}_{\rho_R}$ denote the universal $\Sig_{\GR^{\leqs 1}_{\rho_R}}$-lattice as in Proposition~\ref{prop:ResolDefor}. We let $\vphi_{\rho_R}^{\leqs1}$ denote the universal $\vphi$-structure on $\nf\gM^{\leqs 1}_{\rho_R}$. 
Lemma~\ref{lem:FlatCokerVphi}\eqref{lem:FlatCokerVphi:Flat} implies that $\im(\vphi_{\rho_R}^{\leqs1})/\PP(u)\nf\gM^{\leqs1}_{\rho_R}\subset \nf\gM^{\leqs1}_{\rho_R}/\PP(u)\nf\gM^{\leqs1}_{\rho_R}$ is a  sub-vector bundle over $\GR^{\leqs 1}_{\rho_R}$ (i.e. its quotient is again a vector bundle over $\GR^{\leqs 1}_{\rho_R}$).

Now, choose a $\Sig_{\GR^{\leqs 1}_{\rho_R}}/(\PP(u))$-basis for $\nf\gM^{\leqs1}_{\rho_R}/\PP(u)\nf\gM^{\leqs1}_{\rho_R}$ and let $\langle \cdot,\cdot\rangle$ denote the standard symplectic pairing on $\nf\gM^{\leqs1}_{\rho_R}/\PP(u)\nf\gM^{\leqs1}_{\rho_R}$ with respect to the fixed basis. Choose an open (affine) covering $\set{U_\alpha}$ of $\GR^{\leqs 1}_{\rho_R}$  which trivializes $\im(\vphi_{\rho_R}^{\leqs1})/\PP(u)\nf\gM^{\leqs1}_{\rho_R}$, and choose an $\OO_{U_\alpha}$-basis $\set{\e_{1,\alpha},\cdots,\e_{r_\alpha,\alpha}}$ of  $(\im(\vphi_{\rho_R}^{\leqs1})/\PP(u)\nf\gM^{\leqs1}_{\rho_R})|_{U_\alpha}$. Now, let  $\scri$ be a coherent ideal on $\GR^{\leqs 1}_{\rho_R}$, where $\scri|_{U_\alpha}$ is generated by $\set{\langle \e_{i,\alpha},\e_{j,\alpha}\rangle}_{i,j=1,\cdots,r_\alpha}$, viewing $\langle\cdot,\cdot\rangle$ as an $\OO_{\GR^{\leqs 1}_{\rho_R}}$-bilinear pairing. Clearly the closed subscheme $\GR_{\rho_R}^{\bfv}$ cut out by $\scri$ represents the functor $D^{\bfv}_{\Sig,M_\F,\rho_R}$.
\end{proof}
 
 \begin{rmksub}
With a little more work, one can show that $\Spec R^{\bfv}[\ivtd p] \subset \Spec R[\ivtd p]$ is an equi-dimensional union of connected components, and when $R=R^{\Box,\leqs 1}_\infty$, then the dimension of $R^\bfv[\ivtd p] = R^{\Box,\bfv}_\infty[\ivtd p]$ is $4+[K:\Qp]$. We do not use this result later.
\end{rmksub}
 
\begin{lemsub}\label{lem:HodgeTypeQt}
Let $\Spec R^{\bfv}$ denote the scheme-theoretic image of $\GRv_{\rho_R}$ in $\Spec R$. For any finite $\Qp$-algebra $A$, a $\Zp$-morphism $\xi:R\ra A$ factors through $R^{\bfv}$ if and only if $\det(\rho_R\otimes_{R,\xi}A)|_{I_{K_\infty}} = \chi_{\cyc}|_{I_{K_\infty}}$.
\end{lemsub}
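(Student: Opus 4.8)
The plan is to pass to the generic fibre, use the moduli description of $\GRv_{\rho_R}$ from \S\ref{par:GenFiberGRh} to reformulate the Lagrangian condition as a condition on $\coker(\varphi_{\gM_A})$, and then read off the determinant character from the resulting rank-one $\varphi$-module. First I would reduce to characteristic $0$: since $\GRv_{\rho_R}\ra\Spec R$ is projective, $\Spec R^{\bfv}$ is a closed subscheme of $\Spec R$, so $R\thra R^{\bfv}$; as $A$ is a finite $\Qp$-algebra it is $\Zp$-torsion-free, so $\xi$ kills the $p^{\infty}$-torsion of $R$ and factors through $R[\ivtd p]$, and $\xi$ factors through $R^{\bfv}$ if and only if the induced map $R[\ivtd p]\ra A$ factors through $R^{\bfv}[\ivtd p]$. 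Since $\Spec R[\ivtd p]\ra\Spec R$ is flat, scheme-theoretic image commutes with inverting $p$, so $\Spec R^{\bfv}[\ivtd p]$ is the scheme-theoretic image of $\GRv_{\rho_R}\otimes_{\Zp}\Qp$ in $\Spec R[\ivtd p]$; but by Proposition~\ref{prop:GenIsom} the structure map $\GRh_{\rho_R}\otimes_{\Zp}\Qp\riso\Spec R[\ivtd p]$ is an isomorphism, under which $\GRv_{\rho_R}\otimes_{\Zp}\Qp$ is already a closed subscheme, so $\Spec R^{\bfv}[\ivtd p]=\GRv_{\rho_R}\otimes_{\Zp}\Qp$ as closed subschemes of $\Spec R[\ivtd p]$. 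Therefore $\xi$ factors through $R^{\bfv}$ exactly when the $A$-point of $\GRh_{\rho_R}\otimes_{\Zp}\Qp$ determined by $\xi$ factors through $\GRv_{\rho_R}$; by \S\ref{par:GenFiberGRh} this $A$-point is the unique $\varphi$-stable $\Sig_A$-lattice $\gM_A$ of height $\leqs 1$ in $M_R\otimes_R A$ with $\nf V^{\leqs 1}_\Sig(\gM_A)\cong\rho_R\otimes_{R,\xi}A$ (uniqueness by Proposition~\ref{prop:GenIsom}), and $\xi$ factors through $R^{\bfv}$ if and only if $\im(\varphi_{\gM_A})/\PP(u)\gM_A$ is a Lagrangian in $\cD:=\gM_A/\PP(u)\gM_A$.

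Next I would translate the Lagrangian condition. Writing $\cL:=\im(\varphi_{\gM_A})/\PP(u)\gM_A$, one has $\cD/\cL=\coker(\varphi_{\gM_A})$, which is $A$-flat by Lemma~\ref{lem:FlatCokerVphi}; also $\varphi_{\gM_A}$ becomes an isomorphism over $\fo_{\Eps,A}$ while $\coker(\varphi_{\gM_A})$ is killed by $\PP(u)$, so computing with a matrix of $\varphi_{\gM_A}$ and using that $\PP(u)$ is prime in $\Sig$, the Fitting ideal of $\coker(\varphi_{\gM_A})$ is $(\PP(u)^{j})$ for some $j\in\{0,1,2\}$. An elementary-divisor argument over the (possibly non-reduced) ring $\Sig_A/(\PP(u))\cong\fo_K\otimes_{\Zp}A$ should then show: $\cL$ is a Lagrangian if and only if $j=1$, if and only if $\coker(\varphi_{\gM_A})$ is an invertible $\fo_K\otimes_{\Zp}A$-module, if and only if $\wedge^2_A\gM_A$ is of height $\leqs 1$ with $\coker(\varphi_{\wedge^2\gM_A})\cong\Sig_A/(\PP(u))$. (The ``self-annihilation'' half of ``Lagrangian'' is automatic once $\cL$ is a locally free direct summand of rank $1$, since the symplectic pairing is alternating and perfect.)

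Finally I would compute the determinant. If $\wedge^2_A\gM_A$ has $\coker\cong\Sig_A/(\PP(u))$, then in a basis $\varphi_{\wedge^2\gM_A}$ is multiplication by $\PP(u)v$ with $v\in\Sig_A^{\times}$, so $\wedge^2_A\gM_A\cong\Sig_A(1)\otimes_{\Sig_A}\mathcal{N}_\psi$, where $\mathcal{N}_\psi$ is the rank-one \'etale $\varphi$-module on which $\varphi$ acts by $v$; by Example~\ref{exa:repfinht}\eqref{exa:repfinht:unram} the character $\psi$ with $\nf T_\Eps(\mathcal{N}_\psi\otimes_{\Sig_A}\fo_{\Eps,A})=\psi$ is unramified. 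Since $\nf T_\Eps$ is a $\otimes$-equivalence (Theorem~\ref{thm:FontaineEtPhiMod}) and $\rho_R\otimes_{R,\xi}A\cong\nf T_\Eps(\gM_A\otimes_{\Sig_A}\fo_{\Eps,A})(1)$, taking $\wedge^2$ and using Example~\ref{exa:repfinht}\eqref{exa:repfinht:cycchar} (which gives $\nf T_\Eps(\Sig_A(1)\otimes_{\Sig_A}\fo_{\Eps,A})=\cycchar^{-1}|_{\GKinfty}$) yields $\det(\rho_R\otimes_{R,\xi}A)=\cycchar\cdot\psi|_{\GKinfty}$ with $\psi$ unramified, hence $\det(\rho_R\otimes_{R,\xi}A)|_{\IKinfty}=\cycchar|_{\IKinfty}$. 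For the converse, if $\det(\rho_R\otimes_{R,\xi}A)|_{\IKinfty}=\cycchar|_{\IKinfty}$, write $\det(\rho_R\otimes_{R,\xi}A)=\cycchar\cdot\psi|_{\GKinfty}$ with $\psi$ unramified; then $\wedge^2_A\gM_A$ and $\Sig_A(1)\otimes_{\Sig_A}\mathcal{N}_\psi$ are rank-one $\varphi$-modules of height $\leqs 2$ with the same image under $\nf V^{\leqs 2}_\Sig$, hence are isomorphic by the full faithfulness in Theorem~\ref{thm:main}\eqref{thm:main:fullfth} after inverting $p$, so $\coker(\varphi_{\wedge^2\gM_A})\cong\Sig_A/(\PP(u))$, and running the previous paragraph's translation backwards shows that $\cL$ is a Lagrangian, i.e. $\xi$ factors through $R^{\bfv}$.

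The step I expect to be the main obstacle is the translation ``$\cL$ Lagrangian $\iff\coker(\varphi_{\gM_A})$ invertible over $\fo_K\otimes_{\Zp}A$'': because $\fo_K\otimes_{\Zp}A$ is in general non-reduced (when $K/\Qp$ is ramified and $A$ has nilpotents), this cannot be done with a naive fibrewise dimension count and must be carried out with Fitting ideals together with the $A$-flatness of $\coker(\varphi_{\gM_A})$, keeping careful track of which ring the statements ``rank $1$'' and ``invertible'' are taken over; the rest of the argument is formal manipulation of the functors $\nf T_\Eps$ and $\nf T^{\leqs h}_\Sig$ already set up in the paper.
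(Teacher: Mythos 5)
Your argument is correct and follows the same route as the paper's (very terse) proof. The paper's entire proof consists of the observation that $\gM_A\in D^{\leqs1}_{\Sig,\rho_R}(A,I)$ lies in $D^{\bfv}_{\Sig,\rho_R}(A,I)$ if and only if $\vphi$ on $\bigwedge^2_{\Sig_A}\gM_A$ sends a basis $\e$ to $\alpha\PP(u)\e$ with $\alpha\in\Sig_A^\times$, followed by citing Proposition~\ref{prop:GenIsom} and Example~\ref{exa:repfinht}\eqref{exa:repfinht:cycchar}. This is exactly the translation you describe via $\coker(\vphi_{\gM_A})$ and $\bigwedge^2\gM_A$; your additional citations of Example~\ref{exa:repfinht}\eqref{exa:repfinht:unram}, Theorem~\ref{thm:FontaineEtPhiMod} and Theorem~\ref{thm:main}\eqref{thm:main:fullfth} are just making explicit the bookkeeping that the paper suppresses. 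Two small remarks: where you write $\wedge^2_A\gM_A$ you surely mean $\wedge^2_{\Sig_A}\gM_A$ — the exterior power must be taken $\Sig_A$-linearly so that it is a rank-one $\Sig_A$-module; and the Fitting-ideal step you flag as the ``main obstacle'' is indeed the content of the paper's ``it can easily be seen,'' and your instinct to combine the Fitting-ideal computation with the $A$-flatness of $\coker(\vphi_{\gM_A})$ from Lemma~\ref{lem:FlatCokerVphi}\eqref{lem:FlatCokerVphi:Flat} (rather than a fibrewise rank count, which breaks down over non-reduced $\fo_K\otimes_{\Zp}A$) is the right way to close it. Your observation that self-annihilation is automatic for a rank-one direct summand of a rank-two symplectic module is a clean way to handle the second half of the Lagrangian condition and is worth keeping.
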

\begin{proof}
For any $(A,I)\in\aug R$ it can easily be seen that $\gM_A\in D^{\leqs1}_{\Sig,\rho_R}(A,I)$ is in $D^{\bfv}_{\Sig,\rho_R}  (A,I)$ if and only if the induced $\vphi$ on $\bigwedge^2_{\Sig_A}(\gM_A)$ takes any basis $\e$ to $\alpha\PP(u)\e$ for some $\alpha\in\Sig_A\starr$. Now the lemma is a consequence of Proposition~\ref{prop:GenIsom} and Example~\ref{exa:repfinht}\eqref{exa:repfinht:cycchar}.
 \end{proof}
 
\begin{rmksub}\label{rmk:Rv}
Let $\rho_R$ be the universal framed $\GKinfty$-deformation of height $\leqs1$ over $R:= R^{\Box,\leqs1}_\infty$. Consider the subfunctor $D^{\Box,\bfv}_\infty\subset D^{\Box,\leqs 1}_\infty$ defined by requiring that the inertia action on the determinant is given by $\chi_{\cyc}|_{I_{K_\infty}}$. One can easily see that $D^{\Box,\bfv}_\infty$ is represented by a quotient $\wt R^{\Box,\bfv}_\infty$ of $R^{\Box,\leqs 1}_\infty$. If we let $R^{\Box,\bfv}_\infty$ denote the quotient constructed in Lemma~\ref{lem:IntHodgeType}, then Lemma~\ref{lem:HodgeTypeQt} implies that $\wt R^{\Box,\bfv}[\ivtd p] =  R^{\Box,\bfv}_\infty[\ivtd p]$. (And a similar discussion holds for  $R^{\leqs1}_\infty$ if it exists.)
\end{rmksub}
 
 \parag
 We now show that locally for the  smooth topology $\GR^{\bfv}_{\rho_R}$ looks like the ``Deligne-Pappas model model''  \cite{Deligne-Pappas} if the morphism $\Spf R \ra D^{\leqs1}_\infty$ induced by $\rho_R$ is formally smooth. Since local structure of  ``Deligne-Pappas model model''  is well understood, we thus get local structure of $\GR^{\bfv}_{\rho_R}$. 

We fix $\gM_{\F}\in D^{\bfv}_{\Sig,\rho_R}(\F)$. We define a functor  $D^{\bfv}_{\gM_{\F},\rho_R}$ on $\aug R$ by assigning to $(A,I)\in\aug R$  the set of isomorphism classes of $(\gM_A,\iota_A)$, where $\gM_A\in D^{\bfv}_{\Sig,\rho_R}(A,I)$ and $\iota_A:\gM_A\otimes_A A/I \riso \gM_\F\otimes_\F A/I$ with obvious notion of isomorphisms. Clearly $D^{\bfv}_{\gM_\F,\rho_R}$ can be prorepresented by the completed local ring of $\GRv_{\rho_R}$ at the closed point corresponding to $\gM_\F$. 

We fix an isomorphism $\beta_\F:(\fo_K\otimes_{\Zp}\F)^{\oplus 2} \riso \gM_\F/\PP(u)\gM_\F$ over $\Sig_\F/\PP(u)\cong \fo_K\otimes_{\Zp}\F$. We define a functor $\wt D^{\bfv}_{\gM_\F,\rho_R}$ on $\aug R$ by assigning to $(A,I)\in\aug R$  the set  of isomorphism classes of pairs $(\gM_A,\beta_A)$ where $\gM_A\in D^{\bfv}_{\gM_\F,\rho_R}(A)$ and $\beta_A:(\fo_K\otimes_{\Zp}A)^{\oplus 2} \riso \gM_A/\PP(u)\gM_A$ is an $\fo_K\otimes_{\Zp}A$-linear isomorphism which lifts $\beta_\F$. By forgetting this isomorphism, we obtain a formally smooth morphism of functors $\wt D^{\bfv}_{\gM_\F,\rho_R}\ra D^{\bfv}_{\gM_\F,\rho_R}$. 

Now, we define another functor $\nf M_\bfv$ on $\aug R$ by assigning  to $(A,I)\in\aug R$ the set of  Lagrangians in $(\fo_K\otimes_{\Zp}A)^{\oplus 2}$ in the sense of Lemma~\ref{def:Lagrangian}. This functor can be represented by (the $p$-adic completion of) a closed subscheme of a grassmannian over $R$. We also let $\nf M_\bfv$ denote the representing projective scheme over $\Spec R$.  Clearly $\nf M_\bfv\otimes_{\Zp}\Qp$ is formally smooth over $\Frac(\fo)$. It is shown in \cite[\S4]{Deligne-Pappas} that $\nf M_\bfv$ is $\fo$-flat and relative complete intersection, and that $\nf M_\bfv\otimes_\fo \fo/\m_\fo$ is reduced. (For the $\fo$-flatness, see \cite[IV$_\textrm{2}$, (3.4.6.1)]{EGA}.)

We have a morphism $\wt D^{\bfv}_{\gM_\F,\rho_R}\ra\nf M_\bfv$ by sending $\left(\gM_A,\beta_A\right)\in \wt D^{\bfv}_{\gM_\F,\rho_R}(A,I)$ for some $(A,I)\in\aug R$ to the kernel of $(\Sig_A/\PP(u))^{\oplus 2} \xrightarrow[\beta_A]{\sim}  \gM_A/\PP(u)\gM_A \thra\coker\vphi$. The proof of Lemma~\ref{lem:FormalSm} shows that if the morphism $\Spf R \ra D^{\leqs1}_\infty$ induced by $\rho_R$ is formally smooth then the morphism $\wt D^{\bfv}_{\gM_\F,\rho_R}\ra\nf M_\bfv$ is also formally smooth.

Now, we are ready to prove the following proposition:
\begin{propsub}
Assume that the morphism $\Spf R \ra D^{\leqs1}_\infty$ induced by $\rho_R$ is formally smooth. Then $\GR^{\bfv}_{\rho_R}\otimes_{\Zp}\Qp$ is formally smooth over $\Frac(\fo)$, $\GR^{\bfv}_{\rho_R}$ is $\fo$-flat and relative complete intersection, and $\GR^{\bfv}_{\rho_R} \otimes_\fo \fo/\m_\fo$ is reduced.
\end{propsub}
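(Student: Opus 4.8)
The plan is to transfer all four assertions from the corresponding properties of the Deligne--Pappas model $\nf M_\bfv$ recorded above, along a roof of formally smooth morphisms coming from the diagram of functors just constructed, after reducing the global statements to statements about complete local rings at closed points. The key point is that each of the four properties at issue --- $\fo$-flatness, being a relative complete intersection over $\fo$, reducedness of the special fiber, and formal smoothness of the generic fiber over $\Frac(\fo)$ --- is insensitive to formally smooth morphisms, i.e.\ it both ascends along and descends from formally smooth local homomorphisms of noetherian complete local rings: such morphisms are flat with regular fibers and, being local, faithfully flat, which yields the ascent and (apart from one case) the descent routinely, and the descent of the complete intersection property is supplied by Avramov's theorem on flat local homomorphisms of complete intersections.

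First I would reduce to the local picture. The first three properties involve only the special fiber and may be checked on the stalks $\widehat{\OO}_{\GR^{\bfv}_{\rho_R},x}$ at closed points $x$, and since $\GR^{\bfv}_{\rho_R}$ is projective over the local ring $R$ every closed point lies over $\m_\fo$. For the fourth, the criterion of \cite{EGA} already invoked in the proof of Proposition~\ref{prop:FormalSm} identifies formal smoothness of the noetherian Jacobson scheme $\GR^{\bfv}_{\rho_R}\otimes_{\Zp}\Qp$ over $\Frac(\fo)$ with geometric regularity of the completions at its closed points; each such closed point specializes in $\GR^{\bfv}_{\rho_R}$ to a closed point $x$ over $\m_\fo$, and the local ring of the generic fiber there, as well as its completion, is a localization (resp.\ completion) of $\widehat{\OO}_{\GR^{\bfv}_{\rho_R},x}[\ivtd p]$, which therefore inherits geometric regularity from it. Replacing $\fo$ by a finite unramified extension --- harmless, since the four properties descend along the faithfully flat $\fo\ra\fo'$ and $\GR^{\bfv}_{\rho_R}$ commutes with base change by Proposition~\ref{prop:ResolDefor} and Lemma~\ref{lem:IntHodgeType} --- we may take $x$ to correspond to some $\gM_\F\in D^{\bfv}_{\Sig,\rho_R}(\F)$.

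It remains to treat the local statement. Fix such a $\gM_\F$ and set $A:=\widehat{\OO}_{\GR^{\bfv}_{\rho_R},\gM_\F}$, which prorepresents $D^{\bfv}_{\gM_\F,\rho_R}$; let $B$ be the complete local ring prorepresenting $\wt D^{\bfv}_{\gM_\F,\rho_R}$, and $C$ the completed local ring of $\nf M_\bfv$ at the point corresponding to $\gM_\F$. The morphism $\wt D^{\bfv}_{\gM_\F,\rho_R}\ra D^{\bfv}_{\gM_\F,\rho_R}$ forgetting the framing $\beta$ is formally smooth unconditionally, and $\wt D^{\bfv}_{\gM_\F,\rho_R}\ra \nf M_\bfv$ (remembering only the Lagrangian $\im(\vphi_{\gM})/\PP(u)\gM$ in $\gM/\PP(u)\gM$) is formally smooth under the hypothesis that $\Spf R \ra D^{\leqs 1}_\infty$ is formally smooth, as observed above using the argument of the proof of Lemma~\ref{lem:FormalSm}; both relative tangent spaces are finite-dimensional over $\F$ (the second thanks to Theorem~\ref{thm:Representability}), so that on (pro)representing rings all the maps in the roof $A\ra B\la C$ are formally smooth, hence flat, hence --- being local --- faithfully flat. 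Now $C$ is $\fo$-flat, a complete intersection with reduced special fiber, and has geometrically regular generic fiber over $\Frac(\fo)$, by \cite[\S4]{Deligne-Pappas} (all four properties pass from $\nf M_\bfv$ to its completed local rings since $\fo$ is a discrete valuation ring and $p$ is a nonzerodivisor on $C$). By the first paragraph these properties ascend along $C\ra B$ and then descend along $A\ra B$, so $A$ is $\fo$-flat, a relative complete intersection over $\fo$, with reduced special fiber and geometrically regular generic fiber; in view of the reduction this proves the proposition. I expect the main obstacle to be precisely this descent step: ascent along formally smooth maps is routine, but descending the complete intersection property genuinely needs Avramov's theorem, and one must keep track of the noetherianity of $A$, $B$, and $C$ so that formal smoothness really does give faithful flatness.
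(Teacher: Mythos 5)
Your proposal is correct and follows essentially the same route as the paper: reduce to completed local rings at closed points (enlarging $\F$ as needed), then transfer the four properties from $\nf M_\bfv$ to $\GR^{\bfv}_{\rho_R}$ across the roof $D^{\bfv}_{\gM_\F,\rho_R} \la \wt D^{\bfv}_{\gM_\F,\rho_R} \ra \nf M_\bfv$ of formally smooth morphisms. The paper's proof is extremely terse and treats this transfer as routine; your write-up supplies the missing justification, correctly flagging that descent of the relative-complete-intersection property along the faithfully flat map $A\ra B$ is the one nonformal input (Avramov's theorem on flat local homomorphisms), and that noetherianity of $B$ (from finite-dimensionality of the relative tangent space, i.e.\ Theorem~\ref{thm:Representability}) is what makes formal smoothness give faithful flatness.
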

\begin{proof}
Consider the following diagrams where both arrows are formally smooth.
\begin{equation*}
\xymatrix{
 &
\wt D^{\bfv}_{\gM_\F,\rho_R} \ar[rd] \ar[ld] &
\\
D^{\bfv}_{\gM_\F,\rho_R}&
&\nf M_\bfv
}\end{equation*}
Since $\nf M_\bfv$ has all the desired properties, the same holds for $D^{\bfv}_{\gM_\F,\rho_R}$ which is represented by the completion of  $\GR^{\bfv}_{\rho_R}$  at a closed point (with residue field $\F$). Passing to a finite extension of $\F$ if necessary, we cover the completions of  $\GR^{\bfv}_{\rho_R}$  at all closed points, so we conclude the proof.
\end{proof}

\parag
Let $\GR^{\bfv}_{\rho_R,0}$ denote the fiber over the closed point of $\Spec R$ under the structure morphism $\GR^{\bfv}_{\rho_R}\ra \Spec R$. 
For a scheme $X$, we let  $H_0(X)$ denote the set of connected components  of $X$. The following corollary plays an important role later.
\begin{corsub}\label{cor:FormalIdemp2}
Assume that the morphism $\Spf R \ra D^{\leqs1}_\infty$ induced by $\rho_R$ is formally smooth. Then the natural maps below
\begin{displaymath}
H_0(\GR^{\bfv}_{\rho_R}\otimes_{\Zp}\Qp) \ra H_0(\GR^{\bfv}_{\rho_R}) \la H_0(\GR^{\bfv}_{\rho_R,0})
\end{displaymath}
are bijective.
\end{corsub}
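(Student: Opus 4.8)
The plan is to descend the whole question to the finite $R$-algebra $B := \Gamma(\GR^{\bfv}_{\rho_R},\OO_{\GR^{\bfv}_{\rho_R}})$, which is module-finite over $R$ because $\GR^{\bfv}_{\rho_R}$ is projective over the noetherian ring $R$. Since $\GR^{\bfv}_{\rho_R}$ is then a noetherian scheme it has only finitely many connected components, and $H_0(\GR^{\bfv}_{\rho_R})$ is in natural bijection with the set of minimal non-zero idempotents of $B$ (idempotents of $B$ corresponding to open-and-closed subschemes). The same applies to the closed fibre $\GR^{\bfv}_{\rho_R,0}$ and to $\GR^{\bfv}_{\rho_R}\otimes_{\Zp}\Qp = \GR^{\bfv}_{\rho_R}\otimes_\fo\Frac(\fo)$, the latter having ring of global sections $B[\ivtd p]$ by flat base change. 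So the corollary amounts to showing that the maps on idempotents induced by the ring maps $B \to \Gamma(\GR^{\bfv}_{\rho_R,0},\OO)$ and $B\hookrightarrow B[\ivtd p]$ are bijective: each is a morphism of finite boolean algebras compatible with the natural maps on $H_0$, and a bijective morphism of boolean algebras is an isomorphism, which matches up atoms, i.e. connected components.

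For the first map only properness over the complete local noetherian ring $R$ is needed. I would apply the theorem on formal functions to write $B = \varprojlim_n \Gamma(\GR^{\bfv}_{\rho_R}\otimes_R R/\m_R^n,\,\OO)$; each term is module-finite over the artin local ring $R/\m_R^n$, hence a finite product of artin local rings, and the reduction map onto $\Gamma(\GR^{\bfv}_{\rho_R,0},\OO)$ has nilpotent kernel, so induces a bijection on idempotents. Passing to the inverse limit gives $\mathrm{Idem}(B)\xrightarrow{\ \sim\ }\mathrm{Idem}(\Gamma(\GR^{\bfv}_{\rho_R,0},\OO))$.

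For the second map I would use the two properties from the preceding proposition that $\GR^{\bfv}_{\rho_R}$ is $\fo$-flat and that $\GR^{\bfv}_{\rho_R}\otimes_\fo\F$ is reduced (these are the only inputs needed here — the relative complete intersection property and the formal smoothness of the generic fibre are not used). Writing $\OO$ for $\OO_{\GR^{\bfv}_{\rho_R}}$, flatness makes $p$ a non-zero-divisor on $\OO$, so $B\hookrightarrow B[\ivtd p]$, which already gives injectivity of the map; moreover the short exact sequence $0\to\OO\xrightarrow{\,p\,}\OO\to\OO/p\OO\to 0$ shows $B/pB\hookrightarrow\Gamma(\GR^{\bfv}_{\rho_R}\otimes_\fo\F,\OO)$, and the target is reduced, hence so is $B/pB$. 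Surjectivity of $\mathrm{Idem}(B)\to\mathrm{Idem}(B[\ivtd p])$ is the heart of the argument: given an idempotent $e\in B[\ivtd p]$, pick the least $m\geqs0$ with $b:=p^m e\in B$; if $m\geqs1$ then $e^2=e$ forces $b^2=p^m b$ in $B$, minimality of $m$ forces $b\notin pB$, and hence the image of $b$ in $B/pB$ is a non-zero element whose square vanishes, contradicting reducedness of $B/pB$. Therefore $m=0$, i.e. $e\in B$. I expect this one-line use of the reducedness of the special fibre to be the only substantive point; everything else is formal bookkeeping with idempotents and boolean algebras.
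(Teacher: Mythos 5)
Your proposal is correct and follows essentially the same route as the paper: the second bijection via the theorem on formal functions, and the first via $\fo$-flatness together with reducedness of the special fibre of $\GR^{\bfv}_{\rho_R}$ — the latter is exactly the content of the cited argument from Kisin \cite[Corollary~2.4.10]{Kisin:ModuliGpSch}, which you have simply spelled out at the level of idempotents of the finite $R$-algebra $\Gamma(\GR^{\bfv}_{\rho_R},\OO)$.
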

\begin{proof}
The second bijection follows from the theorem on formal functions.
The first bijection follows from an argument similar to the proof of \cite[Corollary 2.4.10]{Kisin:ModuliGpSch} using $\fo$-flatness and the reducedness of $\GR^{\bfv}_{\rho_R} \otimes_\fo \fo/\m_\fo$.
\end{proof}

\subsection{}
We digress to study  analogues of connected, multiplicative-type, and unipotent finite flat group schemes over $\fo_K$, respectively, for $\vphi$-module of height $\leqs 1$. All the results in this section are proved in  \cite[\S1]{Kisin:ModuliGpSch}. 

\parag\label{par:DefEtPhiNilp}
 Let $\gM$ be an object either in$(\Mod/\Sig)^{\leqs1}$ or in $\ModFIBT A$ where $A$ is either $p$-adically separated and complete or finite over $\Qp$. 
We say $\gM$ is \emph{\'etale}  if $\vphi_{\gM}$ is an isomorphism. We say $\gM$ is \emph{$\vphi$-nilpotent}  if there exists $n$ such that $\vphi^n_{\gM}(\sig^n\gM)\subset\m_\Sig\tim \gM$ as $\Sig$-module. 
We say that $\gM$ is \emph{of Lubin-Tate type (of height $1$)} (respectively,  \emph{$\vphi$-unipotent}\footnote{Note that $\gM\in\ModFIBT A$ being $\vphi$-unipotent does \emph{not} imply that there exists a $\Sig_A$-basis so that the corresponding matrix representation of $\vphi_{\gM}$ is strictly upper triangular. This notion should be thought of as an analogue of unipotent finite flat group schemes/Barsotti-Tate groups.}) if $\gM^\vee$ is \'etale (respectively, $\vphi$-nilpotent), where $(\cdot)^\vee$ is dual of height $1$ as defined in \S\ref{par:CarDual}\footnote{We extend duality of height $1$ to $\ModFIBT A$ for a finite $\Qp$-algebra $A$ in an obvious way.}. When $T_A:=\nf T^{\leqs 1}_\Sig(\gM)$ for some $\gM\in\ModFIBT A$ (which makes sense because $A$ is finite over either $\Zp$ or $\Qp$), one can check that $T_A(-1)$ is unramified if $\gM_A$ is \'etale, and that $T_A$ is unramified if $\gM_A$ is of Lubin-Tate type.\footnote{This is because of the ``Tate twist'' in the definition of $\nf T^{\leqs 1}_\Sig$. An analogue of this in Dieudonn\'e theory is that that the roles of Frobenius and Verschiebung are switched in the \emph{covariant} Dieudonn\'e module.} (Here, $T_A(-1):=T_A\otimes \chi_{\cyc}\iv|_{\GKinfty}$.) 

Assume, furthermore, that $\gM\in\ModFIBT A$ is of $\Sig_A$-rank $2$. We say that $\gM$ is \emph{ordinary} if $\gM$ is an extension of a rank-$1$ Lubin-Tate type $(\vphi,\Sig_A)$-module $\gM^\LT$  by a rank-$1$ \'etale  $(\vphi,\Sig_A)$-module $\gM^{\et}$. If $A$ is a finite $\Qp$-module, then $\gM\in\ModFIBT A$ is \'etale (respectively, of Lubin-Tate type; respectively, $\vphi$-nilpotent; respectively, $\vphi$-unipotent; respectively, ordinary) if and only if there exists a finite $\Zp$-subalgebra $A^\circ \subset A$ and a $\vphi$-stable $\Sig_{A^\circ}$-lattice $\gM_{A^\circ}\subset \gM$ with $\gM_{A^\circ}\in\ModFIBT{A^\circ}$ such that  $\gM_{A^\circ}$ has the same property.

Assume that $\rho_R$ is a rank-$2$ representation of $\GKinfty$ of height $\leqs1$, as before. We define the following subfunctor $D^{\ord}_{\Sig,\rho_R}\subset D^{\leqs1}_{\Sig,\rho_R}$ (respectively, $D^{\leqs1,\vphi\nilp}_{\Sig,\rho_R} \subset D^{\leqs1}_{\Sig,\rho_R}$; respectively, $D^{\leqs1,\vphi\unip}_{\Sig,\rho_R} \subset D^{\leqs1}_{\Sig,\rho_R}$) by requiring that $\gM_A$ should be ordinary (respectively, $\vphi$-nilpotent; respectively, $\vphi$-unipotent). Clearly we have $D^{\ord}_{\Sig,\rho_R}\subset D^{\bfv}_{\Sig,\rho_R}$, and $\gM_A\in D^{\bfv}_{\Sig,\rho_R}(A,I)$ with $(A,I)\in\aug R$ is $\vphi$-nilpotent if and only if it is not ordinary.  Put $D^{\ssing}_{\Sig,\rho_R}:=D^{\leqs1,\vphi\nilp}_{\Sig,\rho_R}\cap D^{\bfv}_{\Sig,\rho_R}=D^{\leqs1,\vphi\unip}_{\Sig,\rho_R}\cap D^{\bfv}_{\Sig,\rho_R}$ (where the superscript `ss' stands for `supersingular').  

The following proposition is a direct consequence of semilinear algebraic statement \cite[Proposiiton 2.4.14]{Kisin:ModuliGpSch} applied to the universal $\Sig$-lattice of height $\leqs 1$.
\begin{propsub}\label{prop:DisjOrdSS}
The subfunctors $D^{\ord}_{\Sig,\rho_R},D^{\leqs1,\vphi\nilp}_{\Sig,\rho_R}, D^{\leqs1,\vphi\unip}_{\Sig,\rho_R}\subset D^{\leqs1}_{\Sig,\rho_R}$ are represented by $\GR^{\ord}_{\rho_R}, \GR^{\leqs1,\vphi\nilp}_{\rho_R}, \GR^{\leqs1,\vphi\unip}_{\rho_R}$, respectively, which are unions of connected components in $\GR^{\leqs1}_{\rho_R}$. Consequently, $D^{\ssing}_{\Sig,\rho_R}$ can be represented by a union $\GR^{\ssing}_{\rho_R}$ of connected components in $\GRv_{\rho_R}$, and $\GRv_{\rho_R}=\GR^{\ord}_{\rho_R}\coprod \GR^{\ssing}_{\rho_R}$.
\end{propsub}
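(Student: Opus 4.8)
The plan is to reduce the whole statement to Kisin's semilinear-algebra analysis in \cite[Proposition~2.4.14]{Kisin:ModuliGpSch}, applied to the universal $\Sig$-lattice $\nf\gM^{\leqs1}_{\rho_R}$ of height $\leqs1$ over $\GR^{\leqs1}_{\rho_R}$ furnished by Proposition~\ref{prop:ResolDefor}. The essential point --- and the whole thrust of the present approach --- is that \emph{loc.\ cit.}\ is stated and proved purely in terms of $(\vphi,\Sig\otimes_{\Zp}\OO_X)$-modules of $\PP$-height $\leqs1$ over a scheme $X$, with no appeal to the classification of finite flat group schemes, so it transfers verbatim to our setting. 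What it provides is: for such a $\vphi$-module $\gM$ over $X$ there is a maximal \'etale quotient $\gM\thra\gM^{\et}$ and, dually through the duality of \S\ref{par:CarDual}, a maximal Lubin--Tate-type sub $\gM^{\LT}\hra\gM$, each compatible with arbitrary base change and each finite locally free over $\OO_X$ of locally constant rank; consequently the vanishing locus of $\gM^{\et}$ and that of $\gM^{\LT}$ are open and closed in $X$, and when $\gM$ has rank $2$ the same holds for the ordinary locus.

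First I would apply this with $X=\GR^{\leqs1}_{\rho_R}$ and $\gM=\nf\gM^{\leqs1}_{\rho_R}$. By the definitions in \S\ref{par:DefEtPhiNilp} the three loci just mentioned are exactly the loci where $\nf\gM^{\leqs1}_{\rho_R}$ is $\vphi$-nilpotent, $\vphi$-unipotent, and (for the rank-$2$ statement) ordinary; and by base-change compatibility, for $(A,I)\in\aug R$ a point $\eta\in\GR^{\leqs1}_{\rho_R}(A)$ lies in $D^{\leqs1,\vphi\nilp}_{\Sig,\rho_R}(A,I)$ (resp. $D^{\leqs1,\vphi\unip}_{\Sig,\rho_R}$, resp. $D^{\ord}_{\Sig,\rho_R}$) precisely when $\eta$ factors through the corresponding open and closed subscheme. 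Naming these subschemes $\GR^{\leqs1,\vphi\nilp}_{\rho_R}$, $\GR^{\leqs1,\vphi\unip}_{\rho_R}$, $\GR^{\ord}_{\rho_R}$, each is a union of connected components of the noetherian scheme $\GR^{\leqs1}_{\rho_R}$ and represents the relevant subfunctor.

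Next I would pass to the closed subscheme $\GRv_{\rho_R}\subset\GR^{\leqs1}_{\rho_R}$ of Lemma~\ref{lem:IntHodgeType}; since $D^{\ord}_{\Sig,\rho_R}\subset D^{\bfv}_{\Sig,\rho_R}$ we have $\GR^{\ord}_{\rho_R}\subset\GRv_{\rho_R}$. By the observations recorded in \S\ref{par:DefEtPhiNilp}, on the Hodge-type locus a rank-$2$ object is $\vphi$-nilpotent exactly when it is $\vphi$-unipotent, and exactly when it is \emph{not} ordinary; therefore $\GR^{\leqs1,\vphi\nilp}_{\rho_R}\cap\GRv_{\rho_R}=\GR^{\leqs1,\vphi\unip}_{\rho_R}\cap\GRv_{\rho_R}=:\GR^{\ssing}_{\rho_R}$ is a union of connected components of $\GRv_{\rho_R}$, it is disjoint from $\GR^{\ord}_{\rho_R}$, and the two cover $\GRv_{\rho_R}$. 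As both are open and closed subschemes of $\GRv_{\rho_R}$ partitioning it, this gives the scheme-theoretic decomposition $\GRv_{\rho_R}=\GR^{\ord}_{\rho_R}\coprod\GR^{\ssing}_{\rho_R}$.

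I do not expect to reprove anything substantial: the real content --- the construction of the maximal \'etale quotient and maximal Lubin--Tate sub, their base-change compatibility, and their local freeness of locally constant rank (from which the clopen-ness is immediate) --- is precisely \cite[Proposition~2.4.14]{Kisin:ModuliGpSch}, used here as a black box; the only thing to verify is that it applies to the $\vphi$-module $\nf\gM^{\leqs1}_{\rho_R}$ rather than to a family of finite flat group schemes, which is exactly the feature that lets the present argument dispense with group schemes. The one place worth a sentence of care is the identification, on $\GRv_{\rho_R}$, of the ``ordinary'' condition (an extension with rank-$1$ \'etale sub and rank-$1$ Lubin--Tate quotient) with the clopen locus produced by the cited proposition, together with the fact that its complement there is the $\vphi$-nilpotent locus; this uses that on $\GRv_{\rho_R}$ the cokernel of the universal $\vphi$ is nowhere zero --- it is locally free of rank $[K:\Qp]$ over $\OO$ by Lemma~\ref{lem:FlatCokerVphi} and the Lagrangian condition defining $D^{\bfv}_{\Sig,\rho_R}$ --- so the universal module is nowhere \'etale there.
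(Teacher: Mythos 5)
Your proposal is correct and takes essentially the same approach as the paper: the paper dispenses with this proposition in a single sentence, declaring it a direct consequence of \cite[Proposition~2.4.14]{Kisin:ModuliGpSch} applied to the universal $\Sig$-lattice of height $\leqs1$, which is precisely the reduction you carry out (with the welcome addition of spelling out the base-change compatibility and the clopen-ness of the relevant loci, which the paper leaves implicit).
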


\parag\label{par:padicOrdGKinfty}
Let $A$ be either finite flat over $\Zp$ or finite over $\Qp$. Let $\rho_A$ be a rank-$2$ representation of $\GKinfty$ over  $A$ with height $\leqs1$. Let $T_A$ denote the underlying $A$-module for $\rho_A$. We say that $\rho_A$ is \emph{ordinary} if there exists a $\GKinfty$-stable $A$-line $L_A\subset T_A$ (i.e. both $L_A$ and $T_A/L_A$ are free of rank $1$ over $A$) such that both $L_A(-1)$ and  $T_A/L_A$ are unramified. We say that $\rho_A$ is \emph{supersingular} if $\rho_A$ is not ordinary and $I_{K_\infty}$ acts on $\det(\rho_A)$ via $\chi_{\cyc}|_{I_{K_\infty}}$. When $A$ is finite over $\Qp$, one can show that $\rho_A$ is ordinary (respectively, supersingular) if and only if the unique $\gM_A\in\ModFIBT A$ that gives rise to $\rho_A$ is ordinary (respectively, supersingular).

Let $\Spec R^{\ord}$ and $\Spec R^{\ssing}$ denote the scheme-theoretic images of $\GR^{\ord}_{\rho_R}$ and $\GR^{\ssing}_{\rho_R}$ in $\Spec R$, respectively. It follows that for any finite $\Qp$-algebra $A$, a map $\xi:R\ra A$ factors through $R^{\ord}$ (respectively, $R^{\ssing}$) if and only if $\rho_R\otimes_{R,\xi}A$ is ordinary (respectively, supersingular). By Propositions~\ref{prop:GenIsom} and \ref{prop:DisjOrdSS} $\Spec R^\bfv[\ivtd p]$ is a disjoint union of $\Spec R^{\ord}[\ivtd p]$ and $\Spec R^{\ssing}[\ivtd p]$, where $\Spec R^\bfv$ is the scheme-theoretic image of $\GRv_{\rho_R}$ in $\Spec R$. 
%

We now state the following theorem, which is the main result of this section.

\begin{prop}\label{prop:ConndOrdNonOrd}
Assume that the morphism $\Spf R \ra D^{\leqs1}_\infty$ induced by $\rho_R$ is formally smooth. Then $\Spec R^{\ssing}[\ivtd p]$ is geometrically connected. Furthermore $\Spec R^{\ord}[\ivtd p]$ is geometrically connected unless $\bar\rho_\infty\sim \bar\psi_1\oplus \bar\psi_2$ 
where $\bar\psi_1$ and $\bar\psi_2$ are distinct unramified characters. In the exceptional case there are exactly two components (which are geometrically connected), and two lifts are in the same component if and only if the $\GKinfty$-stable lines on which $I_{K_\infty}$ act via $\chi_{\cyc}|_{I_{K_\infty}}$ reduce to the same character (either $\bar\psi_1$ or $\bar\psi_2$). 
\end{prop}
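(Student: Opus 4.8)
The plan is to transport the statement to the special fibre of the resolution $\GRv_{\rho_R}$, where it becomes the semilinear-algebraic connectedness assertion already established in \cite{Kisin:ModuliGpSch} and its refinements \cite{Gee:MLTwt2HMF,Imai:ConndComp}. First I would compare sets of connected components. By Proposition~\ref{prop:GenIsom} the isomorphism $\GR^{\leqs 1}_{\rho_R}\otimes_{\Zp}\Qp\riso\Spec R[\ivtd p]$ carries $\GRv_{\rho_R}\otimes_{\Zp}\Qp$ onto $\Spec R^{\bfv}[\ivtd p]$, and (by Proposition~\ref{prop:DisjOrdSS}, as already noted in \S\ref{par:padicOrdGKinfty}) carries $\GR^{\ord}_{\rho_R}\otimes_{\Zp}\Qp$ and $\GR^{\ssing}_{\rho_R}\otimes_{\Zp}\Qp$ onto $\Spec R^{\ord}[\ivtd p]$ and $\Spec R^{\ssing}[\ivtd p]$. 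Combining this with Corollary~\ref{cor:FormalIdemp2} (valid under the formal-smoothness hypothesis) and with the fact that $\GR^{\ord}_{\rho_R}$ and $\GR^{\ssing}_{\rho_R}$ are unions of connected components of $\GRv_{\rho_R}$, one gets $\GRv_{\rho_R,0}=\GR^{\ord}_{\rho_R,0}\coprod\GR^{\ssing}_{\rho_R,0}$ together with natural bijections $H_0(\Spec R^{\ord}[\ivtd p])\cong H_0(\GR^{\ord}_{\rho_R,0})$ and $H_0(\Spec R^{\ssing}[\ivtd p])\cong H_0(\GR^{\ssing}_{\rho_R,0})$. For the last clause of the proposition I would also record a compatibility: an $\bar\Qp$-point $\xi$ of $\Spec R^{\ord}[\ivtd p]$ gives an ordinary $\rho_\xi$ with its distinguished $\GKinfty$-stable line $L_\xi$ on which $I_{K_\infty}$ acts by $\chi_{\cyc}$, and following the associated rank-$1$ \'etale sub-$\vphi$-module along the specialization map shows that $\xi$ lands in the component of $\GR^{\ord}_{\rho_R,0}$ carrying the reduction $\bar L_\xi$. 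This reduces everything to analysing $\GRv_{\rho_R,0}$.

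Now $\GRv_{\rho_R,0}$ depends only on $\bar\rho_\infty$: it is the projective $\F$-scheme --- a closed subscheme of an affine Grassmannian over $\Sig_\F$ --- parametrising $\vphi$-stable $\Sig_\F$-lattices of height $\leqs 1$ inside the fixed \'etale $\vphi$-module $M_\F=\nf D_\Eps(\bar\rho_\infty(-1))$ that satisfy the Lagrangian condition of Definition~\ref{def:Lagrangian}. This is exactly the object whose geometry is computed in \cite[\S2.5]{Kisin:ModuliGpSch}, except that there it is reached via moduli of finite flat group schemes using Theorem~\ref{thm:classifgpsch}, while here the description is intrinsic and hence remains valid for $p=2$. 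Connectedness of the non-ordinary (supersingular) locus of $\GRv_{\rho_R,0}$ is then the ``chain of rational curves'' computation carried out in \cite[\S2.5]{Kisin:ModuliGpSch} for $k=\Fp$, in \cite{Gee:MLTwt2HMF} for $\bar\rho_\infty\sim\abcd{1}{0}{0}{1}$, and in \cite{Imai:ConndComp} in general; hence $\GR^{\ssing}_{\rho_R,0}$, and therefore $\Spec R^{\ssing}[\ivtd p]$, is geometrically connected.

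For the ordinary locus I would run the argument of \cite[Proposition~2.5.15]{Kisin:ModuliGpSch}, which is likewise a statement about $\Sig$-module models and applies unchanged. Over $\bar\F$ an ordinary lattice model is an extension $0\to\gM^{\et}\to\gM\to\gM^\LT\to0$ with $\gM^{\et}$ rank-$1$ \'etale and $\gM^\LT$ rank-$1$ of Lubin-Tate type; the isomorphism types of $\gM^{\et}$ and $\gM^\LT$ are locally constant over $\GR^{\ord}_{\rho_R,0}$, and each of the resulting pieces --- a moduli of extensions of fixed rank-$1$ objects --- is geometrically connected. Since a rank-$1$ \'etale $(\vphi,\Sig_\F)$-module is pinned down by the corresponding unramified $\GKinfty$-character (Example~\ref{exa:repfinht}\eqref{exa:repfinht:unram}), $H_0(\GR^{\ord}_{\rho_R,0})$ is in bijection with the set of isomorphism classes, as abstract characters, of rank-$1$ $\GKinfty$-subrepresentations $\bar L\subset\bar\rho_\infty\otimes\bar\F$ on which $I_{K_\infty}$ acts by $\chi_{\cyc}|_{I_{K_\infty}}$; this set has at most one element unless $\bar\rho_\infty\sim\bar\psi_1\oplus\bar\psi_2$ with $\bar\psi_1\ne\bar\psi_2$ unramified, in which case it has exactly the two elements $\bar\psi_1$ and $\bar\psi_2$. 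Together with the compatibility from the first paragraph, this gives the stated criterion for two lifts to lie on the same component. I expect the main obstacle to be bookkeeping --- checking that the base-change identifications and the specialization of the ordinary filtration are genuinely compatible --- rather than any new idea: the only substantive geometric input is imported wholesale from \cite{Kisin:ModuliGpSch,Gee:MLTwt2HMF,Imai:ConndComp}, and this importation is legitimate precisely because our $\GRv_{\rho_R,0}$ is defined without any appeal to finite flat group schemes.
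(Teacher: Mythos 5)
Your proposal follows the same architecture as the paper's proof: reduce connectedness of $\Spec R^{\ord}[\ivtd p]$ and $\Spec R^{\ssing}[\ivtd p]$ to connectedness of the special fibres $\GR^{\ord}_{\rho_R,0}$ and $\GR^{\ssing}_{\rho_R,0}$ via Proposition~\ref{prop:GenIsom}, Proposition~\ref{prop:DisjOrdSS} and Corollary~\ref{cor:FormalIdemp2}; import the chain-of-rational-curves computation from \cite{Kisin:ModuliGpSch}, \cite{Gee:MLTwt2HMF}, \cite{Imai:ConndComp} for the supersingular locus; and handle the ordinary locus by the argument of \cite[Proposition~2.5.15]{Kisin:ModuliGpSch}. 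The one place your summary drifts from what is actually needed is the ordinary step: it is not literally a ``moduli of extensions of fixed rank-$1$ objects'' that one proves connected. Rather, the key mechanism (which the paper makes explicit, and which is also the content of Kisin's Proposition~2.5.15) is that for each choice of ordinary filtration $L_A\subset T_A$ there is \emph{at most one} $\Sig_A$-lattice model $\gM\subset M_A$ extending the canonical $\gM^{\et}\subset M_{L_A}$ and $\gM^{\LT}\subset M_{T_A/L_A}$ --- established via the maximality/minimality of $\gM^{\et},\gM^\LT$ and a five-lemma argument in the abelian category of $\Sig$-modules --- which identifies $\GR^{\ord}_{\rho_R,0}$ with the (finite-type) moduli of ordinary filtrations of $\bar\rho_\infty$. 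One then checks by cases that this is a reduced point, two reduced points, or (in the $\bar\psi\oplus\bar\psi$ case) a form of $\Proj^1_\F$, the last of which requires the zeta-function/Chevalley-Waring step to upgrade the functor-of-points description to a scheme isomorphism. Since you explicitly propose to run Kisin's argument wholesale, this is a paraphrasing looseness rather than a genuine gap, but as written the phrase ``moduli of extensions'' glosses over exactly the uniqueness input that makes the case analysis work.
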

\begin{proof}[Sketch of the proof]
Since the statement is insensitive of $\fo$ and the setting is compatible with scalar extension of $\fo$ by its finite integral extension , it is enough to prove the connectedness assertions. By Proposition~\ref{prop:GenIsom} and Corollary~\ref{cor:FormalIdemp2}, the theorem follows from the corresponding connectedness assertions on $\GR^{\ord}_{\rho_R,0}:=\GR^{\ord}_{\rho_R}\otimes_R R/\m_R$ and  $\GR^{\ssing}_{\rho_R,0}:=\GR^{\ssing}_{\rho_R}\otimes_R R/\m_R$.

Let us first treat the ``ordinary'' case. Assume that $\GR^{\ord}_{\rho_R}$ is not empty (so automatically $\bar\rho_\infty$ is reducible). For a finite $\F$-algebra $A$, let  $T_A\cong A^{\oplus2}$ with $\GKinfty$-action via $\bar\rho_\infty\otimes_\F A$. Choose a $\GKinfty$-stable $A$-line $L_A\subset T_A$ such that $L_A(-1)$ and $T_A/L_A$ are unramified (which exists by assumption). 
Consider an exact sequence of \'etale $\vphi$-modules $(\dagger):\ 0\ra M_{L_A} \ra M_A \ra M_{T_A/L_A} \ra 0$ such that $\nf T_\Eps(\dagger)(1)$ is the chosen exact sequence $0\ra L_A \ra T_A  \ra T_A/L_A \ra 0$.

Let $\gM^{\et}\subset M_{L_A}$ be a $(\vphi,\Sig)$-submodule which is \'etale as an object in $(\Mod/\Sig)^{\leqs1}$, and $\gM^\LT\subset M_{T_A/L_A}$ a $(\vphi,\Sig)$-submodule which is of Lubin-Tate type as an object in $(\Mod/\Sig)^{\leqs1}$. Note that $\gM^{\et}\subset M_{L_A}$ and $\gM^\LT\subset M_{T_A/L_A}$ are respectively maximal and minimal among $(\vphi,\Sig)$-submodules which are objects in $(\Mod/\Sig)^{\leqs1}$.\footnote{Note that shrinking the $(\vphi,\Sig)$-submodule has the effect of ``shrinking'' the image of $\vphi$ on the $\Sig$-submodule.} 

Now we claim that there are at most one $(\vphi,\Sig)$-submodule $\gM\subset M_A$ which is an extension of $\gM^\LT$ by  $\gM^{\et}$. (Such $\gM$ is automatically of height $\leqs 1$.) Indeed, if $\gM,\gN\subset M_A$ are two such, then so are $\gM+\gN$ and $\gM\cap \gN$. therefore by Lemma~\ref{lem:MaxMinProlongation} there exist maximal and minimal $(\vphi,\Sig)$-submodules in $M_A$ which are extensions of $\gM^\LT$ by  $\gM^{\et}$: call them $\gM_{\ord}^+$ and  $\gM_{\ord}^-$, respectively. But the 5-lemma for the category of $\Sig$-modules (which is an abelian category) shows that the natural inclusion $\gM_{\ord}^- \hra\gM_{\ord}^+$ is in fact an isomorphism. 

So if either $\bar\rho_\infty$ is indecomposable or $\chi_{\cyc}|_{\GKinfty}$ is not unramified mod $p$, then $\GR^{\ord}_{\rho_R,0}\cong\Spec\F$. If $\bar\rho_\infty\sim \bar\psi_1\oplus \bar\psi_2$ where $\bar\psi_1$ and $\bar\psi_2$ are distinct unramified characters, then $\GR^{\ord}_{\rho_R,0}$ consists of  two reduced $\F$-rational points which correspond to the choice $L_\F \sim \bar\psi_i$ for $i=1,2$. In the remaining case (i.e., when $\bar\rho_\infty\sim \bar\psi\oplus \bar\psi$ for an unramified character $\bar\psi$), we have a functorial isomorphism $\GR^{\ord}_{\rho_R}(A) \cong \Proj^1(A)$ for any finite $\F$-algebra $A$, so $\GR^{\ord}_{\rho_R,0}$ is smooth projective over $\F$ with the same zeta function as $\Proj^1$. Since the zeta function determines the dimension and the genus, we obtain $ \GR^{\ord}_{\rho_R,0}\cong\Proj^1_\F$ by Chevalley-Waring. This concludes the ordinary case.

To show the connectedness of $\GR^{\ssing}_{\rho_R,0}$ we give an explicitly construction of a chain of rational curves that connects any two points. We will only give the main idea and indicate references for the detailed computations. 

Let $\F'$ be a finite extension of $\F$ and choose an $\F'$-valued point $x\in\GR^{\ssing}_{\rho_R,0}(\F')$. Let $\gM_{\F'} \in D^{\ssing}_{\Sig,\rho_R}(\F')$ be the corresponding torsion $\vphi$-module. (See \S\ref{par:DefEtPhiNilp} for the definition of $D^{\ssing}_{\Sig,\rho_R}$.) We construct a rational curve passing through $x$ as follows. First fix a $\Sig_{\F'}$-basis of $\gM_{\F'}$, and let $X$ be a matrix representation of $\vphi_{\gM_{\F'}}$ for this fixed basis. For a finite extension $\F''$ of $\F'$, consider a nilpotent matrix $N\in \Mat_2(\fo_{\Eps,\F''})$  such that $\sig(N)X(-N)\in \Mat_2(\Sig_{\F''})$. Define a $(\vphi,\Sig_{\F''[T]})$-module $\gM_{\F''[T]}^{(N)}$ to be $(\Sig_{\F''[T]})^{\oplus2}$ equipped with $\vphi_{\gM_{\F''[T]}^{(N)}}$ given by the matrix $\sig(1+TN)X(1+TN)\iv$. Then since the determinant of $\sig(1+TN)X(1+TN)\iv$ is equal to $\det(X)\in\PP(u)\tim(\Sig_{\F''})\starr$, we have $\gM_{\F''[T]}^{(N)}\in\ModFIBT{\F''[T]}$ and $\im(\vphi_{\gM_{\F''[T]}^{(N)}})/\PP(u)\gM_{\F''[T]}^{(N)}\subset \gM_{\F''[T]}^{(N)}/\PP(u)\gM_{\F''[T]}^{(N)}$ is a Lagrangian in the sense of Definition~\ref{def:Lagrangian}. Also, ``multiplication by $1+TN$'' defines an isomorphism $\gM_{\F'}\otimes_{\Sig_{\F'}}\fo_{\Eps,\F''[T]} \riso \gM_{\F''[T]}^{(N)}[\ivtd u]$. With this isomorphism, we can view $\gM_{\F''[T]}^{(N)}\in D^\bfv_{\Sig,\rho_R}(\F''[T])$, so we obtain an $\F$-morphism $\Spec\F''[T] \ra \GRv_{\rho_R,0}$ where the point ``$T=0$'' is mapped to $x$ naturally viewed as an $\F''$-point. (In particular, the image lands in $\GR^{\ssing}_{\rho_R,0}$.) Now, by specializing at the point ``$T=1$'' on this curve, we obtain an $\F''$-valued point $\gM^{(N)}_{\F''[T]}/(T-1)$ with $\vphi$ defined by the matrix $\sig(1+N)X(1+N)\iv$. 

To show that $\GR^{\ssing}_{\rho_R,0}$ is connected, we proceed as follows. First, we find an $\gM_{\F'} \in D^{\ssing}_{\Sig,\rho_R}(\F')$ for which we can write down a matrix representation $X$ of $\vphi_{\gM_{\F'}}$ for some basis. This is done in \cite[Lemma~1.2]{Imai:ConndComp} generalizing  \cite[Lemma~2.5.5]{Kisin:ModuliGpSch} when the residue field $k$ of $K$ is $\Fp$. Now, it is enough to show that any  $\gN_{\F''}\in D^{\ssing}_{\Sig,\rho_R}(\F'')$ with $\F''$ some finite extension of $\F$, has a basis so that the corresponding matrix representation of $\vphi_{\gN_{\F'}}$ is $\sig(\prod^n_i U_i)X(\prod^n_i U_i)\iv$ for $U_i = 1+N_i$ where $N\in \Mat_2(\fo_{\Eps,\mathfrak{E}})$ for some finite field $\mathfrak{E}$ containing both $\F'$ and $\F''$ such that $\sig(N_i)X(-N_i)\in \Mat_2(\Sig_{\mathfrak{E}})$. This purely linear-algebraic statement is proved first by Kisin \cite[\S2.5]{Kisin:ModuliGpSch} when the residue field $k=\Fp$, by Gee  \cite[\S2]{Gee:MLTwt2HMF}  when $\bar\rho_\infty$ is trivial, and by Imai \cite[\S2]{Imai:ConndComp} for the  general case. Note that their computations was applied to a different closed subscheme of an affine grassmannian (namely, the moduli of finite flat group schemes), but the same computation applies in our situation. While the aforementioned papers were written under the running assumption that $p>2$, it is only because the moduli of finite flat group schemes was not defined when $p=2$ and the same computation works for $p=2$.
\end{proof}

\section{Application to ``Barsotti-Tate deformation rings''}\label{sec:ApplBT}
The goal of this section is to show that the natural map $\res^{\leqs1}_{\cris}\otimes_{\Zp}\Qp$ defined in Corollary~\ref{cor:Restr} is an isomorphism if $p>2$ and it induces an isomorphism between certain unions of connected components. (See Theorem~\ref{thm:GabberDefor} for the precise statement.) 
This theorem is the bridge from Proposition~\ref{prop:ConndOrdNonOrd} (on the connected components of $R^{\Box,\bfv}_\infty[\ivtd p]$) to Theorem~\ref{thm:KisinConndComp} (on the connected components of $R^{\Box,\bfv}_{\cris}[\ivtd p]$).

\subsection{}\label{subsec:padicOrdGK}
Let $A$ be either finite flat over $\Zp$ or finite over $\Qp$, and let $T_A$ be a free $A$-module of rank $2$ equipped with continuous $\GK$-action which is crystalline with Hodge-Tate weights in $[0,1]$. Assume furthermore that  $I_K$ acts on $\det V$ via $\chi_{\cyc}|_{I_K}$. 

We say that $T_A$ as in the previous paragraph is \emph{ordinary} if there exists a $\GK$-stable $A$-line
$L_A\subset T_A$ (necessarily unique, if it exists) on which $I_K$ acts via $\chi_{\cyc}|_{I_K}$. (By our determinant condition, $T_A/L_A$ is forced to be unramified.) We say that $T_A$ as in the previous paragraph is \emph{supersingular} if it is not ordinary (or equivalently, if it does not admit a non-zero unramified quotient). When $A$ is a finite $\Qp$-algebra, one can easily show that a $\GK$-representation $V_A$ over $A$ is ordinary (respectively, supersingular) if and only if there exists a finite $\Zp$-submodule $A^\circ \subset A$ and a $\GK$-stable $A^\circ$-lattice $T_{A^\circ}\subset V_A$ such that $T_{A^\circ}$ is ordinary (respectively, supersingular). Similarly, for any finite flat $\Zp$-algebra $A^\circ$ a $\GK$-representation $T_{A^\circ}$ over $A^\circ$ is ordinary (respectively, supersingular) if $T_{A^\circ}[\ivtd p]$ is ordinary (respectively, supersingular) as a $\GK$-representation over $A^\circ[\ivtd p]$.

When $A$ is a finite $\Qp$-algebra, an equivalent definition of ordinary and supersingular representations can be made using $D_{\cris}^*(V_A)$. 

Let us define $R_{\cris}^{\Box,[0,1],f}:=R_{\cris}^{\Box,[0,1]}\otimes_{R^{\Box,\leqs 1}_\infty}R^{\Box,\leqs 1,\vphi\unip}_\infty$, $R_{\cris}^{\Box,\ord}:=R_{\cris}^{\Box,[0,1]}\otimes_{R^{\Box,\leqs 1}_\infty}R^{\Box,\ord}_\infty$, and $R_{\cris}^{\Box,\ssing}:=R_{\cris}^{\Box,[0,1]}\otimes_{R^{\Box,\leqs 1}_\infty}R^{\Box,\ssing}_\infty$. (We define $R_{\cris}^{\Box,[0,1],f}$ without requiring $\bar\rho$ to be $2$-dimensional. The superscript $(\cdot)^f$ stands for ``formal''.) 

\begin{propsub}\label{prop:OrdSSDeforRings}
Let $A$ be a finite $\Qp$-algebra. Then a map $\xi: R_{\cris}^{\Box,[0,1]} \ra A $ factors through $R_{\cris}^{\Box,[0,1],f}$ (respectively, $R_{\cris}^{\Box,\ord}$; respectively,  $R_{\cris}^{\Box,\ssing}$) if and only if the corresponding framed $A$-deformation $\rho_\xi$ admits no non-zero unramified quotient (respectively, $\rho_\xi$ is an ordinary $\GK$-representation; respectively, $\rho_\xi$ is a supersingular $\GK$-representaion.)
\end{propsub}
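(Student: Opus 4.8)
The plan is to transport the whole assertion to the $\GKinfty$-side, where the ordinary, supersingular and $\vphi$-unipotent loci have already been cut out, and then back across the restriction morphism $\res_{\cris}^{\leqs1}\colon R_\infty^{\Box,\leqs1}\ra R_{\cris}^{\Box,[0,1]}$ of Corollary~\ref{cor:Restr} by Kisin's theory. Since $R_\infty^{\Box,\leqs1,\vphi\unip}$, $R_\infty^{\Box,\ord}$ and $R_\infty^{\Box,\ssing}$ are quotients of $R_\infty^{\Box,\leqs1}$, and $R_{\cris}^{\Box,[0,1]}$ is an $R_\infty^{\Box,\leqs1}$-algebra through $\res_{\cris}^{\leqs1}$, the definition of $R_{\cris}^{\Box,[0,1],f}$, $R_{\cris}^{\Box,\ord}$, $R_{\cris}^{\Box,\ssing}$ as base changes shows that a map $\xi\colon R_{\cris}^{\Box,[0,1]}\ra A$ with $A$ a finite $\Qp$-algebra factors through the relevant one of these exactly when the composite $R_\infty^{\Box,\leqs1}\xra{\res_{\cris}^{\leqs1}}R_{\cris}^{\Box,[0,1]}\xra{\xi}A$ factors through $R_\infty^{\Box,\leqs1,\vphi\unip}$, $R_\infty^{\Box,\ord}$, or $R_\infty^{\Box,\ssing}$ respectively. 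That composite classifies $\rho_\xi|_{\GKinfty}$, which is of height $\leqs1$. The mapping properties of these $\GKinfty$-side quotients over finite $\Qp$-algebras are recorded in \S\ref{par:padicOrdGKinfty} for the ordinary and supersingular cases; the $\vphi$-unipotent case is obtained in the same way from Proposition~\ref{prop:GenIsom} and Proposition~\ref{prop:DisjOrdSS} (the generic fibre of $\GR^{\leqs1,\vphi\unip}_{\rho_R}$ is the corresponding union of connected components of $\Spec R_\infty^{\Box,\leqs1}[\ivtd p]$), once one interprets $\vphi$-unipotence of the $\gM_A$ attached to $\rho_\xi|_{\GKinfty}$ as the absence of a non-zero unramified $\GKinfty$-quotient of $\nf V_\Sig^{\leqs1}(\gM_A)=\rho_\xi|_{\GKinfty}$ (compatibly with the computations in \S\ref{par:DefEtPhiNilp}). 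This reduces the proposition to a comparison, for the crystalline $\GK$-representation $\rho_\xi$ with Hodge--Tate weights in $[0,1]$, of the $\GK$- and $\GKinfty$-versions of the conditions ``admits no non-zero unramified quotient'', ``ordinary'' and ``supersingular''.

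I would then collapse this comparison to a single claim. The determinant conditions occurring in the definitions of ordinary and supersingular representations agree for $\GK$ and for $\GKinfty$: $\det\rho_\xi$ is crystalline of rank $1$, so $\chi_{\cyc}^{-1}\det\rho_\xi$ has Hodge--Tate weight $0$, and a crystalline character of Hodge--Tate weight $0$ is unramified precisely because its restriction to $\GKinfty$ is of height $\leqs0$ (Example~\ref{exa:repfinht}\eqref{exa:repfinht:unram}). Granting this, and since --- once the determinant condition is imposed --- being ordinary means admitting a non-zero unramified quotient and being supersingular means admitting none, on both the $\GK$- and $\GKinfty$-side, all three equivalences follow from the single statement that $\rho_\xi$ has no non-zero unramified $\GK$-quotient if and only if $\rho_\xi|_{\GKinfty}$ has no non-zero unramified $\GKinfty$-quotient. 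One direction is immediate from $\IKinfty\subset\IK$: a non-zero unramified $\GK$-quotient restricts to a non-zero unramified $\GKinfty$-quotient.

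For the converse, given a non-zero unramified $\GKinfty$-quotient $W$ of $\rho_\xi|_{\GKinfty}$, note that $W$, being a subquotient of the $\GKinfty$-restriction of a crystalline representation with Hodge--Tate weights in $[0,1]$, is of height $\leqs1$ (Proposition~\ref{prop:RamakrishnaCrit}, Proposition~\ref{prop:LimitThm}), hence by Theorem~\ref{thm:KisinIntPAdicHT}\eqref{thm:KisinIntPAdicHT:BT} is the $\GKinfty$-restriction of a unique crystalline $\GK$-representation $\wt W$ with Hodge--Tate weights in $[0,1]$; since $W$ is unramified, $\wt W$ has Hodge--Tate weight $0$ and is itself $\GK$-unramified. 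By the full faithfulness of restriction to $\GKinfty$ on crystalline representations (Theorem~\ref{thm:KisinIntPAdicHT}\eqref{thm:KisinIntPAdicHT:crys} via the diagram \eqref{eqn:KisinIntPAdicHT}, applied with $\Qp$-coefficients and then restricted to the $A$-linear maps), the surjection $\rho_\xi|_{\GKinfty}\thra W$ is the restriction of a $\GK$-morphism $\rho_\xi\ra\wt W$, which stays surjective because surjectivity can be tested after restriction to $\GKinfty$; thus $\rho_\xi$ acquires a non-zero unramified $\GK$-quotient, and the ordinary case follows by the determinant bookkeeping above. The main --- and essentially only non-formal --- step is this last passage: it is exactly here that the crystallinity of $\rho_\xi$ is used (through Theorem~\ref{thm:KisinIntPAdicHT}), rather than merely the finite height of $\rho_\xi|_{\GKinfty}$; everything else is a translation between the two deformation problems.
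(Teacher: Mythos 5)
Your argument is correct and follows the paper's route: reduce to the $\GKinfty$-side via the base-change definitions and the mapping properties from \S\ref{par:padicOrdGKinfty}, transfer the determinant condition by Kisin's full faithfulness, and carry the unramified-quotient/ordinarity condition across $\nf\res$ using Theorem~\ref{thm:KisinIntPAdicHT}. The paper packages that last transfer into Lemma~\ref{lem:OrdCrit}; you essentially reprove the $\GK$-equivariance content of that lemma inline from full faithfulness, and, by working directly with the $A$-line definitions of ordinarity rather than with the maximal unramified $A[\GKinfty]$-quotient, you sidestep the $A$-freeness assertion of Lemma~\ref{lem:OrdCrit} (which the paper obtains from \cite[Proposition~1.2.11]{Kisin:ModuliGpSch}) -- a mild tidying, not a different argument.
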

\begin{proof}
By definition, $\xi: R_{\cris}^{\Box,[0,1]} \ra A$ a map factors through the quotient $R_{\cris}^{\Box,[0,1],f}$ (respectively, $R_{\cris}^{\Box,\ord}$; respectively,  $R_{\cris}^{\Box,\ssing}$) if and only if $\rho_\xi|_{\GKinfty}$ does not admit non-zero unramified quotient (respectively, $\rho_\xi|_{\GKinfty}$ is ordinary; respectively, $\rho_\xi|_{\GKinfty}$  is supersingular). By Kisin's theorem (Theorem~\ref{thm:main}\eqref{thm:main:fullfth}), we have $\det\rho_\xi|_{I_K} \cong \chi_{\cyc}|_{I_K}$ if and only if $\det\rho_\xi|_{I_{K_\infty}} \cong \chi_{\cyc}|_{I_{K_\infty}}$. Now the proposition follows from Lemma~\ref{lem:OrdCrit}.
\end{proof}

\begin{lemsub}\label{lem:OrdCrit}
Let $A$ be a finite local $\Qp$-algebra, and $V_A$ a crystalline $A$-representation of $\GK$ with Hodge-Tate weights in $[0,1]$. Then the maximal unramified $A[\GKinfty]$-quotient $V^{\et}_A$ of $V_A$ is free over $A$ and is a $\GK$-equivariant quotient. (So $V_A^{\et}$ is also the maximal unramified $\GK$-quotient.)
%
\end{lemsub}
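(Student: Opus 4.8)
The plan is to treat the two assertions essentially separately: that $V^{\et}_A$ is $\GK$-stable will follow from Kisin's full faithfulness, while its freeness over $A$ will come from a direct-summand analysis of $D_{\cris}^*(V_A)$. Write $V^{\et}_A=V_A/N$ with $N:=\sum_{g\in\IKinfty}(g-1)V_A$; since $\IKinfty\trianglelefteq\GKinfty$ this $N$ is $A[\GKinfty]$-stable, so $V^{\et}_A$ really is the maximal $A[\GKinfty]$-quotient of $V_A$ on which $\IKinfty$ acts trivially, and let $q\colon V_A\twoheadrightarrow V^{\et}_A$ be the projection.

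For $\GK$-equivariance: as $\IKinfty$ acts trivially on $V^{\et}_A$, I transport the induced $\GKinfty/\IKinfty$-action along the natural isomorphism $\GKinfty/\IKinfty\riso\GK/\IK$ (Example~\ref{exa:repfinht}\eqref{exa:repfinht:unram}) to obtain an unramified $\GK$-representation $\wt W$ on the $A$-module $V^{\et}_A$ with $\wt W|_{\GKinfty}=V^{\et}_A$ as $\GKinfty$-representations. Viewed as $\Qp$-representations of $\GK$, both $V_A$ and $\wt W$ are crystalline with Hodge--Tate weights in $[0,1]$ (an unramified representation is crystalline with all Hodge--Tate weights $0$). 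Since $\res\colon\Rep_{\cris}^{[0,1]}(\GK)\ra\Rep_{\Qp}(\GKinfty)$ is fully faithful (by Theorem~\ref{thm:KisinIntPAdicHT}\eqref{thm:KisinIntPAdicHT:crys} together with the commutative diagram~\eqref{eqn:KisinIntPAdicHT}; cf.\ Theorem~\ref{thm:main}\eqref{thm:main:fullfth}), the $\GKinfty$-equivariant map $q\colon V_A|_{\GKinfty}\ra\wt W|_{\GKinfty}$ is forced to be $\GK$-equivariant. Hence $V^{\et}_A=\wt W$ is a $\GK$-equivariant quotient of $V_A$; restricting an arbitrary unramified $\GK$-quotient of $V_A$ to $\GKinfty$ then shows it is moreover the maximal unramified $\GK$-quotient.

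For freeness over $A$ I would pass to $D^*:=D_{\cris}^*(V_A)$: crystallinity over the coefficient ring $A$ makes $D^*$ finite free over $K_0\otimes_{\Qp}A$, and its Hodge filtration on $D^*_K:=D^*\otimes_{K_0}K$ has a single jump at a submodule $L^*$ which is a $(K\otimes_{\Qp}A)$-direct summand of $D^*_K$. As $D_{\cris}^*$ is exact and fully faithful on crystalline $\GK$-representations with Hodge--Tate weights in $[0,1]$ and $V^{\et}_A$ is the maximal unramified quotient, $D_{\cris}^*(V^{\et}_A)$ is the maximal $\vphi$-stable $(K_0\otimes A)$-submodule $D'\subset D^*$ with $D'_K\cap L^*=0$. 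The plan is to show that this maximality, together with $\vphi$ being bijective on $D^*$ and $L^*$ being a direct summand, forces $D'$ to be a direct $(K_0\otimes A)$-summand of $D^*$; being a direct summand of a finite free module, $D'$ is projective, and since the $\sigma$-semilinear $\vphi$ permutes the orthogonal idempotents of $K_0\otimes A$ it is free of constant rank (as in the proof of Corollary~\ref{cor:RaynaudScheme}), so $V^{\et}_A$, which corresponds to $D'$ under Colmez--Fontaine, is free over $A$. Alternatively one can run this through the Kisin module $\gM_A\in\ModFIBT A$ of $V_A|_{\GKinfty}$ (finite free over $\Sig_A$): $V^{\et}_A$ corresponds to the maximal Lubin--Tate-type quotient of $\gM_A$ in the sense of \S\ref{par:DefEtPhiNilp}, equivalently the dual of the maximal \'etale $(\vphi,\Sig_A)$-submodule of $\gM_A^{\vee}$, and one checks that this submodule is $\Sig_A$-free.

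I expect the freeness step to be the main obstacle. It is exactly where the crystalline / $\PP$-height-$\leqs1$ hypothesis is genuinely used — rather than a general $\GKinfty$-representation of finite height — and the crux is the direct-summand claim for $D'\subset D^*$ (equivalently the $\Sig_A$-freeness of the maximal \'etale submodule of $\gM_A^{\vee}$), where transversality with the Hodge filtration, respectively the $\PP$-height bound, must enter; without it the maximal unramified quotient could a priori acquire $A$-torsion.
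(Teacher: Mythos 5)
Your argument for $\GK$-equivariance is correct and is essentially the paper's, packaged slightly differently: the paper invokes the $h=1$ equivalence of Theorem~\ref{thm:KisinIntPAdicHT}\eqref{thm:KisinIntPAdicHT:BT} to promote the $\GKinfty$-surjection $V_A\thra V^{\et}_A$ to a $\GK$-morphism, while you build the unramified $\GK$-structure $\wt W$ by hand via $\GKinfty/\IKinfty\riso\GK/\IK$ and then invoke only full faithfulness of restriction on $\Rep_{\cris}^{[0,1]}(\GK)$. Both routes work.

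The freeness assertion, however, is a genuine gap. You describe a plan---show that the maximal $\vphi$-stable $(K_0\otimes_{\Qp}A)$-submodule $D'\subset D^*$ transverse to the Hodge filtration is a direct summand, or equivalently that the maximal \'etale $\Sig_A$-submodule of $\gM_A^\vee$ is free over $\Sig_A$---and you yourself flag this as the main obstacle, but you never prove it. That direct-summand/freeness claim \emph{is} the content of the lemma; nothing elementary forces a maximal $\vphi$-stable sub or quotient over an artinian coefficient ring to be projective, and this is exactly where the height-$\leqs1$ hypothesis must do real work, as you correctly observe. The paper's proof resolves it by choosing a $\GK$-stable lattice $T_{A^\circ}\subset V_A$ over a finite flat $\Zp$-subalgebra $A^\circ\subset A$ with $A^\circ[\ivtd p]=A$, taking its Kisin module $\gM_{A^\circ}\in\ModFIBT{A^\circ}$, passing to the $\vphi$-stable $\Sig_{A^\circ}$-quotient $\gN_{A^\circ}$ corresponding to $T^{\et}_{A^\circ}$ (the image of $T_{A^\circ}$ in $V^{\et}_A$), noting that $\gN_{A^\circ}$ is finite free over $\Sig$ as the Kisin module of a $\Zp$-lattice, and then citing \cite[Proposition~1.2.11]{Kisin:ModuliGpSch} to conclude that $\gN_{A^\circ}$ is free over $\Sig_{A^\circ}$, hence $T^{\et}_{A^\circ}$ free over $A^\circ$. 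To complete your proposal you would need to supply the substance of that proposition, or establish the direct-summand claim for $D'$ by other means; as written the argument stops short at precisely that step.
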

\begin{proof}
By Theorem~\ref{thm:KisinIntPAdicHT}\eqref{thm:KisinIntPAdicHT:BT} the natural projection $V_A\thra V_A^{\et}$ is $\GK$-equivariant, so it remains to show that $V_A^{\et}$ is free over $A$. 

By Proposition~\ref{prop:GenIsom} there exists a finite flat $\Zp$-subalgebra $A^\circ\subset A$ with $A^\circ[\ivtd p] = A$ and $\gM_{A^\circ}\in\ModFIBT{A^\circ}$ such that $T_{A^\circ}[\ivtd p] \cong V_A|_{\GKinfty}$ where $T_{A^\circ}:=\nf T_\Sig^{\leqs1}(\gM_{A^\circ})$. Let $T_{A^\circ}^{\et}$ denote the image of $T_{A^\circ}$ under the natural projection $V_A \thra V_A^{\et}$, and let $\gN_{A^\circ}$ denote the quotient of $\gM_{A^\circ}$ which corresponds to $T_{A^\circ}^{\et}$. Since $T_{A^\circ}^{\et}$ is a $\GKinfty$-equivariant $A^\circ$-quotient of $T_{A^\circ}$ and $T_{A^\circ}^{\et}[\ivtd p]=V_A^{\et}$, it follows that $\gN_{A^\circ}$ is a $\vphi$-equivariant $\Sig_{A^\circ}$-quotient of $\gM_{A^\circ}$ which is free over $\Sig$. Moreover, $\gN_{A^\circ}$ is free over $\Sig_{A^\circ}$ by \cite[Proposition~1.2.11]{Kisin:ModuliGpSch}, so $T_{A^\circ}^{\et}$ is free over $A^\circ$. 
\end{proof}
\begin{rmksub}
The statement of Lemma~\ref{lem:OrdCrit} still holds when $V_A$ is a  semi-stable $\GK$-representation with Hodge-Tate weights in $[0,h]$, so Proposition~\ref{prop:OrdSSDeforRings} can be generalized to this setting (with suitable the definitions of ordinary and supersingular representations of $\GK$ and $\GKinfty$). See Lemma~11.4.19 and  Proposition~11.4.18 in \cite{Kim:Thesis} for the statements and the proofs.
\end{rmksub}

Let us recall the notation. Let $\bar\rho:\GK\ra\GL_d(\F)$ for a finite extension $\F$ of $\Fp$ and put $\bar\rho_\infty:=\bar\rho|_{\GKinfty}$. Let $R_{\cris}^{\Box,[0,1]}$ denote the crystalline framed deformation ring of $\bar\rho$ with Hodge-Tate weights in $[0,1]$ (as in Kisin's theorem stated as Theorem~\ref{thm:CrysStDeforRing}), and let $R^{\Box,\leqs1}_\infty$ denote the $\GKinfty$-deformation ring of $\bar\rho_\infty$ with height $\leqs 1$ (which exists by Theorem~\ref{thm:Representability}).
Let $e$ be the absolute ramification index of $K$, and $e^*:=\frac{ep}{p-1}$. Let $I^{e^*}_K$ denote the higher ramification group with upper indexing. (For upper indexing we follow the  convention in \cite[Ch.~IV]{Serre:LocalFileds}, not the one in \cite{Fontaine:RamifEstimate}.) We prove the following theorem for the rest of this section and section \S\ref{sec:IntHT}.
\begin{thm}\label{thm:GabberDefor}
Assume that $\bar\rho(I^{e^*}_K) = \set1$. 
\begin{enumerate}
\item 
Assume that $p>2$. Then the natural map $\nf\res^{\cris}:\Spec R_{\cris}^{\Box,[0,1]}[\ivtd p]\ra \Spec R^{\Box,\leqs1}_\infty[\ivtd p]$ defined by the restriction to $\GKinfty$ is an isomorphism. 
\item Assume that $p=2$. Then the natural map $\nf\res^{\leqs1}_{\cris}:\Spec R_{\cris}^{\Box,[0,1],f}[\ivtd p]\ra \Spec R^{\Box,\leqs1,f}_\infty[\ivtd p]$ defined by the restriction to $\GKinfty$ is an isomorphism. 
\end{enumerate}
If furthermore $\End_{\GK}(\bar\rho)\cong\F$, then we also have $\End_{\GKinfty}(\bar\rho_\infty)\cong\F$ and the same statements hold for (unframed) deformation rings.
\end{thm}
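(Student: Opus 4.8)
### Plan of proof for Theorem~\ref{thm:GabberDefor}

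The plan is to build the map $\nf\res^{\cris}$ (resp. $\nf\res^{\leqs1}_{\cris}$) on generic fibers from Corollary~\ref{cor:Restr}, and then to prove it is an isomorphism by checking it on $\Qp$-points valued in finite $\Qp$-algebras, using that both $R_{\cris}^{\Box,[0,1]}[\ivtd p]$ and $R^{\Box,\leqs1}_\infty[\ivtd p]$ are Jacobson and that the generic fiber of $R^{\Box,\leqs1}_\infty$ is formally smooth (Proposition~\ref{prop:FormalSm}, since the universal framed deformation of height $\leqs 1$ induces a formally smooth map $\Spf R^{\Box,\leqs1}_\infty\ra D^{\leqs1}_\infty$). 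Concretely, for a finite local $\Qp$-algebra $A$ with residue field $E$, a point $\xi\colon R^{\Box,\leqs1}_\infty\ra A$ corresponds to a framed $A$-deformation $\rho_{\infty,A}$ of $\bar\rho_\infty$ of height $\leqs 1$; I want to show it lifts uniquely to a framed crystalline $A$-deformation $\rho_A$ of $\bar\rho$ with Hodge-Tate weights in $[0,1]$ restricting to $\rho_{\infty,A}$ (in case $p>2$), resp. that the bijection holds after restricting to the $\vphi$-unipotent/non-ordinary locus (in case $p=2$). Injectivity of $\nf\res^{\cris}$ on such points is essentially Kisin's full faithfulness (Theorem~\ref{thm:main}\eqref{thm:main:fullfth}, see Remark~\ref{rmk:ResNonIsom}); the content is surjectivity, i.e.\ constructing a $\GK$-structure on a $\GKinfty$-representation of height $\leqs 1$.

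The key inputs are: (i) Theorem~\ref{thm:KisinIntPAdicHT}\eqref{thm:KisinIntPAdicHT:BT}, giving for each finite extension $E/\Qp$ and each $\GKinfty$-representation $V_E$ of height $\leqs 1$ a canonical crystalline $\GK$-representation extending it, so the issue is purely one of \emph{families} over the artinian thickening $A$ of $E$; (ii) the deformation-theoretic comparison encoded in Proposition~\ref{prop:Claim1}, proved via strongly divisible $S$-modules in \S\ref{sec:IntHT}, which provides a $\GK$-stable $\Zp$-lattice inside the crystalline representation matching a prescribed $\GKinfty$-stable lattice; and (iii) Gabber's result \cite[Appendix]{Kisin:ModularityGeomGalRep}, applied as in \S\ref{sec:ApplBT}, to descend the $\GK$-action from the generic fibre to the integral family. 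The hypothesis $\bar\rho(I^{e^*}_K)=\{1\}$ enters exactly at this integral step: Fontaine's ramification estimate \cite{Fontaine:RamifEstimate} controls the ramification of the mod $p$ reduction of any finite flat (equivalently, height $\leqs 1$) representation, forcing the $\GKinfty$-action on $\bar\rho_\infty$ to come from the quotient by $I^{e^*}_K$, which is where the obstruction to propagating the $\GK$-structure along $A$ would otherwise live. Assembling (i)--(iii) over all finite local $\Qp$-algebras $A$ shows $\nf\res^{\cris}$ (resp.\ $\nf\res^{\leqs1}_{\cris}$) is bijective on points with square-zero infinitesimal thickenings and hence, by formal smoothness of the target's generic fibre, a local isomorphism on completed local rings at all closed points; being a morphism of Jacobson schemes, it is then an isomorphism.

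For the final, unframed, statement: the implication $\End_{\GK}(\bar\rho)\cong\F \Rightarrow \End_{\GKinfty}(\bar\rho_\infty)\cong\F$ follows because $\GKinfty\subset\GK$ is dense enough to see the endomorphism algebra once one knows $\bar\rho|_{\GKinfty}$ already determines $\bar\rho$ up to isomorphism by Theorem~\ref{thm:main}\eqref{thm:main:fullfth}; more precisely, an $\F[\GKinfty]$-endomorphism of $\bar\rho_\infty$ of height $\leqs 0$ (scalars are such) is rigid, and any $\GKinfty$-endomorphism is automatically $\GK$-equivariant by full faithfulness applied to the corresponding $\vphi$-module endomorphism, so $\End_{\GKinfty}(\bar\rho_\infty)\hookrightarrow\End_{\GK}(\bar\rho)\cong\F$. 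Under this hypothesis the (unframed) deformation rings $R^{\leqs1}_\infty$, $R_{\cris}^{[0,1]}$ exist (Theorem~\ref{thm:Representability}, Theorem~\ref{thm:CrysStDeforRing}), and the framed rings are smooth, explicitly, $R^{\Box,\leqs1}_\infty$-algebras over $R^{\leqs1}_\infty$ with the same relative structure (a formally smooth power-series extension of relative dimension $d^2-1$ — here $4-1=3$); since $\nf\res^{\cris}$ is compatible with the framing variables, the isomorphism on framed generic fibres descends to the unframed ones by quotienting out the free framing variables on both sides.

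The step I expect to be the real obstacle is (ii)/(iii): producing, \emph{functorially in the artinian parameter $A$}, a $\GK$-stable lattice inside the crystalline representation whose reduction realizes the given integral $\GKinfty$-structure, with the ramification hypothesis $\bar\rho(I^{e^*}_K)=\{1\}$ as the precise condition that makes the descent of the $\GK$-action through Gabber's criterion go through; this is what \S\ref{sec:ApplBT} and \S\ref{sec:IntHT} are devoted to, and everything else here is bookkeeping around Kisin's theorems and the Jacobson/formal-smoothness formalism.
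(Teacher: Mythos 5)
Your plan has the right large-scale ingredients (Kisin's full faithfulness for injectivity, Proposition~\ref{prop:Claim1}, Gabber, the ramification hypothesis), but it misreads the structure of the argument in two substantive ways, and the unframed case is handled by a wrong ingredient.

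First, you treat Gabber's theorem as a tool to ``descend the $\GK$-action from the generic fibre to the integral family'' and then say you want to assemble a bijection ``over all finite local $\Qp$-algebras $A$,'' worrying at the end about producing a $\GK$-stable lattice ``functorially in the artinian parameter $A$.'' That is exactly the difficulty that Gabber's theorem (Theorem~\ref{thm:Gabber}) is there to eliminate: its hypothesis asks only for a bijection on $\Hom_\fo(-,E)$ for $E$ a finite \emph{field} extension of $\Frac(\fo)$, not on $\Hom_\fo(-,A)$ for artin local $A$ with nilpotents. The paper proves precisely this field-valued statement (Proposition~\ref{prop:Gabber}), after normalizing to make Gabber's normality/reducedness hypotheses hold, and then Gabber's commutative algebra (Raynaud--Gruson flattening) does the rest. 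There is no functoriality in $A$ to establish, and no formal-smoothness-plus-Jacobson argument is needed. Your proposed route — bijectivity on artinian $A$-points, then compare formal completions, then Jacobson — is coherent as a formal scheme of argument, but it is strictly harder, and indeed you yourself flag the functoriality step as the likely obstacle; the paper's use of Gabber is designed to avoid it.

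Second, you misplace the role of the ramification hypothesis $\bar\rho(I^{e^*}_K)=\{1\}$. Once Kisin gives the canonical crystalline $\GK$-structure on $V_x$ for a field-valued point $x$, two things must be checked: (1) the prescribed $\GKinfty$-stable $\fo_E$-lattice $T_x$ is actually $\GK$-stable, and (2) the reduction $T_x\otimes\fo_E/\m_E$ is isomorphic to $\bar\rho\otimes\fo_E/\m_E$ as a $\GK$-module, not merely after restricting to $\GKinfty$. Item (1) is Proposition~\ref{prop:Claim1}, proved via strongly divisible $S$-modules and \emph{not} using the ramification hypothesis (the only extra condition there is $\vphi$-unipotence when $p=2$). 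It is item (2) that uses the ramification estimate, via Lemma~\ref{lem:ModpFinFlFullFaith}: one shows $\GKinfty\cdot I_K^{e^*}=\GK$, so $\bar\rho(\GKinfty)=\bar\rho(\GK)$, hence a mod $p$ finite-flat representation of $\GK$ is determined by its restriction to $\GKinfty$. Your description (``forcing the $\GKinfty$-action on $\bar\rho_\infty$ to come from the quotient by $I^{e^*}_K$, which is where the obstruction to propagating the $\GK$-structure along $A$ would otherwise live'') attributes the hypothesis to the lattice-lifting step, which is incorrect. Relatedly, your argument that $\End_{\GK}(\bar\rho)\cong\F$ implies $\End_{\GKinfty}(\bar\rho_\infty)\cong\F$ ``by full faithfulness applied to the corresponding $\vphi$-module endomorphism'' does not work: Theorem~\ref{thm:main}\eqref{thm:main:fullfth} and Theorem~\ref{thm:KisinIntPAdicHT}\eqref{thm:KisinIntPAdicHT:crys} are statements about $\Zp$-lattice or $\Qp$-representations, not about mod $p$ (torsion) objects (the paper explicitly warns that $\nf T^{\leqs h}_\Sig$ is \emph{not} fully faithful on torsion objects in general). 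The correct input for the $\End$ claim is again Lemma~\ref{lem:ModpFinFlFullFaith} and the ramification hypothesis.
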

As explained in Remark~\ref{rmk:ResNonIsom}, we do not expect the map $\nf\res^{\leqs h}_{\cris}:\Spec R_{\cris}^{\Box,[0,h]}[\ivtd p]\ra \Spec R^{\Box,\leqs h}_\infty[\ivtd p]$ is an isomorphism when $h>1$.
\begin{rmksub}
The assumption that $\bar\rho(I^{e^*}_K) = \set1$ is always satisfied if $\bar\rho$ comes from a finite flat group scheme over $\fo_K$ by Fontaine's ramification estimate \cite{Fontaine:RamifEstimate}. So this condition is indeed automatic if $R_{\cris}^{\Box,[0,1]}$ is not zero because any crystalline $\GK$-representation with Hodge-Tate weights in $[0,1]$ admits a $\GK$-stable lattice coming from a $p$-divisible group over $\fo_K$. See  \cite[Corollary~2.2.6]{kisin:fcrys} for the proof of this that works for any $p$. 
\end{rmksub}

\begin{rmksub}
Theorem~\ref{thm:GabberDefor} could easily be deduced using the \emph{classification of finite flat group schemes} when $p>2$ (stated in Theorem~\ref{thm:classifgpsch}) and the \emph{classification of connected finite flat group schemes} when $p=2$; more precisely, we deduce from the classification some equivalence of categories between certain classes of torsion $\GK$- and $\GKinfty$- representations as in \cite[Theorem 3.4.3]{Breuil:IntegralPAdicHodgeThy}, and from this Theorem~\ref{thm:GabberDefor} easily follows. In this paper, we present a different proof of Theorem~\ref{thm:GabberDefor} (hence, of Theorem~\ref{thm:KisinConndComp}), which avoids the classification of finite flat group schemes. 
\end{rmksub}
%
Let us discuss the proof of Theorem~\ref{thm:GabberDefor}. The statement for unframed deformation rings follows from the statement for framed deformation rings, provided that we have $\End_{\GKinfty}(\bar\rho_\infty)\cong\F$ when $\End_{\GK}(\bar\rho)\cong \F$. But this follows from Lemma~\ref{lem:ModpFinFlFullFaith} using that $\bar\rho(I^{e^*}_K) = \set1$. So we may focus on proving the statement for framed deformation rings.

By avoiding the classification of finite flat group schemes, we lose our grip on the torsion theory to some extent. So instead of trying to study the artinian points of the deformation ring (concentrated on the closed point), we use the following theorem of Gabber to study the $\Qp$-fiber of the deformation ring more directly. We state Gabber's theorem in the form that we will use in our situation, but the original statement in \cite[Appendix]{Kisin:ModularityGeomGalRep} is more general.
\begin{thm}[Gabber]\label{thm:Gabber}
Let $R$ and $R'$ be complete semi-local noetherian $\fo$-algebras with residue fields finite over $\F\cong\fo/\m_\fo$. Assume that both $R$ and $R'$ are $p$-torsion free, $R$ is reduced, and $R'$ is normal.
Let $f:R'\ra R$ be a $\fo$-algebra map such that the induced map $\Hom_\fo(R,E) \ra \Hom_\fo(R',E)$ is a bijection for any finite (field) extension $E$ of $\Frac(\fo)$.
Then $f$ is an isomorphism.
\end{thm}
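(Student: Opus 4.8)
The plan is to reduce to the case where $R'$ is a complete \emph{local} normal domain, then prove successively that $f$ is injective and module-finite, and finally invoke the fact that a finite birational extension of a normal domain is trivial. For the reduction: since $R'$ is a Noetherian normal ring it decomposes as a finite product $R'=\prod_i R'_i$ of normal domains, and being a direct factor of the complete semi-local ring $R'$ each $R'_i$ is complete semi-local, hence (being a domain) complete local. The idempotents splitting $R'$ map under $f$ to orthogonal idempotents splitting $R=\prod_i R_i$ with $R_i:=R\otimes_{R'}R'_i$, and each $R_i$ inherits the relevant properties (reduced, $p$-torsion free, complete semi-local, residue fields finite over $\F$). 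Because a field $E$ has no nontrivial idempotents, $\Hom_\fo(R,E)=\coprod_i\Hom_\fo(R_i,E)$ and likewise for $R'$, and the given bijection respects this splitting; so it suffices to treat each $f_i\colon R'_i\to R_i$. From now on $R'$ is thus a complete local normal domain, and $\fo\hookrightarrow R'$ (as $R'\neq 0$ is $p$-torsion free), so $R'$ has characteristic $0$.

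Next I would translate the $\Hom$-hypothesis geometrically. Both $R[\ivtd p]$ and $R'[\ivtd p]$ are Noetherian Jacobson rings whose closed points have residue fields finite over $\Frac(\fo)$, and any $\fo$-algebra map from $R$ (resp.\ $R'$) into such a finite extension of $\Frac(\fo)$ factors through $R[\ivtd p]$ (resp.\ $R'[\ivtd p]$); so the hypothesis says precisely that $\Spec R[\ivtd p]\to\Spec R'[\ivtd p]$ is a bijection on closed points inducing isomorphisms of residue fields there. Since closed points are dense and $\Spec R'[\ivtd p]=\Spec R'\setminus V(p)$ is irreducible and dense in $\Spec R'$, the image of $\Spec R$ is dense in $\Spec R'$, whence $\ker f\subseteq\sqrt{0}=0$ and $R'\subseteq R$.

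The hard part will be showing that $f$ is module-finite. First note that every maximal ideal of $R$ contracts to $\m_{R'}$ (the domain $R'/(\m_R\cap R')$ sits inside the finite field $R/\m_R$, hence is a field), so $\m_{R'}R\subseteq\rad(R)$. If one knows the special fibre $R\otimes_{R'}\kappa(\m_{R'})$ is zero-dimensional, then $\sqrt{\m_{R'}R}=\rad(R)$, the $\m_{R'}R$-adic and $\rad(R)$-adic topologies on $R$ coincide, $R$ is $\m_{R'}R$-adically complete and separated, and lifting an $\F$-basis of the finite-length ring $R\otimes_{R'}\kappa(\m_{R'})$ together with complete Nakayama and Krull's intersection theorem yields that $R$ is finite over $R'$. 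So everything reduces to the statement $\dim\bigl(R\otimes_{R'}\kappa(\m_{R'})\bigr)=0$, and \emph{this is where the $\Hom$-hypothesis does the essential work}: the generic fibre morphism $\Spec R[\ivtd p]\to\Spec R'[\ivtd p]$ has finite (in fact one-point) fibres over all closed points, so upper semicontinuity of fibre dimension forces $\dim R[\ivtd p]\leq\dim R'[\ivtd p]$, i.e.\ $\dim R\leq\dim R'$; combined with the injections $R'\hookrightarrow R/\mathfrak p$ for each minimal prime $\mathfrak p$ of $R$, this should pin down $\dim R=\dim R'$ and force the special fibre to be zero-dimensional. I expect the delicate point to be that $f$ is not of finite type, so this bookkeeping must be carried out through the structure theory of complete local rings (Cohen's theorem, Noether normalization $\fo[[t_1,\dots,t_d]]\hookrightarrow R'$) rather than by quoting Chevalley.

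Finally, with $f\colon R'\hookrightarrow R$ finite and $R'$ a normal domain, $R\otimes_{R'}\Frac(R')$ is a reduced finite $\Frac(R')$-algebra, a product of separable field extensions of $K:=\Frac(R')$; in particular distinct irreducible components of $\Spec R$ are disjoint over the generic point of $\Spec R'$, while each surjects onto $\Spec R'$ (a finite morphism is closed, and each component dominates $\Spec R'$ since its generic point contracts to $(0)$). Two distinct components would then produce two preimages of a closed point of $\Spec R'[\ivtd p]$ chosen in the dense open over which the components do not meet, contradicting the bijectivity above; so $R$ is a domain. Let $L:=\Frac(R)$, a finite separable extension of $K$ (characteristic $0$), pick $\theta\in R$ with $L=K(\theta)$, and let $g\in K[t]$ be its monic minimal polynomial; then $g\in R'[t]$ as $R'$ is integrally closed. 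For each closed point $x'$ of $\Spec R'[\ivtd p]$ the fibre $\Spec(R\otimes_{R'}\kappa(x'))$ is a single point with residue field $\kappa(x')$, and since $\Spec R\twoheadrightarrow\Spec R'[\theta]=\Spec R'[t]/(g)$ the fibre $\Spec\kappa(x')[t]/(\bar g)$ is a single point with residue field $\kappa(x')$; this forces $\bar g=(t-a_{x'})^{\deg g}$, hence $\disc(g)\in\m_{x'}$. As this holds for every closed $x'$ and the intersection of all $\m_{x'}$ is the nilradical of $R'[\ivtd p]$, namely $0$, we get $\disc(g)=0$ in $R'[\ivtd p]$, hence in $R'$; since $g$ is separable this forces $\deg g=1$, i.e.\ $L=K$. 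Then $R'\subseteq R\subseteq K$ with $R$ integral over the normal ring $R'$ forces $R=R'$, and reassembling the product decomposition completes the proof.
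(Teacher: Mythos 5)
The paper does not actually prove this theorem: it is cited verbatim from Gabber's appendix to \cite{Kisin:ModularityGeomGalRep}, and the text explicitly flags the point you are struggling with, remarking that ``it is not even obvious how to deduce that $f$ is of finite type from the assumption'' and that the proof ``uses the flattening technique of Raynaud-Gruson (and very ingenious commutative algebra).'' So there is no in-paper argument to compare against, and your sketch must be judged on its own. The outer shell is in reasonable shape: the reduction to $R'$ a complete local normal domain, the reformulation of the $\Hom$-hypothesis as a bijection on closed points of the generic fibres with matching residue fields, the injectivity of $f$, the complete-Nakayama deduction of module-finiteness \emph{given} zero-dimensionality of the special fibre, and the discriminant-plus-normality endgame are all sound ideas. (A small inaccuracy in that last step: not every minimal prime of $R$ need contract to $(0)$ in $R'$, so not every irreducible component of $\Spec R$ automatically dominates $\Spec R'$; one needs a slightly more careful choice of closed point $y$ to get the contradiction, though this is repairable.)

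The proof does not close, however, and the gap is exactly where you flag it: establishing that $\dim\bigl(R\otimes_{R'}\kappa(\m_{R'})\bigr)=0$. The appeal to ``upper semicontinuity of fibre dimension'' is circular -- Chevalley's semicontinuity theorem applies only to morphisms locally of finite type, and finite type is precisely what you are trying to prove. And even if one grants $\dim R[\ivtd p]\leq\dim R'[\ivtd p]$ by some other route, the step ``this should pin down $\dim R=\dim R'$ and force the special fibre to be zero-dimensional'' does not follow: the universally valid inequality for a local morphism is $\dim R\leq\dim R'+\dim\bigl(R\otimes_{R'}\kappa(\m_{R'})\bigr)$, which bounds the special fibre dimension from \emph{below}, not above; the reverse inequality fails without flatness. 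Your fallback suggestion (Cohen structure theory and Noether normalization) is not carried out and there is no reason to believe it suffices -- this is precisely the ``delicate'' core of Gabber's argument, which is handled there via the Raynaud--Gruson flattening theorem and a genuinely ingenious analysis, not by elementary dimension bookkeeping. As written, the proposal reproduces the easy reductions and the easy endgame but leaves the hard middle as an honestly acknowledged open gap.
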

This theorem is certainly very delicate -- it is not even obvious how to deduce that $f$ is of finite type from the assumption. The proof uses the flattening technique of Raynaud-Gruson (and very ingenious commutative algebra). See Gabber's appendix in \cite{Kisin:ModularityGeomGalRep} for more details.

\subsection{}\label{par:ApplGabber}
We outline how to use Theorem~\ref{thm:Gabber} to prove Theorem~\ref{thm:GabberDefor} (for framed deformation rings). Let $R$ and $R_\infty$ be one of the following:
\begin{enumerate}
\item Assuming $p>2$, we set $R:=R^{\Box, [0,1]}_{\cris}$ and $R_\infty:=R^{\Box, \leqs1}_\infty$.
\item Under no assumption on $p$, we set $R:=R^{\Box, [0,1], f}_{\cris}$ and $R_\infty:=R^{\Box, \leqs1,f}_\infty$.
\end{enumerate}
In all the cases above, the restriction to $\GKinfty$ induces a natural map $\nf\res:R_\infty \ra R$.
Note that we cannot directly apply Theorem~\ref{thm:Gabber} to this map because $R$ and $R_\infty$ may not be normal (or even reduced). We fix this situation by applying normalization, as follows.  Note that both $R[\ivtd p]$ and $R_\infty[\ivtd p]$ are formally smooth over $\Frac(\fo)$ by Propositions~\ref{prop:FormalSm} and \ref{prop:DisjOrdSS}, and \cite[Theorem~3.3.8]{kisin:pstDefor}. We let $\wt R_\infty$ and$\wt R$ be the normalizations of $R_\infty/(p^\infty\text{-torsions})$ and $R/(p^\infty\text{-torsions})$, respectively. Note that the natural maps induce isomorphisms $R_\infty[\ivtd p] \riso \wt R_\infty[\ivtd p]$ and $R[\ivtd p] \riso \wt R[\ivtd p]$, and the map $\nf\res[\ivtd p]:R_\infty[\ivtd p] \ra R[\ivtd p]$ restricts to $\wt{\nf\res}:\wt R_\infty \ra \wt R$. Furthermore, $\wt R_\infty$ and $\wt R$ are finite over $R_\infty$ and $R$, respectively, since every complete local noetherian ring is excellent 
\cite[IV$_\textrm{2}$, (7.8.3)(iii)]{EGA}, so $\wt R_\infty$ and $\wt R$ satisfy the assumptions on $R'$ and $R$, respectively, in Theorem~\ref{thm:Gabber}. 

Now, we prove the following proposition.
\begin{prop}\label{prop:Gabber}
Let $R$ and $R_\infty$ be as above \S\ref{par:ApplGabber}.
Then for any finite (field) extension $E$ of $\Frac(\fo)$, the map  $\Hom_\fo(R,E) \ra \Hom_\fo(R',E)$ induced by $\nf\res:R_\infty \ra R $ is a bijection.
\end{prop}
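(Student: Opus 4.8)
The plan is to describe the two $\Hom$-sets as sets of Galois lifts, deduce injectivity from Kisin's full faithfulness, and reduce surjectivity to the integrality statement of \S\ref{sec:IntHT}. First I would translate both sides into representation theory. Since $R_{\cris}^{\Box,[0,1]}$ is $\fo$-flat and $R_{\cris}^{\Box,[0,1]}[\ivtd p]$ is Jacobson, Theorem~\ref{thm:CrysStDeforRing} identifies $\Hom_\fo(R_{\cris}^{\Box,[0,1]},E)$ with the set of framed lifts $\rho\colon\GK\to\GL_d(\fo_E)$ of $\bar\rho$ which are crystalline with Hodge--Tate weights in $[0,1]$; in case~(2) of \S\ref{par:ApplGabber} one imposes in addition, by Proposition~\ref{prop:OrdSSDeforRings}, that $\rho$ admit no non-zero unramified quotient. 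In the same way, using Corollary~\ref{cor:hthQt} together with Corollary~\ref{cor:GenFiberDefR} and Lemma~\ref{lem:GalLattice}, $\Hom_\fo(R_\infty^{\Box,\leqs1},E)$ is the set of framed continuous lifts $\rho_\infty\colon\GKinfty\to\GL_d(\fo_E)$ of $\bar\rho_\infty$ of height $\leqs1$, with the same extra ``no non-zero unramified quotient'' condition in case~(2); here Lemma~\ref{lem:OrdCrit} is used to see that, for a crystalline $\GK$-representation with Hodge--Tate weights in $[0,1]$, the maximal unramified $\GKinfty$-quotient coincides with the maximal unramified $\GK$-quotient, so the two conditions correspond under restriction. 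Under these identifications the map $\Hom_\fo(R,E)\to\Hom_\fo(R_\infty,E)$ induced by $\nf\res$ becomes $\rho\mapsto\rho|_{\GKinfty}$.

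Injectivity is then immediate: if two crystalline lifts $\rho,\rho'$ of $\bar\rho$ with Hodge--Tate weights in $[0,1]$ satisfy $\rho|_{\GKinfty}=\rho'|_{\GKinfty}$, the identity matrix is a $\GKinfty$-equivariant isomorphism $\rho|_{\GKinfty}\riso\rho'|_{\GKinfty}$, hence $\GK$-equivariant by the full faithfulness of restriction to $\GKinfty$ on crystalline representations (Theorem~\ref{thm:KisinIntPAdicHT}, via Theorem~\ref{thm:main}\eqref{thm:main:fullfth}); thus $\rho=\rho'$. The extra conditions in case~(2) only pass to subsets, so injectivity is inherited.

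For surjectivity I would start from a framed lift $\rho_\infty\colon\GKinfty\to\GL_d(\fo_E)$ of $\bar\rho_\infty$ of height $\leqs1$. By Theorem~\ref{thm:KisinIntPAdicHT}\eqref{thm:KisinIntPAdicHT:BT}, $\rho_\infty[\ivtd p]$ is, uniquely up to isomorphism, the restriction to $\GKinfty$ of a crystalline $\GK$-representation $\wt V$ with Hodge--Tate weights in $[0,1]$, and full faithfulness identifies $\End_{\GK}(\wt V)$ with $\End_{\GKinfty}(\rho_\infty[\ivtd p])\supseteq E$, so $\wt V$ is naturally a crystalline $\GK$-representation over $E$ restricting to $\rho_\infty[\ivtd p]$. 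Transporting the $\GKinfty$-stable $\fo_E$-lattice $\rho_\infty$ inside $\wt V$, the crucial point --- which is exactly Proposition~\ref{prop:Claim1}, proved in \S\ref{sec:IntHT} by exhibiting a strongly divisible $S$-module --- is that this lattice is already $\GK$-stable. Writing the $\GK$-action in the basis of $\rho_\infty$ then gives $\wt\rho\colon\GK\to\GL_d(\fo_E)$ with $\wt\rho|_{\GKinfty}=\rho_\infty$ on the nose. Its reduction $\wt\rho\bmod\m_{\fo_E}$ is the reduction of a $\GK$-stable lattice in a crystalline representation with Hodge--Tate weights in $[0,1]$, hence a mod-$p$ finite flat $\GK$-representation satisfying Fontaine's ramification bound \cite{Fontaine:RamifEstimate}, and its restriction to $\GKinfty$ equals $\bar\rho_\infty\otimes_\F\kappa_E=(\bar\rho\otimes_\F\kappa_E)|_{\GKinfty}$; so by the full faithfulness of restriction to $\GKinfty$ on such representations (Lemma~\ref{lem:ModpFinFlFullFaith}, which applies because $\bar\rho(I^{e^*}_K)=\set1$) the identity matrix is $\GK$-equivariant, i.e. $\wt\rho\bmod\m_{\fo_E}=\bar\rho\otimes_\F\kappa_E$. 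Therefore $\wt\rho$ is a framed crystalline $\GK$-lift of $\bar\rho$ with Hodge--Tate weights in $[0,1]$ restricting to $\rho_\infty$; in case~(2) it has no non-zero unramified quotient by Lemma~\ref{lem:OrdCrit}. Hence $\nf\res$ is surjective, completing the argument.

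The hard part is Proposition~\ref{prop:Claim1}: a priori a $\GKinfty$-stable lattice in a crystalline $\GK$-representation need not be $\GK$-stable, and proving that it is in the situation at hand --- uniformly in $p$, including $p=2$ --- requires genuine integral $p$-adic Hodge theory and is precisely what \S\ref{sec:IntHT} carries out. Everything else above is a formal consequence of the results recalled in \S\ref{sec:KisinThy}.
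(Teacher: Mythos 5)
Your proof is correct and follows essentially the same route as the paper: injectivity via Kisin's full faithfulness (Theorem~\ref{thm:KisinIntPAdicHT}), surjectivity by extending a height-$\leqs1$ representation to a crystalline $\GK$-representation via Theorem~\ref{thm:KisinIntPAdicHT}\eqref{thm:KisinIntPAdicHT:BT}, then invoking Proposition~\ref{prop:Claim1} to get $\GK$-stability of the lattice and Lemma~\ref{lem:ModpFinFlFullFaith} to match the reduction with $\bar\rho$. The paper phrases this as verifying ``Claim~(1)'' and ``Claim~(2)'' in \S\ref{par:ProofBijectivity}, which are precisely your two steps; your extra remarks about how the ``no non-zero unramified quotient'' condition passes through Lemma~\ref{lem:OrdCrit} in case~(2) make explicit a point the paper leaves implicit.
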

By Gabber's theorem (Theorem~\ref{thm:Gabber}), the proposition implies that the map $\wt{\nf\res}:\wt R_\infty \ra \wt R$ induced on the normalized deformation rings is an isomorphism, so in particular, $\nf\res[\ivtd p]: R_\infty[\ivtd p] \ra R[\ivtd p]$ is an isomorphism, which proves Theorem~\ref{thm:GabberDefor} for framed deformation rings.


\subsection{}\label{par:ProofBijectivity}
We outline the proof of Proposition~\ref{prop:Gabber}. Let $R$ and $R_\infty$ be as in \S\ref{par:ApplGabber}. By Kisin's theorem (Theorem~\ref{thm:KisinIntPAdicHT}\eqref{thm:KisinIntPAdicHT:crys}) we see that $\nf\res$ induces an injective map  $\Hom_\fo(R,E) \ra \Hom_\fo(R_\infty,E)$ for any finite field extension $E$ of $\Frac(\fo)$, so it is left to show the surjectivity. Let $x\in \Hom_\fo(R_\infty,E)$ and let $V_x$ be the corresponding $E$-representation of $\GKinfty$, which is of height $\leqs 1$ by construction. . By Kisin's theorem (Theorem~\ref{thm:KisinIntPAdicHT}\eqref{thm:KisinIntPAdicHT:BT}), there exists a unique continuous $E$-linear $\GK$-action on $V_x$ which extends the $\GKinfty$-action and makes $V_x$ a crystalline $\GK$-representation.
Since the $\fo$-algebra map $x:R_\infty \ra E$ factors through $\fo_E$, we also obtain a $\GKinfty$-stable $\fo_E$-lattice $T_x\subset V_x$, such that $T_x \otimes_{\fo_E} \fo_E/\m_E \cong \bar\rho_\infty \otimes_\F \fo_E/\m_E$ as a $\GKinfty$-representation. In order to show that $x: R_\infty \ra E$ factors through the crystalline framed deformation ring $R$ (i.e., Proposition~{prop:Gabber}), we need to verify the following claims. 
\begin{description}
\item[Claim~(1)] The $\GKinfty$-stable $\fo_E$-lattice $T_x\subset V_x$ is $\GK$-stable.
\item[Claim~(2)] We have a $\GK$-equivariant isomorphism $T_x \otimes_{\fo_E} \fo_E/\m_E \cong \bar\rho\otimes_\F \fo_E/\m_E$ extending the initial such $\GKinfty$-isomorphism.
\end{description}

The following lemma proves \textbf{Claim~(2)}.
\begin{lem}\label{lem:ModpFinFlFullFaith}
Let $\bar\rho$ be a mod $p$ representation of $\GK$, and assume that $\bar\rho(I^{e^*}_K)=\set1$ as in Theorem~\ref{thm:GabberDefor}. Then we have $\bar\rho(\GKinfty) =  \bar\rho(\GK)$. In particular, the $\GK$-representation $\bar\rho$ is uniquely determined by its restriction to $\GKinfty$.
\end{lem}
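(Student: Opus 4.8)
The plan is to reduce the identity $\bar\rho(\GKinfty) = \bar\rho(\GK)$ to a linear disjointness statement and then establish it by a ramification estimate. Write $L := \overline K^{\ker\bar\rho}$ for the finite Galois extension of $K$ cut out by $\bar\rho$, so that $\bar\rho(\GK) \cong \Gal(L/K)$ and, under restriction, $\bar\rho(\GKinfty)$ is identified with the image of $\GKinfty$ in $\Gal(L/K)$, which is $\Gal\big(L/(L\cap K_\infty)\big)$. Hence $\bar\rho(\GKinfty) = \bar\rho(\GK)$ is equivalent to $L\cap K_\infty = K$, and the final assertion of the lemma then follows by applying this to $\bar\rho_1\oplus\bar\rho_2$ for any two extensions $\bar\rho_1,\bar\rho_2$ to $\GK$ of a common $\GKinfty$-representation. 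Moreover, by the compatibility of the upper-numbering ramification filtration with passage to quotients, the hypothesis $\bar\rho(I^{e^*}_K) = \set1$ says precisely that $\Gal(L/K)^{e^*} = \set1$, i.e.\ that every ramification break of $L/K$, in the upper numbering with Serre's normalization, is strictly less than $e^* = ep/(p-1)$.

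So it remains to prove that no nontrivial finite subextension of $K_\infty/K$ can be embedded into a finite extension of $K$ all of whose upper ramification breaks are $< e^*$; applied to $L\cap K_\infty$ — and using that $L/K$ is Galois, so that $L$ contains the Galois closure over $K$ of $L\cap K_\infty$ — this forces $L\cap K_\infty = K$. Every finite subextension of $K_\infty/K$ lies in some $K(\pi^{1/p^n})$, hence is totally and (for $n\geqslant1$) wildly ramified of $p$-power degree over $K$, so the claim reduces to its bottom layer: every such nontrivial subextension contains $K_1 := K(\pi^{1/p})$, and the Galois closure of $K_1/K$ already has a ramification break $\geqslant e^*$. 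The core computation is that when $\mu_p\subset K$ the extension $K_1/K$ is itself cyclic of degree $p$, and for a generator $\sigma$ one has $v_{K_1}\!\big(\sigma(\pi^{1/p}) - \pi^{1/p}\big) = v_{K_1}(\zeta_p - 1) + 1 = \tfrac{ep}{p-1} + 1$, so that its unique ramification break equals $e^*$; in general one runs the analogous computation over $K(\mu_p)$ and descends it to $K$ by a Herbrand bookkeeping, which still yields a break $\geqslant e^*$. Combining, $K_1$, and hence any nontrivial subextension of $K_\infty/K$, fails to embed into $L$; therefore $L\cap K_\infty = K$.

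The step I expect to demand the most care is the purely local assertion that every nontrivial subextension of $K_\infty/K$ contains $K_1$ — so that one computation on the bottom layer suffices — together with the Herbrand bookkeeping needed in the wildly ramified non-Galois case $\mu_p\not\subset K$. Both are standard facts about the tower $K(\pi^{1/p^n})$: the first is best seen after adjoining $\mu_{p^\infty}$, where Kummer theory makes $K_\infty K(\mu_{p^\infty})/K(\mu_{p^\infty})$ into a $\mathbb{Z}_p$-extension and hence forces its lattice of subextensions to be a chain; and the threshold $e^* = ep/(p-1)$ is exactly the one recorded in the literature on Kisin modules (cf.\ \cite{wintenberger:NormFiels}, \cite{Breuil:IntegralPAdicHodgeThy}), so one may alternatively simply cite it. Once these are in hand the rest of the argument is formal.
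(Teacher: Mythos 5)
Your proof is correct and follows essentially the same route as the paper: both reduce to the ramification computation that the Galois closure of $K_1/K$ has its (unique) upper ramification break exactly at $e^*$, so that $I^{e^*}_K$ cannot act trivially on $K_1$. Your reformulation via the cut-out field $L$ and the condition $L\cap K_\infty = K$ is equivalent to the paper's argument that $I^{e^*}_K\cdot\GKinfty = \GK$, and you make explicit (with a correct Kummer-theory sketch) the "chain" fact that every nontrivial subextension of $K_\infty/K$ contains $K_1$, which the paper also uses but does not state.
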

\begin{proof}
To show the lemma, it is enough to show that the natural inclusion induces an isomorphism\footnote{The author learned this idea from \cite[\S8.3.2, Theorem~C]{Abrashkin:GrSchPerNot2}.}:
\begin{equation}\label{eqn:RamifEst}
\GKinfty/(I^{e^*}_{ K }\cap\GKinfty) \riso \GK/I^{e^*}_{ K }.
\end{equation}
To rephrase the isomorphism \eqref{eqn:RamifEst}, we want to show that the open subgroup $I^{e^*}_{ K }\tim\GKinfty\subset \GK$ fills up the full Galois group $\GK$; i.e., there is no non-trivial subextension of $ K _\infty/ K $ fixed by $I^{e^*}_{ K }$. This follows from the claim below, which is a nice exercise with higher ramification groups.
\begin{claimsub}
Let $ K _1:= K (\pi^{(1)})$ where $\pi^{(1)}\in K _\infty$ such that $(\pi^{(1)})^p=\pi$.
Then $I^{e^*}_{ K }$ does \emph{not} fix $ K _1$.
\end{claimsub}
Put $ K ':= K (\zeta_p)$ where $\zeta_p\in\ol K $ is a primitive $p$-th root of unity, and consider $ K _1':= K '(\pi^{(1)})$, which is a Galois closure of $ K _1/ K $.
We put $\gal:=\Gal( K _1'/ K ) \cong \Gal( K _1'/ K ')\rtimes \Gal( K _1'/ K _1)$. Here, $\Gal( K _1'/ K ')\cong \Z/p\Z$ is the wild inertia subgroup of $\gal$, and $\Gal( K _1'/ K _1)\subset (\Z/p\Z)\starr$ acts on $\Gal( K _1'/ K ')$ by Kummer theory.

Since the upper indexing is well-behaved under passing to quotients \cite[IV.~\S3, Proposotion~14]{Serre:LocalFileds}, it is enough to show that $\gal^{e^*}$ does not fix $ K _1$. Indeed, we show that  $\gal^{e^*} = \Gal( K _1'/ K ')$ by computing the higher ramification subgroups $\gal_i$ in the lower numbering and using the Herbrand function, exploiting the explicitness of the situation.

Clearly, $\gal_1 = \Gal( K _1'/ K ')$, and $\gal_i$ is a subgroup of $\Gal( K _1'/ K ')$ for all $i>0$. Let $c:=\abs{I_{ K _1'/ K _1}}$ denote the ramification index of $ K _1'/ K _1$, so the absolute ramification index of $ K _1'$ is $epc$. (Also note that $[\gal_0:\gal_1]=c$.) 
Choose a uniformizer $\pi'_1$ of  $K_1'$ so that $(\pi'_1)^p\in K'$; this is possible because there exists a uniformizer $\pi'\in K'$ such that $(\pi')^c \in [\alpha]\pi$ for some $\alpha \in \fo_{K'}/\m_{K'}$, where $[\cdot]$ denotes the Teichm\"uller lift, so we have $K'(\sqrt[p]{\pi}) = K'(\sqrt[p]{\pi'})$ in $\ol K$.

For any non-trivial $\gamma \in \gal$, we have 
\begin{equation*}
v_{ K _1'}\left(\gamma(\pi'_1)-\pi'_1\right)-1 \
= v_{ K _1'}\left(\zeta_p^{\varepsilon(\gamma)}-1\right) + v_{ K _1'}\left(\pi'_1\right) -1 \
=e^*c,
\end{equation*}
where $\varepsilon$ is the Kummer cocycle associated to $\pi'_1$.
This shows that $\gal_i=\Gal( K _1'/ K ')$ for $0< i \leq e^*c$, and $\gal_i=\set\id$ for $i>e^*c$. Since $[\gal_0:\gal_i]=c$ for $0< i \leq e^*c + c -1$, we obtain
\begin{equation*}
\begin{array}{ll}
\gal^r = \Gal( K _1'/ K '), &\textrm{for }0< r \leq e^*\\
\gal^r =  \set\id,  &\textrm{for } r > e^*.
\end{array}
\end{equation*}
In particular, $\gal^{e^*}=\Gal( K _1'/ K ')$ does not fix $ K _1$.
\end{proof}

For \textbf{Claim~(1)}, we use Breuil's theory of strongly divisible lattices (of weight $\leqs1$). We only use the fact that one can naturally associate a $\GK$-stable $\Zp$-lattice of $V_x$ to a strongly divisible ``lattice'' by purely semilinear algebra means (i.e., without relating the strongly divisible modules of weight $\leqs 1$ with Barsotti-Tate groups over $\fo_{ K }$). Since it takes a significant digression to introduce the relevant definitions, we carry out these steps in a separate section \S\ref{sec:IntHT}.

%
 
\section{Integral $p$-adic Hodge theory \`a la Breuil}\label{sec:IntHT}
In this section we give a proof of \textbf{Claim~(1)} in \S\ref{par:ProofBijectivity}, hence a proof of Theorem \ref{thm:KisinConndComp}. More precisely, we prove the following theorem.

\begin{prop}\label{prop:Claim1}
Let $V$ be a $p$-adic crystalline $\GK$-representation with Hodge-Tate weights in $[0,1]$. If $p=2$, then we additionally assume that either $V$ or $V^*(1)$ does not have any non-zero unramified quotient. Then any $\GKinfty$-stable $\Zp$-lattice $T\subset V$ is $\GK$-stable.
\end{prop}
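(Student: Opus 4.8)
The idea is to compare the given $\GKinfty$-stable lattice $T\subset V$ with a $\GK$-stable lattice that we know exists inside $V$. Since $V$ is crystalline with Hodge–Tate weights in $[0,1]$, by Kisin's theory (Theorem~\ref{thm:KisinIntPAdicHT}\eqref{thm:KisinIntPAdicHT:crys}, or directly \cite[Corollary~2.2.6]{kisin:fcrys}) there is at least one $\GK$-stable $\Zp$-lattice $T_0\subset V$ coming from a $p$-divisible group over $\fo_K$; more importantly, via Breuil's theory of strongly divisible $S$-modules of weight $\leqs 1$ one attaches to $V$ (together with the filtered $\vphi$-module $D=D_{\cris}^*(V)$) a filtered $S$-module $\mathcal{M}$ with $\vphi_1$ and $N$, and the lattice $\widehat{T}(\mathcal{M})$ obtained by the usual crystalline period construction is a $\GK$-stable $\Zp$-lattice in $V$. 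The plan is: (i) recall Breuil's construction purely as semilinear algebra — given a strongly divisible $S$-lattice $\mathcal{M}$ inside $D\otimes_{K_0}S[\tfrac1p]$, the associated representation $\widehat{T}(\mathcal{M})$ is $\GK$-stable, with no recourse to finite flat group schemes; (ii) observe that $\GKinfty$-stable lattices in $V$ correspond, via Kisin's $\nf T^{\leqs1}_\Sig$ and Theorem~\ref{thm:main}\eqref{thm:main:lattices}, to $(\vphi,\Sig)$-lattices $\gM\subset \nf D_\Eps(V(-1))$ of height $\leqs1$ (equivalently, the lattice $T$ determines a unique such $\gM$, since $h=1$ forces uniqueness of the prolongation inside the \'etale $\vphi$-module over $\fo_\Eps$); (iii) match the two: show that the $S$-lattice obtained by base-changing $\gM$ along $\Sig\ra S$ (sending $u$ appropriately) is in fact a strongly divisible $S$-lattice, so that its associated $\GK$-representation is $\GK$-stable and, by functoriality of the period rings, its restriction to $\GKinfty$ recovers $T$. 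Hence $T$ itself is $\GK$-stable.

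\textbf{Key steps in order.} First I would set up the rings: $S$ the $p$-adic completion of the divided-power envelope of $W(k)[u]$ with respect to $(\PP(u))$, its Frobenius, filtration $\Fil^1 S$, and the maps $S\ra \fo_K$ ($u\mapsto\pi$) and $\Sig\hookrightarrow S$; recall the definition of a strongly divisible $S$-module of weight $\leqs1$ and Breuil's functor to $\GK$-representations, citing \cite{Breuil:IntegralPAdicHodgeThy} for the fact that it lands in $\GK$-stable $\Zp$-lattices in crystalline representations. Second, starting from $\gM\in\phimodBT\Sig$ with $\nf T^{\leqs1}_\Sig(\gM)\cong T$ (which exists by Definition~\ref{def:hthLattice} since $T$ is a $\Zp$-lattice $\GKinfty$-representation of height $\leqs1$ — this is exactly where the hypothesis that $V$ has height $\leqs1$, hence so does every $\GKinfty$-stable lattice by Theorem~\ref{thm:main}\eqref{thm:main:lattices}, is used), form $\mathcal{M}_\gM := S\otimes_{\vphi,\Sig}\gM$ with the induced filtration and $\vphi_1$. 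Third — the crux — verify that $\mathcal{M}_\gM$ is strongly divisible: this amounts to checking that $\vphi_1(\Fil^1\mathcal{M}_\gM)$ generates $\mathcal{M}_\gM$ over $S$, which is Kisin's comparison \cite[\S1.2, \S1.3]{kisin:fcrys} relating $\vphi$-modules of height $\leqs1$ with strongly divisible modules — one must track that no information is lost, i.e. that the $S$-lattice is "stable" in Breuil's sense, not merely a lattice in the rational object. Fourth, invoke compatibility of Breuil's $\widehat{T}$ with Kisin's $\nf T^{\leqs1}_\Sig$ under restriction to $\GKinfty$ (this is in \cite{kisin:fcrys} and \cite{Breuil:IntegralPAdicHodgeThy}): $\widehat{T}(\mathcal{M}_\gM)|_{\GKinfty}\cong \nf T^{\leqs1}_\Sig(\gM)\cong T$. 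Since $\widehat{T}(\mathcal{M}_\gM)$ is $\GK$-stable and sits in $V$ with $\GKinfty$-restriction equal to $T$, and since a $\GKinfty$-stable lattice inside $V$ is determined by its underlying set, we conclude $T=\widehat{T}(\mathcal{M}_\gM)$ is $\GK$-stable.

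\textbf{The $p=2$ subtlety and the main obstacle.} When $p=2$ the correspondence between height-$\leqs1$ $\vphi$-modules and strongly divisible $S$-modules is more delicate (this is exactly the gap that Kisin's \cite{Kisin:2adicBT} filled using Zink's windows and displays, which we are trying to avoid). The extra hypothesis in Proposition~\ref{prop:Claim1} — that either $V$ or $V^*(1)$ has no non-zero unramified quotient — is what lets us sidestep the connected-finite-flat-group-scheme classification: roughly, the ordinary case (where an unramified sub/quotient exists) can be handled directly by a Kummer-theory/ramification argument as in \cite[3.2.5]{Khare-Wintenberger:SerreII}, while in the non-ordinary ("supersingular") case one has better control of the strongly divisible lattice. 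So the plan for $p=2$ splits: reduce to the case where $V$ is indecomposable enough, then either cite Tong Liu's results on $(\vphi,\widehat G)$-modules / strongly divisible lattices that hold for all $p$, or argue via the $S$-lattice directly using the supersingularity hypothesis to rule out pathological prolongations. I expect Step three — establishing that $\mathcal{M}_\gM$ is genuinely a strongly divisible $S$-\emph{lattice} (the integral statement, uniformly in $p$ including $p=2$ under the given hypothesis) — to be the main obstacle, and this is presumably where the acknowledgement to Tong Liu is relevant; the rest is bookkeeping with period rings and functors already available in \cite{kisin:fcrys} and \cite{Breuil:IntegralPAdicHodgeThy}.
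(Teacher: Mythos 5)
Your overall strategy matches the paper's: build $\MM := S\otimes_{\sig,\Sig}\gM$ from the Kisin module $\gM$ attached to $T$, check that $\MM$ is a strongly divisible $S$-module, and compare Breuil's lattice with $T$. But you mislocate the difficulty. Strong divisibility of $\MM$ is automatic once $\gM$ has height $\leqs1$: choosing a suitable $\Sig$-basis of $\gM$ one sees directly that $\vphi_1(\Fil^1\MM)$ generates $\MM$, and the rational identification with $S\otimes_{W(k)}D$ is Proposition~\ref{prop:QpBreuil}. What you call step four — the \emph{integral} compatibility that $\nf T^*_{\st}(\MM)$ and $T$ agree as $\Zp$-lattices inside $V$, not merely up to isogeny — is the actual crux, and you cannot simply cite it: in the literature this statement is established only as a by-product of the classification of (connected) finite flat group schemes, which is precisely the machinery the paper is trying to avoid. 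The paper therefore proves it directly as Lemma~\ref{lem:TongLemma}, by reducing modulo $p$ and showing that the map $T\otimes_{\Zp}\Fp\to\nf T^*_{\qst}(\ol\MM)$ is injective (Lemma~\ref{lem:TongLemmaModP}).

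Your treatment of $p=2$ is also off-target. You propose to split into an ordinary case (Kummer theory) and a supersingular case, but the proposition's hypothesis — that $V$ or $V^*(1)$ has no non-zero unramified quotient — already removes the ordinary case when $p=2$; there is nothing to split. What the hypothesis actually buys, after replacing $V$ by $V^*(1)$ if needed, is that $\gM$ is $\vphi$-unipotent in the sense of \S\ref{par:DefEtPhiNilp}, and that is used in the mod-$p$ injectivity step: a $\vphi$-compatible map $f\colon \ol\gM\to\wh\Sig^{\ur}/p$ lying in the kernel has image $\ol\gN=f(\ol\gM)$ which is forced to be of Lubin-Tate type, and $\vphi$-unipotence of $\ol\gM$ forbids any non-zero such quotient, so $f=0$. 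For $p>2$ no such hypothesis is needed, because a $u$-adic estimate ($\ord_u f(x)$ grows under $\vphi$-iteration since $p-1>1$) kills $f$ directly. So the argument is unified across $p$, and the $p=2$ hypothesis enters through the linear algebra of the mod-$p$ Kisin module rather than through a Kummer-theoretic case analysis. Your instinct that this is where Tong Liu's input was relevant is right; your proposed route to it is not.
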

The proof of this proposition is ``embedded'' in the literature\footnote{For example, Proposition~\ref{prop:Claim1} was used in Kisin's proof of classification of finite flat group schemes when $p>2$ and connected finite flat group schemes when $p=2$. See the proof of \cite[Theorem~2.2.7]{kisin:fcrys}, and \cite[Proposition~1.2.7]{Kisin:2adicBT}.}, but we present the proof for convenience of readers. We prove this proposition using strongly divisible modules introduced by Breuil. We recall some basic constructions used in the proof of the theorem, and direct interested readers to  \cite{Breuil:IntegralPAdicHodgeThy} for an overview of the theory, and \cite{Liu:StronglyDivLattice} for more recent developments in this subject.

\subsection{Construction}\label{par:MM}
Let $V$ be as in Proposition~\ref{prop:Claim1} and $T\subset V$ a $\GKinfty$-stable $\Zp$-lattice. By replacing $V$ with $V^*(1)$ if necessary, we assume that $V^*(1)$ does not have any non-zero unramified quotient if $p=2$. Let $D:=\nf D^*_{\cris}(V)$ be the contravariantly\footnote{In this section, contravariant functors are more intuitive and convenient to work with.} associated filtered $\vphi$-module and $\gM$ be a $\vphi$-module of height $\leqs1$ such that $T\cong \left(\nf T^{\leqs1}_\Sig(\gM)\right)^*(1)$. When $p=2$, our assumption on $V$ implies that $D$ has no non-zero admissible quotient that is pure of slope $1$,\footnote{i.e.,  $D$ is ``unipotent'' in the sense of \cite[Definition 2.1.1]{Breuil:IntegralPAdicHodgeThy}.}; or equivalently by Lemma~\ref {lem:OrdCrit}, that  $\gM$ is $\vphi$-unipotent in the sense of \S\ref{par:DefEtPhiNilp}. Under these assumptions, we will prove Proposition~\ref{prop:Claim1} by constructing  from $\gM$ another \emph{$\GK$-stable} $\Zp$-lattice $T'\subset V$ and show that $T=T'$.

Let $S$ be the $p$-adic completion of the divided power envelop of $W(k)[u]$ with respect to the ideal generated by $\PP(u)$. It can be shown that $S$ can be viewed as a subring of $ K _0[[u]]$ whose elements are precisely those of the form $\sum_{i\geq0} a_i \frac{u^i}{q(i)!}$, where $q(i):=\lfloor \frac{i}{e} \rfloor$ with $e:=\deg\PP(u)$, and $a_i\in W(k)$ converge to $0$ as $i\to\infty$. We define a differential operator $N:=-u\frac{d}{du}$ on $S$. We define $\sig:\self S$ via extending the Witt vector Frobenius on the coefficients by $\sig(u)=u^p$. We let $\Fil^w S \subset S$ denote the ideal topologically generated by $\PP(u)^i/i!$ for $i\geqs w$. Since $p|\sig(f)$ for any $f\in\Fil^1S$, we can define a $\sig$-semilinear map $\sig_1:=\frac{\sig}{p}:\Fil^1 S \ra S$. 

We set $\MM:=S\otimes_{\sig,\Sig}\gM\cong S\otimes_\Sig (\sig^*\gM)$. We have an $S$-linear map $\id\otimes\vphi_\gM:\MM\cong S\otimes_\Sig (\sig^*\gM)\ra S\otimes_\Sig\gM$. Using this, we define a $S$-submodule  $\Fil^1\MM\subset \MM$ and $\vphi_1:\Fil^1\MM \ra \MM$ as follows.
\begin{eqnarray}
\label{eqn:Filh}\Fil^1\MM:=\set{x\in\MM|\ \id\otimes\vphi_\gM(x) \in \Fil^1S\otimes_\Sig \gM \subset S\otimes_\Sig \gM } \\
\label{eqn:vphir}\vphi_1:\Fil^1\MM \xra{\id\otimes \vphi_\gM} \Fil^1S\otimes_\Sig \gM \xra{\sig_1\otimes\id} S\otimes_{\sig,\Sig}\gM = \MM
\end{eqnarray}
Let $\vphi_{\MM}:=\sig\otimes\vphi_\gM$, and note that $\vphi_1 = \ivtd p (\vphi_{\MM}|_{\Fil^1\MM})$.

We now describe $\Fil^1\MM$ explicitly. Choose a $\Sig$-basis $\set{\e_1,\cdots,\e_d}$ of $\gM$ so that $\vphi(\sig^*\gM)$ is generated by $\set{\e_1,\cdots,\e_{d_1},\PP(u)\e_{d_1+1},\cdots,\PP(u)\e_d}$ for some $1\leqs d_1 \leqs d$.  We abusively let $\e_i\in \MM$ denote the element $1\otimes\e_i$. Then we have $\Fil^1\MM \cong \left(\bigoplus_{i=1}^{d_1}(\Fil^1S)\e_i\right)\oplus \left(\bigoplus_{i=d_1+1}^{d}S\e_i\right)$. Clearly $\vphi_1(\Fil^1\MM)$ generates $\MM $.

By  \cite[Proposition~5.1.3]{Breuil:GrPDivGrFiniModFil}, one can define a differential operator $N_{\MM}:\self{\MM}$ over $N:\self S$ (i.e., a $W(k)$-linear map $N_{\MM}$ such that $N_{\MM}(sm) = N(s)m+sN_{\MM}(m)$ for any $s\in S$ and $m\in \MM$). We recall the construction below. Let $m_i:= \PP(u)\e_i$ for $i\leqs d_1$ and $m_i=\e_i$ for $i>d_1$. Since $\set{m_i}$ is an $S$-basis of $\Fil^1\MM$, it follows that $\set{\vphi_1(m_i)}$ is an $S$-basis of $\MM$.  Define $N_0:\self{\MM}$ by  $N_0(\sum_is_i\vphi_1(m_i))=N(s_i)\vphi_1(m_i)$. Clearly, the image of $N_0$ is contained in $I\MM$ where $I\subset S$ is the ideal generated by $\frac{u^i}{q(i)}$ for all $i\geqs1$.   We recursively define $N_n$ for all $n\geqs1$ so that $N_n(\vphi_1(m_i)):= (\sig\otimes\vphi_\gM)\circ N_{n-1} (m_i)$. Clearly, $N_n(m) - N_{n-1}(m) \in \langle\sig^{n-1}(I)\rangle\tim \MM$ for any $m\in \MM$, so the limit $N_{\MM}:=\lim_{n\to\infty}N_n = N_0+\sum_{n=1}^\infty (N_n-N_{n-1})$ converges $p$-adically. By construction we have $N_{\MM}\circ\vphi_1 = (\vphi_{\MM}\circ N_{\MM})|_{\Fil^1\MM}$ and $N_{\MM}\equiv 0 \bmod{I\MM}$.

Using the work of Breuil \cite[\S6]{Breuil:GriffithsTransv} and Kisin (Theorem~\ref{thm:KisinIntPAdicHT}\eqref{thm:KisinIntPAdicHT:crys}), we obtain the following:
\begin{propsub}\label{prop:QpBreuil}
The tuple $(\MM[\ivtd p],\vphi_{\MM}[\ivtd p],N_{\MM}[\ivtd p],\Fil^1\MM[\ivtd p])$ is naturally isomorphic (with the obvious notion) to the following:
\begin{enumerate} 
\item an $S[\ivtd p]$-module $S\otimes_{W(k)}D$,
\item a $\sig$-linear map $\sig_S\otimes\vphi_D:\self{S\otimes_{W(k)}D} $,
\item a derivation $N\otimes\id_D::\self{S\otimes_{W(k)}D} $ over $N:\self S$,
\item an $S[\ivtd p]$-submodule $\set{m\in S\otimes_{W(k)}D|\ m\bmod{\PP(u)} \in\Fil^1D_K}$.
\end{enumerate}
In other words, $\MM$ is a strongly divisible lattice in $S\otimes_{W(k)}D$  in the sense of \cite[Definition~2.2.1]{Breuil:IntegralPAdicHodgeThy}. 
\end{propsub}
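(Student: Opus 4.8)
The plan is to recognise the tuple $(\MM[\ivtd p],\vphi_\MM,N_\MM,\Fil^1\MM)$ as the value, realised at the level of the divided-power ring $S$, of Kisin's functor $\phimodBT\Sig[\ivtd p]\riso\waMFphiBT K$ — the quasi-inverse of the equivalence of Theorem~\ref{thm:KisinIntPAdicHT}\eqref{thm:KisinIntPAdicHT:BT} — and then to invoke Breuil's comparison theorem to match it with the standard Breuil module attached to $D$.

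First I would recall how Kisin's equivalence is built (\cite[\S1.2--1.3]{kisin:fcrys}, building on Breuil \cite[\S6]{Breuil:GriffithsTransv}): starting from $\gM\in\phimodBT\Sig$ one forms $\sig^*\gM$ and $\MM=S\otimes_\Sig\sig^*\gM$, and the submodule $\Fil^1\MM$ of \eqref{eqn:Filh}, the divided Frobenius $\vphi_1$ of \eqref{eqn:vphir}, and the monodromy $N_\MM$ produced by \cite[Prop.~5.1.3]{Breuil:GrPDivGrFiniModFil} are precisely the structures that occur in that construction; the associated weakly admissible filtered $\vphi$-module is $(\sig^*\gM)/u(\sig^*\gM)$ after inverting $p$, with Frobenius induced by $\vphi_\MM$ and filtration on its $K$-base change induced from $\Fil^1\MM$. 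Since $T\cong(\nf T^{\leqs1}_\Sig(\gM))^*(1)$ and $D=\nf D^*_{\cris}(V)$, Theorem~\ref{thm:KisinIntPAdicHT}\eqref{thm:KisinIntPAdicHT:BT} — once one matches the $*$-dual and the Tate twist built into these normalisations — identifies this filtered $\vphi$-module with $D$.

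Next, Breuil's direct functor sends $D$ to $S\otimes_{W(k)}D$ equipped with $\sig_S\otimes\vphi_D$, $N\otimes\id_D$, and the filtration $\set{m\in S\otimes_{W(k)}D \mid m\bmod\PP(u)\in\Fil^1 D_K}$, and \cite[\S6]{Breuil:GriffithsTransv} (together with Theorem~\ref{thm:KisinIntPAdicHT}\eqref{thm:KisinIntPAdicHT:crys}) shows that the composite $\phimodBT\Sig[\ivtd p]\to\waMFphiBT K\to\{\text{filtered }(\vphi,N)\text{-modules over }S[\ivtd p]\}$ agrees with Breuil's direct functor. Applied to $\gM[\ivtd p]$, this yields the asserted natural isomorphism $\MM[\ivtd p]\cong S\otimes_{W(k)}D$, carrying $\vphi_\MM$, $N_\MM$ and $\Fil^1\MM$ to $\sig_S\otimes\vphi_D$, $N\otimes\id_D$ and the filtration in the statement. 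That $\MM$ is then a \emph{strongly divisible} lattice is immediate: it is a finite free $S$-module spanning $S\otimes_{W(k)}D$ over $S[\ivtd p]$, it is $\vphi_\MM$- and $N_\MM$-stable by construction, and $\vphi_\MM(\Fil^1\MM)=p\,\vphi_1(\Fil^1\MM)\subset p\MM$ with $\vphi_1(\Fil^1\MM)$ generating $\MM$ — exactly the axioms of \cite[Def.~2.2.1]{Breuil:IntegralPAdicHodgeThy}.

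The substantive content, which I would cite rather than reprove, is hidden in ``Breuil's comparison'': the existence of a canonical isomorphism $\MM[\ivtd p]\cong S\otimes_{W(k)}D$ compatible with Frobenius, filtration, and — most delicately — the monodromy. The operator $N_\MM$ is defined only through a $p$-adically convergent successive-approximation procedure, whereas $N\otimes\id_D$ is ``free''; that the two correspond under the isomorphism is precisely the horizontality (Griffiths-transversality) statement proved in \cite[\S6]{Breuil:GriffithsTransv} and reproved inside Kisin's construction. The filtration compatibility is of the same nature, and concretely it rests on the explicit description of $\Fil^1\MM$ in terms of a $\Sig$-basis of $\gM$ adapted to $\coker\vphi_\gM$ (as written out above) together with the description of $\Fil^1 D_K$ in terms of $\vphi_\gM\bmod u$.
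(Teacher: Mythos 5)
Your proposal is correct and takes essentially the same route as the paper: the paper simply invokes Breuil \cite[\S6]{Breuil:GriffithsTransv} together with Kisin's theorem (Theorem~\ref{thm:KisinIntPAdicHT}\eqref{thm:KisinIntPAdicHT:crys}) in a one-line citation, and your argument is a more explicit unpacking of exactly that dependence, correctly identifying that the substantive input is Breuil's comparison (especially the monodromy horizontality) and Kisin's matching of the filtered $\vphi$-module with $D$ once the contravariant normalisations and Tate twist are sorted out.
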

%
In order to construct from $\MM$ a $\GK$-stable $\Zp$-lattice in $V$, we need to introduce the ring $\Asthat$. We do this in the next section.

\subsection{} \label{par:Asthat}
Put $\rep:=\invlim_{x^p\la x} \fo_{\ol K }/(p)$. It is well-known that the $\kbar$-algebra $\rep$ is complete with respect to a naturally given valuation and $\Frac(\rep)$ is algebraically closed. We fix a uniformizer $\pi\in\fo_{ K }$ such that $\PP(\pi)=0$ and we choose successive $p$-power roots $\pi^{(n)}$; i.e., $\pi^{(0)}=\pi$ and $(\pi^{(n+1)})^p = \pi^{(n)}$. The sequence $\nf\pi:=\set{\pi^{(n)}}$ is an element of $\rep$. There exists a ``canonical lift'' $\theta:W(\rep)\thra \fo_{\C_{ K }}$ of the first projection $\rep \thra \fo_{\ol K }/(p)$, which is $\GK$-equivariant for the natural actions on both sides and is a topological quotient map (for the ``product topology'' on the source and the natural $p$-adic topology on the target).  Let $\BdR^+$ be the completion of $W(\rep)[\ivtd p]$ with respect to the kernel of $\theta[\ivtd p]$, and let $\BdR:=\BdR^+[\ivtd t]$, where $t$ is Fontaine's $p$-adic analogue of $2\pi i$. See \cite{fontaine:Asterisque223ExpII} for more details.

We define $\Acris$ as the $p$-adic completion of the divided power envelop of $W(\rep)$ with respect to $\ker(\theta)$. The Witt vector Frobenius map and the $\GK$-action on $W(\rep)$ extend to $\Acris$. We let $\Fil^w\Acris$ be the ideal topologically generated by $\ivtd{i!}(\ker\theta)^i$ for $i\geqs w$. We have $\sig(\Fil^1\Acris)\Acris \subset p\Acris$, so we can define $\sig_1:=\frac{\sig}{p}:\Fil^1\Acris \ra \Acris$.

We define $\Asthat$ to be the $p$-adic completion of the divided power polynomial ring $\Acris[\frac{X^i}{i!}]_{i\geq1}$. We  define a Frobenius endomorphism $\sig:\self{\Asthat}$ using $\sig:\self{\Acris}$ on the coefficients and $\sig(1+X)= (1+X)^p$. 
We define the ideals
\begin{equation*}
\Fil^h\Asthat:=\left\{\sum_{i\geq0} a_i \frac{X^i}{i!}\in\Asthat |\ a_i\in\Fil^{i-h}\Acris,\ \lim_{i\to\infty}a_i=0\right\},
\end{equation*}
where we set $\Fil^w\Acris:=\Acris$ for $w\leq 0$.
Then the map $\sig_1:=\frac{\sig}{p}:\Fil^1\Asthat \ra \Asthat$ is well-defined. Let $N:\self{\Asthat}$ be the $\Acris$-derivation $(1+X)\frac{d}{dX}$, and for any $\gamma\in\GK$, we define $\gamma (1+X):= \epsilon(\gamma)(1+X)$ where $\epsilon(\gamma):=\frac{\gamma[\nf\pi]}{[\nf\pi]} \in\Acris$. 

We briefly discuss the relation with $\Asthat$ and the period rings. Clearly, $\Acris$ has a natural filtered $\GK$-equivariant embedding into $\BdR^+$. We extend this to $\Asthat$ by sending $X$ to  $ \frac{[\nf\pi]}{\pi}-1\in\BdR^+$. This embedding is also filtered and $\GK$-equivariant. Furthermore, $\Bst^+$, as a subring of $\BdR^+$, is exactly the image of $\Acris[\log(1+X),\ivtd p]$; i.e. the subring of $\Asthat[\ivtd p]$ consisting of elements killed by some power of $N$. 

We also define filtered $W(k)$-algebra embeddings $S\hra \Acris$ by sending $u$ to $[\nf\pi]$ and $S\hra\Asthat$ by sending $u$ to $\frac{[\nf\pi]}{1+X}$. Both embeddings respect the Frobenius structures on both sides, and the derivation $N:\self{\Asthat}$ restricts to $N:\self S$. Furthermore, the image of $S$ in $\Acris$ is fixed by $\GKinfty$, and the image in $\Asthat$ is fixed by $\GK$. (In fact, one can even show that $S$ fills up $\Acris^{\GKinfty}$ and $(\Asthat)^{\GK}$, respectively. See \cite[\S4]{Breuil:GriffithsTransv} for the proof.) We also record that the filtered map $\Asthat\ra\Acris$ defined by $\frac{X^i}{i!}\mapsto0$ for all $i$ is a map of $S$-algebras which respects the Frobenius structures and $\GKinfty$-actions on both sides. 
%
%

\subsection{Construction of Galois-stable $\Zp$-lattices}\label{par:Tst}
We come back to the setting of \S\ref{par:MM}, and define a $\Zp[\GK]$-module $\nf T^*_{\st}(\MM)$ and a $\Zp[\GKinfty]$-module $\nf T^*_{\qst}(\MM)$, as follows:
\begin{eqnarray}
\nf T^*_{\st}(\MM)&:=&\Hom_{S,\vphi_1, N, \Fil^1}(\MM,\Asthat)\\
\nf T^*_{\qst}(\MM)&:=&\Hom_{S,\vphi_1, \Fil^1}(\MM,\Acris)
\end{eqnarray}
where $\GK$ acts on $\nf T^*_{\st}(\MM)$ through $\Asthat$, and $\GKinfty$ acts on $\nf T^*_{\qst}(\MM)$ through $\Acris$, respectively. Clearly both $\nf T^*_{\st}(\MM)$ and $\nf T^*_{\qst}(\MM)$ are $p$-adically separated and complete. Breuil showed that there exists a natural $\GK$-equivariant isomorphism $\nf T^*_{\st}(\MM)[\ivtd p] \riso V$, so we may identify $\nf T^*_{\st}(\MM)$ with a $\GK$-stable $\Zp$-lattice in $V$. We can construct this isomorphism as follows. By Proposition~\ref{prop:QpBreuil}, we can view $D$ as a $\vphi$-stable submodule in $\MM[\ivtd p]$ which is killed by $N_{\MM}$. Since $\Bcris^+\subset\Asthat[\ivtd p]$ is a subring of elements killed by $N$,  we obtain a $\GK$-equivariant $\Zp$-linear map $\Hom_{S[\ivtd p],\vphi_1, N, \Fil^1}(\MM[\ivtd p],\Asthat[\ivtd p]) \ra \Hom_{K_0,\vphi,\Fil}(D,\Bcris^+)$. To show that this is an isomorphism, see  \cite[Proposition~2.2.5]{Breuil:IntegralPAdicHodgeThy}  where the sketch of the proof and references of the full proof are given.\footnote{The proof is also worked out in \cite[Theorem~12.2.1.3]{Kim:Thesis}.} 

Note that there is a $\GKinfty$-equivariant map $\nf T^*_{\st}(\MM) \ra \nf T^*_{\qst}(\MM)$ induced by $\Asthat \thra \Acris$; $\frac{X^i}{i!} \mapsto 0$ for all $i$. This map is an isomorphism by \cite[Lemma 3.4.3]{Liu:StronglyDivLattice}. So we may view $\nf T^*_{\qst}(\MM)$ as the restriction of the $\GK$-action on $\nf T^*_{\st}(\MM)$ to $\GKinfty$, and the natural $\GKinfty$-equivariant embedding \begin{equation}\label{eqn:TqstEmb}
\nf T^*_{\qst}(\MM)=\Hom_{S,\vphi_1, \Fil^1}(\MM,\Acris)\hra  \Hom_{K_0,\vphi,\Fil}(D,\Bcris^+)\cong V
\end{equation} 
is induced the $\vphi$-equivariant embedding $D\hra \MM[\ivtd p]$.

Recall that we have constructed $\MM$ (with its extra structure) from the contravariantly associated $\vphi$-module $\gM$ to a $\GKinfty$-stable $\Zp$-lattice $T\subset V$. To compare $T$ with $\nf T^*_{\st}(\MM)$, we need to recall how we construct $T$ from $\gM$ in  \cite{kisin:fcrys}. Note that
\begin{equation*}
T \cong \left( \nf T^{\leqs1}_\Sig(\gM)\right)^*(1) \cong \Hom_{\Sig,\vphi}(\gM,\wh\fo_{\Eps^{\ur}}) \liso \Hom_{\Sig,\vphi}(\gM,\wh\Sig^{\ur}),
\end{equation*}
where $\wh\Sig^{\ur}$ is the topological closure of the integral closure of $\Sig$ in $ \wh\fo_{\Eps^{\ur}}$. (The endomorphism $\sig$ and the $\GKinfty$-action on $\wh\fo_{\Eps^{\ur}}$ stabilize $\wh\Sig^{\ur}$.) The arrow above is induced by the natural inclusion $\wh\Sig^{\ur}\hra \wh\fo_{\Eps^{\ur}}$, and it is  an  isomorphism by  \cite[\S{B.}~Proposition~1.8.3]{fontaine:grothfest}.  Note that the embedding $\Sig\hra\Acris$ by $u\ra[\nf\pi]$ extends to $\wh\Sig^{\ur}\hra\Acris$ uniquely in a way that respects the endomorphisms $\sig$ and $\GKinfty$-actions on both sides.

Now let us recall the construction of $\GKinfty$-equivariant embedding of $T$ into $V\cong \nf V^*_{\cris}(D)$. Let $\OO_\disk \subset K_0[[u]]$ be the subring which converges on the ``open unit disk''; i.e. $f(u)\in\OO_\disk$ is a formal power series over $K_0$ such that $f(x)$ converges for any $x\in\C_K$ with $|x|<1$. One can define an endomorphism $\sig:\self{\OO_\disk}$ so that it is the Witt vector Frobenius on the coefficients $K_0$ and $\sig(u)=u^p$. Then we have  $\Sig[\ivtd p] \subset \OO_\disk \subset S[\ivtd p]$ as subrings of $K_0[[u]]$, which are stable under the endomorphisms $\sig$ defined on each of them. In particular, one can embed $\OO_\disk$ into $\Bcris^+$ (since $S[\ivtd p]$ embeds into $\Bcris$).

Put $\M:=\OO_\disk \otimes_\Sig\gM$ equipped with a $\OO_\disk$-linear map $\vphi_\M:=\id_{\OO_\disk}\otimes\vphi_\gM:\sig^*\M\ra\M$ where $\sig^*\M:=\OO_\disk\otimes_{\sig,\OO_\disk}\M$. Let us define a filtration 
\begin{equation*}\Fil^w(\sig^*\M):=\left\{ m\in\sig^*\M|\ \vphi_\M(m) \in \PP(u)^w\M\right\}.\end{equation*}
Finally, by \cite[Lemma~1.2.6]{kisin:fcrys} there exists a $\vphi$-equivariant embedding $\xi:D \hra \M$ where $\xi(D)\subset \vphi_\M(\sig^*\M)$. Now we obtain $\GKinfty$-equivariant injective maps
\begin{equation}\label{eqn:TSigEmb}
 \Hom_{\Sig,\vphi}(\gM,\wh\Sig^{\ur}) \hra  \Hom_{\OO_\disk,\vphi,\Fil}(\sig^*\M,\Bcris^+) \hra  \Hom_{K_0,\vphi,\Fil}(D,\Bcris^+),
\end{equation}
where the first map is induced by $\vphi_\M$ on the first argument and the natural inclusion $\wh\Sig^{\ur} \hra\Bcris^+$ on the second argument, and the second map is induced by $\xi:D\hra\vphi_\M( \sig^*\M)$, using \cite[\S1.2.7, Proposition~1.2.8]{kisin:fcrys}. Now, \cite[Proposition~2.1.5]{kisin:fcrys} asserts that the composition of the arrows above, hence each arrow, is an isomorphism, so we obtain a $\GKinfty$-equivariant embedding $T\hra V$.

Now consider the following natural $\GKinfty$-equivariant morphism: 
\begin{equation}\label{eqn:TSigTqstLatt}
\mathfrak{T}:T \cong \Hom_{\Sig,\vphi}(\gM,\wh\Sig^{\ur}) \ra \Hom_{S,\vphi_1,\Fil^1}(\MM,\Acris) = \nf T^*_{\qst}(\MM)
\end{equation}
where the arrow is defined as follows: for a $\Sig$-linear map $f:\gM\ra\wh\Sig^{\ur}$, we consider $\wt f:\MM = S\otimes_{\sig,\Sig}\gM \ra \Acris$ obtained by $S$-linearly extending $\gM \xra f \wh\Sig^{\ur}\xra\sig \Acris$, where we view $S$ as a $\Sig$-algebra via $\sig:\Sig\ra S$. One can check that if $f$ respects $\vphi$, then $\wt f$ respects $\vphi_1$ and takes $\Fil^1\MM$ to $\Fil^1\Acris$. The arrow in \eqref{eqn:TSigTqstLatt} is defined by $f\mapsto \wt f$. One can show that this map $\mathfrak{T}$ is furthermore $\GKinfty$-equivariant and respects the natural embeddings of the source and the target into $V$ defined respectively in \eqref{eqn:TSigEmb} and \eqref{eqn:TqstEmb}.

Proposition~\ref{prop:Claim1} now directly follows from the lemma below:
\begin{lem}\label{lem:TongLemma}
Let $T$, $\gM$, and $\MM$ be as in \S\ref{par:MM}, and if $p=2$ then additionally assume that $\gM$ is $\vphi$-unipotent in the sense of \S\ref{par:DefEtPhiNilp}. 
Then the natural $\GK$-equivariant map  $\mathfrak{T}:T \ra T^*_{\qst}(\MM)$ in \eqref{eqn:TSigTqstLatt} is an isomorphism.
\end{lem}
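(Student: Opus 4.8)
The key point is that we already have two $\GKinfty$-equivariant embeddings of finitely generated $\Zp$-modules into $V$, namely $T \hookrightarrow V$ via \eqref{eqn:TSigEmb} and $\nf T^*_{\qst}(\MM) \hookrightarrow V$ via \eqref{eqn:TqstEmb}, and the map $\mathfrak{T}$ is compatible with both. Both $T$ and $\nf T^*_{\qst}(\MM)$ are $\GKinfty$-stable $\Zp$-lattices in $V$ (the latter because $\nf T^*_{\st}(\MM)$ is a $\GK$-stable lattice in $V$ by Breuil's theorem, and restriction of the $\GK$-action to $\GKinfty$ does not change the underlying lattice). Since both are full lattices in $V$, the map $\mathfrak{T}$ becomes an isomorphism after inverting $p$, so $\mathfrak{T}$ is automatically \emph{injective}; the only thing to prove is that it is \emph{surjective}, equivalently that the two lattices $T$ and $\nf T^*_{\qst}(\MM)$ coincide inside $V$.

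\textbf{Reduction to a cokernel computation.} First I would observe that $C := \coker(\mathfrak{T})$ is a finite $p$-primary group, and the question is whether $C = 0$. The plan is to get a handle on $C$ by base-changing to $\Acris$ (or $\wh\Sig^{\ur}$, $\wh{\fo}_{\Eps^{\ur}}$) and using that both Hom-modules compute the same $\GKinfty$-representation. Concretely, one can dualize: $\mathfrak{T}$ being an isomorphism is equivalent to the induced map on the ``dual'' side $\MM \to S \otimes_{\sig,\Sig} \gM$ (which is essentially an identity, built in to the definition $\MM = S\otimes_{\sig,\Sig}\gM$) being compatible with the filtration/Frobenius data in a way that reflects no loss. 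The real content is a statement about strongly divisible lattices: given the strongly divisible lattice $\MM$ built from $\gM$, the lattice $\nf T^*_{\qst}(\MM)$ it produces equals the lattice $T = \Hom_{\Sig,\vphi}(\gM,\wh\Sig^{\ur})$ it came from. This is precisely the kind of compatibility between Kisin modules of height $\leqslant 1$ and strongly divisible modules of weight $\leqslant 1$ that is proved by T.~Liu --- I would cite \cite[Lemma 3.4.3]{Liu:StronglyDivLattice} for the isomorphism $\nf T^*_{\st}(\MM) \riso \nf T^*_{\qst}(\MM)$ and then the comparison between $T$ and $\nf T^*_{\qst}(\MM)$ should follow from tracking the two explicit constructions through the commutative diagram relating $\OO_\disk$, $S$, $\Sig$, and $\Acris$.

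\textbf{The role of the $p=2$ hypothesis.} The crucial subtlety is that when $p=2$ the functor from strongly divisible lattices (or Kisin modules of height $\leqslant 1$) to Galois lattices need not be an equivalence without a unipotence assumption --- this is exactly why \S\ref{par:DefEtPhiNilp} isolates the $\vphi$-unipotent condition and why Proposition~\ref{prop:Claim1} carries the extra hypothesis that $V^*(1)$ (hence $\gM$, by Lemma~\ref{lem:OrdCrit}) has no non-zero unramified/Lubin-Tate quotient. I expect the main obstacle to be exactly here: verifying that under $\vphi$-unipotence the map $\mathfrak{T}$ on the level of lattices is surjective, which amounts to showing that the cokernel $C$ has no ``extra'' torsion. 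The cleanest route is to invoke T.~Liu's results directly: for $p>2$ the relevant functors are fully faithful on the appropriate integral categories with bounded height (so $T$ and $\nf T^*_{\qst}(\MM)$ must agree since they have the same generic fiber and compatible lattice structure), and for $p=2$ the same holds once one restricts to the $\vphi$-unipotent (equivalently, ``connected'' or $N$-unipotent on the $S$-side) subcategory, as in \cite[Lemma 3.4.3]{Liu:StronglyDivLattice} and the surrounding discussion. I would spell out the diagram chase showing $\mathfrak{T}$ respects the embeddings into $V$ (which is routine given the explicit formulas for $\xi$, the embeddings $\Sig \hookrightarrow \OO_\disk \hookrightarrow S \hookrightarrow \Acris$, and the definition $\wt f = \sig \circ f$ extended $S$-linearly), and then conclude that an injective $\GKinfty$-map between two lattices in $V$ that becomes an isomorphism after $\otimes\Qp$ and is induced by an isomorphism of the ambient semi-linear-algebra data must be an isomorphism of lattices.
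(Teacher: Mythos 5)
Your opening observations are right: $\mathfrak{T}$ is an injection of $\Zp$-lattices in $V$ compatible with the two embeddings \eqref{eqn:TSigEmb} and \eqref{eqn:TqstEmb}, so the real content is surjectivity, and the $p=2$ hypothesis is there to rule out the Lubin-Tate-quotient pathology. But the step where you actually establish surjectivity is missing. You propose to ``invoke T.~Liu's full faithfulness results directly'' and then assert that ``an injective $\GKinfty$-map between two lattices in $V$ that becomes an isomorphism after $\otimes\Qp$ and is induced by an isomorphism of the ambient semi-linear-algebra data must be an isomorphism of lattices.'' That last sentence is the desired conclusion, not an argument for it: full faithfulness of $\nf T^{\leqslant 1}_\Sig$ or of the strongly-divisible-module functor concerns $\Hom$-sets, and gives no a priori reason why the lattice $\nf T^*_{\qst}(\MM)$ produced by the $S$-theoretic construction from $\gM$ should coincide with $T=\Hom_{\Sig,\vphi}(\gM,\wh\Sig^{\ur})$ rather than strictly contain it. The passage from $\gM$ to $\MM=S\otimes_{\sig,\Sig}\gM$ is a genuine base change, not an ``isomorphism of ambient data,'' so nothing forces the cokernel $C$ to vanish without a computation. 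Moreover, the sort of full-faithfulness result you want to cite (associating strongly divisible modules to Galois lattices, especially at $p=2$) is precisely what this paper is trying to \emph{avoid} relying on.

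The paper's actual mechanism is concrete and self-contained: by Nakayama it suffices to show $\mathfrak{T}\otimes_{\Zp}\Fp$ is injective (injectivity forces isomorphism because source and target have the same $\Fp$-dimension, namely $\rank_{\Zp}T$). This is reduced, via the projection $\Acris\thra\Acris/p\Acris$, to showing injectivity of a map $\ol{\mathfrak{T}}\colon T\otimes\Fp\to\nf T^*_{\qst}(\ol\MM)$ (Lemma~\ref{lem:TongLemmaModP}), and that injectivity is proved by hand using that $\ker\bigl(\sig\colon\wh\Sig^{\ur}/p\to\Acris/p\bigr)=(u^e)$: if $f\in\Hom_{\Sig/p,\vphi}(\ol\gM,\wh\Sig^{\ur}/p)$ dies under $\ol{\mathfrak{T}}$, then $f(\ol\gM)\subset u^e(\wh\Sig^{\ur}/p)$, and a bootstrap using $\vphi$ and the height-$\leqslant 1$ bound forces $f=0$ when $p>2$; when $p=2$ the bootstrap only shows $f(\ol\gM)$ is a Lubin-Tate-type quotient of $\ol\gM$, which $\vphi$-unipotence excludes. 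If you want to complete your proof, you need some explicit argument of this sort controlling the cokernel --- the citation to full faithfulness alone does not close the gap.
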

Under the assumption of the lemma,  $T$ and $\nf T^*_{\qst}(\MM)$ define the same $\Zp$-lattice in $V$. Since we observed that $\nf T^*_{\qst}(\MM)$ and $\nf T^*_{\st}(\MM)$ define the same  the same $\Zp$-lattice in $V$ and the latter is $\GK$-stable, it follows that $T$ is $\GK$-stable. This proves Proposition~\ref{prop:Claim1}.
\subsection{Proof of Lemma \ref{lem:TongLemma}}\label{par:TongLemmaPf}
Since the injectivity of $\mathfrak{T}$ is clear, it is enough to show the surjectivity of $\mathfrak{T}$. By Nakayama's lemma, it is enough to show the surjectivity of  $\mathfrak{T} \otimes_{\Zp}\Fp:T\otimes_{\Zp}\Fp \ra \nf T^*_{\qst}(\MM)\otimes_{\Zp}\Fp$.

Put $\ol\gM:=\gM/p\gM $. Using the exactness assertion of Theorem~\ref{thm:FontaineEtPhiMod} and \cite[\S{B.}~Proposition~1.8.3]{fontaine:grothfest} we obtain: 
\begin{equation*}
T\otimes_{\Zp}\Fp \cong \Hom_{\Sig/p\Sig,\vphi}(\ol\gM,\fo\comp{\Eps^{\ur}}/p\fo\comp{\Eps^{\ur}}) \liso\Hom_{\Sig/p\Sig,\vphi}(\ol\gM,\wh\Sig^{\ur}/p\wh\Sig^{\ur})
\end{equation*}
where the arrow on the right is induced from the natural inclusion $\wh\Sig^{\ur}/p\wh\Sig^{\ur}\hra \fo\comp{\Eps^{\ur}}/p\fo\comp{\Eps^{\ur}}$.

Put $\ol\MM:=\MM/p\MM$. Let $\Fil^1\ol\MM\subset\ol\MM$ denote the image of $\Fil^1\MM$, and let $\ol{\vphi_1}:=\vphi_1\bmod{p\MM}$.  From the natural projection $\Acris \thra \Acris/p\Acris$, we obtain the following natural injective map:
\begin{multline}\label{eqn:RednTqst}
\nf T^*_{\qst}(\MM)\otimes_{\Zp}\Fp = \Hom_{S,\vphi_1,\Fil^1}(\MM,\Acris)\otimes_{\Zp}\Fp \\ \ra \Hom_{S/pS,\vphi_1,\Fil^1}(\ol\MM,\Acris/p\Acris)=:\nf T^*_{\qst}(\ol\MM).
\end{multline}
Therefore we obtain the natural map 
\begin{equation}\label{eqn:TSigTqstLattModp}
\ol{\mathfrak{T}}:T\otimes_{\Zp}\Fp \xlra{\mathfrak{T}_\gM \otimes_{\Zp}\Fp} \nf T^*_{\qst}(\MM)\otimes_{\Zp}\Fp \xlra{\text{\eqref{eqn:RednTqst}}} \nf T^*_{\qst}(\ol\MM).
\end{equation}
Alternatively, one can directly construct $\ol{\mathfrak{T}}$ in a similar manner to \eqref{eqn:TSigTqstLatt}; namely,
\begin{equation*}
\ol{\mathfrak{T}}:T\otimes_{\Zp}\Fp\cong\Hom_{\Sig/p\Sig,\vphi}(\ol\gM,\wh\Sig^{\ur}/p\wh\Sig^{\ur})\ra \Hom_{S/pS,\vphi_1,\Fil^1}(\ol\MM,\Acris/p\Acris)\end{equation*}
induced by $S/pS$-linear extension on the first arguments and by the map $\sig:\wh\Sig^{\ur}/p\wh\Sig^{\ur} \ra \Acris/p\Acris$ on the second arguments. The following lemma is the main step of the proof.

\begin{lemsub}\label{lem:TongLemmaModP}
Assume that $\gM$ is $\vphi$-unipotent in the sense of \S\ref{par:DefEtPhiNilp} if $p=2$. 
Then, the natural map $\ol{\mathfrak{T}}:T\otimes_{\Zp}\Fp \ra \nf T^*_{\qst}(\ol\MM)$ is injective.
\end{lemsub}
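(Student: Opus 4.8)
The plan is to unwind $\ol{\mathfrak T}$ to a concrete divisibility question and then play off the height $\leqs1$ condition against the structure of $\Acris/p\Acris$. I begin by making $\ol{\mathfrak T}$ explicit. By the identification recalled in \S\ref{par:TongLemmaPf} (using \cite[\S{B.}~Proposition~1.8.3]{fontaine:grothfest} and Theorem~\ref{thm:FontaineEtPhiMod}), a class in $T\otimes_{\Zp}\Fp$ is a $\vphi$-compatible $\Sig/p\Sig$-linear map $f\colon\ol\gM\to\wh\Sig^{\ur}/p\wh\Sig^{\ur}$, and by construction $\ol{\mathfrak T}(f)$ is the $S/pS$-linear map $\ol\MM=S/pS\otimes_{\sig,\Sig/p\Sig}\ol\gM\to\Acris/p\Acris$ sending $1\otimes m$ to $\iota(f(m))^p$, where $\iota\colon\wh\Sig^{\ur}/p\hookrightarrow\Acris/p$ is the reduction mod $p$ of the canonical $\sig$-equivariant embedding $u\mapsto[\nf\pi]$ (so that the map ``$\sig$'' of \eqref{eqn:TSigTqstLattModp} on $\wh\Sig^{\ur}/p$ is $x\mapsto\iota(x)^p$). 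Since $1$ generates $\ol\MM$ over $S/pS$, the map $\ol{\mathfrak T}(f)$ is zero if and only if $\iota(x)^p=0$ in $\Acris/p$ for every $x$ in the $\Sig/p$-submodule $L:=f(\ol\gM)$ of $\wh\Sig^{\ur}/p$, i.e.\ $L^{(p)}\subset\ker\iota$, where $L^{(p)}:=\set{x^p:x\in L}$. As $\wh\Sig^{\ur}/p$ is a domain (it embeds into the field $\wh\fo_{\Eps^{\ur}}/p$) it carries the $u$-adic valuation $v$; put $\nu(L):=\inf\set{v(x):0\ne x\in L}$, so $L=0\iff\nu(L)=\infty$. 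Thus the lemma is equivalent to: \emph{if $L=f(\ol\gM)$ with $f$ as above and $\ol\gM\in(\Mod/\Sig)^{\leqs1}$ (which is $\vphi$-unipotent when $p=2$), and $L^{(p)}\subset\ker\iota$, then $L=0$.}

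Two inputs drive the rest. (i) \emph{The height $\leqs1$ hypothesis.} Since $\coker(\vphi_{\ol\gM})$ is killed by $\PP(u)\equiv u^e\pmod p$, one has $u^e\ol\gM\subset\im(\vphi_{\ol\gM})\subset\ol\gM$; applying $f$ and the $\vphi$-compatibility relation $f(\vphi_{\ol\gM}(s\otimes m))=s\,f(m)^p$ gives
\[
u^eL\ \subset\ (\Sig/p)\cdot L^{(p)}\ \subset\ L ,
\]
so $L$ is itself the underlying module of a height $\leqs1$ $\vphi$-module over $\Sig/(p)=k[[u]]$ — a $\vphi$-quotient of $\ol\gM$ through $f$ — and comparing $u$-adic valuations in the left inclusion forces $\nu(L)\leqs\tfrac{e}{p-1}$ whenever $L\ne0$. (ii) \emph{The structure of $\iota$ modulo $p$.} Since $\PP([\nf\pi])$ generates $\ker(\theta\colon\Acris\to\fo_{\C_K})$, the $p$-th power of any generator of $\ker\theta$ lies in $p\Acris$, and, using Fontaine's estimates on $\Fil^\bullet\Acris$, one obtains $u^{pe}\wh\Sig^{\ur}/p\subset\ker\iota$ — and when $p=2$ already $u^{e^*}\wh\Sig^{\ur}/2=u^{2e}\wh\Sig^{\ur}/2\subset\ker\iota$, where Fontaine's ramification constant $e^*=\tfrac{ep}{p-1}$ (built into the hypothesis $\bar\rho(I^{e^*}_K)=\set1$ of Theorem~\ref{thm:GabberDefor}) enters. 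Combining $L^{(p)}\subset\ker\iota$ with a successive-approximation step — feeding a bound ``$\nu(L)\geqs\gamma$'' back through $u^eL\subset(\Sig/p)L^{(p)}$ yields ``$\nu(L)\geqs p\gamma-e$'' — produces a sequence of lower bounds tending to $\infty$ as long as $p\gamma-e>\gamma$ in the relevant range, which holds for $p>2$; hence $L\subset\bigcap_n u^n\wh\Sig^{\ur}/p=0$ and the lemma follows for $p>2$.

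When $p=2$ one has $\tfrac{e}{p-1}=e$ and the recursion $\nu\mapsto p\nu-e$ has fixed point $\nu=e$, so the bare iteration stalls; this is the main obstacle. The stalling regime is exactly the one in which $L$ behaves like a Lubin--Tate object (isoclinic of $\PP$-height slope $e$), and this is precisely excluded by $\vphi$-unipotence via the Lubin--Tate $\Leftrightarrow$ unramified dictionary of \S\ref{par:DefEtPhiNilp}: a $\vphi$-quotient of a $\vphi$-unipotent object is again $\vphi$-unipotent, hence has no non-zero Lubin--Tate quotient, which improves the bound of the previous paragraph to $\nu(L)\leqs e-1$ and lets the sharper containment $u^{2e}\wh\Sig^{\ur}/2\subset\ker\iota$ close the estimate. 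Concretely I would argue by d\'evissage on the $\Sig$-length of $\ol\gM$ along a $\vphi$-stable filtration, propagating the conclusion by the five lemma in the abelian category of $\Sig$-modules together with the exactness of $\nf D_\Eps$ and $\nf T_\Eps$ in Theorem~\ref{thm:FontaineEtPhiMod}. The delicate points I anticipate are keeping the $u$-adic estimates uniform along the d\'evissage and verifying that it is genuinely $\vphi$-unipotence — not the weaker hypothesis ``$\ol\gM$ has no non-zero \'etale quotient'' — that tips the balance at $p=2$.
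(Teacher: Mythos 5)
Your overall strategy coincides with the paper's: you set $L:=f(\ol\gM)$ (the paper's $\ol\gN$), observe it is a $\vphi$-quotient of height $\leqs1$, use $f\in\ker\ol{\mathfrak T}$ to get $L\subset u^e(\wh\Sig^{\ur}/p\wh\Sig^{\ur})$, iterate the valuation estimate when $p>2$, and at $p=2$ identify the stalling regime with Lubin--Tate objects and invoke $\vphi$-unipotence. The $p>2$ part matches \S\ref{par:TongLemmaModPEasyCase} essentially verbatim.

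At $p=2$, however, the essential step is asserted rather than proved. You claim that having no non-zero Lubin--Tate quotient ``improves the bound of the previous paragraph to $\nu(L)\leqs e-1$'', but that improvement is precisely the content that needs an argument, and you give none — the closing d\'evissage paragraph is a plan, not a proof, and is not what the paper does (no filtration of $\ol\gM$ and no five-lemma are used). The paper argues in the opposite direction, directly on $L$: from $L\subset u^e(\wh\Sig^{\ur}/p\wh\Sig^{\ur})$ and the fact that $\vphi_L$ is induced by the $p$-th power map, one gets $\vphi_L(\sig^*L)\subset u^{pe}(\wh\Sig^{\ur}/p\wh\Sig^{\ur})=u^{2e}(\wh\Sig^{\ur}/p\wh\Sig^{\ur})$, hence $\vphi_L(\sig^*L)\subset u^eL$; combined with the reverse inclusion $u^eL\subset\vphi_L(\sig^*L)$ from height $\leqs1$, this yields the equality $\vphi_L(\sig^*L)=u^eL$, i.e.\ $L$ \emph{is} of Lubin--Tate type. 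Since $L$ is a $\vphi$-quotient of the $\vphi$-unipotent $\ol\gM$, it must vanish. Re-deriving a numerical bound $\nu(L)\leqs e-1$ from ``no Lubin--Tate quotient'' is not available to you without first establishing this Lubin--Tate identification, which is the very thing you skipped.

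Two smaller points. The reduction $\iota\colon\wh\Sig^{\ur}/p\to\Acris/p$ of the embedding $u\mapsto[\nf\pi]$ is \emph{not} injective (its kernel contains $u^{pe}(\wh\Sig^{\ur}/p)$, as you yourself observe two lines later), so the hooked arrow is misleading. And the uncertainty you raise at the end — whether it is $\vphi$-unipotence or merely ``$\ol\gM$ has no non-zero \'etale quotient'' — is settled by the paper: it is $\vphi$-unipotence, equivalently no non-zero Lubin--Tate \emph{quotient} (the dual of ``no non-zero \'etale sub-object''), and this is exactly the hypothesis consumed by the Lubin--Tate identification above.
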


Let us explain how to deduce Lemma \ref{lem:TongLemma} from Lemma \ref{lem:TongLemmaModP}.
Granting the injectivity of $\ol{\mathfrak{T}}:T\otimes_{\Zp}\Fp\ra \nf T^*_{\qst}(\ol\MM)$, it follows from the construction of $\ol{\mathfrak{T}}$ that $\mathfrak{T} \otimes_{\Zp}\Fp:T\otimes_{\Zp}\Fp \ra \nf T^*_{\qst}(\MM)\otimes_{\Zp}\Fp$ is injective. But since the source and the target have the same $\Fp$-dimension, it has to be an isomorphism. 

We prove Lemma~\ref{lem:TongLemmaModP} for the rest of the section.
The proof can also be found in \cite[Proposition~1.2.7]{Kisin:2adicBT}. 
\parag[Proof of Lemma \ref{lem:TongLemmaModP}: the case $p>2$]\label{par:TongLemmaModPEasyCase}
The case $p>2$ essentially follows from the proof of \cite[Proposition 4.2.1]{Breuil:ApplicationNormFields}\footnote{Although \cite[Proposition 4.2.1]{Breuil:ApplicationNormFields} is only worked out when $e=1$, the argument works with little modification for any $e$.}. 
Using the same notation as in \S\ref{par:Asthat}, we let $\Fil^1\rep\subset\rep$ denote the kernel of the first projection $\rep\thra\ol K/(p)$.  Note that $\Acris/p\Acris$ is naturally isomorphic to the divided power envelop of $\rep$ with respect to $\Fil^1\rep$. (See \cite[Remark 3.20(8)]{Berthelot-Ogus} for the proof.)

Observe that the kernel of $\sig:\wh\Sig^{\ur}/p\wh\Sig^{\ur} \ra \Acris/p\Acris$ is principally generated by $u^e$, where $e$ is the ramification index of $ K $. So if $f\in\Hom_{\Sig/p\Sig,\vphi}(\ol\gM,\wh\Sig^{\ur}/p\wh\Sig^{\ur})$ is in the kernel of $\ol{\mathfrak{T}}$, then we have $f(x)\in u^e(\wh\Sig^{\ur}/p\wh\Sig^{\ur})$ for any $x\in\ol\gM$. Suppose $f(x)\in u^{e'}(\wh\Sig^{\ur}/p\wh\Sig^{\ur})$ for some $e'\geq e$. Since $\ol\gM$ is of $\PP$-height $\leqs 1$, there exists $y\in\ol\gM$ such that $\vphi_{\ol\gM}(\sig^*y) = u^{e}x$. Since $\PP(u)\bmod{p}$ is (a unit multiple of) $u^e$, we have
\begin{equation*}
f(x) = u^{-e} f\left(\vphi_{\ol\gM}(\sig^*y)\right) = u^{-e} \sig\left(f(y)\right) \in u^{e'p-e}(\wh\Sig^{\ur}/p\wh\Sig^{\ur}) \subset u^{2e'}(\wh\Sig^{\ur}/p\wh\Sig^{\ur}),
\end{equation*}
since we assumed that $p>2$. By iterating this process, we conclude that $f = 0$.

\parag[Non-example: the case $p=2$]
Before we present the proof of the case $p=2$, we give an example of non-$\vphi$-unipotent $\gM$ where the lemma fails to hold. Let $\gM\cong \Sig\tim\e$ with $\vphi_{\gM}(\sig^*\e) = \PP(u)\e$. Let $\ol\gM:=\gM/p\gM$ and $\ol\MM:=S/pS\otimes_{\sig,\Sig}\gM$. We show that $\ol{\mathfrak{T}}:T\otimes_{\Zp}\Fp\ra \nf T^*_{\qst}(\ol\MM)$ is the zero map, which in turn implies that $\mathfrak{T}\otimes_{\Zp}\Fp:T\otimes_{\Zp}\Fp \ra \nf T^*_{\qst}(\MM)\otimes_{\Zp}\Fp$ is the zero map. In particular, $\ol{\mathfrak{T}}$ cannot be injective (so $\mathfrak{T}$ cannot be an isomorphism).

Let $f\in\Hom_{\Sig/p\Sig,\vphi}(\ol\gM,\wh\Sig^{\ur}/p\wh\Sig^{\ur})$ be any element. Then we have
\begin{displaymath}
(f(\e))^p=(f(\e))^2 = \sig\left(f(\e)\right) = f\left(\vphi_{\ol\gM}(\sig^*\e)\right) = \alpha u^{e}\tim f(\e),
\end{displaymath}
where $\alpha u^e = \PP(u) \bmod{p}$ with $\alpha \in k\starr$.
If $f$ is non-zero then we have $f(\e)= \alpha\tim u^e$. On the other hand, $\sig:\wh\Sig^{\ur}/p\wh\Sig^{\ur} \ra \Acris/p\Acris$ maps any multiple of $u^e$ to $0$. This proves that $\ol{\mathfrak{T}}$ is the zero map.

\parag[Proof of Lemma \ref{lem:TongLemmaModP}: the case $p=2$]
Now, we handle the remaining case.\footnote{The author thanks Tong Liu for his advice.} Assume that $p=2$ and $\ol\gM$ is $\vphi$-unipotent  in the sense of \S\ref{par:DefEtPhiNilp}. Let $f\in\Hom_{\Sig/p\Sig,\vphi}(\ol\gM,\wh\Sig^{\ur}/p\wh\Sig^{\ur})$ be in the kernel of $\ol{\mathfrak{T}}$.
We set $\ol\gN:=f(\ol\gM) \subset \wh\Sig^{\ur}/p\wh\Sig^{\ur}$, which is a $\Sig/p\Sig$-submodule stable under the $p$th power map $\sig:\self{\wh\Sig^{\ur}/p\wh\Sig^{\ur}}$. This makes $\ol\gN$ into a $(\vphi,\Sig/p\Sig)$-module. Since we have the $\vphi$-compatible surjection $f:\ol\gM\thra \ol\gN$, it follows that $\ol\gN$ is of height $\leqs 1$; i.e., $u^{e}\ol\gN \subset \vphi_{\ol\gN}(\sig^*\ol\gN)$.

Since $f$ is in the kernel of $\mathfrak{T}_{\ol\gM}$, the same argument as \S\ref{par:TongLemmaModPEasyCase} implies that $\ol\gN \subset u^e(\wh\Sig^{\ur}/p\wh\Sig^{\ur})$. Using that $\vphi_{\ol\gN}$ is induced from the $p$th power map $\sig:\self{\wh\Sig^{\ur}/p\wh\Sig^{\ur}}$, we have $\vphi_{\ol\gN}(\sig^*\ol\gN) \subset u^{2e}(\wh\Sig^{\ur}/p\wh\Sig^{\ur})$, so $u^{e}\ol\gN \supset \vphi_{\ol\gN}(\sig^*\ol\gN)$. Since $\ol\gN$ is of $\PP$-height $\leqs1$, we obtain $\vphi_{\ol\gN}(\sig^*\ol\gN) = u^{e}\ol\gN$; i.e., $\ol\gN$ is of Lubin-Tate type  in the sense of \S\ref{par:DefEtPhiNilp}. But by the definition of $\vphi$-unipotent-ness, $\ol\gM$ does not admit any non-zero quotient of Lubin-Tate type. Therefore $\ol\gN=0$, so $f=0$.

\section{Positive characteristic analogue of crystalline deformation rings}\label{sec:GenLaffHartl}
In this section, we introduce a class of $\Gal(k\llpar u\rrpar^{\sep}/k\llpar u\rrpar)$-representation over some equi-characteristic local field which could be thought of as an analogue of crystalline representations, and develop deformation theory for them. Such representations are (implicitly) introduced by Genestier-Lafforgue \cite{Genestier-Lafforgue:FontaineEqChar}, and its torsion version also appeared in Abrashkin \cite{Abrashkin:EquiCharTorsionCrystalline}. A useful observation is that the linear algebra objects that give rise to such Galois representations have very similar structure to various $(\vphi,\Sig)$-modules we saw in Kisin theory. Considering the  norm field isomorphism $\GKinfty \cong \Gal(k\llpar u\rrpar^{\sep}/k\llpar u\rrpar)$  \cite{wintenberger:NormFiels}, it is not too surprising that the $\GKinfty$-deformation theory has an analogue in positive characteristic.

\subsection{Notations/Definitions}\label{subsec:Notations}
Let $\fo_0:=\Fq[[\pi_0]]$ be a complete discrete valuation ring of characteristic $p$. For this section, let $K:=k\llpar u \rrpar$ and $\fo_K:=k[[u]]$ where $k$ is a finite extension of $\Fq$. (So $K$ is no more a finite extension of $\Qp$.) We fix a finite map $\iota:\fo_0 \ra \fo_K$ over $\Fq$. Roughly speaking, $\fo_0$ will play the role of $\Zp$, and $\pi_0\in\fo_0$ will play the role of $p$.

Put $\GK:=\Gal(K^{\sep}/K)$. We would like to study a certain class of $\GK$-representations over $\fo_0$, $\Frac(\fo_0)$, or finite algebras thereof. 
It is  defined in terms of linear-algebraic objects called \emph{(effective) local shtukas} over $\fo_K$, which we introduce below. Local shtukas have many analogous features to $(\vphi,\Sig)$-modules of finite height in Kisin theory, so we use similar notations to Kisin theory to emphasize the analogy.

Let  $\Sig := \fo_K[[\pi_0]] $ and $\fo_\Eps:= K[[\pi_0]]$. We define a partial $q$-Frobenius endomorphism $\sig$ for each of these rings  so that it acts as the $q$th power map on $K$ and $\sig(\pi_0)=\pi_0$. This $\sig$ lifts the $q$th power map modulo $\pi_0$, and fixes  $\fo_0$. We also set  $\Eps := K\llpar \pi_0 \rrpar$ and extend $\sig$ on $\Eps$. Then $\sig$ fixes $\Frac(\fo_0)$

Let $u_0:=\iota(\pi_0)\ne0$ where $\iota:\fo_0\ra\fo_K$ is the map we fixed earlier.  Put $\PP(u):=\pi_0-u_0\in\Sig$ and let $e:=\ord_u(u_0)$. Clearly we have  $\Sig/(\PP(u)) \cong \fo_K$, which is a totally ramified ring extension of $k[[\pi_0]]$. This shows  that $\PP(u)$ is a $\Sig\starr$-multiple of some Eisenstein polynomial in  $k[[\pi_0]][u]$ with degree $e$. 

\parag
An \emph{\'etale $(\vphi,\fo_\Eps)$-module} (or simply, an  \emph{\'etale $\vphi$-module}) is a finitely generated  $\fo_\Eps$-module $M$ equipped with an $\fo_\Eps$-linear isomorphism $\vphi_M:\etfstr M$. We say an \'etale $\vphi$-module $M$ is \emph{free} (respectively, \emph{torsion}) if the underlying $\fo_\Eps$-module is free (respectively, $\pi_0^\infty$-torsion). We let $\etphimod{\fo_\Eps}$ denote the category of \'etale $(\vphi,\fo_\Eps)$-modules with the obvious notion of morphisms, and let $\fretphimod{\fo_\Eps}$ and $\toretphimod{\fo_\Eps}$ respectively denote the full subcategories of free and $\pi_0^\infty$-torsion \'etale $\vphi$-modules. We let $\etphimod{\fo_\Eps}[\ivtd{\pi_0}]$ denote the ``isogeny category'' for  $\fretphimod{\fo_\Eps}$; i.e. the categories defined by formally inverting the multiplication by $\pi_0$. 

There exist natural notions of  subquotient, direct sum, $\otimes$-product, and internal hom for \'etale $\vphi$-modules. We respectively define the duals for free and torsion \'etale $\vphi$-modules by the $\fo_\Eps$-linear duals and the Pontryagin duals with the induced $\vphi$-structures.
 
The main motivation for considering \'etale $\vphi$-modules is that we have a natural analogue of Theorem~\ref{thm:FontaineEtPhiMod}. Let $\wh\fo_{\Eps^{\ur}}:=K^{\sep}[[\pi_0]]$, and we let $\GK$ act on it through the coefficients, and define the partial $q$-Frobenius endomorphism $\sig$ so that it acts as the $q$th power map on $K^{\sep}$ and $\sig(\pi_0)=\pi_0$.  Then the following construction
 \begin{equation}\label{eqn:EtPhiModTEpsEqChar}
 \nf T_\Eps (M):=(M\otimes_{\fo_\Eps}\wh\fo_{\Eps^{\ur}})^{\vphi=1} \qquad \textrm{for }M\in\etphimod{\fo_\Eps},\\
 \end{equation}
defines an equivalence of categories between $\etphimod{\fo_\Eps}$ and the category of finitely generated $\fo_0$-module with continuous $\GK$-action. One can define the quasi-inverse $ \nf D_\Eps$ in a similar fashion to \eqref{eqn:EtPhiModDEps}. Furthermore, they respect all the natural operations such as $\otimes$-product, duality, etc, and they preserve rank and length whenever applicable. The proof is identical to the proof of Theorem~\ref{thm:FontaineEtPhiMod}. (See \cite[\S5.1]{Kim:Thesis} for the full proof.)

\begin{defnsub}\label{def:LTchar}
Consider the following \'etale $\vphi$-module  $M_\LT:=\fo_\Eps\tim\e$ equipped with $\vphi_{M_\LT}(\sig^*\e) = \PP(u)\iv\e$.
Let $\chi_\LT:\GK\ra\fo_0\starr$ denote the character that defines the $\GK$-action on $\nf T_\Eps(M_\LT)$. For any $\fo_0[\GK]$-module $V$, we let $V(n)$ the $\fo_0[\GK]$-module whose $\GK$-action is twisted by $\chi_\LT^n$. 
\end{defnsub}
This character $\chi_\LT$ is equivalent to the character obtained from the $\pi_0$-adic Tate module of the Lubin-Tate formal $\fo_0$-module over $\fo_K$. See \cite[\S7.3.7]{Kim:Thesis} for the proof. Note that when $K$ is a finite extension of $\Qp$, we can obtain $\chi_{\cyc}|_{\GKinfty}$ from the \'etale $\vphi$-module defined analogously as above. Compare with Example~\ref{exa:repfinht}\ref{exa:repfinht:cycchar}.

\parag
For a non-negative integer $h$, an \emph{effective local shtuka (over $\fo_K$) of height $\leqs h$} is a finite free $\Sig$-module $\gM$ equipped with an $\Sig$-linear morphism $\vphi_\gM:\fstr \gM$ such that $\coker (\vphi_\gM)$ is killed by $\PP(u)^h$. We let $\phimodh\Sig$ denote the category of effective local shtukas of height $\leqs h$ with the obvious notion of morphisms. We let $\phimodh\Sig[\ivtd{\pi_0}]$ denote the ``isogeny category'' for  $\phimodh\Sig$; i.e. the categories defined by formally inverting the multiplication by $\pi_0$.

The original definition of effective local shtuka (over $\fo_K$) requires  $\coker (\vphi_\gM)$ to be flat over $\fo_K$, but this is automatic by the same argument as the proof of Lemma~\ref{lem:FlatCokerVphi}\eqref{lem:FlatCokerVphi:Flat}. Note that effective local shtukas can be defined over any $\fo_0$-scheme (not just over $\Spf\fo_K$), and there are more general objects called \emph{local shtukas} which are defined by allowing $\vphi_\gM$ to have a pole at $\PP(u)$. See \cite[Definition~0.1]{Genestier-Lafforgue:FontaineEqChar} or \cite[Definition~2.1.1]{hartl:period} for more general definition.

Since $\PP(u)$ is a unit in $\fo_\Eps$, the scalar extension $\gM\otimes_\Sig\fo_\Eps$ is naturally an \'etale $\vphi$-module. So we can associate a $\GK$-representation to an effective local shtuka $\gM$ of height $\leqs h$ as follows:
\begin{equation}\label{eqn:TSigLatticeEqChar}
\nf T^{\leqs h}_\Sig(\gM):=\nf T_\Eps(\gM\otimes_\Sig \fo_\Eps)(h)
\end{equation}

We state the following fundamental and non-trivial result on this functor $\nf T^{\leqs h}_\Sig$. Compare with Theorem~\ref{thm:main}.
\begin{propsub}\label{prop:main}\hfill
\begin{enumerate}
\item \label{prop:main:FFaith}
The functor $\nf T^{\leqs h}_\Sig$ from the category of effective local shtukas of height $\leqs h$ to the category of $\fo_0$-representations of $\GK$ is fully faithful. 
\item \label{prop:main:ClassifLatt}
Let  $V:=\nf T^{\leqs h}_\Sig(\gM)[\ivtd{\pi_0}]$, then for any $\GK$-stable $\fo_0$-lattice $T'\subset V$ there exists an effective local shtuka $\gM'$ of height $\leqs h$ such that $T'\cong\nf T^{\leqs h}_\Sig(\gM')$.
\end{enumerate}
\end{propsub}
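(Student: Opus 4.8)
The plan is to reduce both assertions to statements about étale $\vphi$-modules by means of the equal-characteristic Fontaine equivalence attached to \eqref{eqn:EtPhiModTEpsEqChar}, and then to transport Kisin's arguments \cite[Prop~2.1.12, Lemma~2.1.15]{kisin:fcrys} to the present setting, exactly as in the proof of Theorem~\ref{thm:main}; the details are worked out in \cite{Kim:Thesis}, so I only describe the architecture. Since $\nf T_\Eps$ is an exact $\otimes$-equivalence and the twist $(\cdot)(h)$ is an autoequivalence, part~\eqref{prop:main:FFaith} is equivalent to full faithfulness of the scalar-extension functor $\phimodh\Sig \to \etphimod{\fo_\Eps}$, $\gM \mapsto M_\gM := \gM\otimes_\Sig\fo_\Eps$; and, writing $M := M_\gM$, part~\eqref{prop:main:ClassifLatt} becomes the assertion that every $\vphi$-stable $\fo_\Eps$-lattice $M' \subset M[\ivtd{\pi_0}]$ admits a $\vphi$-stable $\Sig$-lattice $\gM'\subset M'$ with $\gM'\in\phimodh\Sig$ and $\gM'\otimes_\Sig\fo_\Eps = M'$. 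One passes between the two pictures via $M' := \nf D_\Eps(T'(-h))$, which is automatically $\fo_\Eps$-free (as $\fo_\Eps$ is a complete discrete valuation ring) and satisfies $M'[\ivtd{\pi_0}] = M[\ivtd{\pi_0}]$.

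For full faithfulness, $\Hom$-injectivity is immediate: $\Sig\hookrightarrow\fo_\Eps$ and $\gM$ free force $\gM\hookrightarrow M_\gM$. For surjectivity one must show that every $\vphi$-compatible $\fo_\Eps$-linear map $M_\gM\to M_{\gM'}$ carries $\gM$ into $\gM'$. Following Kisin, I would introduce the auxiliary ``rigid-analytic'' coefficient ring $\OO$ — the equal-characteristic analogue, in the sense of Genestier--Lafforgue \cite{Genestier-Lafforgue:FontaineEqChar} and Hartl \cite{hartl:period,hartl:dictionary}, of Kisin's ring of holomorphic functions on the open unit disk — form $\M := \OO\otimes_\Sig\gM$, and prove: (i) since $\coker(\vphi_\gM)$ is killed by $\PP(u)^h$, the $\vphi$-structure on $\M$ becomes an isomorphism after inverting $\PP(u)$, so that $\M$ with its $\vphi$-structure is functorial in $M[\ivtd{\pi_0}]$ alone; and (ii) $\gM = \M\cap M_\gM$ inside a common overring, which rests on the commutative-algebra identity $\Sig = \OO\cap\fo_\Eps$. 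Combining (i) and (ii), a morphism on étale realizations extends to $\M\to\M'$ and hence sends $\gM = \M\cap M_\gM$ into $\gM' = \M'\cap M_{\gM'}$.

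For part~\eqref{prop:main:ClassifLatt} one mimics \cite[Lemma~2.1.15]{kisin:fcrys} (cf. \cite[Prop~5.2.9]{Kim:Thesis}), keeping track of the $\PP(u)$-height bound. After multiplying $T'$ by a power of $\pi_0$ one may assume $T'\subset\nf T^{\leqs h}_\Sig(\gM)$, so that $M' \subset M$ with torsion quotient $M'' := M/M'$, a torsion étale $\vphi$-module. Let $\overline{\gM}'' \subset M''$ be the image of $\gM$ under $M\twoheadrightarrow M''$; by the equal-characteristic analogue of the notion of a torsion $\vphi$-module of height $\leqs h$ (Definition~\ref{def:TorKisMod}, Lemma~\ref{lem:TorKisMod}), $\overline{\gM}''$ has projective dimension $\leqs 1$ over $\Sig$, so that $\gM' := \ker(\gM\twoheadrightarrow\overline{\gM}'')$ is $\Sig$-free of the same rank as $\gM$. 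It is $\vphi$-stable with $\gM'\otimes_\Sig\fo_\Eps = M'$, and the snake lemma applied to $\vphi$ on $0\to\gM'\to\gM\to\overline{\gM}''\to0$ shows that $\coker(\vphi_{\gM'})$ is killed by $\PP(u)^h$; hence $\gM'\in\phimodh\Sig$ and $\nf T^{\leqs h}_\Sig(\gM')\cong T'$.

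In both parts the essential difficulty is the same: controlling a finite-$\PP(u)$-height $\Sig$-lattice through its étale realization together with its overconvergent extension — concretely, the identity $\Sig=\OO\cap\fo_\Eps$ and the boundedness of the poset of finite-height $\Sig$-lattices inside a fixed étale $\vphi$-module. This is the technical core of \cite[\S2.1]{kisin:fcrys}. In the equal-characteristic setting it should be either reproved verbatim — the arguments use only that $\Sig$ is a two-dimensional regular local ring equipped with the finite flat endomorphism $\sig$, which survives replacing $p$ by $\pi_0$ — or imported from Hartl's rigid-analytic theory of local shtukas. I do not expect a genuinely new phenomenon (the equal-characteristic case being, if anything, more elementary), but the rigid-analytic bookkeeping is unavoidable and is the crux.
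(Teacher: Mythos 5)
Your architecture is the right one — both parts reduce, via the equal-characteristic Fontaine equivalence $\nf T_\Eps$, to statements about lattices inside étale $\vphi$-modules, and one then transposes Kisin's Prop.~2.1.12 and Lemma~2.1.15 from \cite{kisin:fcrys}. Your treatment of part~\eqref{prop:main:ClassifLatt} is correct: scaling to $T'\subset T$, taking the image $\overline{\gM}''$ of $\gM$ in $M/M'$ (which is automatically $u$-torsion-free because $M/M'$ is an $\fo_\Eps$-module), and then passing to the kernel $\gM'$ and using the snake lemma to propagate the $\PP(u)^h$-bound is exactly the Lemma~2.1.15 argument; the paper just points to \cite[Lemme~2.3]{Genestier-Lafforgue:FontaineEqChar}, which packages the same content.

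For part~\eqref{prop:main:FFaith}, however, your step~(i) hides the actual content of the result behind a non-sequitur. You write that ``since $\coker(\vphi_\gM)$ is killed by $\PP(u)^h$, the $\vphi$-structure on $\M$ becomes an isomorphism after inverting $\PP(u)$, so that $\M$ with its $\vphi$-structure is functorial in $M[\ivtd{\pi_0}]$ alone.'' The first clause is trivial; the ``so that'' clause is precisely the hard theorem and does not follow from it. Saying that $\vphi_{\M}$ is an isomorphism away from $\PP(u)$ tells you nothing about which $\OO$-lattice inside the isogeny class is singled out by $M[\ivtd{\pi_0}]$; that uniqueness (and full faithfulness of $\gM\mapsto\M$) is where Kisin invokes \cite[Lemma~1.3.13]{kisin:fcrys}, which identifies $\vphi$-modules over $\OO$ of finite height with weakly admissible \emph{filtered} $\vphi$-modules. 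This input does \emph{not} ``survive replacing $p$ by $\pi_0$ verbatim'': in equal characteristic there is no de Rham filtration, and the correct replacement is the Hodge--Pink structure. The paper's proof states this explicitly: one must substitute ``weakly admissible isocrystals with Hodge--Pink structure'' for filtered $\vphi$-modules and apply \cite[Th\'eor\`eme~7.3]{Genestier-Lafforgue:FontaineEqChar} in place of Kisin's Lemma~1.3.13. Your closing paragraph names the lattice identity $\Sig=\OO\cap\fo_\Eps$ and the boundedness of the poset of $\Sig$-lattices as ``the technical core,'' but those belong to your step~(ii) and to part~\eqref{prop:main:ClassifLatt}; the crux of step~(i) is the Hodge--Pink equivalence, and your proposal neither names it nor explains what replaces the filtered-module theory. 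Spelling out that substitution is the one point where the equal-characteristic argument genuinely diverges from a verbatim transport of Kisin's, and it deserves to be made explicit rather than absorbed into ``rigid-analytic bookkeeping.''
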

\begin{proof}
The proof of \eqref{prop:main:FFaith} is very similar to the proof of its $p$-adic analogue \cite[Proposition~2.1.12]{kisin:fcrys}, except that one needs to work with ``weakly admissible isocrystals with Hodge-Pink structure'' instead of filtered $\vphi$-modules, and apply \cite[Th\'eor\`eme~7.3]{Genestier-Lafforgue:FontaineEqChar} instead of  \cite[Lemma~1.3.13]{kisin:fcrys}. The detail is worked out in \cite[Theorem~5.2.3]{Kim:Thesis}.

The claim \eqref{prop:main:ClassifLatt} easily follows from \cite[Lemme~2.3]{Genestier-Lafforgue:FontaineEqChar} and the equivalence of category $\nf T_\Eps$. (See \cite[Lemma~2.1.15]{kisin:fcrys} for the proof of its $p$-adic analogue.)
\end{proof}

By \emph{$\fo_0$-lattice $\GK$-representation}, we mean a finite free $\fo_0$-module equipped with continuous $\GK$-action. By \emph{$\pi_0^\infty$-torsion $\GK$-representation}, we mean a finitely generated $\pi_0^\infty$-torsion $\fo_0$-module equipped with continuous $\GK$-action. 
\begin{defnsub}\label{def:EqCharCrys}
Let $h$ be a non-negative integer.
An $\fo_0$-lattice $\GK$-action $T$ is called \emph{of height $\leqs h$} if there exists an effective local shtuka $\gM$ of height $\leqs h$ such that $T\cong \nf T^{\leqs h}_\Sig(\gM)$. A continuous $\GK$-representation $V$ over $\Frac(\fo_0)$ is called \emph{of height $\leqs h$} if it admits a $\GK$-stable $\fo_0$-lattice $T\subset V$ which is of height $\leqs h$; or equivalently by Proposition~\ref{prop:main}\eqref{prop:main:ClassifLatt}, any $\GK$-stable $\fo_0$-lattice $T\subset V$ is of height $\leqs h$. A $\pi_0^\infty$-torsion $\GK$-representation $\ol T$  is called \emph{of height $\leqs h$} if there exist $\fo_0$-lattice $\GK$-representations $T'\subset T $ of height $\leqs h$ such that $\ol T\cong T/T'$.
\end{defnsub}

Note that $\chi_\LT^r$ for $0\leqs r\leqs h$ is of height $\leqs h$, because by definition of $\chi_\LT$ (Definition~\ref{def:LTchar}) we have $\chi_\LT^r\sim \nf T_\Sig^{\leqs h}(\Sig(h-r))$ where $\Sig(h-r):=\Sig\tim\e$ with $\vphi_{\Sig(h-r)}(\sig^*\e) = \PP(u)^{h-r}\e$. (\emph{cf.} Example~\ref{exa:repfinht}\eqref{exa:repfinht:cycchar}.) It is not difficult to show that any unramified $\GK$-representation is of height $\leqs 0$ (hence, of height $\leqs h$ for any non-negative $h$). See, for example,  \cite[Proposition~5.2.10]{Kim:Thesis} for the proof.

Proposition~\ref{prop:main} suggests that $\GK$-representations of height $\leqs h$ should enjoy similar properties to those enjoyed by $\GKinfty$-representation of height $\leqs h$ in the setting of Kisin theory. On the other hand, $\GK$-representations of height $\leqs h$ can also be regarded as a positive characteristic analogue of crystalline representations with Hodge-Tate weights in $[0,h]$, for the following reasons.\footnote{We remark that in positive characteristic $K_\infty:=K(\sqrt[q^\infty]{u})$ is a purely inseparable field extension of $K$, so the gap between $\GK$ and $\GKinfty$ collapses.} Effective local shtukas arise naturally by completing global objects at ``places of good reduction'' such as $t$-motives, elliptic sheaves, and  Drinfeld shtukas. (See \cite[Example~2.1.2]{hartl:period} for more details.) It has been known for experts that there exists a natural anti-equivalence of categories between the category of effective local shtukas of height $\leqs 1$ and the category of strict $\pi_0$-divisible groups (using the terminology of \cite{fal02}), and if $\gM$ is the effective local shtuka of height $\leqs1$ which corresponds to a strict $\pi_0$-divisible group $G$ then $\left(\nf T^{\leqs1}_\Sig(\gM)\right)^*(1)$ is naturally isomorphic to the $\pi_0$-adic Tate module of $G$. This is generalized by Hartl \cite[\S3]{hartl:dictionary}  to any effective local shtukas.\footnote{Note that not all the $\pi_0$-divisible groups come from effective local shtukas -- the $\pi_0$-divisible groups that come from effective local shtukas are called \emph{divisible Anderson modules} in \cite[\S3]{hartl:dictionary}.}  See \cite[\S7.3]{Kim:Thesis} for the proof.

\parag
For a non-negative integer $h$, a  \emph{torsion shtuka of height $\leqs h$} is a finitely generated $\pi_0^\infty$-torsion $u$-torsionfree $\Sig$-module $\gM$ equipped with an $\Sig$-linear morphism $\vphi_\gM:\fstr \gM$  such that $\coker (\vphi_\gM)$ is killed by $\PP(u)^h$. We let $(\Mod/\Sig)^{\leqs h}$ denote the category of torsion shtukas of height $\leqs h$ with the obvious notion of morphisms. There exist natural notions of  subquotient, direct sum, $\otimes$-product for torsion shtukas. We can also define duality in the same way as in \S\ref{par:CarDual}. 

Let  $\ol\gM$  be a torsion shtuka of height $\leqs h$. Since $\ol\gM\otimes_\Sig\fo_\Eps$ is a $\pi_0^\infty$-torsion \'etale $\vphi$-module, one can associate a $\pi_0^\infty$-torsion $\GK$-representation $\nf T^{\leqs h}_\Sig (\ol\gM):=\nf T_\Eps(\ol\gM\otimes_\Sig\fo_\Eps)(h)$.

The same proof as \cite[Lemma~2.3.4]{kisin:fcrys} shows that any torsion shtuka of height $\leqs h$ can be obtained as the cokernel of an isogeny of effective local shtukas of height $\leqs h$. Using the ``exactness'' of $\nf T^{\leqs h}_\Sig$ which can be easily verified, we obtain the following lemma.
\begin{lemsub}
Let $\ol T$ be a finite $\pi_0^\infty$-torsion $\GK$-representation. Then $\ol T$ is of height $\leqs h$ in the sense of Definition~\ref{def:EqCharCrys} if and only if there exists a torsion shtuka $\ol\gM$ of height $\leqs h$ such that $\ol T\cong \nf T^{\leqs h}_\Sig(\ol\gM)$.
\end{lemsub}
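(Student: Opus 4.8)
The plan is to deduce both implications from two facts already available: the presentation of an arbitrary torsion shtuka of height $\leqs h$ as the cokernel of an isogeny of effective local shtukas of height $\leqs h$ (the equi-characteristic analogue of \cite[Lemma~2.3.4]{kisin:fcrys} recorded just above), and the ``exactness'' of $\nf T^{\leqs h}_\Sig=\nf T_\Eps(\,\cdot\otimes_\Sig\fo_\Eps\,)(h)$ — namely that a short exact sequence $0\to\gN'\to\gN\to\gN''\to0$ of $(\vphi,\Sig)$-modules with $\gN'$ a $\pi_0$-torsion-free $\Sig$-module is carried to a short exact sequence of $\GK$-representations. The latter combines flatness of $\fo_\Eps$ over $\Sig$, exactness of $\sig^{*}$ and of $\nf T_\Eps$, and exactness of the twist $(h)$, and is the verification alluded to in the text.

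For the ``if'' direction I would argue as follows. Given a torsion shtuka $\bar\gM$ of height $\leqs h$ with $\bar T\cong\nf T^{\leqs h}_\Sig(\bar\gM)$, write $\bar\gM\cong\coker[f\colon\gM_1\to\gM_0]$ with $f$ an isogeny of effective local shtukas of height $\leqs h$, and apply $\nf T^{\leqs h}_\Sig$ to $0\to\gM_1\xra{f}\gM_0\to\bar\gM\to0$. By exactness this yields $0\to T_1\to T_0\to\bar T\to0$ with $T_i:=\nf T^{\leqs h}_\Sig(\gM_i)$ being $\fo_0$-lattice $\GK$-representations of height $\leqs h$ (Definition~\ref{def:EqCharCrys}); hence $\bar T\cong T_0/T_1$ exhibits $\bar T$ as of height $\leqs h$.

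For the ``only if'' direction, suppose $\bar T\cong T/T'$ with $T'\subset T$ of height $\leqs h$, and choose effective local shtukas $\gM,\gM'$ of height $\leqs h$ with $\nf T^{\leqs h}_\Sig(\gM)\cong T$ and $\nf T^{\leqs h}_\Sig(\gM')\cong T'$. The first step is to lift the inclusion $\iota\colon T'\hra T$ to a morphism $f\colon\gM'\to\gM$ of effective local shtukas via the full faithfulness of $\nf T^{\leqs h}_\Sig$ (Proposition~\ref{prop:main}\eqref{prop:main:FFaith}); tensoring $\Hom$ with $\Frac(\fo_0)$ shows $f$ is an isogeny, since $\iota[\ivtd{\pi_0}]$ is an isomorphism, and $f$ is injective because $\nf T_\Eps$ is an equivalence and $\gM'$ embeds into $\gM'\otimes_\Sig\fo_\Eps$. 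I would then set $\bar\gM:=\coker(f)$, so that $0\to\gM'\xra{f}\gM\to\bar\gM\to0$ is a length-one free resolution over $\Sig$. From this one reads off: $\bar\gM$ is $\pi_0^\infty$-torsion; $\coker(\vphi_{\bar\gM})$ is a quotient of $\coker(\vphi_\gM)$ by the snake lemma applied to the two rows $\sig^{*}(-)\to(-)$, hence killed by $\PP(u)^h$; and $\bar\gM$ has no nonzero $u$-torsion because it has projective dimension $\leqs1$ over the two-dimensional regular local ring $\Sig$, so Auslander--Buchsbaum gives $\operatorname{depth}\bar\gM\geqs1$, i.e. $\m_\Sig\notin\operatorname{Ass}(\bar\gM)$ — the equi-characteristic form of the equivalence in Definition~\ref{def:TorKisMod}\eqref{def:TorKisMod:ProjDim}. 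Thus $\bar\gM$ is a torsion shtuka of height $\leqs h$, and applying the exact functor $\nf T^{\leqs h}_\Sig$ to the resolution identifies $\nf T^{\leqs h}_\Sig(\bar\gM)$ with $\coker(\iota)=\bar T$.

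The genuinely substantive input is the cokernel presentation of a torsion shtuka, which rests on Proposition~\ref{prop:main} and ultimately on the Genestier--Lafforgue formalism; granting it, the argument is a faithful transcription of the $p$-adic picture in \S\ref{sec:KisinThy}, and the only point requiring any care is the $u$-torsion-freeness of $\coker(f)$, which the Auslander--Buchsbaum computation above settles.
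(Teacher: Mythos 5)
Your proof is correct and follows the same route the paper intends: the paper gives only a one-sentence sketch pointing to the equi-characteristic analogue of Kisin's Lemma~2.3.4 (torsion shtukas as cokernels of isogenies) and the exactness of $\nf T^{\leqs h}_\Sig$, and your argument fills in exactly those steps, invoking Proposition~\ref{prop:main} for full faithfulness in the ``only if'' direction and Auslander--Buchsbaum for the $u$-torsion-freeness of the cokernel. No gaps.
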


\parag
\label{par:LimThmEqChar}
We finally remark that the analogue of the ``limit theorem'' (Proposition~\ref{prop:LimitThm}) holds; i.e., an $\fo_0$-lattice $\GK$-representation obtained as a limit of $\pi_0^\infty$-torsion $\GK$-representation of height $\leqs h$ is again of height $\leqs h$ (as  an $\fo_0$-lattice $\GK$-representation). The proof is ``identical'' to the proof of Proposition~\ref{prop:LimitThm}.

\subsection{Deformation theory}
Let $\F$ be a finite extension of $\Fq$ (which is the residue field of $\fo_0$), and $\bar\rho:\GK\ra\GL_d(\F)$ a representation. Let $\fo$ be a $p$-adic discrete valuation ring with residue field $\F$, and put $F:=\Frac(\fo)$. Let $\art{\fo}$ be the category of artin local $\fo$-algebras $A$ whose residue field is $\F$ (via the natural map), and similarly let $\arhat{\fo}$ be the category of complete local noetherian $\fo$-algebras with residue field  $\F$.

Let $D,D^\Box:\arhat\fo \ra \Sets$ be the deformation functor and framed deformation functor for $\bar\rho$. Since the tangent spaces of these functors are infinite-dimensional (as explained in \S\ref{subsec:RepbilitySetup}), they cannot be represented by complete local noetherian $\fo$-algebras.

We say that a deformation $\rho_A$ over $A\in\art\fo$ is \emph{of height $\leqs h$} if it is a $\pi_0^\infty$-torsion $\GK$-representation of height $\leqs h$ as a $\pi_0^\infty$-torsion $\GK$-representation; or equivalently, if there exists $\ol\gM\in\Modh$ and an isomorphism $\Th_\Sig(\gM)\cong\rho_A$ as $\fo_0[\GK]$-modules. For $A\in\arhat\fo$, we say that $\rho_A$ is \emph{of height $\leqs h$} if $\rho_A\otimes A/\m_A^n$ is a deformation of height $\leqs h$ for each $n$. When $A\in\art\fo$, both definitions are compatible because the condition of being height $\leqs h$ is closed under subquotient. When $A$ is finite flat over $\fo_0$, a deformation $\rho_A$ over $A$ is of height $\leqs h$ if and only if $\rho_A$ is of height $\leqs h$ as a $\fo_0$-lattice $\GK$-representation, as remarked in \S\ref{par:LimThmEqChar}.

Let $D^{\leqs h}\subset D$ and $D^{\Box,\leqs h}\subset D^\Box$ respectively denote subfunctors of deformations and framed deformations of height $\leqs h$. In this setting, we have the analogue of Theorem~\ref{thm:Representability}:
\begin{thmsub}\label{thm:RepresentabilityEqChar}
The functor $D^{\leqs h}$ always has a hull. If $\End_{\GK}(\bar\rho) \cong \F$ then $D^{\leqs h}$ is representable (by $R^{\leqs h}\in\arhat\fo$). The functor $D^{\Box,\leqs h}$ is representable (by $R^{\Box,\leqs h}\in\arhat\fo$) with no assumption on $\bar\rho$.
Furthermore, the natural inclusions $D^{\leqs h}\hra D$ and $D^{\Box,\leqs h}\hra D^\Box$ of functors are relatively representable by surjective maps in $\arhat\fo$.
\end{thmsub}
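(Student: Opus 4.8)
The plan is to mirror, almost verbatim, the proof of Theorem~\ref{thm:Representability} from \S\ref{sec:Representability}, transporting each ingredient from the $p$-adic $\GKinfty$-setting to the equi-characteristic $\GK$-setting. First I would invoke Schlessinger's criteria: by Mazur's results (recalled in \S\ref{par:MazurRamakrishna}) the unrestricted deformation functor $D$ always satisfies (H1)--(H2), and (H4) if $\End_{\GK}(\bar\rho)\cong\F$, while the framed functor $D^\Box$ always satisfies (H1), (H2), (H4); this step is insensitive to whether $K$ is $p$-adic or equi-characteristic, so it carries over unchanged. Next, by the equi-characteristic analogue of Proposition~\ref{prop:RamakrishnaCrit} --- namely that subquotients and direct sums of $\pi_0^\infty$-torsion $\GK$-representations of height $\leqs h$ are again of height $\leqs h$, which follows from the exactness of $\nf T_\Eps$, $\nf D_\Eps$ and the visible behaviour of $\Sig$-submodules of torsion \'etale $\vphi$-modules exactly as before --- Ramakrishna's theory (as in \cite{ramakrishna:FlatDeforRing} or \S23, \S25 of \cite{Mazur:Deformation-FLT}) gives that $D^{\leqs h}$ inherits (H1), (H2), and (H4) under the same hypotheses, and $D^{\Box,\leqs h}$ inherits (H1), (H2), (H4) unconditionally. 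The relative representability of $D^{\leqs h}\hra D$ and $D^{\Box,\leqs h}\hra D^\Box$ by surjective maps then follows by repeating the argument of Proposition~\ref{prop:RelRepbility} word for word: for a deformation $\rho_A$ over $A\in\arhat\fo$ one forms $\mathfrak h_A(B):=\Hom_\fo(B,A)$ and the subfunctor $\mathfrak h_A^{\leqs h}$ of those $f:B\ra A$ with $\rho_A\otimes_{A,f}B$ of height $\leqs h$; since $\mathfrak h_A$ is prorepresentable and $\mathfrak h_A^{\leqs h}$ is closed under subquotients and direct sums, it is prorepresentable by a quotient $A^{\leqs h}$ of $A$, which is the desired universal quotient.

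The only substantive point left --- and the main obstacle --- is verifying (H3), i.e.\ that the tangent spaces $D^{\leqs h}(\F[\ep])$ and $D^{\Box,\leqs h}(\F[\ep])$ are finite-dimensional over $\F$. This requires transporting the entire argument of Proposition~\ref{prop:RepresentabilityH3} (steps \S\ref{Repbility1}--\S\ref{Repbility7}) to the equi-characteristic setting. The prerequisites needed are all available in \S\ref{sec:GenLaffHartl}: the Fontaine-style equivalence between \'etale $(\vphi,\fo_\Eps)$-modules and $\fo_0$-representations of $\GK$ (stated there, with proof in \cite[\S5.1]{Kim:Thesis}); the analogue of Corollary~\ref{cor:RaynaudScheme}, which relies only on the existence of a maximal $\Sig$-submodule of height $\leqs h$ (Lemma~\ref{lem:MaxMinProlongation}) and the fact that $\sig$ transitively permutes the orthogonal idempotents of $\Sig_\F$, both of which hold verbatim here; and the finiteness statement of Lemma~\ref{lem:MaxMinProlongation} itself, whose proof (using that $\gM^+/\gM^-$ is killed by powers of $\pi_0$ and $u$, hence artinian, plus the integral-closure boundedness argument via the \'etale algebra $A_M$) translates directly since $\Sig=\fo_K[[\pi_0]]$ and $\PP(u)=\pi_0-u_0$ play exactly the roles of $W(k)[[u]]$ and the Eisenstein polynomial. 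With these in hand, I would reduce (H3) to Claim~\ref{clm:DeforEtPhiMod} --- finiteness of equivalence classes of $(M,\iota)$ with $M\in\ModFIet{\F[\ep]}$ admitting a $\vphi$-stable $\Sig_\F$-lattice of height $\leqs h$ --- and then run the $\sig$-conjugation/successive-approximation computation: choose a finite set $\{\ol\gM^{(a)}\}$ of height-$\leqs h$ $\Sig_\F$-lattices in $\ol M$, fix bases and $\vphi$-matrices $\alpha^{(a)}$, bound the ``denominator exponents'' $r_i$ in terms of $\ord_u$ of the entries of $\alpha^{(a)}$ (the bound $r\leqs\frac1{p-1}\max_i\{\min_j\ord_u(\alpha_{ij})\}$ becomes $r\leqs\frac1{q-1}\max_i\{\min_j\ord_u(\alpha_{ij})\}$, since the relevant Frobenius is the $q$th power map), and finally prove the analogue of Claim~\ref{Repbility7}: for $c$ large enough (ensuring $q(c-he)>c$) the matrices $\beta$ and $\beta+X$ are equivalent whenever $X\in u^c\Mat_n(\Sig_\F)$, via the iteration $Y^{(i)}:=u^{-he}(X^{(i-1)}\gamma)$, $X^{(i)}:=\alpha\,\sig(Y^{(i)})$ with $\gamma=u^{he}\alpha^{-1}$ and $c^{(i)}=q(c^{(i-1)}-he)\to\infty$.

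I should flag a point that needs a moment's care: in the equi-characteristic setting $\fo_0=\Fq[[\pi_0]]$ is not absolutely unramified and $\F$ is an $\Fq$-algebra, so one must check that $\Sig_\F=\fo_K\wh\otimes_{\Fq}\F[[\pi_0]]$ (equivalently $(\fo_K\otimes_{\Fq}\F)[[\pi_0]]$, after suitably interpreting the completed tensor product) is still a finite product of complete discrete valuation rings permuted transitively by $\sig$, and that $u_0=\iota(\pi_0)$ has $\ord_u$ equal to the ``ramification index'' $e$ appearing in the bounds --- all of which hold by the setup in \S\ref{subsec:Notations}. Once (H3) is established, Schlessinger's and Mazur's theorems give: $D^{\leqs h}$ has a hull, is representable by $R^{\leqs h}\in\arhat\fo$ when $\End_{\GK}(\bar\rho)\cong\F$, and $D^{\Box,\leqs h}$ is representable by $R^{\Box,\leqs h}\in\arhat\fo$ unconditionally, completing the proof of Theorem~\ref{thm:RepresentabilityEqChar}. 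The expected main difficulty is purely bookkeeping: making sure every $p$ that should become a $q$ does, every occurrence of $W(k)[[u]]$/Eisenstein polynomial is correctly replaced by $\fo_K[[\pi_0]]$/$(\pi_0-u_0)$, and that the cited results from \cite{Kim:Thesis} (\S5.1, \S5.2, \S7.3) genuinely cover the equi-characteristic analogues of everything used --- there is no new conceptual obstacle beyond what was already overcome in \S\ref{sec:Representability}.
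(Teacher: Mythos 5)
Your proposal follows exactly the paper's own approach: the paper proves Theorem~\ref{thm:RepresentabilityEqChar} simply by noting that the argument for Theorem~\ref{thm:Representability} (Mazur/Schlessinger/Ramakrishna for (H1), (H2), (H4), plus the finiteness of the tangent space via the $\sig$-conjugation and successive-approximation computation of Proposition~\ref{prop:RepresentabilityH3}) adapts verbatim once $\Sig$, $\fo_\Eps$, $\Modh$ are replaced by their positive-characteristic analogues and the $p$th power map is replaced by the $q$th power map. Your write-up makes this adaptation explicit --- including the correct bounds $r\leqs\frac{1}{q-1}\max_i\{\min_j\ord_u(\alpha_{ij})\}$ and $q(c-he)>c$ --- but it is the same proof, spelled out in more detail than the paper gives.
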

We call $R^{\Box,\leqs h}$ the \emph{universal framed deformation ring of height $\leqs h$} and $R^{\leqs h}$ the  \emph{universal deformation ring of height $\leqs h$} if it exists. 

The proof of Theorem~\ref{thm:Representability} can easily be adapted. The main difficulty is to show the finiteness of the tangent space, but the same proof of Proposition~\ref{prop:RepresentabilityH3} works, if we replace $\Sig$, $\fo_\Eps$, $\Modh$ by their positive characteristic  analogues as introduced in \S\ref{subsec:Notations} and the $p$th power map is replaced by the $q$th power map in suitable places.

\parag[Torsion shtukas with coefficients]
\label{par:ModFIEqChar}
Let $A$ be a $\pi_0$-adically separated and complete topological $\fo_0$-algebra, (for example, finite $\fo_0$-algebras or any $\fo_0$-algebra $A$ with $\pi_0^N\cdot A = 0$ for some $N$). Set $\Sig_A:=\Sig\wh\otimes_{\fo_0}A:=\varprojlim_\alpha \Sig\wh\otimes_{\fo_0}A/I_\alpha$ where $I_\alpha$ form a basis of open ideals in $A$. We define a ring endomorphism $\sig:\self{\Sig_A}$ by $A$-linearly extending the Frobenius endomorphism $\sig$. We also  put $\fo_{\Eps,A}:=\fo_\Eps\wh\otimes_{\fo_0}A :=\varprojlim_\alpha\fo_\Eps\wh\otimes_{\fo_0}A/I_\alpha$and similarly define $\sig:\self{\fo_{\Eps,A}}$.

Let $\ModFIh A$ be the category of finite free $\Sig_A$-modules $\gM_A$ equipped with a $\Sig_A$-linear map $\vphi_{\gM_A}:\sig^*(\gM_A)\ra\gM_A$ such that $\PP(u)^h$ annihilates $\coker(\vphi_{\gM_A})$. Similarly, let $\ModFIet A$ be the category of finite free $\fo_{\Eps,A}$-modules $M_A$ equipped with a $\fo_{\Eps,A}$-linear isomorphism $\vphi_{M_A}:\sig^*(M_A)\riso M_A$. If $A$ is finite artinean, then one can associate to these objects $\GK$-representations over $A$,  and show that $\nf T_\Eps$, defined in \eqref{eqn:EtPhiModTEpsEqChar}, induces an equivalences of categories between $\ModFIet A$ and the category of $\GK$-representations over $A$.\footnote{The relevant freeness follows from length consideration.}

\subsection{Moduli of torsion shtukas of height $\leqs h$}\label{subsec:DefnOfResolEqChar}
Let $h$ be a positive integer. Consider a deformation $\rho_R
$ of $\bar\rho_\infty$ over $R\in\arhat\fo$ which is of height $\leqs h$ (i.e. $\rho_R\otimes_R R/\m_R^n$ is of height $\leqs h$ for each $n$). The main examples to keep in mind are universal framed deformation of height $\leqs h$. 

Put $M_R:=\varprojlim M_n$ where $M_n\in\ModFIet{R/\m_R^n}$ is such that $\nf T_\Eps(M_n)(h)\cong \rho_R\otimes_R R/\m_R^n$ for each $n$. For any $R$-algebra $A$, we view $M_R\otimes_R A$ as an \'etale $\vphi$-module by $A$-linearly extending $\vphi_{M_R}$. 

For a complete local noetherian ring $R$, let $\aug R$ be the category of pairs $(A,I)$ where $A$ is an $R$-algebra and $I\subset A$  is an ideal with $I^N=0$ for some $N$ which contains $\m_R\tim A$. Note that an artin local $R$-algebra $A$ can be viewed as an element in $\aug\fo$ by setting $I:=\m_A$. A morphism $(A,I)\ra(B,J)$ in $\aug R$ is an $R$-morphism $A\ra B$ which takes $I$ into $J$. 
We define a functor $D^{\leqs h}_{\Sig,\rho_R}:\aug R \ra \Sets$ by putting $D^{\leqs h}_{\Sig,\rho_R}(A,I)$ the set of $\vphi$-stable $\Sig_A$-lattices  in $M_R\otimes_RA$ which are torsion shtukas of height $\leqs h$. 

With this setting, we have an analogue of Proposition~\ref{prop:ResolDefor}.
\begin{propsub}\label{prop:ResolDeforEqChar}
The functor $D^{\leqs h}_{\Sig,\rho_R}$ can be represented by a projective $R$-scheme $\GRh_{\rho_R}$ and a $\Sig\otimes_{\Zp}\OO_{\GRh_{\rho_R}}$-lattice $\nf\gM^{\leqs h}_{\rho_R} \subset M_R \otimes_{R} \OO_{\GRh_{\rho_R}}$. (We call  $\nf\gM^{\leqs h}_{\rho_R}$ a \emph{universal $\Sig$-lattice of height $\leqs h$} for $\rho_R$.) Moreover, the formation of $\GRh_{\rho_R}$ and  $\nf\gM^{\leqs h}_{\rho_R}$ commute with scalar extension $R\ra R'$.
\end{propsub}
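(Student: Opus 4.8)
The plan is to transcribe, essentially verbatim, Kisin's construction of the resolution of (potentially) semi-stable deformation rings \cite[\S1]{kisin:pstDefor} --- the reference already invoked for Proposition~\ref{prop:ResolDefor} --- replacing the $p$-adic rings $W(k)[[u]]$ and $\fo_\Eps$, together with the Witt-vector Frobenius, by their equi-characteristic analogues $\Sig = \fo_K[[\pi_0]]$ and $\fo_\Eps = K[[\pi_0]]$, together with the partial $q$-Frobenius $\sig$ of \S\ref{subsec:Notations}. Nothing in Kisin's argument uses the residue characteristic of $\Sig$: all it needs is that $\Sig$ is a regular complete local ring of dimension $2$ with $\Sig/(\PP(u)) \cong \fo_K$ a discrete valuation ring, that $\PP(u)$ is a non-zero-divisor in $\Sig$ which becomes a unit in $\fo_\Eps$, that $\sig$ is finite flat and strictly raises $u$-adic orders, and that $M_R := \varprojlim M_n$ is free over $\fo_{\Eps,R}$ with $\vphi_{M_R}$ an isomorphism --- all of which hold here.

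Concretely I would proceed in three steps. First, \emph{boundedness}: fix an $\fo_{\Eps,R}$-basis of $M_R$, let $M_R^0 \subset M_R$ be its $\Sig_R$-span, and show there is an integer $c = c(h, M_R^0)$ such that every $\vphi$-stable $\Sig_A$-lattice $\gM_A \subset M_R \otimes_R A$ which is a torsion shtuka of height $\leqs h$ satisfies $\PP(u)^{c}(M_R^0 \otimes_R A) \subset \gM_A \subset \PP(u)^{-c}(M_R^0 \otimes_R A)$, uniformly in $(A,I) \in \aug R$; one starts from the commensurability of $M_R^0$ with the fixed $\Sig_R$-lattice $\vphi_{M_R}(\sig^* M_R^0)$ (which persists, with the same bound, after applying $-\otimes_R A$) and then iterates the height relation $\PP(u)^h \gM_A \subset \vphi_{\gM_A}(\sig^* \gM_A)$, the point being that $\sig$ raises $u$-orders enough for the successive bounds to converge --- this is exactly the estimate of \cite[\S1]{kisin:pstDefor}, with $u$-torsion-freeness of $\gM_A$ (automatic, cf.\ the remark after Definition~\ref{def:EqCharCrys}) used to run the diagram chases, and it is the equi-characteristic counterpart of Lemma~\ref{lem:MaxMinProlongation}. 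Second, \emph{representability}: the finite free $\Sig_A$-lattices sandwiched between $\PP(u)^{\pm c}(M_R^0 \otimes_R A)$ are, modulo $\PP(u)^{2c}$, parametrized by a Grassmannian condition on the finite free $\Sig_R/(\PP(u)^{2c})$-module $\PP(u)^{-c}M_R^0/\PP(u)^{c}M_R^0$; this yields a projective $R$-scheme $\GR$ carrying a tautological $\Sig \otimes_{\fo_0} \OO_{\GR}$-lattice $\widetilde\gM \subset M_R \otimes_R \OO_{\GR}$, and the two conditions ``$\vphi(\sig^*\widetilde\gM) \subset \widetilde\gM$'' and ``$\PP(u)^h$ kills $\coker(\vphi_{\widetilde\gM})$'' are closed, cutting out the closed subscheme $\GRh_{\rho_R} \subset \GR$ with universal object $\nf\gM^{\leqs h}_{\rho_R} := \widetilde\gM|_{\GRh_{\rho_R}}$; here the analogue of Lemma~\ref{lem:FlatCokerVphi} shows that $\coker(\vphi_{\nf\gM^{\leqs h}_{\rho_R}})$ is $\OO_{\GRh_{\rho_R}}$-flat, so that one is genuinely parametrizing torsion shtukas over all of $\aug R$, not merely $\vphi$-modules with bounded cokernel. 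Third, \emph{base change}: for a map $R \to R'$ one has $M_{R'} = M_R \otimes_R R'$ canonically, so $\GRh_{\rho_R} \otimes_R R'$ and $\GRh_{\rho_R \otimes_R R'}$ represent the same functor on $\aug{R'}$, and the identity map of functors supplies the desired isomorphism, matching up the universal lattices.

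The only step with genuine content is the boundedness of step one; everything afterwards is formal once the affine-Grassmannian picture is available. But that boundedness is precisely Kisin's estimate, and its proof uses only the features of $(\Sig, \PP(u), \sig, \fo_\Eps)$ listed in the first paragraph, all valid in positive characteristic. Accordingly --- exactly as for Proposition~\ref{prop:ResolDefor} --- rather than reproducing the details I would cite \cite[\S1]{kisin:pstDefor}, with the substitutions indicated above, note that the base-change assertion is immediate from the functorial description, and omit the routine verifications.
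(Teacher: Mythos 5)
Your proposal is correct and takes essentially the same approach as the paper: the paper's own proof consists of the remark that the $p$-adic argument (Proposition~\ref{prop:ResolDefor}, itself cited to \cite[\S1]{kisin:pstDefor}) carries over verbatim to the positive-characteristic setting, with the details deferred to \cite[\S11.1]{Kim:Thesis}. You have simply unpacked why the verbatim transfer works --- isolating the structural properties of $(\Sig,\PP(u),\sig,\fo_\Eps)$ used by Kisin and checking they persist --- which is a useful gloss but not a different route.
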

Indeed, the proof of its $p$-adic analogue (Proposition~\ref{prop:ResolDefor}) works verbatim in the positive characteristic  setting. The proof is also worked out in Proposition~11.1.9, Corollary~11.1.11of \cite{Kim:Thesis} for the positive characteristic setting.

The discussions in \S\ref{sec:GenFibers} also applies to this positive characteristic setting. For example, the structure morphism $\GRh_{\rho_R}\ra \Spec R$ becomes an isomorphism after inverting $\pi_0$ (Proposition~\ref{prop:GenIsom}, also using Proposition~\ref{prop:main}\eqref{prop:main:FFaith}); $R^{\Box,\leqs h}[\ivtd{\pi_0}]$ is formally smooth (Proposition~\ref{prop:FormalSm}); and the condition of having ``$\vphi$-nilpotent'' (respectively, ``$\vphi$-unipotent'') local shtuka model defines a union of connected components in $R^{\Box,\leqs h}[\ivtd{\pi_0}]$  (Proposition~\ref{prop:DisjOrdSS}).

When $\bar\rho$ is $2$-dimensional and $h=1$, one can define the quotient $R^{\Box,\bfv}$ of $R^{\Box,\leqs 1}$ in the similar fashion to \S\ref{subsec:HodgeType}.\footnote{In the $p$-adic case $\Spec R^{\Box,\bfv}_\infty[\ivtd p]$ is a union of connected components of $\Spec R_\infty^{\Box,\leqs 1}[\ivtd p]$. But in the positive characteristic setting, the author could only prove this when $K$ is separable over $k\llpar u_0\rrpar$. See \cite[Proposition~11.3.7]{Kim:Thesis}.}  As in Remark~\ref{rmk:Rv}, for any $\Frac(\fo_0)$-algebra $A$, an $A$-point of $R^{\Box,\leqs 1}$ factors through $R^{\Box,\bfv}$ if and only if $I_K$ acts via $\chi_\LT$ on the determinant of the corresponding  framed deformations. Then the direct analogue of the connected component result (Proposition~\ref{prop:ConndOrdNonOrd}) holds for the positive characteristic deformation ring $R^{\Box,\bfv}[\ivtd{\pi_0}]$. Furthermore, the argument in \cite[\S3]{kisin:pstDefor} can be adapted to show that $R^{\Box,\bfv}[\ivtd{\pi_0}]$ is equi-dimensional of dimension $4+[K:\Fq\llpar u_0\rrpar]$, which is strongly analogous to the $p$-adic case. (Compare with \cite[Theorem~3.3.8]{kisin:pstDefor} and \cite[\S11.3.17]{Kim:Thesis}.)
\bibliography{bib}
\bibliographystyle{amsalpha}

\end{document}